%% file: dp.tex
\documentclass[11pt]{amsart}
\usepackage{pb-diagram,amssymb,epic,eepic,verbatim,transparent}
\usepackage{tikz-cd,braket}
\usepackage{hyperref,graphics,epsfig,psfrag}
\usepackage[inline]{enumitem}
\hypersetup{colorlinks}
\usepackage{color}
\definecolor{darkred}{rgb}{0.5,0,0}
\definecolor{darkgreen}{rgb}{0,0.5,0}
\definecolor{darkblue}{rgb}{0,0,0.5}
\hypersetup{ colorlinks,
linkcolor=darkblue,
filecolor=darkgreen,
urlcolor=darkred,
citecolor=darkblue }
\newtheorem{theorem}{Theorem}[section]

\newtheorem{corollary}[theorem]{Corollary}

\newtheorem{construction}[theorem]{Construction}

\newtheorem{proposition}[theorem]{Proposition}
\newtheorem{lemma}[theorem]{Lemma}

\theoremstyle{definition}
\newtheorem{definition}[theorem]{Definition}
\theoremstyle{remark}
\newtheorem{remark}[theorem]{Remark}
\newtheorem{notation}[theorem]{Notation}
\newtheorem{example}[theorem]{Example}
\newtheorem{claim}[theorem]{Claim}
\newtheorem*{claim*}{Claim}

\newcommand\A{\mathcal{A}}
\newcommand\cA{\mathcal{A}}
\newcommand\cS{\mathcal{S}}

\newcommand\M{\mathcal{M}}

\renewcommand\M{\mathcal{M}}

\newcommand\bGam{\mathbb{\Gamma}}
\newcommand\bGamma{\bGam}
\newcommand\tGam{{\tilde{\Gamma}}}
\newcommand\Gam{{\Gamma}}

\newcommand\XX{\mathcal{X}}

\newcommand\YY{\mathcal{Y}}

\renewcommand\S{\mathcal{S}}

\newcommand{\W}{\mathcal{W}}

\newcommand{\J}{\mathcal{J}}

\newcommand{\N}{\mathbb{N}}
\newcommand{\R}{\mathbb{R}}

\newcommand{\C}{\mathbb{C}}

\newcommand{\Z}{\mathbb{Z}}
\newcommand{\Q}{\mathbb{Q}}

\newcommand{\ddt}{\frac{d}{dt}}

\newcommand{\dds}{\frac{d}{ds}}

\renewcommand{\P}{\mathbb{P}}

\newcommand{\PP}{\mathcal{P}}

\newcommand\lie[1]{\mathfrak{#1}}

\newcommand{\m}{\lie{m}}

\renewcommand{\t}{\lie{t}}

\newcommand{\on}{\operatorname}
\newcommand{\ainfty}{{$A_\infty$\ }}

\newcommand{\br}{{\on{br}}}
\newcommand{\ann}{{\on{ann}}}
\newcommand{\foc}{{\on{foc}}}
\newcommand{\rel}{{\on{rel}}}

\newcommand{\Crit}{\on{Crit}}
\newcommand{\Critval}{\on{Critval}}

\newcommand{\sm}{{\on{sm}}}
\newcommand{\fr}{{\on{fr}}}

\newcommand{\dual}{\vee}

\newcommand{\Newt}{\on{Newt}}

\newcommand{\Edge}{\on{Edge}}

\newcommand{\Ver}{\on{Vert}}

\newcommand{\unfr}{{\on{unfr}}}
\newcommand{\Disc}{\on{Disc}}

\newcommand{\Aut}{ \on{Aut} }

\newcommand{\muout}{\mu_{\on{out}}}

\newcommand{\Hol}{ \on{Hol} } 
\newcommand{\Rep}{\on{Rep}}

\newcommand{\Hom}{ \on{Hom}}

\newcommand{\Vol}{  \on{Vol}}

\newcommand{\trop}{{\on{trop}}}
\newcommand{\pert}{{\on{pert}}}

\newcommand{\codim}{\on{codim}}

\newcommand\dirac{/\kern-1.2ex\partial} 
\newcommand\qu{/\kern-.7ex/} 
\newcommand\hqu{/\kern-.7ex/\kern-.7ex/\kern-.7ex/}
 
\newcommand\white{{\includegraphics[width=.05in]{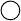}}}
\newcommand\black{{\includegraphics[width=.05in]{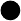}}}
\newcommand\lqu{\backslash \kern-.7ex \backslash} 
\newcommand\bs{\backslash}
\newcommand\dr{r_+ \kern-.7ex - \kern-.7ex r_-}
 
\newcommand{\labell}\label

\usepackage[bbgreekl]{mathbbol}

\newcommand{\hra}{\hookrightarrow}

\renewcommand{\d}{{\on{d}}}
\newcommand{\ol}{\overline}

\newcommand\Phinv{\Phi^{-1}}

\newcommand\lam{\lambda}
\newcommand\Lam{\Lambda}

\newcommand\sig{\sigma}
\newcommand\eps{\epsilon}

\newcommand\om{\omega}

\newcommand{\f}{\frac}
\newcommand{\lan}{\langle}
\newcommand{\ran}{\rangle}
\newcommand\bran[1]{ \lan {#1} \ran}
\newcommand{\hh}{{\f{1}{2}}}

\newcommand{\ti}{\tilde}

\newcommand\cT{\mathcal{T}}

\newcommand\cI{\mathcal{I}}

\newcommand\cP{\mathcal{P}}

\newcommand\ev{\on{ev}}

\newcommand\Vect{\on{Vect}}
\newcommand\ul{\underline}

\newcommand\bdefn{\begin{definition}}
\newcommand\edefn{\end{definition}}
\newcommand\bea{\begin{eqnarray*}}
\newcommand\eea{\end{eqnarray*}}
\newcommand\bcv{\left[ \begin{array}{r} }
\newcommand\ecv{\end{array} \right] }

\newcommand\bma{\left[ \begin{array}{l} }
\newcommand\ema{\end{array} \right]}
\newcommand\ben{\begin{enumerate}}
\newcommand\een{\end{enumerate}}
\newcommand\beq{\begin{equation}}
\newcommand\eeq{\end{equation}}
\newcommand\bex{\begin{example}}
\newcommand\bsj{\left\{ \begin{array}{rrr} }
\newcommand\esj{\end{array} \right\}}

\newcommand\Id{\on{Id}}

\newcommand\Fuk{\on{Fuk}}
\newcommand\Jac{\on{Jac}}

\newcommand\eex{\end{example}}
\newcommand\crit{{\on{crit}}}

\newcommand\sx{*\kern-.5ex_X}

\newcommand{\Bl}{\on{Bl}}

\def\mathunderaccent#1{\let\theaccent#1\mathpalette\putaccentunder}
\def\putaccentunder#1#2{\oalign{$#1#2$\crcr\hidewidth \vbox
to.2ex{\hbox{$#1\theaccent{}$}\vss}\hidewidth}}

\usepackage{xr}
\begin{document}

\title[Disk potentials of almost toric manifolds]{Tropical disk potentials \\ for almost toric manifolds}

\author{S. Venugopalan and C. Woodward}

\begin{abstract}  Using our previous work \cite{vw:trop},
we give a tropical formula for disk potentials for Lagrangian tori in almost toric four-manifolds, that is, fibrations by Lagrangian tori with only  toric and focus-focus singularities, generalizing results of Mikhalkin \cite{mikhalkin} for holomorphic spheres in the projective plane.      As examples, we directly compute potentials for Lagrangian tori  in del Pezzo surfaces equipped with monotone symplectic forms.  These formulas were established in the monotone case by different methods in Pascaleff-Tonkonog \cite{pasc}, and investigated from the point of view of the Gross-Siebert program in Carl-Pumperla-Siebert \cite{cps:trop}, Bardwell-Evans--Cheung--Hong--Lin \cite{bardwellevans} and 
also Lau-Lee-Lin \cite{lau:syz}.
\end{abstract}

\maketitle

\tableofcontents

\section{Introduction}
 
 Mikhalkin \cite{mikhalkin} has given a tropical formula for the counts of holomorphic spheres in the projective plane,
 which was generalized to toric varieties of arbitrary dimension by Nishinou-Siebert \cite{ns}.    In this paper, we generalize Mikhalkin's formula to the case of holomorphic spheres or disks with boundary in moment fibers of almost toric symplectic four-manifolds.  The formula is a special case of our previous work on the behavior of holomorphic curves under degeneration \cite{vw:trop}; the results extend, partially, to almost toric manifolds of dimension greater than four. The formula overlaps with the formulas for the potentials established in Pascaleff-Tonkonog \cite{pasc}, who used the behavior of the potentials under mutation.  It also overlaps with work of Bardwell-Evans--Cheung--Hong--Lin \cite{bardwellevans}, who give a tropical method for computing the potentials of toric moment fibers for del Pezzo surfaces that have semi-Fano degeneration; see also Lau-Lee-Lin \cite{lau:syz} for connection to the Strominger-Yau-Zaslow point of view for mirror symmetry.    Gromov-Witten invariants of blow-ups of projective spaces are computed tropically in Parker \cite{parker:blowups}, from a somewhat different viewpoint.  The techniques here extend to a formula for counting disks with boundary in Lagrangians such as those corresponding to simply laced root systems on del Pezzo surfaces, as we will explain in a sequel \cite{vw:tl} where we extend the results to tropical Lagrangians.

 Recall that an almost toric structure on a symplectic manifold as studied in Leung and Symington \cite{leungsym} is a Lagrangian fibration with {\em elliptic singularities}  occurring in the moment map for a torus action on a symplectic four-manifold or {\em focus-focus singularities} allowed, in which the 
torus fiber is allowed to develop a node.   The base of the fibration, equipped with the location of the singular fibers, is called an {\em almost toric diagram}.   An example of an almost toric diagram for the five-times blow-up of the projective plane (the del Pezzo surface of degree four) is shown in 
Figure \ref{fig:b5p2_at}, with the images of the focus-focus singularities marked with $\times$'s.  Vianna \cite{vianna:dp} has given explicit constructions of almost toric manifolds on del Pezzo surfaces.

\begin{figure}[ht]
\begin{center} 
\scalebox{.8}{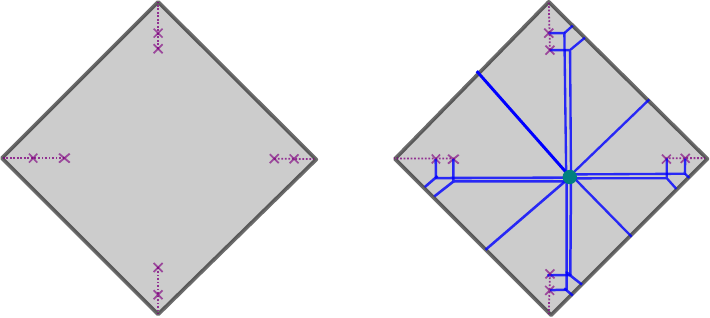}
\end{center} 
\caption{An almost toric diagram for the del Pezzo of degree four and the twelve tropical disks contributing to the potential of the monotone torus} 
\label{fig:b5p2_at}
\end{figure}

Holomorphic disks in such symplectic four-manifolds have tropical descriptions in terms of graphs in the base.   Bardwell-Evans--Cheung--Hong--Lin \cite{bardwellevans} and Gr\"{a}fnitz \cite{gr:corresp}, \cite{graef:trop} prove  tropical correspondences in the case
of toric moment fibers for del Pezzo surfaces
that have a semi-Fano degeneration. 
 The existence of such a formula was
suggested in Vianna's thesis \cite{vianna:dp}, and in previous work by  Carl-Pumperla-Siebert \cite{cps:trop}, 
The tropical computations reproduce the maximally mutable Laurent polynomials found in mirror symmetry; see, for example, Akhtar et al. \cite{akhtar} on the basis of {\em maximal mutability}.  The results in Bardwell-Evans et al. \cite{bardwellevans} take a detour through algebraic geometry.   In this paper, we apply the degeneration (that is, tropical) techniques we developed in Venugopalan-Woodward \cite{vw:trop} to directly compute various disk counts in almost toric manifolds.   In principle, these techniques apply to symplectic manifolds of any dimension, but potentials for four-dimensional almost-toric manifolds have a particularly nice formula.  As examples we give tropical computations of the disk potentials of Lagrangian torus fibers of monotone del Pezzo surfaces, which were first computed in Pascaleff-Tonkonog \cite{pasc}, and from a tropical point of view in Bardwell-Evans--Cheung--Hong--Lin \cite{bardwellevans} for del Pezzo surfaces of degree at least three. In particular,  for the monotone del Pezzo surfaces we give a combinatorial proof that the potential of the Lagrangian torus is mutable, and so equivalent to one of the mutable Laurent polynomials described in Akhtar et al. \cite{akhtar} (following Pascaleff-Tonkonog \cite{pasc}.)
The right-hand part of Figure \ref{fig:b5p2_at} shows the tropical disks in the five-times blow-up.    The monotone Lagrangian torus with its trivial local system
gives a summand in the Fukaya category with $w = 12$ (and so 
$12$ is an eigenvalue of quantum multiplication by the first Chern class; see 
\cite[Theorem 1.7]{vwx} and note that the leading order term in the open-closed map for the point class is non-zero).  

We describe in more detail the disk counts that we wish to compute.  Let $X$ be a compact symplectic manifold and $L \subset X$ be a compact Lagrangian
equipped with a relative spin structure and a local system.  Let $\Lambda_{\Q,\ge 0} = \Q[[q^\R]]$ be the Novikov field in a formal variable $q$ with non-negative real exponents and rational coefficients, and 
$\Lambda = \Lambda_\Q \otimes_\Q \C$ the version with complex coefficients.   The space of Morse chains $CF(L,L)$ on $L$ with coefficients in $\Lambda$ has a natural \ainfty structure introduced by Fukaya, giving rise to a space of projective Maurer-Cartan solutions $MC(X,L)$ with a {\em disk potential}
\begin{equation} \label{eq:mcl} W_{X,L}:  MC(X,L) \to \Lambda_{\ge 0} \end{equation}
counting holomorphic disks with boundary in $L$, weighted by powers of the formal variable with areas as exponents.   The same statement holds over the rationals $\Lambda_\Q$, but the complex version allows for more critical points and so more non-trivial brane structures.  Up to equivalence,
the potential $W_{X,L}$ is invariant of all choices.   We will be mainly interested in the monotone case, 
in which one typically computes the simpler version consisting of a count of disks passing through a generic point on the Lagrangian, weighted by holonomies of a given local system.  The results of this paper extend more generally to the non-monotone case, with the caveat that in this one is computing a function on the Maurer-Cartan space
up to isomorphism, rather than a well-defined potential that is a Laurent polynomial independent of all choices.   In the first purpose of the paper we use degeneration techniques to count such disks.   Let $T$ be a two-dimensional
torus with Lie algebra $\t$, integral lattice $\t_\Z \subset \t$, 
and dual $\t^\dual$. Let $X$
be a compact almost toric four-manifold with an almost toric moment map $\Phi:X \to B$,
where $B$ is an affine integral manifold modelled on $\t^\dual$,
and $L  = \Phinv(\lambda)$ a Lagrangian torus fiber. 
Denote by 
\[ B^\foc \subset B \]  the set of focus-focus values, so that each fiber
$\Phinv(b), b \in B^\foc$ is a nodal torus (with one or more nodes.)  \footnote{More generally, one could allow affine manifolds with singularities in the sense of \cite{vw:split}.}   Let
\[ \cP = \{ P \subset B \} \] 
be a polyhedral decomposition of $B$. Quotienting by the circle actions
normal to the facets gives a degeneration denoted $\XX$.  We will
choose particularly simple decompositions in order to compute disk potentials:

\begin{definition} \label{def:elementary} 
  The decomposition $\cP$ is  {\em elementary} if
  one of the possibilities holds exclusively: Each $P \in \cP$ either
  \begin{enumerate}
    \item  contains a (possibly empty) collection of 
  focus-focus values $b \in B^{\foc}$ all of which lie on the same branch cut in the base diagram,  and in which case $P$ is equivalent
  to the polytope shown in Figure \ref{fig:ff-general}, 
  up to the action of $GL(2,\Z)$;
  \item  contains the projection $\lambda = \Phi(L)$ of the Lagrangian
    $L$;
    \item intersects the boundary of
  $\Phi(X)$ and contains at most one vertex of $\Phi(X)$.
\end{enumerate}
\end{definition}

The broken manifold is equipped with the datum of a dual complex and a cutting
datum in the sense of Definition T-\ref{T-def:cut-datum}, meaning a collection of polytopes $P^\dual$ for each $P \in \PP$ with inclusions $P^\dual \to Q^\dual$ for each face $Q \subset P$.  Given a cutting datum, the dual complex is the union 
\begin{equation} \label{eq:sim}
B^\dual = \cup_{P\in \PP} P^\dual / \sim,\end{equation}
where $\sim$ is the identification of faces. See \cite[Section 3.3]{vw:trop} for detailed definitions.  
For a polyhedral decomposition of an almost toric manifold, the interior of the dual
complex has the structure of a singular affine manifold 
modeled on $\t$. The singular points in the affine manifold $B^\dual$
correspond to the polytopes $P \in \cP^{\foc}$ containing the focus-focus values $b \in B^{\foc}$.   We call the complement of these points
\begin{equation} \label{eq:smoothloc} B^{\dual,\sm} = B^\dual - \bigcup_{P \in \cP^{\foc}} P^\dual \end{equation}
the {\em smooth locus} in $B^\dual$.   In the neck
stretching limit the holomorphic disks with boundary in $L$ degenerate to
broken maps associated to some set $\cT$ of tropical graphs $\Gamma$
in $B^\dual$.  The
notion of {\em tropical graphs} in $B^\dual$ is well-defined; each
edge of a tropical graph $\Gamma$ in $B^\dual$ is an affine linear
segment, and the edges are joined at vertices where a balancing
condition holds as in Mikhalkin \cite{mikhalkin}. (See Section \ref{subsec:bmap} for details.)
The balancing
condition holds at all vertices $v$ except if $v$ is a disk vertex, or
if $P(v)$ intersects the boundary $\partial \Phi(X)$ or if
$P(v)$ contains focus-focus singular values $b \in B^{\foc}$. In these three
cases, we require vertices to be univalent, this condition is laid
down in the following definition.

\begin{definition}
\label{def:coll-interior}
  A tropical graph $\Gamma$  in $B^\dual$ has {\em collisions in the interior}
  if for any vertex of $\Gamma$ for which the polytope $P(v)$ intersects $\partial \Phi(X)$,
  \begin{enumerate}
  \item the valence of $v$ is $|v| = 1$, and 
  \item $P(v)$ intersects exactly a single facet $Q$ of $\Phi(X)$ and the direction $\cT(e) \in \t$ of the adjacent edge $e$ is the primitive normal to $Q$.  
  \end{enumerate}
  A tropical graph $\Gamma$  in $B^\dual$ has collisions only at {\em generic points} if for each vertex $v$ such the polytope $P(v)$ contains a focus-focus value $b \in B^{\foc}$,  $v$ is an open vertex and the valence of $v$ is $|v| = 1$; otherwise, the valence is most $|v| = 3$.
\end{definition}

For a large class of manifolds and Lagrangians, including in the monotone case, there exist subdivisions $\cP$ for which all graphs $\Gamma$ have collisions at 
interior points, see  Proposition \ref{prop:interior} below.

The direction of the edge emanating from a disk vertex  is called
the {\em initial direction} of the tropical graph. 
For univalent vertices $v$ mapping to polytopes
containing the focus-focus values, the directions $\cT(e)$ of the adjacent edges $e \in \Edge(\Gamma)$ are constrained to lie in the shear directions as
explained in Section \ref{sec:at}.

We assign orientations to edges of the tropical graphs
whose collisions are only at generic interior points.  
By Lemma  \ref{lem:orient}, 
there is a unique assignment of edge orientations with the following property:
Given a rigid map $u$ with tropical graph $\Gamma$, cutting an edge $e=(v_+, v_-)$ pointing towards $v_-$ produces maps $u_{v_+}$, $u_{v_-}$
that are rigid if the constraint on the  map component $u_{v_-}$ corresponding to the vertex on which $e$ is incident.

We add some additional data to the tropical graph:  The degeneration associated
to the polyhedron $\cP$ is a union of {\em cut spaces} $X_P, P \in \cP$.
We prove later that if a map $u_v$ lies in a piece $X_{P(v)}$
containing focus-focus singularities, $u_v$ meets exactly one
focus-focus singularity, which we denote by $x_v \in X_{P(v)}$. To the
tropical graph $\Gamma$ underlying $u$, we add the data of $x_v$ to
every such univalent vertex $v$.  The additional data plays a role in
the formula \eqref{eq:w-formula} in the following theorem: a graph
isomorphism $\phi$ on $\Gamma$ is in $\Aut(\Gamma)$ exactly if
$P(v)=P(\phi(v))$ for all vertices $v$, $\phi$ maps
$\Ver_\white(\Gamma)$ to itself, and for univalent vertices $v$ as
above, $x_v=x_{\phi(v)}$.

\begin{theorem} \label{thm:potthm} 
  Let $X$ be a compact almost toric manifold of dimension four, $L$ a
  Lagrangian torus fiber of $\Phi$ and $\cP$  an elementary polyhedral decomposition, for which any disk contributing to the potential $W_{\XX,L}$ has a tropical graph with all collisions 
in generic interior  points. Then
  the count of  rigid Maslov index two disks  with boundary in $L$ is a count of tropical graphs
  \begin{equation}
    \label{eq:w-formula}
W_{X,L} = \sum_{\Gamma \in \cT} \frac{1}{ \# \Aut(\Gamma)}  \prod_{v \in \Ver(\Gamma)} 
m(v)     
  \end{equation}
where the multiplicities $m(v)$ are as in Definition \ref{def:mvs} below, and 
$\Aut(\Gamma)$ denotes the group of automorphisms of $\Gamma$.
The same holds for counts of holomorphic spheres in $X$ with constraints.
\end{theorem}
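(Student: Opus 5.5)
The plan is to reduce the count of Maslov index two disks to a count of rigid broken maps via the splitting theorem of \cite{vw:trop}, and then to show that each rigid broken map count factors as a product of local vertex contributions. First I will invoke the invariance of the potential under neck-stretching (the main degeneration result of \cite{vw:trop}) to identify $W_{X,L}$ with the potential $W_{\XX,L}$ of the broken manifold $\XX$ associated to the elementary decomposition $\cP$. This expresses $W_{\XX,L}$ as a sum, over tropical graphs $\Gamma \in \cT$ in $B^\dual$, of counts of rigid broken disks with tropical type $\Gamma$. Each such count is weighted by $1/\#\Aut(\Gamma)$, because broken maps are counted up to reparametrization, and isomorphisms of decorated graphs (preserving $P(v)$, the disk vertex, and the focus-focus point $x_v$) act freely on labelled configurations.

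Second, I will classify the vertex pieces using the elementary hypothesis together with the assumption that all collisions occur at generic interior points. There are three cases.
\begin{enumerate*}[label=(\roman*)]
\item For a vertex with $P(v)$ meeting $\partial\Phi(X)$, the piece $u_v$ is univalent with edge direction the primitive normal to a single facet. It is therefore a sphere in a toric cut space $X_{P(v)}$ meeting the divisor once, and I will compute it by toric methods as a multiple of a fiber class.
\item For a focus-focus vertex, I will show that $u_v$ is univalent and meets exactly one node $x_v$, with edge direction the shear direction. Its count comes from the local model of a single focus-focus fiber, namely the unique Lefschetz-thimble type disk or sphere.
\item For the disk vertex, the local count in the toric piece containing $\lambda$ is the standard Cho--Oh disk count, with holonomy weight.
\end{enumerate*}
Interior trivalent vertices are balanced and contribute Mikhalkin's multiplicity $|\det(\cT(e_1),\cT(e_2))|$, computed as a count of rigid maps in a toric surface relative to three divisors. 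In each case I will check that the local count agrees with $m(v)$ in Definition \ref{def:mvs}.

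Third, I will prove the product formula. Using the edge orientations of Lemma \ref{lem:orient}, I will order the vertices so that cutting an edge $e=(v_+,v_-)$ leaves $u_{v_+}$ rigid, while $u_{v_-}$ becomes rigid once it is given the incidence constraint coming from $e$. Inductively, the count of broken maps of type $\Gamma$ is then the fiber product of local moduli spaces over the divisor matching conditions. Transversality of the matching, provided by the genericity of the collision points, makes the fiber product count equal to the product of local counts. The edge-multiplicity factors are absorbed into the $m(v)$ as in Mikhalkin, where each edge weight is counted once via the determinant at its oriented endpoint. The same argument applied to closed tropical graphs with point constraints gives the sphere statement.

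The main obstacle I expect is the focus-focus vertices. Here I must show that any component in $X_{P(v)}$ containing nodes is univalent, passes through exactly one node, and has its edge in the shear direction, and I must compute its count with correct signs and orientation (relative spin). My first approach would be an energy/Maslov index argument using that the total Maslov index is two, combined with the monodromy-invariant local model near a nodal fiber. This local model would reduce the count to the unique section of the standard Lefschetz fibration, which has multiplicity one, times the appropriate edge weight.
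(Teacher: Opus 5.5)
\textbf{Gap at the focus-focus vertices.} Your overall architecture matches the paper's:
\begin{itemize}
\item identify $W_{X,L}$ with $W_{\XX,L}$ via the neck-stretching cobordism (Theorem \ref{thm:cobord});
\item factor each rigid broken moduli space, along the constraint orientation, into a product of relative moduli spaces (Theorem \ref{thm:split}); your ``transversal fiber product'' is realized there by moving the point constraint through the cobordism of Proposition \ref{prop:inv}, with the framing factors $\mu_e$ absorbed at the point-constrained endpoint;
\item evaluate the local vertex counts.
\end{itemize}
The toric vertices (cylinders, Mikhalkin pairs of pants, Cho--Oh disks) are handled as in the paper. The genuine gap is the focus-focus vertex, which is exactly where Definition \ref{def:mvs}\eqref{mv:multcov} has content.

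You propose to reduce that count to ``the unique section of the standard Lefschetz fibration, which has multiplicity one, times the appropriate edge weight.'' This is the wrong curve and the wrong number.
\begin{itemize}
\item The focus-focus pieces do not contain $L$, so no thimble-type disks occur.
\item The vertex is a closed sphere meeting the boundary of the compactified cut space $\ol\XX_{P(v)}$ once. For the singular fibration $f:\ol\XX_{P(v)}\to\P^1$, the map $f\circ u_v$ can hit at most one of $0,\infty$, so it is constant (Lemma \ref{lem:nodal-sph}).
\item Hence $u_v$ is an $\ell$-fold cover, $\ell=\ell(\cT(e))$, of the fiber component $E_0$ through the node, totally tangent to the section $S_0$. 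It is not a section.
\item For $\ell\ge 2$ these covers come in positive-dimensional, obstructed families. Their virtual count is $(-1)^{\ell-1}/\ell^2$. The sign is intrinsic to the multiple-cover count; it is not a relative-spin convention, since the vertex is a sphere.
\end{itemize}
This is not cosmetic. The computation behind Proposition \ref{prop:bunchmult} rests on $\exp\bigl(k\sum_{\nu\ge 1}(-1)^{\nu+1}x^\nu/\nu\bigr)=(1+x)^k$, and it breaks with your value. With contribution $1$ per cover and $k=2$, the bunched coefficient of $x^2$ becomes $6$ instead of $\binom{2}{2}=1$. So the binomial edge coefficients and the mutation formula would fail.

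\textbf{The missing idea} is the Bryan--Pandharipande computation of Proposition \ref{prop:nd}.
\begin{enumerate}
\item After deforming so that $E_0$ has small area, Gromov monotonicity localizes the relevant curves near $E_0$. A neighborhood of $E_0$ is modeled on that of the exceptional curve $E$ in $\Bl_p(\P^1\times\P^1)$, equivalently in $\Bl_p\P^2$ with $Y$ the strict transform of a line through $p$.
\item Count curves of class $de+f$ through $d+1$ generic points in two ways. The count vanishes for $d>1$, because $(de+f)\cdot e=1-d<0$ for a $J$ making $E$ holomorphic.
\item Degenerating into $X_+=\P^1\times\P^1$ (containing the point constraints) and $X_-=\Bl_p\P^2$ (with relative divisor $Y$) expresses the same count as $\sum_\Theta \prod_i d_i n_{d_i}/\prod_j \nu_j(\Theta)!$ over partitions $\Theta$ of $d$.
\item The resulting recursion with $n_1=1$ has the unique solution $n_d=(-1)^{d-1}/d^2$, since $\exp\bigl(\sum_{d\ge1}(-1)^{d+1}x^d/d\bigr)=1+x$.
\end{enumerate}
Your energy/Maslov-index plan addresses only the localization (univalence and passing through a single node). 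The generic-collision hypothesis together with Lemma \ref{lem:nodal-sph} already supplies that; it does not give this virtual count.
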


The multiplicities in the Theorem combine Mikhalkin's multiplicity formula \cite{mikhalkin}, the Bryan-Pandharipande multiple cover formula \cite{bryan:lgw}, and the Cho-Oh \cite{chooh:fano} counts of disks in toric varieties meeting the interior.  The combination of these formulas appears in Gr\"afnitz \cite{graef:trop} for spheres, Lin
in the case of disks in K3 surfaces, appears in Lin \cite[Definition 3.3]{lin:trop}; also compare with Theorem 1.1 in Cheung-Mandel \cite{cm:dt}.   In each picture, the  graph (drawn in blue) represents $\Gamma$, the point (drawn in cyan) represents the Lagrangian image, if present, the purple point a focus-focus value, a solid segment a part of the image of the divisor at infinity $D$ of $X$, a dotted line (drawn in black) a divisor $X_Q$ created by the multiple cut, and a dotted line (drawn in purple) a branch cut.

 \begin{definition}\label{def:mvs}  The multiplicities $m(v)$ of vertices $ v \in \Ver(\Gamma)$ of valence at most three   are defined as follows:
\begin{enumerate}
    \item \label{mv:cyl} {\rm (Holomorphic cylinders)}  The multiplicity 
    \[ m(v)= 1 \] 
    for bivalent vertices $v \in \Ver_\black(\Gamma)$ such that the polytope $P(v)$ has no focus-focus values $b \in B^{\foc}$, does not meet the boundary of the moment polytope $\Phi(X)$ and $v$ has adjacent edges $e_1,e_2 \in \Edge_\black(\Gamma)$ in the same directions ${\cT}(e_1) = {\cT}(e_2)$.  
    \begin{center} 
\scalebox{.9}{\includegraphics{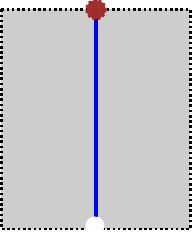}}
\end{center}

    \item \label{mv:pants} {\rm (Holomorphic pairs of pants)}  The Mikhalkin multiplicity 
    \[ m(v)= |\det({\cT}(e_1) {\cT}(e_2))| \] 
    for trivalent vertices $v \in \Ver_\black(\Gamma)$ such that the polytope $P(v)$ contains no focus-focus values $b \in B^{\foc}$, and edge directions 
    \[ {\cT}(e_1),{\cT}(e_2),{\cT}(e_3) \in \Z^2  \] 
    (exactly two of which are incoming) satisfying the balancing condition 
\[ {\cT}(e_1) + {\cT}(e_2) + {\cT}(e_3) = 0 ;\] 
    otherwise, $m(v) = 0$.
  \begin{center} 
\scalebox{.9}{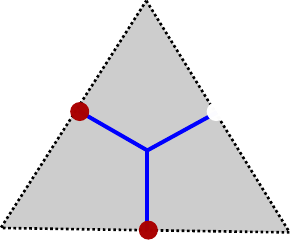}
\end{center}
\item \label{mv:cylhit} {\rm (Collisions of cylinders with boundary)}  The multiplicity  
\[ m(v) = 1 \] 
if $v \in \Ver_\black(\Gamma)$ is a univalent vertex 
such that $P(v)$ contains no focus-focus values $b \in B^{\foc}$ and intersects a facet $Q$ of the toric boundary $\Delta = \Phi(X)$ with  direction $\mu \in \R^2$ the primitive generator of the normal direction to $TQ$.

\begin{center} 
\scalebox{.5}{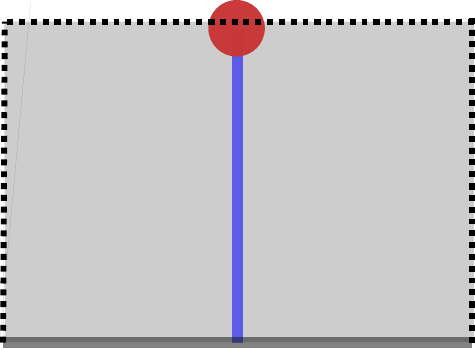}
\end{center}

\item \label{mv:multcov} {\rm (Multiple covers near focus-focus singularities)}  Denote by $\ell(\mu) \in \Z_{> 0}$  the lattice length of the 
direction $\mu = {\cT}(e)$ of the adjacent edge $e \in \Edge(\Gamma)$.
The Bryan-Pandharipande multiplicity is
\[ m(v) = (-1)^{\ell(\mu)-1}/\ell(\mu)^2 \] 
for the univalent closed vertices
$v \in \Ver_\black(\Gamma)$ such that the polytope $P(v)$ contains a single focus-focus value $b \in B^{\foc}$.

\begin{center} 
\scalebox{.7}{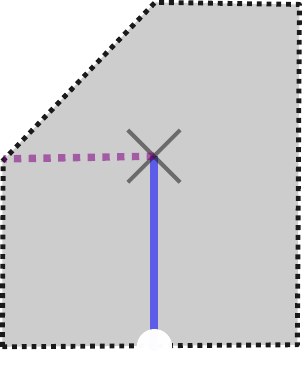}
\end{center}

\item \label{mv:toricdisk}  {\rm (Disks with boundary in the Lagrangian)}  The Cho-Oh multiplicity  
\[ m(v) = 1 \] 
if $v \in \Ver(\Gamma)$ is a univalent vertex such that the polytope $P(v)$ contains the  Lagrangian $\lambda = \Phi(L)$ and the adjacent edge $e \in \Edge(\Gamma)$
has direction ${\cT}(e)$ that is a primitive lattice vector. 
\end{enumerate}
\end{definition}

\begin{example} {\rm (Potential for the degree four del Pezzo)}  Consider the almost toric diagram for the del Pezzo surface
of degree four shown in Figure \ref{fig:b5p2_at}.    Twelve tropical graphs representing Maslov index two disks are shown.  The first four 
$\Gamma_1,\ldots,\Gamma_4$ have two vertices, each univalent, while the last eight have two three univalent vertices and one trivalent vertex.  Each of the multiplicities $m(v)$ of the vertices is equal to one, by the Definition above.   The initial directions of the tropical graphs are $(\pm 1, \pm 1)$, each with multiplicity one, and $(\pm 1,0),(0,\pm 1)$, each with multiplicity two, since there are two choices of focus-focus singularity to interact with in each of these directions.  It follows that 
\[ W_L(y_1,y_2) =   (1+y_1)^2 (1 + y_2)^2 /(y_1y_2)  - 4. \]
\end{example}

\begin{figure}[ht]
\begin{center} 
\scalebox{.8}{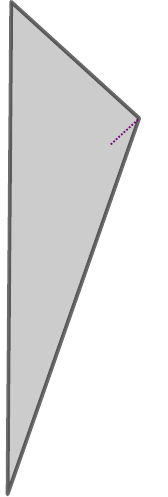}
\end{center} 
\caption{An almost toric diagram for the Chekanov torus and the four tropical disks contributing to the potential} 
\label{fig:chek}
\end{figure}

\begin{example} \label{ex:chek} {\rm (Potential for the Chekanov torus)}  An almost toric diagram for $\P^2$ is  shown in Figure \ref{fig:chek}, which has a single focus-focus singularity and three facets 
with normal vectors $(1,0), (-1,-1)$, and $(-3,1)$.  The diagram is obtained by mutation from the standard toric diagram of $\P^2$, as shown in Figure
\ref{fig:signs}. The corresponding  monotone torus fiber is known as the Chekanov torus, as studied in Vianna \cite{vianna:inf}.   There are three graphs $\Gamma_1,\Gamma_2,\Gamma_3$ with two bivalent vertices each, and both multiplicities are one in this case.  The final graph $\Gamma_4$ has initial direction $(2,0)$, and has a trivalent vertex with incoming edges $(2,0)$ and $(1,1)$ and so Mikhalkin multiplicity two.   The contribution of this graph to the potential is therefore $2 y_1^2$, with a total potential of 
\[ W_L(y_1,y_2) = 1/y_1 + y_1 y_2 + 2 y_1^2 + y_1^3/y_2 .\]
There are three critical values of $W_L$, corresponding to three local systems on $L$ which taken together split-generate the Fukaya category of the complex projective plane $\P^2$.
\end{example}

\begin{example} \rm{ (The number of exceptional curves in the degree one del Pezzo is the same as the number of roots of the $E_8$ root system)}  The same formula holds for rigid holomorphic spheres in 
del Pezzos, as explained in an Appendix \ref{sec:spheres}, simply by disallowing open vertices in item \eqref{mv:toricdisk}, in which case the formula is similar to one in Gr\"afnitz \cite[Proposition 4.48]{graef:trop}.   For example, it is well-known that a del Pezzo of degree one 
contains $240$ lines, that is, rigid embedded holomorphic spheres, which is the same as the number of roots of the $E_8$ root system connected to this del Pezzo by Manin \cite{manin:cubic}; see \cite{serg:adj}, \cite{testa:thesis}.  One can see these $240$ spheres
as follows:
Figure \ref{fig:b8p2_exc} shows 252 cartoon diagrams (approximate moment images) for stable genus zero maps to the del Pezzo of degree one.  Of these, exactly $12$ have self-intersections:  
\begin{enumerate} 
\item the $9$ curves of the $81$ curves of the last type that intersect the same focus-focus singularity twice, and 
\item the $3$ curves corresponding to tropical graphs with the trivalent vertex with determinant $3$; see the self-intersection computation in Mikhalkin \cite{mikhalkin}.
\end{enumerate}
Thus, there are $240 = 252 - 12$ embedded such curves.   Testa \cite[Lemma 2.3]{testa:thesis} explains the difference between the number in terms of the $12$ curves in the anti-canonical linear system. (These are half of the $24$ nodal fibers in the standard realization of a $K3$ surface.) 
\end{example}

\begin{figure}[ht]
  \begin{center}
    \scalebox{.8}{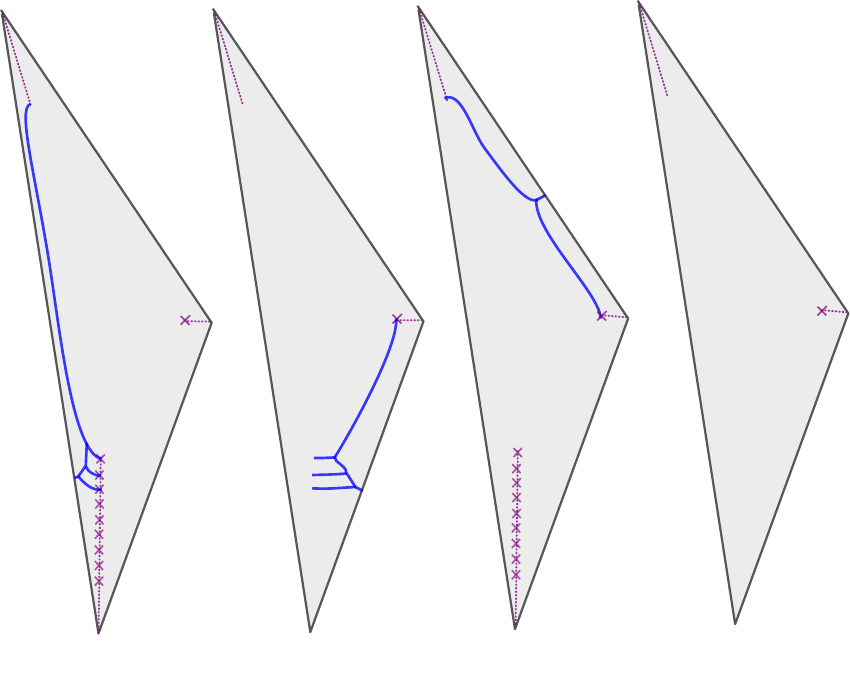}
  \end{center}
  \caption{Cartoon diagrams for the 252 degree one curves in $\Bl^8 \P^2$}
  \label{fig:b8p2_exc}
\end{figure}

It remains to define multiplicities for vertices with valence four or more. To define these multiplicities, we require a perturbation of the tropical graph which we describe next.
\begin{definition} \label{def:vpert}
  Let $v$ be a spherical vertex in a tropical graph $\Gamma$, and let $\Gamma_v$ be the subgraph consisting of $v$ and its incident edges. Suppose  $e_1,\dots,e_k \in \Edge(\Gamma)$ are incoming edges at $v$ where $k \geq 3$, and $e_0$ is the outgoing edge.
A {\em perturbation} $\Gamma_v^\pert$ of $\Gamma_v$ is a graph in $B^\dual$,
each of whose incoming edges $e_i'$, $1 \leq i \leq k$ is the edge $e_i$ translated by an amount $\ell_i$, where
\begin{equation}
  \label{eq:ell-order}
  \ell_1 \gg \ell_2 \gg \dots \gg \ell_k,  
\end{equation}
and the vector $(\ell_1,\dots \ell_k)$ is generic so that all sphere vertices in $\Gamma_v^\pert$ are trivalent, and a disk vertex, if present, is monovalent.
\end{definition}

\begin{figure}[ht]\begin{center}
    \scalebox{.6}{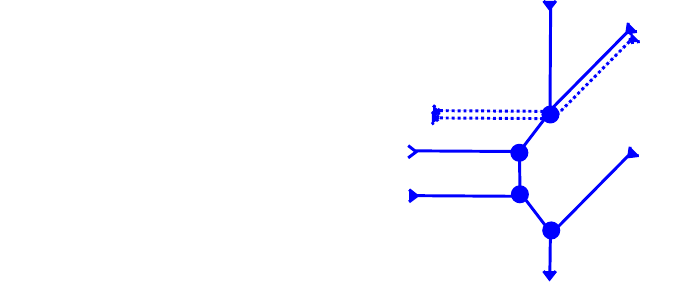}
  \end{center}
  \caption{Perturbing incoming edges of $v$ with parameters $\ell_1>\ell_2>\ell_3$. The pairs $e_1$, $e_5$
  and $e_2$, $e_3$ are coincident in $\Gamma$. }
  \label{fig:pert-edge}
\end{figure}

\begin{figure}[ht]\begin{center}
    \scalebox{.6}{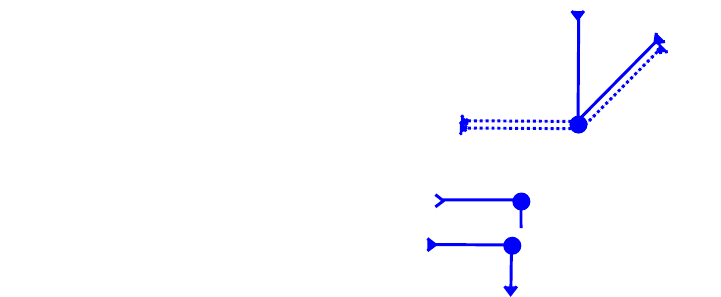}
  \end{center}
  \caption{Perturbing $\Gamma$ in Figure \ref{fig:pert-edge} with parameters $\ell_2>\ell_3>\ell_1$ produces two perturbed graphs.}
  \label{fig:pert-edge2}
\end{figure}

Note that for sphere vertices we may consider perturbations where two of the  parameters $\ell_{k-1}$, $\ell_k$ may  
be taken to be zero, assuming that the directions of the edges $e_{k-1}$, $e_k$ are not parallel. Indeed,
two non-parallel edges in general position will intersect, and the effect of the perturbation is to rule out more than two edges intersecting at a point.

\begin{definition}
  {\rm(Multiplicity for higher valent vertices)}
\label{def:higherval}
  Let $\Gamma$ be a tropical graph as in Theorem \ref{thm:potthm}, and let $v$ be a vertex of $\Gamma$ with valence more than three. The multiplicity of $v$ is defined by 
  is
  \[m(v):=\sum_{\Gamma_v^\pert} \left(\prod_{v_1 \in \Ver(\Gamma_v^\pert)}m(v_1)\right), \]
  where the sum ranges over all perturbed graphs of $\Gamma_v$ that respect a fixed ordering \eqref{eq:ell-order} of incoming edges. (We will see later that different orderings \eqref{eq:ell-order} of the incoming edges produce the same multiplicity. )
\end{definition}

Both the dual complex and tropical graphs can be viewed in a simplified way,
for judiciously chosen polyhedral decompositions, which aids the counting of tropical graphs in Theorem \ref{thm:potthm},  especially in the monotone case.  These simplified tropical graphs called {\em $\A$-tropical graphs} lie in a dual affine manifold:  Given an almost toric four manifold $(X,\Phi)$, the 
  associated {\em dual affine manifold} $\A(X)$ 
  is a singular affine integral manifold 
  modelled over $\t$ 
  that is complete and without boundary equipped with 
  \begin{enumerate}
  \item 
  a singular point $b^\dual \in \A(X)$ corresponding to each focus-focus values
  $b \in B^\foc$; the monodromy of the tangent space $T_\Z\A(X)$
  around the singular point $b$ is a shear matrix  $A_b \in GL(2,\Z)$
  whose primitive eigendirections are
   \begin{equation}
   \label{eq:mub}
   \mu_b^+, \mu_b^- \in T_{x_b}\A(X),
 \end{equation}
with singular points $b_1$, $b_2 \in B^\foc$ lying on the same branch cut allowed to coincide;
 %
%
\item  and a distinguished (smooth point) $P_0^\dual = \{ \lambda \}$ corresponding to the Lagrangian fiber. 
    \end{enumerate}
The dual affine manifold arises naturally from the dual complexes of
our polyhedral decompositions as explained in \eqref{eq:ax}.   There is a bounded open set $S \subset \A(X)$ such that the complement
\[\A_\ann(X):=\A(X) \bs S\]
is an affine annulus with no singular points. We assume in this introduction that the divisor at infinity $D \subset X$ is irreducible; in this case there is a distinguished vector field 
\[ \muout \in \Vect(\cA_\ann(X))_\Z \]
called the {\em primitive outward direction} that is flat with
respect to the affine structure, and is shown in Figure \ref{fig:dual1chart}.  Going one around the annulus produces a monodromy 
\[ h \in \Aut(T_x \A(X)) \simeq GL(2,\Z) \] 
of tangent spaces
$T\A(X)$ that is equal to the composition of shears at all the
focus-focus values. The outward vector $\muout(x)$ is fixed by $h$.
The dual affine manifold $\A(X)$ can be read off from the moment polytope $\Phi(X)$ up to equivalence as described in Remark \ref{rem:ax}. For examples of dual affine manifolds, see Figures \ref{fig:affineex} and  \ref{fig:b8p2-affine}. 

Tropical graphs in the dual complex $B^\dual$
 give rise to 
 {\em $\A$-tropical graphs} in the dual affine manifold
 $\A(X)$ by forgetting bivalent vertices, and the data of the polytopes $P(v)$ 
corresponding to vertices $v$.   The graphs are defined as follows:
\begin{definition} \label{def:a-graph}
  {\rm($\A$-tropical graph)} Let $\A(X)$ be an affine manifold corresponding to a manifold $X$ (as in \eqref{eq:ax}).  Let $\lam \in \A(X)$ correspond to the Lagrangian torus $L \subset X$.  An {\em $\A$-tropical graph} $\cT$ for $X$ consists of a graph 
  \[ \Gamma=(\Ver(\Gamma), \Edge(\Gamma)) \] 
  whose vertices are partitioned into disk and sphere vertices
  \[ \Ver(\Gamma)=\Ver_\white(\Gamma) \cup \Ver_\black(\Gamma), \]
  a {\em tropical embedding} 
  \[ \cT : \Gamma \to \A(X) \] 
  into the affine manifold $\A(X)$ consisting of
  \begin{enumerate}
  \item a tropical position $\cT(v) \in \A(X)$ for every vertex $v$
    such that $\cT(v)=\lam$ for all disk vertices
    $v \in \Ver_\white(\Gamma)$,
  \item and for any edge $e=(v_+,v_-)$, a map from $e$ to an affine
    linear segment in $B^\dual$ of rational direction $\cT(e)$
    connecting $\cT(v_+)$, $\cT(v_-)$ satisfying a {\em balancing
      condition}, namely that the sum of directions of edges at any
    vertex $v$ with valence $|v| \geq 3$ is zero:
    \[ \sum_{e \ni v} \cT(e) = 0, \]
    where $\cT(e)$ is viewed as an element in $T_{\cT(v), \Z}\A(X)$.
    \footnote{In general, in an $\A$-tropical graph, the direction
      $\cT(e)$ of an edge $e$ is the primitive slope of the segment
      $e$ multiplied by a positive integer called the
      multiplicity. When an affine chart is given, $\cT(e)$ is an
      element in $\t_Z$.}
    In the case that an edge $e$ is incident on a single vertex $v$,
    $\cT$ maps $e$ to a semi-infinite affine linear segment
    originating at $\cT(v)$.
  \end{enumerate}
\end{definition}

\begin{definition} {\rm($\A$-tropical tree, index two trees)}
    An $\A$-tropical graph $(\Gamma,\cT)$ is an {\em $\A$-tropical tree} if $\Gamma$ is a tree, and
    there is a single disk vertex.
An $\A$-tropical tree $(\Gamma,\cT)$ in $\A(X)$ has {\em index two} if the edges can be assigned orientations such that each vertex $v \in \Ver(T)$ has exactly one outgoing edge $e \ni v$, and one of the vertices $v \in \Ver(T)$ has a semi-infinite outgoing edge $e_{\on{out}}$ (that is, an edge incident on a single vertex), and 
    
  \begin{enumerate}
  \item {\rm(Tree)} a tree $T$ with directed edges $\Edge(T)$ so that 
\item {\rm(Leaves)} Each univalent vertex $v$ maps either to 
\begin{enumerate}
    \item the point $\lam \in \A(X)$ corresponding to the Lagrangian torus, in which case the adjacent edge $e$ to $v$ is called the {\em input leaf}
\item to a singular point 
$b^\dual, b \in B^{\foc}$,
and in this case the direction of the edge incident on $v$ is required to be a $\Z_+$-multiple of either $\mu_b^+$ or $\mu_b^-$.
\end{enumerate}
  \item {\rm(Final direction)} The direction of the outgoing semi-infinite outgoing edge $e_{\on{out}}$ is $\muout$.
  \end{enumerate}
\end{definition}
We later show that $\A$-tropical trees of index two correspond to broken disks of Maslov index two, justifying the terminology. 
 
\begin{figure}[ht]\begin{center}
    \scalebox{.8}{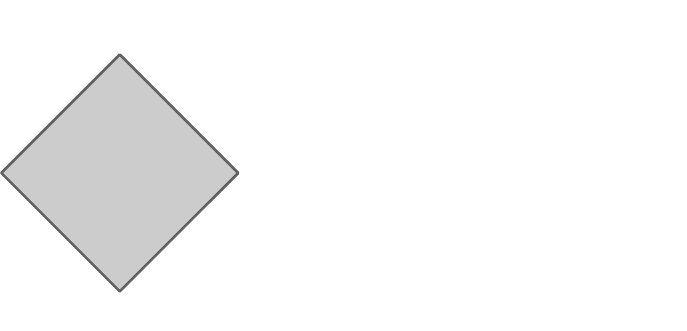}
  \end{center}
  \caption{Left: an almost toric diagram for $\Bl^5 \P^2$.  Right: The dual affine manifold, and the blue dotted lines indicate the direction $\mu_b$ in which a tropical curve emanates from a singular point $b$.}
  \label{fig:affineex}
\end{figure}

Theorem \ref{thm:potthm}  gives a method to compute the disk potentials of del Pezzo surfaces as conjectured by Akhtar et al. \cite{akhtar}.   
Examples of admissible graphs are given in Section \ref{sec:atinfinity}, and shown in Figure \ref{fig:b5p2_at} for the del Pezzo of degree four.  Theorem \ref{thm:potthm-T} is a simpler version of Theorem \ref{thm:potthm} for judiciously chosen polyhedral decompositions. For such decompositions all collisions occur in the annulus $\A_\ann(X)$.

The techniques of this paper can also be used, in good cases, to compute disk potentials of almost toric manifolds in dimensions higher than four in which case the focus-focus values form subsets of positive dimension,
but in this paper we focus on the case of almost toric structures on del Pezzo surfaces.   Although the techniques here are not restricted to the monotone case, it is difficult to formulate clean theorems in the non-monotone cases as the disk potential in the monotone case is only defined as a function on the space of projective Maurer-Cartan solutions up to equivalence.   But the space of such solutions is unknown, even for toric varieties.

\begin{theorem} \label{thm:potthm-T} Suppose $X$ is a symplectic
  monotone four-manifold equipped with an almost toric structure with
  base $B$ and let $L$ be a monotone torus fiber of the projection
  $\Phi:X \to B$.  The formula in Theorem \ref{thm:potthm} for the
  disk potential may be taken to be a sum over $\A$-tropical graphs in
  the affine manifold $\A(X)$.
\end{theorem}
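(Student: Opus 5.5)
The plan is to reduce Theorem \ref{thm:potthm-T} to Theorem \ref{thm:potthm} by choosing a particular elementary polyhedral decomposition $\cP$ adapted to the monotone structure. Then we show that the tropical graphs in the dual complex $B^\dual$ of $\cP$ correspond bijectively, with equal weights, to $\A$-tropical graphs in $\A(X)$, where the correspondence forgets bivalent vertices and the polytope labels $P(v)$.

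\textbf{Choice of decomposition.} Concretely, take $\cP$ to consist of:
\begin{enumerate*}[label=(\roman*)]
\item a small polytope $P_0$ around $\lambda=\Phi(L)$;
\item for each branch cut, a thin polytope of the type in Figure \ref{fig:ff-general}, containing the focus-focus values on that cut and positioned close to $\partial\Phi(X)$;
\item polytopes along the boundary, each containing at most one vertex of $\Phi(X)$;
\item the remaining annular region, subdivided so that the dual complex interior is an annulus.
\end{enumerate*}
This is the decomposition giving rise to $\A(X)$ in \eqref{eq:ax}, so the interior of $B^\dual$ glued along the smooth locus \eqref{eq:smoothloc} is identified with $\A(X)$. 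Under this identification:
\begin{itemize}
\item $P_0^\dual$ is the point $\lambda$;
\item each $P^\dual$ with $P\in\cP^\foc$ collapses to the singular point $b^\dual$;
\item the boundary pieces are pushed off to infinity in the direction $\muout$.
\end{itemize}
By Proposition \ref{prop:interior} we may further arrange, using monotonicity, that every disk contributing to $W_{\XX,L}$ has collisions only at generic interior points, so Theorem \ref{thm:potthm} applies.

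\textbf{Comparison of graphs.} Given a tropical graph $\Gamma\in\cT$ in $B^\dual$, erase the bivalent cylinder vertices (multiplicity $1$ by item \eqref{mv:cyl}). By Definition \ref{def:coll-interior}, univalent boundary vertices have edge direction equal to the primitive normal to their facet. Under the affine identification this normal is $\muout$, so each such leaf becomes the semi-infinite outgoing edge $e_{\on{out}}$ with multiplicity $1$ by item \eqref{mv:cylhit}. Univalent vertices in $\cP^\foc$ become leaves at $b^\dual$ with direction a multiple of $\mu_b^\pm$, carrying the Bryan--Pandharipande weight. The disk vertex becomes the input leaf at $\lambda$, carrying the Cho--Oh weight. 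The balancing conditions agree because the affine structures agree on the smooth locus.

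Conversely, given an index two $\A$-tropical tree, we reconstruct $\Gamma$ by subdividing edges where they cross walls of $\cP$. This recovers the labels $P(v)$ uniquely. The only choice is the focus-focus point $x_v$ when several values share a cut; this choice is recorded in both the automorphism group and the count, as in the degree four example. The orientation from Lemma \ref{lem:orient} matches the index two orientation. The Maslov index two condition matches the requirement that there be exactly one outgoing leaf, in direction $\muout$, because $D$ is irreducible and monotonicity gives $\mu=2[\omega]$. Higher-valent vertices are handled by Definition \ref{def:higherval}, which only depends on local data near $v$ and so is preserved.

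\textbf{Main obstacle.} The main difficulty is showing that, for this decomposition, every collision really occurs in the annulus $\A_\ann(X)$ away from the collapsed polytopes. In particular, no trivalent vertex may land inside a focus-focus polytope or a boundary polytope, since there the identification with $\A(X)$ would be ambiguous. I would address this by an area/energy argument: in the monotone case each disk has fixed area $\frac{1}{2}\mu(u)$ times the monotonicity constant. Shrinking the focus-focus and boundary polytopes then forces edges to cross them transversally or terminate there as leaves, and a trivalent vertex inside such a thin polytope would violate balancing against the shear directions.
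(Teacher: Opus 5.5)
Your high-level plan is the paper's: specialize Theorem~\ref{thm:potthm} to a well-chosen decomposition, then forget bivalent vertices and polytope labels. However, the step that actually produces $\A(X)$ is missing. $\A(X)$ is not the interior of the dual complex of a single decomposition. It is the increasing union $\bigcup_\rho \on{int} B^\dual(\rho)$ over the one-parameter family of cutting data obtained by writing the standard decomposition as $\PP_\ann\cap\PP_{\on{in}}$ (Lemma~\ref{lem:union-cd}, \eqref{eq:ax}). For fixed $\rho$ the edge into the boundary vertex is a finite segment. More importantly, the structure you rely on is only proved for large $\rho$:
\begin{itemize}
\item Proposition~\ref{prop:b0edge} shows that the set of realizable graphs stabilizes for $\rho\ge\rho_0$, and that every path from the disk vertex to the boundary contains an outward-pointing edge.
\item This feeds the area identity \eqref{eq:agam0}, which is how Proposition~\ref{prop:col-int} obtains a \emph{unique}, \emph{univalent} boundary vertex with direction $\nu=\muout$.
\item That edge becomes $e_{\on{out}}$ only as $\rho\to\infty$.
\end{itemize}
For small $\rho$ other graphs are realizable (cf.\ Figure~\ref{fig:rect}), so ``pushing the boundary pieces off to infinity'' is not a formality. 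Note also that Maslov index two gives, via $c_1(X)=[D]^\dual$ (not via monotonicity), only a single intersection with $D$ of multiplicity one. It does not prevent several edges from merging at the boundary vertex; that is exactly what Proposition~\ref{prop:col-int} rules out.

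Second, your decomposition has a \emph{small} $P_0$ around $\lambda$. Proposition~\ref{prop:interior} does not apply to it, so you cannot cite it to ``further arrange'' generic collisions. The paper takes $P_0\supset(1-\eps)\Phi(X)$ (Definition~\ref{def:good}, Proposition~\ref{prop:collg}). This gives two area bounds:
\begin{itemize}
\item By Lemmas~\ref{lem:disk-area} and \ref{lem:cxi}, a disk component with initial slope outside $\Phi(X)^\dual$ would already have area $(1-\eps)c_\xi>1$.
\item In \eqref{eq:agam0}, the part of the map inside the outward-pointing edges carries area $(1-\eps)\sum n_\partial$, leaving only $\eps$ for everything else.
\end{itemize}
The first bound forces the initial slope into $\Phi(X)^\dual$, so the disk cannot hit the divisor $X_{P_0}\cap X_{P_b}$ of a focus-focus piece. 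Combined with Lemma~\ref{lem:no-interior} (all later edges lie in the outward-pointing cone of the annular part), this gives univalence at focus-focus vertices and keeps all sphere vertices in the annulus. With $P_0$ small, most of the energy sits outside $P_0$ and none of these bounds survive. Your energy intuition therefore runs in the wrong direction.

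Your proposed fix for the main obstacle also fails. Balancing imposes no univalence at a vertex in a focus-focus piece: for example, regular fibers of $f$ in \eqref{eq:sing-fib} meet both $S_0$ and $S_\infty$. Shrinking the focus-focus and boundary pieces does not by itself exclude collisions there. The repair is to use the standard decomposition with $P_0\supset(1-\eps)\Phi(X)$, apply Propositions~\ref{prop:collg} and \ref{prop:col-int} and Lemma~\ref{lem:no-interior} for $\rho\ge\rho_0$, and then let $\rho\to\infty$.
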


We give some examples of the computations in Section \ref{sec:compute}. Theorem \ref{thm:potthm-T} also implies fairly easily
that the Newton polygon of the potential of an almost toric manifold is the dual of the moment polytope.

The polynomials described in \cite{akhtar} were previously
determined based on the {\em maximal mutability} property; we also show that Theorem \ref{thm:potthm} implies the mutation formula for monotone tori in del Pezzo surfaces, originally proved
in Pascaleff-Tonkonog \cite{pasc}:

\begin{theorem} \label{thm:potthm3} Let $X$ be a compact monotone symplectic four-manifold with almost toric diagrams $\Delta,\Delta'$  related
by a mutation, that is, nodal slide and transferring the cut as in Definition \ref{def:mutate}, and let $L,L' \subset X$ denote the corresponding monotone torus fibers.  Then their potentials $W_L, W_{L'}$ are related by the mutation formula, as in Definition \ref{def:mutform} below.  
\end{theorem}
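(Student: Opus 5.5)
The plan is to deduce the mutation formula directly from the tropical formula of Theorem \ref{thm:potthm}, in the simplified $\A$-tropical form of Theorem \ref{thm:potthm-T}. The key observation is that a mutation changes neither $X$ nor its dual affine manifold $\A(X)$. It only moves the marked point $\lam$ across the eigenline of one focus-focus singularity $b$, together with the choice of branch cut, which amounts to a choice of affine chart.

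\textbf{Step 1: the monodromy change of variables.} Fix charts on $\A(X)$ and on the modified manifold $\A'(X)$. After the nodal slide and transfer of the cut, the point $\lam'$ lies on the other side of the line through $b^\dual$ in direction $\mu_b^\pm$. The charts for $L$ and $L'$ differ on one half-plane by the shear $A_b$. Under this identification the monomials $y^{\cT(e)}$ attached to initial directions of trees from $\lam$ correspond to monomials from $\lam'$, except that the chart change is applied on one side of the wall. This reproduces the piecewise linear part of the mutation map $y^m \mapsto y^m(1+y^{w})^{\langle m, n\rangle}$.

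\textbf{Step 2: comparing the two sets of index-two trees.} Deform $\lam$ to $\lam'$ along a path in $\A(X)$ crossing the wall $\mathbb{R}_{\ge 0}\mu_b$ emanating from $b^\dual$, and track the index-two $\A$-tropical trees. A tree whose input leaf avoids the wall deforms rigidly and keeps its multiplicity, since the multiplicities are local by Definition \ref{def:mvs}. When the input leaf crosses the wall, the tree acquires or loses trivalent vertices where edges of direction $k\mu_b$ from $b^\dual$ join. The resulting graphs carry Bryan--Pandharipande weights $(-1)^{k-1}/k^2$ from item \eqref{mv:multcov} and Mikhalkin weights $|\det(\cdot,\cdot)|$ from item \eqref{mv:pants}.

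\textbf{Step 3: summing the wall-crossing contributions.} Summing over all ways of attaching such leaves, with the $1/\#\Aut$ factor accounting for repeated identical leaves, should give a generating function. Its logarithm is
\[
\sum_k \langle m, k\mu_b\rangle \frac{(-1)^{k-1}}{k^2}\, k\, y^{k w} \;=\; \langle m,\mu_b\rangle \log(1+y^{w}),
\]
which exponentiates to the factor $(1+y^w)^{\langle m,n\rangle}$. This is the standard Gross--Pandharipande--Siebert computation showing that a single focus-focus wall has function $1+y^w$. Because both potentials are invariants, we may also assume generic positions, so all vertices are at most trivalent.

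\textbf{Main obstacle.} The hardest part is this combinatorial resummation with signs and automorphism factors. In particular, one must show that the higher-valent perturbations of Definition \ref{def:higherval} do not spoil the product formula. I would first try to reduce to the local model $\P^2$ versus the Chekanov torus of Example \ref{ex:chek}, verify it there, and then argue that the contributions are local near the wall. Monotonicity guarantees that the potentials are genuine Laurent polynomials, so the formal identity becomes an equality of Laurent polynomials.
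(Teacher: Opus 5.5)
\textbf{There is a genuine gap.} Your strategy is the paper's: track index-two $\A$-tropical trees across a simple wall and resum the Bryan--Pandharipande leaves with an exponential generating function (Propositions \ref{prop:simple-wall}, \ref{prop:bunchmult}, \ref{prop:wallA}). But the central step fails as written.

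The displayed identity is false. Since $\langle m,k\mu_b\rangle=k\langle m,\mu_b\rangle$, your left side equals $\langle m,\mu_b\rangle\sum_k(-1)^{k-1}y^{kw}=\langle m,\mu_b\rangle\,y^w/(1+y^w)$, and exponentiating that does not give a polynomial. The extra factor $k$ double-counts framing. The Mikhalkin weight $|\det(m,k\mu_b)|$ of Definition \ref{def:mvs}\eqref{mv:pants} already absorbs the framing of the edge from $b^\dual$. So a leaf of multiplicity $k$ contributes $|\det(m,\mu_b)|(-1)^{k-1}/k$, and only this sums to $|\det(m,\mu_b)|\log(1+y^w)$.

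More seriously, exponentiating assumes the leaves attach at distinct trivalent vertices. Your claim that genericity makes all vertices trivalent is wrong here. Every leaf starts at the same point $b^\dual$ in a direction in $\Z_{>0}\mu_b$, so all of them meet the tree at one point, whatever $\lambda$ is. This is the obstacle you flag, and checking the Chekanov example does not remove it. The missing idea is the paper's bunching argument:
\begin{enumerate}
\item Replace the higher-valent vertex by a perturbation as in Definitions \ref{def:vpert} and \ref{def:higherval}; this is justified by Proposition \ref{prop:ffcross}.
\item Observe that adding multiples of $\mu_b$ to the incoming edge $f_0$ does not change $\det(\cdot,\mu_b)$. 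Hence each trivalent vertex of the perturbed graph has multiplicity $\nu_i|\det(\cT(f_0),\mu_b)|$.
\item This yields the binomial coefficient $\binom{\ell}{n}$ of Proposition \ref{prop:bunchmult}.
\end{enumerate}
Monomials with $\langle m,n\rangle<0$ also need the argument run from the other side of the wall, since a negative power of $1+y^w$ cannot come from attaching leaves.

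The global reduction is also unjustified. In the paper a mutation does not fix the dual affine manifold: the nodal slide moves $b^\dual$ and removes a rectangle (Figure \ref{fig:moving-ff}). If you instead move $\lambda$ in a fixed $\A(X)$, you must prove that the path crosses exactly one wall and that this wall is simple. That means ruling out walls from other singular points and scattered, non-simple walls. The paper gets this in Proposition \ref{prop:simple-wall} from the outward-pointing cone property (Lemma \ref{lem:no-interior}), for a path that moves $b$.

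Likewise, the claim that a tree avoiding the wall ``deforms rigidly'' ignores changes of combinatorial type away from the wall. These are an edge shrinking to length zero, or an edge sweeping through another singular point. Their invariance is the content of Theorem \ref{thm:nowall}, which again relies on Proposition \ref{prop:ffcross}.

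Finally, drop your Step 1. Transfer the cut so that $\lambda$ stays in the unchanged half (Notation \ref{note:sign}, Figure \ref{fig:signs}), and identify $H_1(L)\cong H_1(L')$ by parallel transport. Then all of $\cS_\nu$, including its piecewise-linear part, comes from the bunched trees. An extra one-sided shear on top of the factor $(1+y^w)^{\langle m,n\rangle}$ would count it twice.
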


Combining Theorems \ref{thm:potthm} and \ref{thm:potthm3} we give a combinatorial proof that the potentials are as claimed in the monotone case:

\begin{corollary} (Pascaleff-Tonkonog \cite{pt:wall}.)
The potential $W_L$ of the monotone Lagrangian torus fiber $L \subset X$ of a del Pezzo surface $X$ is given by a tropical disk count in $\A(X)$.   For the almost toric 
structures described by Vianna \cite{vianna:dp}, the potential is equal to the formulas in Akhtar et al \cite{akhtar} reproduced in Table \ref{table:pptable} below.    
\end{corollary}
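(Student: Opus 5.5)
The plan is to read off the corollary from Theorems \ref{thm:potthm-T} and \ref{thm:potthm3}, together with a single explicit tropical computation for each del Pezzo surface. The first assertion, that $W_L$ is a tropical disk count in $\A(X)$, is Theorem \ref{thm:potthm-T} applied to the monotone fiber. To use it, I first check that Vianna's almost toric structures \cite{vianna:dp} admit an elementary polyhedral decomposition in which every disk contributing to $W_{\XX,L}$ has collisions only at generic interior points. This is what Proposition \ref{prop:interior} provides in the monotone case. I choose the decomposition so that the polytope containing $\lambda$ is small and central, and each focus-focus value sits alone, or with others on its branch cut, in a polytope of the type in Figure \ref{fig:ff-general}. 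Theorem \ref{thm:potthm} then identifies $W_L$ with the weighted sum \eqref{eq:w-formula}, and Theorem \ref{thm:potthm-T} rewrites that sum over index two $\A$-tropical trees in $\A(X)$.

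For the second assertion I avoid computing each of Vianna's diagrams from scratch. I pick one reference diagram per surface: the toric one for $\P^2$, $\P^1\times\P^1$ and $\Bl^k\P^2$ with $k\le 3$, and for the higher blow-ups a diagram obtained from a toric or near-toric model by nodal trades. For these reference diagrams I enumerate the index two $\A$-tropical trees directly, as in the degree four example above. Because $L$ is monotone, every index two tree has its outgoing edge in direction $\muout$ and bounded symplectic area. This restricts the initial directions to the lattice points of the polygon dual to $\Phi(X)$, which is the Newton polygon statement following Theorem \ref{thm:potthm-T}. It also restricts the leaves at singular points $b^\dual$ to directions $k\mu_b^\pm$ with $k$ small. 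For each admissible initial direction I list the trees and multiply the vertex multiplicities of Definition \ref{def:mvs}. The Mikhalkin factors at trivalent vertices and the multiple-cover factors $(-1)^{\ell-1}/\ell^2$ should combine into the binomial coefficients of the Akhtar et al.\ polynomials in Table \ref{table:pptable}. The check is to compare coefficients entry by entry.

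Vianna's other diagrams for the same surface are obtained from the reference one by a sequence of mutations (Definition \ref{def:mutate}). Theorem \ref{thm:potthm3} shows that the potentials transform by the mutation formula of Definition \ref{def:mutform}. Maximally mutable Laurent polynomials are closed under exactly these mutations, so the table entry for each diagram follows by induction along the mutation sequence. If needed, I also record that the mutation sequences realizing Vianna's diagrams match those used by Akhtar et al.

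I expect the main obstacle to be the explicit enumeration for the degree one and degree two surfaces. There, several focus-focus values lie on common branch cuts, and trees meeting the same singular point repeatedly, with $\ell(\mu)>1$, contribute fractional Bryan--Pandharipande weights. Those weights must cancel or combine into integers, and it is easy to miss trees or miscount automorphisms $\Aut(\Gamma)$. My first approach would be to avoid the direct computation in these cases. Instead I would start from the degree three computation and apply Theorem \ref{thm:potthm3} together with a nodal trade, so that only low-complexity trees ever need to be enumerated by hand.
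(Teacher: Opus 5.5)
\textbf{There is a genuine gap in the second assertion.} Your handling of the first assertion matches the paper: Theorem~\ref{thm:potthm-T} applied through the standard decomposition of Proposition~\ref{prop:interior}. One correction there: that decomposition has $P_0 \supset (1-\eps)\Phi(X)$ large, not small and central. The area argument of Proposition~\ref{prop:collg}, which forces initial slopes into $\Phi(X)^\dual$ and rules out collisions at focus-focus points, depends on $P_0$ nearly filling $\Phi(X)$.

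The gap is your shortcut for degrees one and two: starting from the cubic surface and applying Theorem~\ref{thm:potthm3} together with a nodal trade. Nodal trades, nodal slides and mutations all change the almost toric structure on a \emph{fixed} symplectic manifold. They also preserve $\Vol(\Delta)$, hence the degree by Lemma~\ref{lem:degree}. Nothing in the paper relates a torus potential in $\Bl^6\P^2$ to one in $\Bl^7\P^2$ or $\Bl^8\P^2$; that would need a blow-up formula. The same confusion appears in taking reference diagrams ``obtained from a toric model by nodal trades'' for $\Bl^k\P^2$, $k\ge 4$, which are not monotone toric. So in the two hardest cases your argument falls back on the unexecuted enumeration with fractional Bryan--Pandharipande weights, which you yourself identify as the obstacle.

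Separately, propagating along mutations does not give ``the table entry for each diagram''. By Theorem~\ref{thm:potthm3}, a mutated diagram has potential $\cS_\nu^*$ of the reference potential, which is generally a different Laurent polynomial; compare Example~\ref{ex:chek} with $y_1+y_2+1/(y_1y_2)$. What you obtain is equality up to mutation, which is what Corollary~\ref{cor:ptable} states. Closure of maximal mutability plays no role in that step.

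\textbf{The missing idea is maximal mutability plus classification, which makes enumeration unnecessary.} This is the paper's route.
\begin{enumerate}
\item Take an edge of $\Newt(W_L)=\Phi(X)^\dual$ (Theorem~\ref{thm:newt}). Mutate the focus-focus value at the corresponding corner of $\Phi(X)$ across $\lambda$.
\item This gives a new monotone fiber whose potential $\cS_\nu^*W_L$ is again a Laurent polynomial, since it is a disk potential. It has zero constant term by Proposition~\ref{prop:noconstant}.
\item Hence $W_L$ is mutable along every edge (Proposition~\ref{prop:maxmut}). By Theorem~\ref{thm:coates}, up to mutation and $SL(2,\Z)$ it is one of the polynomials in Table~\ref{table:pptable}.
\item The correct entry is fixed by $\Vol(\Delta)$ via Lemma~\ref{lem:degree}. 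The homology argument in Corollary~\ref{cor:ptable} separates $\P^1\times\P^1$ from $\Bl^1\P^2$.
\end{enumerate}

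Where your direct enumeration is feasible, it buys exact coefficients for a specific diagram, which the classification alone does not give. For degrees one and two, however, it is this structural argument, not a computation, that closes the proof.
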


\begin{table}
\[ \begin{array}{l|l|l}
\text{del Pezzo surface} & \text{Manin root system} & \text{disk potential} \\ 
\hline
\P^2  & & y_1 + y_2 +
1/y_1 y_2  \\ 
\P^1 \times \P^1 & A_1  & y_1 + 1/y_1 + y_2 + 1/y_2 \\
\Bl^1 \P^2 & & y_1 + y_2 + 1/(y_1 y_2) + y_1 y_2 \\
\Bl^2 \P^2 & A_1 & (1 + y_1 + y_2)(1 + 1/(y_1y_2)) -1 \\
\Bl^3 \P^2 & A_1 \oplus A_2 & (1 + y_1)(1 + y_2)(1 + 1/(y_1y_2)) -2 \\
\Bl^4 \P^2 & A_4 & (1 + y_1 + y_2)(1 + 1/y_1)(1 + 1/y_2) -3 \\ 
\Bl^5 \P^2 & D_5 & 
(1 + y_1)^2(1 + y_2)^2/(y_1y_2) - 4 \\ 
\Bl^6 \P^2 & E_6 & 
(1 + y_1 + y_2)^3/(y_1y_2) - 6  \\ 
\Bl^7 \P^2 &  E_7 & (1 + y_1 + y_2)^4/(y_1y_2) - 12  \\
\Bl^8 \P^2 & E_8 & 
(1 + y_1 + y_2)^6/(y_1y_2^2) - 60
\end{array} \] 
\caption{Potentials of del Pezzo surfaces}
\label{table:pptable}
\end{table}

The relationship of these potentials with the rational elliptic surfaces 
considered to be mirror to the del Pezzos is studied in Lutz \cite{lutz:dp}.
Each critical value of the potential is an eigenvalue of quantum multiplication by
the first Chern class, as explained, for example, in Sheridan \cite{sheridan:hypersurface}.
Note that depending on the almost toric structure, not all eigenvalues can appear
as critical values, hence the $\subseteq$ in the 
column heading.

In a sequel to this work, we identify split generators for the Fukaya category of monotone del Pezzos using the almost toric diagrams and tropical Lagrangians.


\section{Almost toric manifolds}
\label{sec:at}

\subsection{Focus-focus singularities}

An {\em almost toric structure} on a symplectic four-manifold is a
pair of functions that act like a moment map for a completely
integrable Hamiltonian two-torus action except for a finite number of
{\em focus-focus singularities} where the torus fibers acquire a node.  
By the Arnold-Liouville theorem, that the image of such a moment map has an
affine integral structure, which we define following
Symington \cite{sym} and 
Gross \cite{gross:trop}.  We use here a definition that is a simplified
one given in \cite{vw:split}, where we assumed a stratification on the singular set.

\begin{definition}{\rm(Singular affine integral manifold)} 
\begin{enumerate}
    \item 
An {\em integral affine structure} on a topological $n$-manifold $B$  is an
  equivalence class of an atlases, where each atlas
  $\{(U_i,\phi_i)\}_{i \in I}$ in the equivalence class has the property that for any $i,j \in I$
  the coordinate change map $\phi_j \circ \phi_i^{-1}$
  is given by the action of an element $g$ in semi-direct product
  $\R^n \times SL_n(\Z)$ in the sense that 
  \[ \phi_j \circ \phi_i^{-1}(x) = gx , \forall x \in \phi_i(U_i \cap U_j) \] 
    and two atlases are equivalent if their union is an atlas of this type. 
  \item
A {\em tropical affine manifold with singularities} (c.f. 
Gross \cite[Definition 1.24]{gross:trop}) is a 
topological manifold $B$ with an integral affine
structure on the complement of a finite union 
\[ B^{\foc} := \cup_{k  \in K} Z_k \]
of codimension at least two topological submanifolds  $Z_k \subset B$.
\end{enumerate}
\end{definition}

The integral affine manifolds we encounter in this paper may be obtained by gluing together open subsets of plane at a subset of {\em branch cuts}.    
Each branch cut $C_i \subset \R^2$ is a ray whose intersection 
$C_i \cap \Delta$ is a line segment whose end-points 
$ \partial C_i = \{ \zeta, b_i \} \subset \Delta $ 
consist of a vertex $\zeta$ of $\Delta$ and a singular point $b_i$.
The singular affine manifold $B$ is  obtained by gluing the 
trivial affine structure on $B \bs \{C_1,\dots,C_k\}$
along the branch cut $C_i$ with a  {\em shear}  conjugate to a matrix of the form 
\begin{equation} \label{eq:shear} s_i = 
\begin{pmatrix}
  1 & k_i.\\
  0&1
\end{pmatrix}
 \in GL(2,\Z) .\end{equation}
Of course, a given affine manifold may be represented by different polytopes that are related to each other by ``transferring the cut'' operations From Definition \ref{def:mutate}.   In the pictures, we draw only the line segments $C_i \cap \Delta$ with dotted lines.

\begin{definition} \label{def:at} An {\em almost toric moment map}  for a symplectic four-manifold $(X,\omega)$ is a smooth surjective map $\Phi: X \to B$
  to a two-dimensional singular affine integral manifold $B$ satisfying the following: 
  For any $x \in X$ 
  there 
is 
a Darboux chart $(q_1,p_1,q_2,p_2)$ on a neighborhood $U \subset X$ of $x$, and a representation of $B$ as a polytope $\Delta \subset \R^2$ so that
$\Phi|U$ is a product of maps of the form
  \begin{enumerate}
  \item {\rm (Non-singular)} $\Phi_i(q_i,p_i)=p_i$;
  \item {\rm (Elliptic)} $\Phi_i(q_i,p_i) = \hh (q_i^2 + p_i^2) $; and
  \item {\rm (Focus-focus)} 
    $\Phi(q_i,p_i, q_{i+1},p_{i+1}) = (q_i p_i + q_{i+1} p_{i+1}, q_i
    p_{i+1} - q_{i+1} p_i )$.
  \end{enumerate}
%
  The triple $(X,\omega,\Phi)$ is called an {\em almost toric
    manifold} and the map $\Phi$ is an {\em almost toric moment
    map}. The last singularities at the origin in the second and third
  cases are called {\em elliptic} and {\em focus-focus singularities}
  respectively. The set of focus-focus singularities is denoted by $X^\foc \subset X$.
\end{definition}

The focus-focus singularities represent nodal singularities in the fibers; see 
Figure \ref{fig:ffsing}. We often fix the representation of the affine manifold $B$ as a polytope $\Delta \subset \R^2$ with branch cuts. Then, the map
\[\Phi : X \to \R^2\]
is an honest moment map on
the complement 
$\Phinv(B \bs \cup_i C_i)$
of the preimage of the branch cuts $C_i$. The data $\Delta \subset B$ with branch cuts $(C_1,\dots, C_k)$ and focus-focus values $B^\foc \subset \Delta $
is called the {\em base diagram} of the almost toric manifold.

\begin{figure}[ht]\begin{center} 
    \scalebox{.7}{\includegraphics{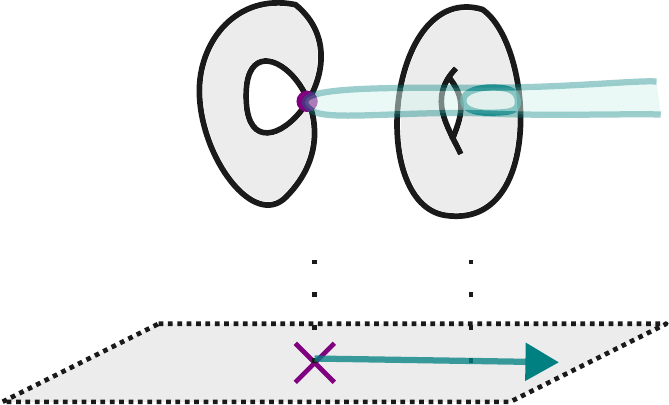}}
  \end{center}
  \caption{A focus-focus singularity}
  \label{fig:ffsing}
\end{figure}
The reader may wish to keep in mind the following example:
\begin{example} \label{ex:pend} {\rm (Spherical pendulum, see e.g. Duistermaat \cite{duist:global})}  A completely 
integrable structure on the cotangent bundle $X = T^* S^2 $ is given by the energy
of the spherical pendulum and its angular momentum.  The energy is defined by 
a sum of kinetic and potential terms, 
\[ \phi: X \to \R, \quad v \mapsto 
\hh \Vert v \Vert^2 + \lan \pi(v), \xi \ran \]
where $\pi: T^* S^2 \to S^2 $ is the projection, $\xi \in \R^3$ is the direction of gravity and $\Vert v \Vert$
is the norm defined using the standard invariant metric.  The 
angular momentum is defined by 
\[ \psi: X \to \R, \quad v \mapsto \lan \iota(v), \xi \ran \] 
where $\iota: T^* S^2 \to \R^3$ is the moment map for the $SO(3)$-action.
Combining the two functions gives rise to a map 
\[ \Psi := (\phi,\psi): X \to \R^2  .\]
Its moment image has boundary given by the set of points corresponding to the ``horizontal motions'' on the sphere
\cite[3.6]{duist:global}.
%
The map $\Psi$ has a single interior critical point $x_+ \in \Crit(\Psi)$ which is a focus-focus singularity corresponding to the unstable equilibrium of the spherical pendulum, and an elliptic critical point $x_- \in \Crit(\Psi)$ corresponding to the stable equilibrium that lies over the boundary. An application of the Arnold-Liouville theorem gives an almost toric structure for $X$ with a moment map $\Phi$ with
the following properties: a single
focus-focus singularity, a moment image equal to the cone on the vectors
$(-1,1)$ and $(1,1)$, with the stable equilibrium $x_-$ mapping to $(0,0)$
and the unstable equilibrium $x_+$ mapping to $(0,1)$.   See 
Figure \ref{pendulum}, which also has the disks contributing to the disk potential drawn. 
 This ends the Example.
\end{example}


%

%
%
%
%


 The following proposition is a well-known consequence of the Arnold-Liouville theorem 
 \cite[Chapter 2]{evans:lec}:
  
 \begin{proposition} \label{prop:affine} 
 Let $\Phi: X \to B$ be an almost toric moment map
 in the sense of Definition \ref{def:at}. Then $B$ has the structure of an tropical affine manifold with singularities the set $B^{\foc}$ of images of the focus-focus critical points.  
 \end{proposition}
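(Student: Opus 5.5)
The plan is to build affine charts on $B \setminus B^{\foc}$ from local action coordinates given by the Arnold--Liouville theorem, and then to check that transition maps lie in $\R^2 \rtimes GL(2,\Z)$, reducing to $SL$ after orienting.

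The first step handles regular values. Fix a regular value $b \in B \setminus B^{\foc}$ in the interior of the image. The fibration is proper near $b$ because $X$ is compact, or in general because the map is proper. The fiber is a compact connected Lagrangian, hence a torus. Choose a basis $\gamma_1,\gamma_2$ of $H_1(\Phinv(b'),\Z)$, locally constant in $b'$, and define action coordinates
\[ a_i(b') = \int_{\Gamma_i(b')} \alpha, \]
where $\alpha$ is a local primitive of $\omega$ near the fiber, which exists since a neighborhood retracts onto the Lagrangian fiber. Equivalently, $a_i(b') - a_i(b)$ is the symplectic area of a cylinder swept out by $\gamma_i$. The Arnold--Liouville theorem shows that $(a_1,a_2)$ are local coordinates on the base whose Hamiltonian flows are $2\pi$-periodic and generate a $T^2$-action. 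Changing the basis acts on these coordinates by $GL(2,\Z)$, and changing the primitive $\alpha$ or the reference point shifts them by a translation. So the transition maps are integral affine.

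The second step handles the singular strata of the local model in Definition \ref{def:at}. Near elliptic points, the local normal form $\hh(q^2+p^2)$ is already an action variable, since its flow has period $2\pi$. The charts therefore extend across the boundary strata as manifold-with-corners charts, which are compatible with the interior charts by the argument above. Near a focus-focus value $b$, a punctured neighborhood $U \setminus \{b\}$ consists of regular values, so it carries the affine structure from the first step. We only need that the structure is defined on the complement of the finite set $B^{\foc}$, which has codimension two. For completeness I would note the standard computation, as in Symington or Evans's lectures, that the monodromy around $b$ is a shear as in \eqref{eq:shear}. One cycle, the vanishing cycle, is globally defined near $b$, and the other picks up a multiple of it, by a Picard--Lefschetz type argument in the local model. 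Finiteness of $B^{\foc}$ follows because focus-focus points are isolated critical points, and compactness then gives finitely many of them.

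The main obstacle is verifying that the action coordinates give a topological manifold structure on all of $B$, including continuity at focus-focus values, and that the transition maps are genuinely integral affine rather than merely smooth. For the first point I would use that $\Phi$ is a quotient map onto $B$ and that $\Phi$ is continuous by assumption. For the second, integrality comes from periods of $\omega$ over integral cycles, as computed above.
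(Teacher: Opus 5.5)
Your proposal is correct and takes the same route as the paper, which gives no argument beyond calling the statement a well-known consequence of the Arnold--Liouville theorem (citing Evans's lectures, Chapter 2): action coordinates on regular values, the elliptic normal form at the boundary, and discarding the finite, codimension-two set $B^{\foc}$ are exactly what that citation amounts to. You implicitly use that fibers are connected, which Definition \ref{def:at} does not state, and that $B$ is orientable, which is what lets the linear parts lie in $SL(2,\Z)$; both hold in the paper's setting, where bases are polygons in $\R^2$ glued by shears.
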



The inverse image of the boundary of the moment polytope is an immersed symplectic submanifold of codimension two.  We call this inverse image 
\[ D = \Phinv(\partial \Phi(X)) \] 
of the boundary
$\partial \Phi(X)$ the {\em divisor at
  infinity}, borrowing terminology from algebraic geometry.  The following is immediate from the list of allowable singularities in Definition \ref{def:at}:

  \begin{lemma}   The divisor at infinity $D \subset X$ is an immersed
    symplectic submanifold, whose 
self-intersections are the points $x$ in $X$ where the moment map $\Phi$
has
rank $0$, but $x$ is not a focus-focus point.
\end{lemma}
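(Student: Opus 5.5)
The plan is to work locally in the charts of Definition \ref{def:at}, since the claim is purely local. Near any $x \in X$ there is a Darboux chart on which $\Phi$ is a product of non-singular, elliptic, and focus-focus factors.

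Because $X$ is four-dimensional, the possible local models at a point $x$ are:
\begin{enumerate}
\item two non-singular factors, so $\Phi$ has rank $2$;
\item one elliptic and one non-singular factor, so $\Phi$ has rank $1$;
\item two elliptic factors, so $\Phi$ has rank $0$;
\item one focus-focus factor, which uses all four coordinates, so $\Phi$ has rank $0$.
\end{enumerate}

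The first step is to identify $D = \Phinv(\partial \Phi(X))$ locally in each case.

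\begin{enumerate}
\item In the non-singular model $\Phi$ is a submersion. Its image is open in $\t^\dual$, so $x \notin D$.
\item In the elliptic $\times$ non-singular model, $\Phi = (\tfrac12(q_1^2+p_1^2), p_2)$. The image is locally a half-plane, and $D$ is the set $\{q_1=p_1=0\}$. This is the symplectic $2$-plane spanned by $q_2, p_2$, i.e. a smooth symplectic surface.
\item In the elliptic $\times$ elliptic model the image is locally a quadrant. The preimage of its boundary is the union $\{q_1=p_1=0\}\cup\{q_2=p_2=0\}$ of two symplectic planes meeting transversally, and positively, at $x$. This is an immersed symplectic submanifold with a double point at $x$.
\item In the focus-focus model the image of $\Phi$ contains a full neighbourhood of $\Phi(x) \in B^\foc$. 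Indeed, the map $(q_1,p_1,q_2,p_2)\mapsto (q_1p_1+q_2p_2,\, q_1p_2-q_2p_1)$ is the real and imaginary parts of $z_1 z_2$ with $z_1=q_1+iq_2$, $z_2=p_1+ip_2$ (up to a sign convention), hence surjective onto $\C$. So focus-focus points are interior and do not meet $D$.
\end{enumerate}

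The second step is to globalize. Since $X$ is compact, $D$ is covered by finitely many such charts. In each chart $D$ is either an embedded symplectic surface or a transverse union of two such. Therefore $D$ is the image of an immersion of a smooth surface $\tilde D$, obtained by separating the two branches at each double point, and the immersion is symplectic. Its self-intersection points are exactly the points of type (3), which are precisely the rank $0$ points that are not focus-focus points.

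The only point needing care, and the one I expect to be the main obstacle, is ensuring that the local models are compatible with the choice of base representation near branch cuts. This should be harmless: the definition of $D$ uses only $\partial \Phi(X)$, which is intrinsic to the singular affine manifold $B$. The claim that focus-focus values are interior follows from the surjectivity computation in step (4) regardless of which chart of $B$ is used.
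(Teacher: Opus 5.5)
Your proposal is correct and follows the paper's route: the paper simply declares the lemma immediate from the list of local models in Definition~\ref{def:at}. Your case analysis is exactly the verification left implicit there: regular and focus-focus points map to interior values, elliptic$\times$regular points give a single symplectic sheet $\{q_1=p_1=0\}$, and elliptic$\times$elliptic points give two transverse symplectic sheets. The one tacit input, shared with the paper, is that near an elliptic value the local image $\Phi(U)$ is the whole germ of $\Phi(X)$ (for instance because fibers are connected), and this is what lets you identify $D\cap U$ with the preimage of the local boundary.
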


That is, the self-intersection points are the elliptic singularities
of rank two, which would be the torus fixed points in the case of a
toric surface.  The irreducible components of the divisor at infinity 
\[ D_1,\ldots, D_k \subset D \subset X \] 
are called the {\em boundary divisors}, of in algebraic geometry language, the
prime components of the divisor at infinity $D$.  For example, if $X$ is a toric manifold then the $D_1,\ldots, D_k$ are the prime 
invariant divisors in $X$, which are the inverse images of the facets 
\[ Q_1,\ldots, Q_k \subset \Phi(X) \] 
under the moment map $\Phi$.    The vertices that are the images of elliptic singularities may be characterized alternatively as follows:

\begin{proposition}  A vertex $b$ of $\Phi(X)$ is the image of an elliptic singularity 
if $b$ is not contained in any branch cut, or if $b$ is contained in a branch cut $C_i$, the adjacent edges $e_-,e_+$ to $b$ are not parallel after applying
  the shear $s_i$ to $e_+.$
  \end{proposition}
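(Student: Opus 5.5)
The plan is to work entirely with the local normal forms of Definition \ref{def:at}. The idea is that whether $b$ is the image of an elliptic singularity is a local question about the shape of $\Phi(X)$ near $b$, read in a genuine affine chart of $B$ around $b$. The point $b$ is a vertex of $\Phi(X)$, so it lies on $\partial \Phi(X)$. Focus-focus values are interior points of the base, so $b \notin B^{\foc}$, and $b$ therefore has a neighborhood $U \subset B$ on which the integral affine structure is nonsingular.

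\emph{Step 1: the local models and their images.} In a neighborhood of a point $x \in \Phinv(b)$, Definition \ref{def:at} writes $\Phi$ as a product of two one-dimensional factors. A focus-focus factor is excluded, since its image is a full neighborhood of an interior point. This leaves three cases:
\begin{enumerate}
\item non-singular $\times$ non-singular, whose image is an open set, so $\Phi(x)$ is an interior point;
\item non-singular $\times$ elliptic, whose image is a half-plane $\{p_1 \in \R,\ \hh(q_2^2+p_2^2) \geq 0\}$, so $\Phi(x)$ is a boundary point at which $\partial\Phi(X)$ is a straight segment in the affine chart;
\item elliptic $\times$ elliptic, whose image is a quadrant with two non-parallel primitive edges meeting at $\Phi(x)$, so $\Phi(x)$ is a genuine corner.
\end{enumerate}
Thus a boundary value $b$ is the image of an elliptic singularity of rank zero if and only if, in an honest affine chart near $b$, the two boundary edges at $b$ are not parallel. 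Equivalently, $b$ is a corner rather than an interior point of a straight boundary edge. Moreover, properness of $\Phi$ together with this classification shows that the fiber over such a corner is a single point.

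\emph{Step 2: identifying an honest chart at $b$.} If $b$ lies on no branch cut, then the representation of $B$ as the polygon $\Delta \subset \R^2$ is itself an affine chart near $b$. The edges $e_-, e_+$ of $\Delta$ at $b$ are non-parallel, since $b$ is a vertex of $\Delta$, and Step 1 gives case (3). If $b = \zeta$ is the endpoint on $\partial\Delta$ of a cut $C_i$, then the polygonal picture is not an affine chart across $C_i$: the affine structure is obtained by regluing along $C_i$ by the shear $s_i$. To produce an honest chart on a neighborhood of $b$, I would apply $s_i$ to the part of the neighborhood on the $e_+$ side of the cut. This is the local version of the ``transferring the cut'' operation of Definition \ref{def:mutate}, and it is legitimate near $b$ because $b \notin B^{\foc}$. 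In the new chart the two boundary edges at $b$ are $e_-$ and $s_i(e_+)$. If these are non-parallel, $b$ is a corner and case (3) holds, so $b$ is the image of an elliptic singularity. If they are parallel, $b$ is an interior point of a straight boundary edge, case (2) holds, and $\Phinv(b)$ is a circle of rank-one points.

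\emph{Main obstacle.} The main difficulty is the bookkeeping in Step 2. One has to check that the glued chart near $\zeta$ really is an integral affine chart compatible with the atlas, that is, that the shear monodromy of $C_i$ is the only correction needed near $\zeta$. One also has to check that the image of $\Phi$ in this chart is exactly the union of $e_-$ and $s_i(e_+)$ near $b$. For this I would use that, along each side of the cut, $\Phi$ is an honest toric moment map on $\Phinv(B \setminus \cup_i C_i)$, and that the two sides are identified through $s_i$ by the definition of the base diagram. The corner-versus-edge dichotomy then follows directly from the normal forms in Step 1.
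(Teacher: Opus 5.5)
Your proposal is correct and takes the same approach as the paper. The paper's one-line proof likewise reglues the affine structure across $C_i$ by the shear and observes that $b$ is the image of an elliptic singularity exactly when, in that honest chart, $b$ lies on two facets (a genuine corner). Your Step 1 spells out, via the local normal forms, the corner-versus-straight-edge dichotomy that the paper takes for granted; that step implicitly uses properness and connectedness of the fibers to pass from the local model at a point of $\Phinv(b)$ to the shape of $\Phi(X)$ near $b$.
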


  \begin{proof} After gluing the affine structures on either side of the branch cut, if it exists, the condition for $b$ to be the image of an elliptic singularity is exactly that $b$ is contained in two facets of the moment image in a neighborhood of $b$.
  \end{proof}

The following result will be used to facilitate the computation of Maslov indices of holomorphic disks by counts of intersection numbers with the divisor at infinity.  By a result of Symington \cite{sym}, see also Li-Ming-Ning \cite{li2023toric}:

\begin{definition} 
A {\em symplectic pair} $(X,D)$ consisting of a symplectic manifold $X$ and a finite union of codimension two symplectic submanifolds $D$.  The pair $(X,D)$ is {\em log Calabi-Yau} if $D$ represents the first Chern class of $X$ in the sense that 
\[ c_1(X) = [D]^\dual \in H^2(X). \] 
\end{definition} 

By a result of Symington \cite[Proposition 8.2]{sym}, see also Li-Ming-Ning \cite{li2023toric}, the pair consisting of an almost toric manifold and its divisor at infinity is log Calabi-Yau. 
The same identity holds in the relative cohomology $H^2(X,L)$ for any Lagrangian fiber fiber $L$ over an interior point.  

\begin{example}  Let $X$ be almost toric four-manifold with moment image $\Delta = \Phi(X)$.  
The {\em boundary divisor} $D = \Phinv(\partial \Phi(X))$ can be made into an embedded symplectic two-torus $D \cong T^2$ by performing nodal trades at all torus fixed points as explained below in Section \ref{fig:modsec}.  In particular, the first Chern class is represented by an embedded symplectic torus. 
\end{example}

\begin{definition}
An almost toric manifold $X$ is called {\em monotone} if the
symplectic class $[\omega ] \in H^2(X)$ is a positive multiple of the first Chern class
$c_1(X) \in H^2(X)$:
\[ \exists \tau > 0, \quad [\omega] = \tau c_1(X) . \]
\end{definition}

\begin{example} By a {\em del Pezzo surface} we mean a non-singular projective algebraic surface with ample anti-canonical divisor class,
whose symplectic form is taken to lie in the anti-canonical class. 
Uniqueness of these symplectic structures up to symplectomorphic follows from McDuff \cite{mcduff:isotopy}.  According to Ohta-Ono \cite{ohtaono}, any monotone symplectic four-manifold is of this form.  Vianna \cite{vianna:dp} has given examples of almost toric structures on del Pezzo surface, shown in Figure \ref{fig:dpall}, and shown that these contain monotone torus fibers:  The diagrams constructed there are obtained by a combination of monotone blow-ups and nodal trades and slides, starting from the standard toric diagram for the complex projective plane.  These operations preserve monotonicity.
\end{example}

\begin{figure}[ht]\begin{center} 
\scalebox{.4}{\includegraphics{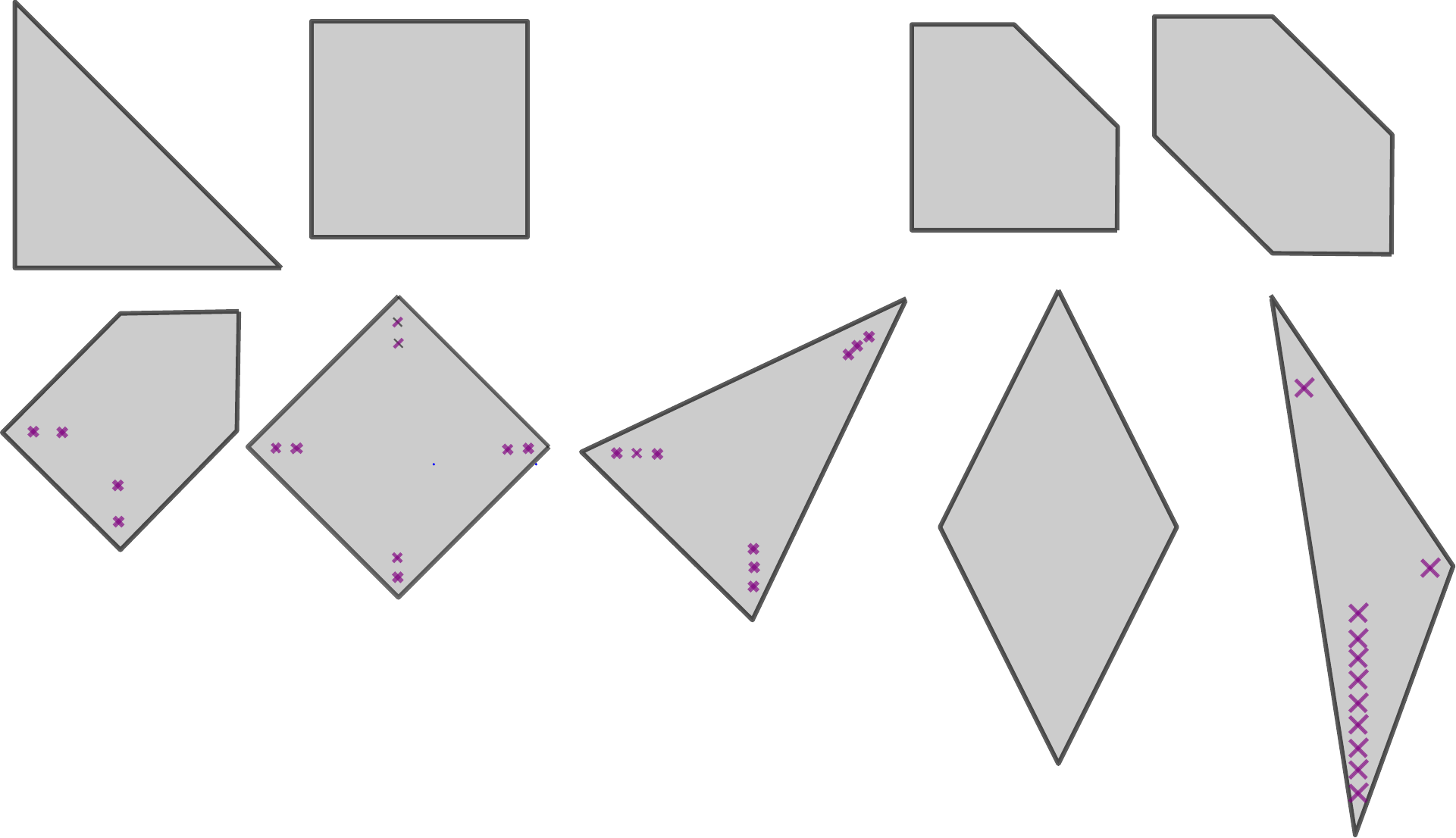}}
\end{center} 
\caption{Almost toric diagrams for the del Pezzo surfaces} 
\label{fig:dpall}
\end{figure}

We are particularly interested in almost toric diagrams for del Pezzo surfaces
where the focus-focus values are close to the vertices.



\begin{definition} \label{fig:viannatype}
  A base diagram $(B,\Delta,\ul{C}, B^{\foc})$ of a monotone almost toric manifold $X$ is {\em of Vianna type} if the cut loci 
  $C_i$ in $\ul{C}$ do not intersect, and the image of the monotone fiber $\lambda = \Phi(L)$ does not lie on any branch cut.
\end{definition}

Later we wil need the following relationship between 
the volume of the the moment polytope 
and the degree of the del Pezzo surface:

\begin{lemma} \label{lem:degree} Let $X$ be a compact connected almost toric four-manifold.  The moment polytope $\Delta$ determines, by its volume, the square of the first Chern class via the relation
\[ \int_X c_1(X)^2  = \int_X 2 \exp(\omega) =  2 \Vol(\Delta) \in \Z .\] 
\end{lemma}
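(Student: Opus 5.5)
The plan is to compare both sides with the symplectic volume $\int_X \omega^2/2$. Since the formula is only meaningful when the symplectic class is tied to $c_1(X)$, I will read the statement with the monotone normalization $[\omega] = c_1(X)$, that is, $\tau = 1$ in the definition of monotonicity; this is the situation of the del Pezzo surfaces for which the lemma is used. Under this normalization $\int_X c_1(X)^2 = \int_X \omega^2 = 2\int_X \omega^2/2$. Only the top-degree part of $\exp(\omega)$ contributes to the integral, so $\int_X \exp(\omega) = \int_X \omega^2/2$, and the middle expression is the same quantity. Integrality is automatic, because $c_1(X)^2$ is the self-pairing of an integral class on a closed oriented four-manifold.

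The main step is to show $\int_X \omega^2/2 = \Vol(\Delta)$, where $\Vol$ is the Lebesgue measure in the integral affine structure. I will proceed as follows.
\begin{enumerate}
\item The singular fibers have measure zero in $X$. Their images are the focus-focus values, the points $B^{\foc}$, together with $\partial \Phi(X)$, the image of the elliptic strata. Hence it suffices to integrate over $\Phinv(B^{\circ})$, where $B^{\circ}$ is the regular interior.
\item Over a simply connected open set $U \subset B^{\circ}$, Arnold--Liouville action-angle coordinates (Proposition \ref{prop:affine}) give a symplectomorphism $\Phinv(U) \cong U \times T^2$ with $\omega = \sum dp_i \wedge dq_i$. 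Here $p$ is an integral affine chart on $U$ and $q \in \R^2/\Z^2$ are the angle coordinates; with the convention that the angles have period $1$, the torus $T^2$ has volume one.
\item By Fubini, $\int_{\Phinv(U)} \omega^2/2 = \int_U dp_1\, dp_2$. So the pushforward of the Liouville measure is the affine Lebesgue measure on $B^{\circ}$. (If the angles are taken of period $2\pi$, a constant $(2\pi)^2$ appears, and I will fix conventions so that it is absorbed into the normalization of $\Vol$.)
\end{enumerate}

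Next I will check that this measure is compatible with the base diagram. Transition maps of the integral affine structure lie in $\R^2 \rtimes SL_2(\Z)$, and the shears $s_i$ in \eqref{eq:shear} have determinant one. Therefore the affine Lebesgue measure on $B$ agrees with the Euclidean area of the planar polygon $\Delta$ obtained by cutting $B$ along the branch cuts $C_i$. Since the cuts have measure zero, summing over a cover of $B^{\circ}$ by charts gives $\int_X \omega^2/2 = \Vol(\Delta)$. Combining this with the first paragraph gives $\int_X c_1(X)^2 = 2\Vol(\Delta)$.

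The only delicate point is the measure-zero and local-model analysis near the focus-focus and elliptic fibers, and I expect it to be routine. The normal forms in Definition \ref{def:at} show that these loci are unions of finitely many immersed submanifolds of positive codimension in $X$, so they carry no Liouville measure. As an optional consistency check one could compute $c_1^2 = [D]\cdot[D]$ from the log Calabi--Yau property via self-intersections of the boundary divisors. That computation is not needed for the proof.
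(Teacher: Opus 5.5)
Your proposal is correct and follows the same route as the paper. The paper's proof is a one-line appeal to the fact that the Duistermaat--Heckman measure equals affine Lebesgue measure via the local action-angle model, citing Guillemin for the toric case and saying the almost toric case is similar; your steps (measure-zero singular fibers, Fubini in action-angle charts, $SL_2(\Z)$ shears preserving area) spell that argument out. Your explicit normalization $[\omega]=c_1(X)$ supplies a hypothesis that the paper's first equality needs but leaves implicit.
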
 

\begin{proof} Indeed, the  Duistermaat-Heckman measure is Lebesgue measure for toric varieties, by the local model for Lagrangian torus fibers.  See for example Guillemin \cite[Theorem 2.10]{gui:mom} for the toric case; the almost toric case has a similar proof. 
\end{proof}

\subsection{Modifications of almost toric structures}
\label{fig:modsec}

The following are operations on almost toric manifolds of dimension four, as shown 
in Figure \ref{fig:ops}.  Let $X$ be an almost toric symplectic four manifold with symplectic form $\omega$.

\begin{figure}[ht]\begin{center} 
\scalebox{.8}{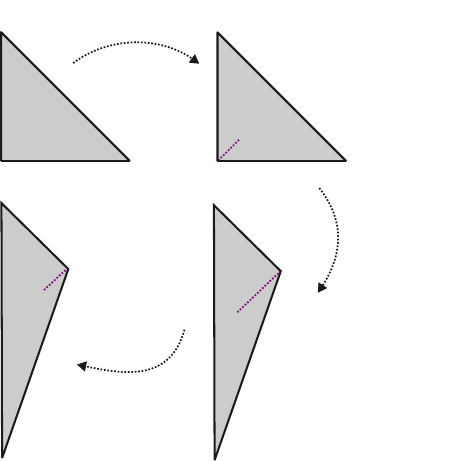}
\end{center} 
\caption{Operations on almost toric diagrams} 
\label{fig:ops}
\end{figure}

\begin{definition} \label{def:mutate}  Suppose an almost toric structure on 
a compact symplectic four-manifold $X$ with base diagram $\Delta$ is given. 
  \begin{enumerate}
  \item {\em Transferring the cut} is an operation which produces a
    new almost toric structure  on $X$ with the same number $| B^{\foc}|$ of focus-focus values as follows. Suppose the base diagram $\Delta'$ has cuts $(C_1,\ldots, c_k)$.  The new cuts are 
    \[ (C_1',\ldots, C_k') = (C_1,\ldots, C_{i-1}, C_i', s_i C_{i+1}, \ldots, s_i
      C_k) \]
    where $C_i'$ is obtained as follows.  Given a branch cut
    starting at $b$ in the direction of $v$,
    \[ C_i = \{b + t v, t \in \R_{\ge 0} \} \]
define a new cut in the opposite direction
    \[ C_i' = \{ b - tv, t \in \R_{\ge 0]} \} .\]
     The cuts $C_i,C_i'$ divide $\Delta$ into
    regions $\Delta_-,\Delta_+$, that is, 
    \[ \Delta - (\ol{C}_i \cup \ol{C}_i') = \Delta_- \sqcup \Delta_+. \]
    The base $\Delta'$ of the new almost toric structure is the union 
\[ \Delta' = \Delta_- \cup s_i(\Delta_+) \] 
of ``half'' of the base  $\Delta_-$  with the sheared ``other half''
$s_i(\Delta_+)$ where    $s_i \in GL(2,\R)$ is a shear along $C_i$.  The new base $\Delta'$ has a new vertex at the
    point of intersection $C_i' \cap \partial \Delta$. In the example in Figure \ref{fig:transf} the shear $s_2$ is
\[ s_2 = \begin{pmatrix}
      0&1\\-1&2
    \end{pmatrix}. \] 
  \item A {\em nodal trade} modifies the base diagram $\Delta$ so that the number of focus-focus values $B^{\foc}$ increases by one, as follows.  
  Suppose the base diagram  $\Delta$ has 
    an elliptic critical point $x \in X$ over a vertex $b \in \Delta$
    adjacent to facets $Q_1,Q_2 \subset \Delta$ whose normal vectors
    are $\nu_1,\nu_2$.  A nodal trade adds a  focus-focus singularity $x' \in X^{\foc}$ whose
    image is at a point $b' \in B^{\foc} $ on the ray in the direction
    $\nu_1 + \nu_2$ from the vertex $b$, and a new cut $C_{k+1}$ from $b'$ to
    $b$.
  \item A {\em nodal slide} modifies the base diagram $\Delta$ by changing
    the position of a focus-focus value $b \in B^{\foc}$  in the direction of the adjacent cut
    $C_i \subset \Delta$, and adjusting the length of the cut $C_i$ if necessary so that it ends at the focus-focus value $b$. 
  \item \label{part:mutate}
    A {\em mutation} produces a new base diagram $\Delta'$ which is a
    combination of transferring the cut and a nodal slide in which one
    of the focus-focus values $b_i \in B^{\foc}$ is moved in the direction of the branch cut until it is close to the other intersection point of the line containing the branch cut with the
    moment polytope $\Phi(X)$, and performing an accompanying shear %
    \[ \Delta' = \Delta_- \cup s_i \Delta_+ \] 
    of
    the moment polytope.
  \end{enumerate}
\end{definition}

\begin{figure}[ht]\begin{center}
      \scalebox{.8}{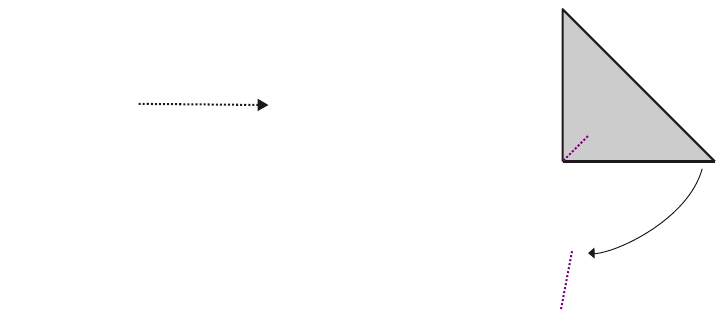}
  \end{center}
  \caption{Transferring a cut}
  \label{fig:transf}
\end{figure}

The Hamiltonian isotopy class of a Lagrangian fiber of an almost toric structure is invariant under modifications of almost toric
  structures  except for a nodal slide that crosses a focus-focus fiber: Let
  \[ \Phi_t: X \to \R^2, \quad t \in [0,1] \]
  be a smooth family of almost toric structures on a simply-connected
  four-manifold $X$, $\mu \in \R^2$, and
  \[ L_t = \Phinv(\mu_t) \]
  a regular monotone Lagrangian fiber of  $\Phi_t$ for each $t \in [0,1]$. 
  
  \begin{lemma}   \label{lem:indep} Suppose that $X,\Phi_t,L_t$ is a family 
  of smooth almost toric fibers with fixed moment image, so that $\Phi_t(X)$
  and $\Phi_t(L_t)$ are independent of $t \in [0,1]$ as above. The Hamiltonian isotopy class of $L_t$ is independent of $t$.
  \end{lemma}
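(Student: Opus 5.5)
The plan is to produce a Hamiltonian isotopy by Moser's method. Set $\mu := \Phi_t(L_t)$, which is independent of $t$ by hypothesis, and set $L_t = \Phi_t^{-1}(\mu)$.

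First, we show the fibers $L_t$ sweep out a smooth isotopy of Lagrangian embeddings. Since $\mu$ is a regular value of $\Phi_t$ for every $t \in [0,1]$ and the family $\Phi_t$ is smooth, the total map
\[ \tilde\Phi : X \times [0,1] \to \R^2 \times [0,1], \quad (x,t) \mapsto (\Phi_t(x),t) \]
has $(\mu,t)$ as a regular value. By compactness of $X$, its preimage is a compact submanifold fibering over $[0,1]$. Ehresmann's theorem then gives a smooth family of embeddings
\[ \iota_t : T^2 \to X, \quad \iota_t(T^2) = L_t . \]
Each $\iota_t$ is Lagrangian, since $L_t$ is a regular fiber of an integrable system.

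Second, we upgrade this Lagrangian isotopy to a Hamiltonian one. The standard criterion, via Weinstein's neighborhood theorem, is that a Lagrangian isotopy is Hamiltonian if and only if its flux class
\[ \alpha_t = \iota_t^* \big( \omega(\partial_t \iota_t, \cdot) \big) \in H^1(T^2;\R) \]
vanishes for all $t$. Equivalently, the symplectic area of every cylinder swept out by a loop $\gamma \subset L_0$ under the isotopy must be zero.

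Third, we check that this flux vanishes, and this is the key step. We compute the areas using action coordinates. For a loop $\gamma$ in $L_t$, the symplectic area of a cylinder between $\gamma \subset L_{t}$ and its transport to $L_{t'}$ is measured by the difference of action integrals $\int \lambda$ over the corresponding cycles. The affine coordinates on $B$ are exactly these action integrals: by Arnold--Liouville, the affine coordinate paired with a class $\gamma \in H_1(L)$ equals $\int_\gamma \lambda$ up to a constant. Because the moment image $\Phi_t(X)$ with its affine structure and the point $\mu = \Phi_t(L_t)$ are fixed, these action values at $\mu$ are independent of $t$. Hence the flux vanishes.

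To make this rigorous, we use the monotonicity hypothesis to pin down the normalizing constants. The monotone fiber is characterized by its disk areas being proportional to Maslov index, via $[\omega] = \tau c_1$ and the log Calabi--Yau property. For each $t$, the areas of the disks of Maslov index $\mu(\beta)$ in $H_2(X,L_t)$ equal $\tau \mu(\beta)/2$. A class $\beta_t$ transported along the isotopy has constant Maslov index, so its area is constant. Since $X$ is simply connected, every class in $H_1(L_t)$ bounds a disk. The swept cylinder's area is the difference of the areas of the transported disks, so it is $0$, and the flux vanishes.

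Finally, Banyaga's theorem, or extension of the Weinstein-neighborhood Hamiltonian, gives a global Hamiltonian isotopy. The main obstacle is justifying flux-vanishing carefully. If simple connectivity and monotonicity were unavailable, one would instead argue directly with action coordinates, which requires controlling how the affine charts move with $t$ across the cuts.
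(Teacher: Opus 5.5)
Your proposal is correct, and once you drop the heuristic action-coordinate paragraph it is the paper's proof: simple connectivity lets every loop on $L_t$ bound a disk, the continuously transported disk has constant Maslov index, monotonicity then makes its area constant, so the flux vanishes and the isotopy is exact, hence Hamiltonian. Your Ehresmann step only makes explicit the smoothness of the isotopy, which the paper takes for granted.
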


  \begin{proof}  It suffices to check that the isotopy is exact in the sense of,
  for example,   Weinstein \cite{we:remove}.    By assumption $X$ is simply-connected, each loop $\gamma_t: S^1 \to L_t$ bounds a disk $u_t$.   We may assume that $u_t$ is smoothly varying in $t \in [0,1]$ since the isotopy is smooth.     By continuity, $u_t$  has   constant Maslov index $I(u_t)$ and so constant area  $A(u_t)$ by monotonicity.    The deformation $L_t$ defines a family of closed one-forms $\alpha_{t,s} \in \Omega^1(L_t)$,  so that $L_{t + s}$ is the graph of $\alpha_{t,s}$ in some Weinstein neighborhood of $L_t$   and the deformation one-forms $\alpha_t := \dds \alpha_{t,s} |_{s = 0}$ are independent of the choice of neighborhood.  By Stokes' theorem  
  \[ \ddt \int_\gamma \alpha_{t,s} = \ddt \int_D u_t^* \omega .\]
  Since the areas are constant, the periods of $\alpha_t$ vanish.  Hence the one-forms $\alpha_t$ are exact, which implies the deformation $L_t$   is exact. 
  \end{proof}

On the other hand, the fibers of a family of focus-focus values which cross the monotone value gives new isotopy classes of monotone Lagrangian tori.  Using this operation,  Vianna \cite{vianna:dp} shows that every monotone del Pezzo surface has the structure of an almost toric manifold (even with a triangular base), shown in Figure \ref{fig:dpall}.   An example of an almost toric diagram for the blow-up of the projective plane at five points is shown in Figure \ref{fig:b5p2_at}. The diagrams with triangular base corresponds to solutions to Markov-type equations, as explained in Vianna \cite{vianna:dp}.  Diagrams for $\Bl^k \P^2, k = 5,7,8$ are shown in Figure \ref{fig:triang}. 

\begin{figure}[ht]\begin{center} 
\scalebox{.8}{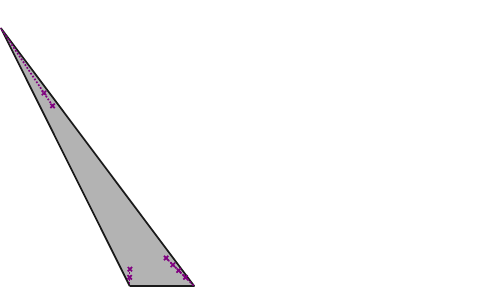}
\end{center} 
\caption{Triangular almost toric diagrams} 
\label{fig:triang}
\end{figure}

\section{Tropical limit theorems}
\label{sec:troplim}

We describe the degeneration techniques from Venugopalan-Woodward \cite{vw:trop} which allow the computation of disk potentials for Lagrangians submanifolds.   In \cite{vw:trop} these tropical limit theorems were stated for treed disks, while here we are concerned only with disks passing through a generic point on the Lagrangian.

\subsection{Disk potentials}

The disk potential of a monotone Lagrangian brane is a count of Maslov-index-two holomorphic disks with the Lagrangian as boundary condition.   Let $(X,\omega)$ be a compact symplectic manifold.   By a {\em Lagrangian brane} we mean a compact  relatively-spin graded Lagrangian $L \subset X$.   For such a brane (at least, under rationality constraints discussed in \cite{cw:traj}) the potential in a Morse model is defined as follows. Let
\[ f: L \to \R \]
be a Morse function.   Let 
\[ \Lambda_\Q  = \Q((q^\R)), \Lambda = \Lambda_\Q \otimes_\Q \C  \] 
denote the Novikov fields in a  formal variable $q$, and $\Lambda_{ \ge 0} \subset \Lambda$ the Novikov ring  
generated by elements with non-negative $q$-valuation.   The group of {\em Floer cochains} 
\[ CF(L) = \bigoplus_{x \in \cI(L)} \Lambda_{\ge 0} x  \]
is the free group generated by the set of critical points
\[ \cI(L) := \crit(f) .\]
By the construction in Charest-Woodward \cite{flips}, 
the group $CF(L)$ has an \ainfty structure with composition maps 
\[ m_d: CF(L)^{\otimes d} \to CF(L)[2-d], d \ge 0 \] 
defined
given by weighted counts of treed holomorphic disks $u: C \to X$ with boundary in $L$.   Here ``treed'' refers to the definition of $C$ as the union of a surface part $S$ and a tree part $T$.  The map $u$ is required to be pseudoholomorphic on the surface part
 on $S$ (with respect to some domain-dependent almost complex structure, after perturbation) and 
a gradient flow on the tree part $T$ (with respect to some domain-dependent function, after perturbation).    The details will not concern us very much, 
since we will be interested in a simplified version in which we count holomorphic disks with boundary in $L$ with point constraints.   Strict units
\[ 1_L \in CF(L,L) \]
are provided by a homotopy unit construction. 
The resulting Fukaya category $\Fuk(X)$ may be curved, meaning that the zero-th composition map $m_0$ may be non-zero.   
Let  $CF^{\on{odd}}(L)_+ \subset CF(L)$ denote the space generated by  
elements with positive $q$-valuation.  Let $MC(X,L)$ denote the space
of elements $b \in CF^{\on{odd}}(L)$ of solutions to the {\em projective Maurer-Cartan equation}
\[ m_0(1) + m_1(b) + m_2(b,b) + \ldots \in \on{span}(1_L) .\]
The coefficient in front of the identity gives rise to the {\em disk potential}
\begin{equation}
  \label{eq:diskpot-mc}
  W_{X,L}:  MC(X,L) \to \Lambda_{\ge 0}   
\end{equation}
counting holomorphic disks with boundary in $L$.  The pair $(MC(X,L),W_{X,L})$ is independent of all choices up to gauge transformation.     In the monotone case, there is a simplified definition of the disk potential which avoids the machinery of \ainfty algebras.  Namely, we restrict to the case $b = 0$
and obtain a function also denoted 
\[ W_{X,L}: \Rep(L) \to \C \] 
 on the space
of local systems 
\[ \Rep(L) := \Hom(\pi_1(L), \C^\times)  \]
which counts disks passing through a generic point. 
The arguments in Oh \cite{oh:lag} show that the potential is independent of the choice of almost complex structure, and depends only on the Hamiltonian isotopy class of $L$.    In this paper we will focus on the monotone case.  However, many of the techniques apply also to the computation of potentials for non-monotone Lagrangians; for example, the Lagrangians near the anti-canonical divisor considered in \cite{grz:proper}.

The regularization scheme described in \cite{oh:lag} to define the potential uses a generic domain-independent almost complex structure.    In order to make contact
with the theory of broken maps in \cite{vw:trop} we will use domain-dependent almost complex structures as in Cieliebak-Mohnke \cite{cm:trans}.   These moduli spaces
have additional interior markings corresponding to the intersection of the maps 
with a Donaldson hypersurface $D \subset X$ of large degree, disjoint from the Lagrangian, and so that the Lagrangian $L$ is exact in the complement of $D$.
We denote by  the moduli space of maps satisfying the given constraints, and $\M_d(X,L)$ the union over all types with $d$ semi-infinite boundary edges.

To each holomorphic disk, we associate a map type as follows.  The
vertices $\Ver(\bGam)$ of the type $\bGam$ correspond to components of
the domain, while the edges $\Edge(\bGam)$ correspond to either nodes
or markings.  Edges $e \in \Edge_{\to}(\bGam)$ corresponding to
markings are incident on one vertex each, and those
$e \in \Edge_{-}(\bGam)$ corresponding to nodes are incident on two
vertices.  The set of edges by definition have a partition
\[\Edge(\bGam)=\Edge_\white(\bGam) \cup \Edge_\black(\bGam)\]
into an open and closed subsets.   The closed leaves correspond to intersections with the Donaldson divisor.  The vertices $v \in \Ver(\bGam)$ are decorated with the homotopy classes $[u_v]$ of the corresponding map $u_v$.  For a boundary marking corresponding to
$e \in \Edge_{\white,\to}(\bGam)$ we have an evaluation map
\[ \ev_e:  \M(X,L) \to L \]
evaluating the map at the corresponding boundary marking $z_e \in C$.
The underlying {\em domain type} $\Gamma$ is obtained by forgetting
the labelling of the vertices by homotopy classes.

\begin{definition} \label{def:dp}
  {\rm(Disk potential for monotone Lagrangians)} Given a Lagrangian
  brane $L$, the disk potential of $L$ is the count of Maslov-index-two disks with a
  single point constraint on the boundary $\ul{Y} = (p \in L)$ defined by the formula 
  \begin{equation}
    \label{eq:wxl}
  W_{X,L} = \sum_{u \in \M_{\bGam}(X,L,\ul{Y})_0} \frac{
    \Hol_L([\partial u]) \eps(u) }{d_\black(\bGam)!},  
  \end{equation}
  where 
  \begin{itemize}
  \item $\bGam$ ranges over types of maps with no boundary input
  markings and a single output marking that is constrained to map to a
  point $p \in L$, that is, $\ul Y=(p)$;
  \item $\Hol_L([\partial u]) \in \Lam^\times$ is the evaluation of
    the local system $\Hol_L \in \on{Rep}(L)$ on the homotopy class of
    loops $[\partial u] \in \pi_1(L)$ defined by going around the
    boundary of each disk component in the treed disk once,
  \item $d_\black(\bGam) \in \Z_{\ge 0 }$ is the number of interior
    markings,
  \item and $\eps(u) \in \{\pm 1\}$ is the orientation sign of $u$
    defined using the relative spin structure. For almost toric moment
    fibers we assume that the relative spin structure is the standard
    torus-invariant spin structure.
  \end{itemize}
\end{definition}

\begin{remark}
 For a monotone pair $(X,L)$, the disk potential is independent of choices, by an argument similar to that in Theorem \ref{thm:coates} below.  In the general case, the disk potential is defined as a function on the Maurer-Cartan moduli space as in \eqref{eq:diskpot-mc}. 
 \end{remark}
 
\subsection{Broken Maps}\label{subsec:bmap}

Broken maps arise from multi-directional neck stretching.  We review the theory from Venugopalan-Woodward \cite{vw:trop}, which is a version of Brett Parker's work for closed maps. 

\begin{notation} 
Let $X$ be a compact symplectic four-manifold with an almost toric structure with codomain $B$, containing the image of the  almost toric moment map $\Delta = \Phi(X)$, and focus-focus image $B^{\foc} \subset B$. The locus $B - B^{\foc}$ is equipped with an affine structure modelled on $\t^\dual$,
where $T \simeq (S^1)^2$.   A subset $P \subset B$ is a {\em polyhedron} if in a coordinate chart $U \subset B - B^{\foc}$ homeomorphic to an open subset of $\R^n$, the intersection $P \cap U$ is the inverse image of a finite intersection of half-spaces $H_1,\dots, H_k$ in $\R^n$.
The polyhedron $P$ is {\em rational} if the half spaces $H_i$
have rational normal vectors.   Let $\Delta \subset B$ be a polyhedron; we have in mind
the case that $\Delta = \Phi(X)$ is the moment image of an almost toric moment map. 
\end{notation}

\begin{definition} \label{def:poly}  A {\em polyhedral decomposition} of $\Delta \subset B$ is a collection 
$\cP$ of rational polyhedra $P \subset B$ such that 
\begin{enumerate}
\item $B$ is equal to the union $ \cup_{P \in \cP} P  ;$ 
\item The intersection of any two polytopes $P_0,P_1 \in \cP$ is either empty or a face of each; 
\item each focus-focus value $b \in B^{\foc}$
is contained in the interior of some polyhedron $P \in \cP$;  and
\item  any polyhedron $P \in \PP$ intersects the boundary $\partial \Delta$ transversally.
\end{enumerate} 
Given such $\cP$, for any polytope $P \in \PP$, there is a cut space 
\[ X_P = \Phinv(P^\circ)/T_P \]
where $T_P \subset T$ is the torus whose Lie algebra is the annihilator
of the span of $TP \subset \t^\dual$, and $P^\circ$ is the complement of the faces $Q \subset P, Q \in \PP$ of $P$.  The {\em broken manifold} associated to $\cP$ is the disjoint union 
\[ \XX = \bigcup_{P \in \cP} \XX_P \]
where $\XX_P$ is the {\em thickening} of $X_P$ defined by 
\[ \XX_P = \Phi^{-1}(P^\circ) \times \t_P^\dual \]
This ends the definition. \end{definition}

We will consider in this paper only the case that the Lagrangian is a fiber 
of the almost toric structure.  In particular, we assume that the Lagrangian submanifold $L \subset X$ is disjoint from the cuts so that $L$ lies in a component $\XX_P$ of the broken manifold $\XX$ where $P \in \PP$ is a top-dimensional polytope.

\begin{example} The simplest examples of polyhedral decompositions 
are those cut out by a collection of lines with rational directions.
If $B$ is realized as a subset of $\R^2$ with cut loci $C_i \subset B$, then 
any line $H$ in $B$ not passing through the cut loci defines a polyhedral decomposition with three pieces. On the other hand, if $H$ passes through a cut $C_i$ then the complement of $H$ is not locally a convex polytope (as the half-spaces undergo shears through the cut loci).  Instead, changing the direction of $H$ after it passes through $C_i$ results in an allowable cut.    See Figures \ref{fig:poly}, \ref{fig:2p}, \ref{fig:b4p2dual} for examples. 
\end{example}

As described in  \cite{vw:trop}, in the neck-stretching limit holomorphic maps converge to broken maps, which is a version of Parker's exploded maps in \cite{parker:blowups}.  A broken map is equipped with a tropical structure, which we recall here. 

\begin{definition}
    A {\em tropical structure} on a graph 
\[ \Gamma = (\Ver(\Gamma), \Edge(\Gamma))  \]
is a collection of polytope assignments $P(v) \in \PP$ for vertices
$v \in \Ver(\Gamma)$ and 
{\em edge directions}
\[ {\cT}(e) \in \t_{P(e),\Z}  , \] 
with $P(e)$ defined by 
\[ P(e) := P(v_+) \cap P(v_-)  \] 
so that the graph $\Gamma$  is  realizable 
in the dual complex $B^\dual$ of the neck-stretching in the following sense: 
There exist a collection of {\em tropical positions} of
the vertices in the dual complex
\[\cT : \Ver(\Gamma) \to B^\dual, \quad \cT(v) \in P(v)^\dual\]
that satisfy 
\begin{equation} \label{eq:dircond} \cT(e) \in \R_{\ge 0} ( \cT(v_+) -
  \cT(v_-) ).
\end{equation}
for any edge $e=(v_+,v_-)$.
The image of $\cT(\Gamma)$ under the map to $B^\dual$ induced by
$v \mapsto \cT(v)$ is called 
a  {\em realization of a tropical graph}, and the underlying graph equipped with the edge directions $\{\cT(e)\}_e$ and vertex polytopes $\{P(v)\}_v$
is called a {\em tropical graph}.     This ends the Definition.  \end{definition}

A broken map is a collection of holomorphic maps on
punctured curves associated to the vertices of a tropical graph. 
We equip $X$ with an almost complex structure that is cylindrical near each cut; this induces
a collection of almost complex structures on $\XX_P$ that are invariant with respect to the 
translation action of $\t_P$.  

\begin{definition}
    Given such a graph $\Gamma$
a broken map with domain type $\Gamma$ is a collection
\[ u_v:C_v^\circ \to \XX_{P(v)}, v \in \Ver(\Gamma) \]
of pseudoholomorphic maps satisfying certain matching conditions explained below on the lifts of nodal points. Each of the domain components $C_v^\circ$ is an irreducible curve component $C_v \subset C$
(possibly with boundary) punctured at interior nodal points, that is,
\[C_v^\circ :=C_v \bs \{\text{interior nodes}\}.\]
The matching conditions are described as follows.
Suppose that 
$w_e^\pm \in C_{v_\pm}$ are the lifts of a node $w_e$ corresponding to an edge $e=(v_+,v_-)$. 
At the cylindrical or strip-like
ends of $C_{v_\pm} \bs w_e^\pm$, the map $u_{v_\pm}$ is asymptotic to a map given by the 
action of a one-parameter subgroup corresponding to some 
{\em direction} $\cT(e) \in \t_{P(e),\Z}$.
The matching condition at the node $w_e$ 
is that the map $(\pi_{\cT(e)}^\perp \circ u_{v_\pm})$ has a removable
singularity at the node $w_e^\pm$, and
\begin{equation}
  \label{eq:match-proj}
  (\pi_{\cT(e)}^\perp \circ u_{v_+})(w_e^+)=(\pi_{\cT(e)}^\perp \circ u_{v_-})(w_e^-) \in \XX_{P(e)}/T_{\cT(e),\C}.
\end{equation}
The quantities in the left-hand side and right-hand side of
\eqref{eq:match-proj} are called {\em projected tropical evaluations}.
   In case $Q$ is a facet so that $X_Q$ is a divisor of the compactified spaces $\ol X_{P(v_\pm)}$, this condition is
 simply that $\ol{u}_{v_\pm}(z_\pm) \in X_Q$ are equal but if $Q$ is
 higher codimension the condition says roughly that the ratios of
 derivatives match.
 In the latter case, we may consider a different compactification $\ol{\XX}_{P(v_\pm), \cT(e)}$ of $X_{P(v_\pm)}$ for which the space
\begin{equation}
   \label{eq:ycte}
   Y_{\cT(e)}:= \{[x] = T_{\cT(e),\C}x: x \in \ol{\XX}_{P(v_\pm)} \}  
 \end{equation}
 of orbit closures is a divisor in $\ol{\XX}_{P(v_\pm), \cT(e)}$ ; then \eqref{eq:match-proj} says that the points $\ol{u}_{v_\pm}(z_\pm) \in Y_{\cT(e)}$ are equal in  the compactifications $\ol{\XX}_{P(v_\pm), \cT(e)}$. 
 The object defined so far is an {\em unframed
   broken map}.

A {\em broken map} consists of an unframed broken map and a {\em framing}, which is a
linear isomorphism
\begin{equation} \label{eq:framingeq} \fr_e : T_{w^+_e} C_{v_+}
  \otimes T_{w^-_e} C_{v_-} \to \C,
\end{equation}
such that any pair of holomorphic coordinates $z_+$, $z_-$ in the
neighborhood of $w_e^+$, $w_e^-$ satisfying
\begin{equation}
  \label{eq:framingcoord}
  \d z_+(w^+_e) \otimes \d z_-(w^-_e)=\fr_e
\end{equation}
are {\em matching coordinates} in the sense that the following 
\begin{equation} \label{eq:nodematch-intro}
  \text{(Matching condition)} \quad \lim_{z_+ \to  0}z_+^{-\cT(e)}u(z_+) 
  =\lim_{z_- \to 0}z_-^{-\cT(e)}u(z_-)
\end{equation}
holds. 
 The quantities in the left-hand side and right-hand
side of \eqref{eq:nodematch-intro} are called the {\em tropical
  evaluations} at $w_e^+$ and $w_e^-$ respectively. 
  This ends the Definition.
\end{definition}
  
\begin{remark}
    An
unframed broken map has a finite number of framings as explained in
Remark T-\ref{T-rem:nfr}.
\end{remark}  

 A sample polyhedral degeneration  of the almost toric diagram of the degree four del Pezzo surface is shown in Figure  \ref{fig:poly}.   Let $u = (u_v)_{ v\ \in \Ver(\Gamma)}$
 be a broken map.  The graph $\Gamma$ with the additional data of the homotopy 
 classes of the maps $[u_v]$ is called the {\em type} of the broken map $u$ and denoted
 \[ \bGam = (\Gamma, ( [u_v])_{v \in \Ver(\Gamma)}) . \] 
 Denote by $\M_{\bGam}(\XX,L)$ the moduli space of broken maps of type $\bGam$.

\begin{figure}[ht]\begin{center} 
\scalebox{.5}{\includegraphics{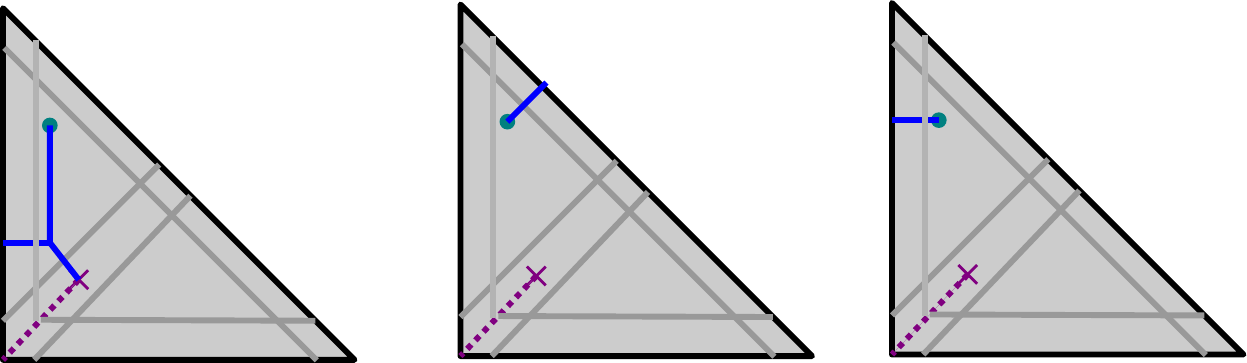}}
\end{center} 
\caption{A polyhedral decomposition of an almost toric diagram
and three Maslov-two broken disks}
\label{fig:poly}
\end{figure}

The disk potential in a broken manifold is defined similarly to the unbroken case, the only difference being that one counts broken disks. Assuming that $L \subset \XX$ is a monotone Lagrangian brane contained in a top-dimensional component $\XX_P \subset \XX$, the {\em disk potential} is 
 \begin{equation} \label{eq:bdp}  W_{\XX,L} = \sum_{u \in \M_{\bGam}(\XX,L,\ul{Y})_0} \frac{
     \Hol_L([\partial u]) \eps(u)}{d_\black(\bGam)!}, \end{equation}
  where $\bGam$ ranges over types of broken disks with no boundary input
  markings and a single output marking that is constrained to map to a
  point $p \in L$, that is, $\ul Y=(p)$. Furthermore,
  $\Hol_L([\partial u])$, $d_\black(\bGam)$,   $\eps(u)$ are as in \eqref{eq:wxl}; and the area  of a broken map $u$ is the sum of the areas of the projections of its components, that is, 
    \begin{equation} \label{eq:au} A(u) = \sum_{v \in \Ver(\Gamma)} A(\pi_{P(v)} \circ u_v), \end{equation}
    where $\pi_{P(v)} : \XX_{P(v)} \to X_{P(v)}$ is the projection to
    the base of the fibration. When $(X,L)$ is monotone, the fact that
    $W_{\XX,L}$ is independent of all choices will be a consequence of
    the tropical limit theorem in the next section.

\subsection{Tropical limit theorems for moduli spaces}
In \cite{vw:trop}, we showed that the Fukaya algebra of a Lagrangian is equivalent to 
Fukaya algebra defined by counting broken maps.  In particular, the corresponding disk potentials 
\[ W_{X,L}: MC(X,L) \to \Lambda, \quad W_{\XX,L}: MC(\XX,L) \to \Lambda \] 
are equivalent up to a gauge transformation on $MC(X,L) \cong MC(\XX,L).$  Returning to the monotone case, one has equality of the broken and unbroken potentials:

\begin{theorem} \label{thm:cobord} Suppose $L \subset X $
is monotone and $Y = \{ p \} $ consists of a point constraint at a single  boundary point $p \in L$.  There exists a compact oriented one-dimensional cobordism $\ti{\M}(X,L,\ul{Y})_1$ whose boundary 
\[ \partial \widetilde{\M}(X,L,\ul{Y})_1 = \M(X,L,\ul{Y})_0 \cup \M(\XX,L,\ul{Y})^-_0  \]
(here the superscript $-$ indicates reversed orientation) is the union of the original moduli space of Maslov-index-two disks $\M(X,L,\ul{Y})_0$ and the broken moduli space
of disks $\M(\XX,L,\ul{Y})_0$ that is a 
compact one-manifold with boundary.
\end{theorem}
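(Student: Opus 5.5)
The cobordism comes from the one-parameter family of neck-stretched almost complex structures interpolating between $X$ and the broken manifold $\XX$, as in the tropical limit theorem of \cite{vw:trop}. Concretely, I would fix a cylindrical almost complex structure near the cuts determined by the polyhedral decomposition $\cP$. I then let $J^\tau$, $\tau \in [0,\infty)$, denote the structure on $X$ obtained by inserting necks of length $\tau$ along all the cuts simultaneously, in the multi-directional sense; $\tau = \infty$ corresponds to the broken manifold $\XX$. The domain-dependent perturbations are chosen à la Cieliebak-Mohnke, with interior markings at intersections with a Donaldson hypersurface disjoint from $L$, coherently in $\tau$.

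Set
\[ \ti{\M}(X,L,\ul{Y})_1 = \bigcup_{\tau \in [0,\infty)} \{\tau\} \times \M^{J^\tau}(X,L,\ul{Y})_0 \;\cup\; \{\infty\}\times \M(\XX,L,\ul{Y})_0 . \]
The proof then has four steps.

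\textbf{Transversality.} For generic perturbation data, the parametrized moduli space over $[0,\infty)$ of Maslov-index-two disks through $p$ is a smooth one-manifold. Near $\tau = 0$ it is the given regular space $\M(X,L,\ul{Y})_0$. For the broken end, I use the regularity of rigid broken maps from \cite{vw:trop}, which holds for generic perturbations on each piece $\XX_P$ together with transversality of the tropical matching conditions \eqref{eq:nodematch-intro}.

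\textbf{Compactness.} Monotonicity bounds the energy, since Maslov index two forces area $\tau_0 \cdot 2$ for the monotonicity constant. Gromov compactness therefore applies for finite $\tau$. Disk bubbling is excluded in codimension one: any nonconstant disk has Maslov index at least two, and a splitting would force a Maslov-zero component, which monotonicity rules out. Sphere bubbling has codimension at least two, by the usual Cieliebak-Mohnke argument using the Donaldson divisor and the stabilizing markings. As $\tau \to \infty$, the compactness theorem of \cite{vw:trop} gives convergence to broken maps of the same total type.

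\textbf{Gluing at $\tau = \infty$.} Every rigid broken disk in $\M(\XX,L,\ul{Y})_0$, together with each of its finitely many framings, glues to a unique family of $J^\tau$-disks for $\tau$ large. This shows that the end of the one-manifold at infinity is a collar on $\M(\XX,L,\ul{Y})_0$. Orientations come from the relative spin structure and are compared along the gluing map. The reversed sign arises because $\partial_\tau$ points outward at $\tau = 0$ and inward at $\infty$, or vice versa under the chosen convention.

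\textbf{Main obstacle.} The hardest step is the gluing and compactness at infinity, in the presence of the focus-focus pieces and higher-codimension cuts. There the matching condition involves the framings and the tropical evaluations into $Y_{\cT(e)}$. One must show that the number of glued solutions equals the count of broken maps weighted by framings, and that no extra limits of lower expected dimension appear. For this I would invoke the gluing theorem of \cite{vw:trop} directly, and check that its hypotheses hold: a rigid tropical graph, and transversality of the evaluation matching. Both follow from the genericity of the perturbation data chosen above.
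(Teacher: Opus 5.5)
Your proposal is correct and takes the same route as the paper. The paper also uses the parametrized moduli space over the neck-stretching family $J_t$, $t \in [0,\infty)$, with the broken moduli space as the end at $t=\infty$, and uses monotonicity to exclude the only other codimension-one boundary: disk bubbling, i.e.\ an infinite-length edge, where splitting into two stable, hence positive-area, pieces would force Maslov index at least $4$. The paper spells out only this last argument and defers transversality, compactness and gluing to \cite{vw:trop}, exactly as you do.
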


\begin{proof} We assume the reader is familiar with the neck-stretching framework in our previous paper  \cite{vw:trop}.   Let 
\[  \ul{J} = (J_t \in \J(X), t \in [0,\infty)) \]
be a family of almost complex structures corresponding to neck-stretching along the inverse images of the proper faces in the polyhedral decomposition $\cP$ as in \cite{vw:trop}. 
Denote by $\M(X,L,\ul{J})$  denote the moduli space of pairs $(t,u)$ where
$u$ is $J_t$ holomorphic.  In general, $\M(X,L,\ul{J})$ has
boundary components  arising from disk bubbling. 

The situation is dramatically simplified
if $L$ is monotone:  Suppose, by way of contradiction, that there exist a $J_t$-holomorphic map 
$u: C \to X$ that lies in a  boundary component of  $\M(X,L,\ul{Y})$
has an edge $T_e\subset C$ of infinite length $\ell(e) = \infty$.  
Cutting at this edge produces maps 
\[ u_\pm: C_\pm \to X \] 
with boundary in $L$ with say the disk $C_+$ containing the point constraint on $L$.  Stability
forces each component $u_\pm$ to have positive area, since each has at most two semi-infinite edges.   Monotonicity implies that $I(u_-) \ge 2$. 
The total Maslov index is then 
\[ I(u)= I(u_-)+ I(u_+) \ge 4 \] 
which is a contradiction.   Therefore, 
the only boundary configurations are broken maps and configurations that are holomorphic with respect to the original complex structure.
\end{proof}

In particular, any broken disk is the end of a family of unbroken disks, and we define
the Maslov index of the broken disk to be the Maslov index of the corresponding family of unbroken disks.   The same holds for the area.  Since the Lagrangian is assumed monotone, 
the monotonicity relation holds for broken disks as well, so the Maslov index two broken disks have a fixed area.  By the Gromov compactness result in \cite{vw:trop}, there are finitely many such broken disks.  It follows that the potential  $W_{\XX,L}$ has finitely many terms.
The existence of the cobordism from Theorem \ref{thm:cobord} 
implies that  the disk potential may be computed by degeneration:

\begin{corollary}\label{cor:w} Let $L \subset X$ be a monotone Lagrangian contained in the interior 
of some piece $\Phinv(P) \subset X$. The potentials 
\[ W_L :=W_{X,L} = W_{\XX,L}: \Rep(L) \to \C^\times \] 
are equal as functions on $\Rep(L)$. 
\end{corollary}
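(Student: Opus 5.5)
The plan is to obtain the equality $W_{X,L} = W_{\XX,L}$ from the cobordism of Theorem \ref{thm:cobord}. First, recall the ingredients of the two counts. Both $W_{X,L}$ in \eqref{eq:wxl} and $W_{\XX,L}$ in \eqref{eq:bdp} are signed, weighted counts of isolated points in zero-dimensional moduli spaces. The weights are $\Hol_L([\partial u])\,\eps(u)/d_\black(\bGam)!$.

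Fix a generic point $p \in L$ and the one-parameter neck-stretching family $\ul J$. Theorem \ref{thm:cobord} then supplies a compact oriented one-manifold $\ti\M(X,L,\ul Y)_1$ with
\[ \partial \ti\M(X,L,\ul Y)_1 = \M(X,L,\ul Y)_0 \cup \M(\XX,L,\ul Y)_0^- . \]
Since $\ti\M(X,L,\ul Y)_1$ is compact, we may decompose it into connected components. Each is either a circle or an arc, and the arcs pair up boundary points. So it suffices to show that the weight attached to a point is locally constant along each component of $\ti\M(X,L,\ul Y)_1$.

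I would check this for the three factors of the weight separately.

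\emph{Holonomy.} The boundary class $[\partial u] \in \pi_1(L) = H_1(L)$ is a discrete invariant. It is therefore constant in connected families. At the broken end it is defined as the class of the limiting family, which is consistent with the convention, stated after Theorem \ref{thm:cobord}, that topological invariants of broken disks are those of the nearby unbroken disks. Since $L$ is fixed, $\Hol_L([\partial u])$ is constant along each component.

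\emph{Interior markings.} The number $d_\black(\bGam)$ of intersections with the Donaldson hypersurface is determined by the homology class via the degree of the hypersurface. Hence it is also constant along each component.

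\emph{Signs.} The signs $\eps(u)$ come from the orientation of $\ti\M(X,L,\ul Y)_1$, which is induced by the relative spin structure. The two endpoints of an arc lying in the same boundary piece carry opposite boundary orientations and cancel. An arc joining $\M(X,L,\ul Y)_0$ to $\M(\XX,L,\ul Y)_0$ carries the same weight at both ends, because of the reversed orientation $^-$ on the broken part.

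Summing the weights over the boundary of $\ti\M(X,L,\ul Y)_1$ therefore gives zero, which is exactly
\[ W_{X,L}(\Hol_L) - W_{\XX,L}(\Hol_L) = 0 \]
for every local system $\Hol_L \in \Rep(L)$. By the discussion after Theorem \ref{thm:cobord}, monotonicity together with Gromov compactness makes both sums finite. So this is an honest equality of functions on $\Rep(L)$.

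The main obstacle I expect is the sign and orientation compatibility at the broken end. One must verify that the orientation $\eps(u)$ assigned to a broken map, which is built from the broken moduli space and the framing data of \eqref{eq:framingeq}, agrees with the boundary orientation that the glued family $\ti\M(X,L,\ul Y)_1$ induces. The framings are relevant here because an unframed broken map has finitely many framings. Each framing should glue to its own end, so the count over framed maps is the correct boundary count.

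For this step I would appeal to the gluing theorem of \cite{vw:trop}, which gives orientation-compatible charts near the broken boundary. I would use it in the monotone setting, where Theorem \ref{thm:cobord} already rules out disk bubbling and leaves only two kinds of boundary in $\ti\M(X,L,\ul Y)_1$: the original unbroken disks and the broken maps.
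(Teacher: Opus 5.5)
Your proposal is correct and takes the same approach as the paper. The paper's proof is the single observation that the signed count of the ends of the compact oriented one-dimensional cobordism from Theorem \ref{thm:cobord} is zero. Your checks that the holonomy, the number of interior markings, and the orientation signs are constant along each arc simply spell that argument out in detail.
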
 

\begin{proof}    The statement of the Corollary follows from the fact that the signed count of ends of the cobordism in Theorem \ref{thm:cobord} is equal to zero.  
\end{proof}

\subsection{Relative maps}
\label{sec:relmap}

In this section we introduce terminology for a {\em relative map},
which is a version of a broken map obtained by cutting some tropical
edges. 

\begin{construction}
Let $u:C \to \XX$ be a broken map with domain type $\Gamma$. 
Cutting a tropical edge $T_e \subset C$ in a broken map $u$
produces two relative maps $u_+$, $u_-$, both of which contain a
cylindrical end corresponding to the edge $e$.  The graph $\Gamma_\pm$
of $u_\pm$ contains an edge $e_\pm$, called a {\em relative edge}
resulting from the cutting of $e$ that is incident on a single vertex
in $\Gamma_\pm$.
\end{construction}

\begin{definition}
\label{def:relmap}
\begin{enumerate}
    \item 
  A {\em type of relative map} is
  a type $\bGam$ a broken map with a distinguished 
 subset $\Edge_\rel(\bGam) \subset \Edge_{\to, \black}$ 
  of {\em relative semi-infinite edges }, and for each relative edge $e$,a polytope $P(e) \in \PP$ and a slope $\cT(e) \in \t_{P(e),\Z}$ as in the case with broken maps at the nodes.   
  \item 
  An {\em unframed relative map} of type $\bGam$ is a map $u:C \to \XX$  so that for each $e \in \Edge_\rel(\bGam) $, the  cylindrical end of $C_v$ corresponding to $e$ the map $u_v$ is asymptotic to a trivial cylinder of slope $\cT(e)$.  
  \item
  A {\em framed relative map} is an unframed relative map with the additional data of a  framing at each 
such  cylindrical end.
\end{enumerate}
  \end{definition}

  Let $\M_\bGam(\XX,L)$ resp. $\M_\bGam^{\fr}(\XX,L)$  denote the moduli space of unframed resp. framed relative maps  of type $\bGam$.
  Evaluation at the cylindrical ends corresponding to relative edges defines a  {\em evaluation map} as follows:  Given a relative map let $z$ be a 
  local holomorphic coordinate corresponding to the cylindrical end, compatible with the given framing.  The evaluation map for relative maps is then 
\[\ev_e^\fr: 
\M_\bGam^\fr(\XX,L) \to \XX_{P(e)} \quad u \mapsto 
\lim_{z\to 0 }z_+^{-\cT(e)}u(z) .\] 
It induces a {\em projected evaluation map }
\[ \ev_e :\M_\bGam(\XX,L) \to \XX_{P(e)}/T_{\cT(e),\C},
\quad 
u \mapsto (\pi_{\cT(e)}^\perp \circ \ev_e \]
where $(\pi_{\cT(e)}^\perp $ is the projection.

We now reconsider the situation considered at the beginning of this section in which we break an edge to obtain two relative types. We use the evaluation maps on relative markings 
to introduce moduli spaces of relative maps with constraints. 

\begin{definition}  \label{def:constraints} A {\em system of constraints} for a type $\bGam$
  consists of, for every relative edge
  $e \in \Edge_\rel(\bGam)$,
  \begin{itemize}
  \item {\rm(Map constraint)} a submanifold $\YY_e \subset 
  \XX_{P(e)}$ and
    %
  \item {\rm(Tropical constraint)}  an affine subspace $\Upsilon_e \subset P(e)^\dual$
that is parallel to the vector subspace  $\t^\rel_e  \subset \t_{P(e)}/\ \bran{\cT(e)}$.
  \end{itemize}
  Let
\begin{equation} \label{eq:con} 
\ul{\YY} = (\YY_e)_{e \in \Edge_\rel(\bGam)}, \end{equation} 
be a system of constraints for which $\bGam$ has a realization $\cT : \Ver(\bGam) \to B^\dual$ where for any relative edge $e$ incident on a vertex $v_e$ satisfies the tropical constraint
\begin{equation}
  \label{eq:tropcon}
\cT(v_e) \in \Upsilon_e.  
\end{equation}
The moduli space
of relative maps with constraints $\ul {\YY}$ is 
\[  \M_\bGam(\XX,L,\YY) = \{ u \in  \M_\bGam(\XX,L) | \ev_e(u) \in \YY_e,
  \quad \forall e \in \Edge_-(\bGam)  \} .\]
The {\em tropical symmetry groups} of a relative map with constraints is 
a collection 
  \[\ul g =(g_v)_{v \in \Ver(\Gamma)}\]
  that satisfies
  \begin{equation}
    \label{eq:unfr-symm}
  g_{v_+}g_{v_-}^{-1} \in \{z^{\cT(e)} : z \in \C^\times\} \quad \forall e=(v_+,v_-) \in \Edge_\black(\Gamma).  
  \end{equation}  
  so that the translated maps $(g_v u_v)_{v \in \Ver(\Gamma)}$ satisfy the given constraints.
  This ends the Definition.
\end{definition}

In four-dimensional examples, $\YY_e$ is either a point or the entire space
$\XX_{P(e)}$ which we call an {\em trivial constraint}. In the case of a point constraint, the 
tropical constraint $\YY^\cT_e$ is a line in the direction $\cT(e)$, and in the case of an trivial constraint, the tropical constraint is also trivial, that is, $\YY^\cT_e=P(e)^\dual$. 

In general, the counts of maps with constraints depend on the choice
of almost complex structure, in the same way that counts of
holomorphic disks depend on this choice in the closed case as well.
However, the count is invariant if all non-trivial constraints at
relative markings are point constraints, and the type $\bGam_v$ of any 
disk component $v \in \Ver_\white(\bGam)$ is {\em primitive} in the
sense that it is not decomposable into types $\bGam_1$ and $\bGam_2$ of positive area.
For example, if $(X_{P(v)}, L)$ is toric, and torus-invariant divisors
of $X_{P(v)}$ are relative divisors, then $\bGam_v$ is primitive
exactly if $v$ has exactly one tropical edge or relative marking.

\begin{proposition} \label{prop:inv}
  Let $\bGamma$ be a type of a relative broken map to $(\XX, L)$,
  whose glued type is either a disk or a sphere, and the type
  $\bGam_v$ of any disk vertex $v \in \Ver(\bGam)$ is primitive. Let
  $\YY= (\YY_e)_{e \in \Edge_\rel(\bGamma)}$
  be a collection of
  constraints each of which is either an trivial constraint or a point
  constraint, and so that the moduli space $\M_\bGam(L,\ul \YY)$ of maps of
  type $\bGamma$ with constraints $Y$, and a single point constraint on the boundary, has expected dimension zero.
  Then the count $m(\bGamma)$ of maps of type $\bGamma$ is independent
  of the choice of almost complex structure,  perturbation data, and generic point constraints $\ul \YY$. Moreover, for any two constraint data $\ul \YY_0$, $\ul \YY_1$, there is a cobordism between the moduli spaces $\M_\bGam(L,\ul \YY_0)$, $\M_\bGam(L,\ul \YY_1)$. 
   The tropical constraints $\{\Upsilon_e\}_e$ are assumed to be fixed.  
\end{proposition}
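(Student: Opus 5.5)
The plan is the standard cobordism argument. Take two choices of data $(J_0,\ul{\YY}_0)$ and $(J_1,\ul{\YY}_1)$: almost complex structures cylindrical near the cuts, perturbation data, and point or trivial constraints. Join them by a generic path $(J_t,\ul{\YY}_t)$, $t\in[0,1]$, keeping the tropical constraints $\Upsilon_e$ fixed. The point constraints $\YY_{e,t}$ move in $\XX_{P(e)}$ along generic paths, which is possible since points in the connected components of the (relevant strata of the) cut spaces can be joined. Form the parametrized moduli space
\[ \widetilde{\M}_\bGam(L,\ul{\YY}) = \{ (t,u) : u \in \M_\bGam(L,\ul{\YY}_t) \text{ computed with } J_t \}. \]
By the transversality results for relative broken maps in \cite{vw:trop}, for a generic path this is a smooth one-manifold, oriented via the relative spin structure. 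Its boundary over $t=0,1$ is $\M_\bGam(L,\ul{\YY}_0)\cup \M_\bGam(L,\ul{\YY}_1)^-$. The claim then reduces to showing that $\widetilde{\M}_\bGam$ is compact up to these ends. The equality of signed counts, and hence the independence of $m(\bGamma)$, follows immediately.

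For compactness, apply the Gromov compactness theorem for broken/relative maps from \cite{vw:trop}. A sequence in $\widetilde{\M}_\bGam$ converges to a limit of some type $\bGam'$ degenerating from $\bGam$. The possible degenerations are:
\begin{enumerate}
\item sphere bubbling or the formation of new tropical edges in the closed part;
\item disk bubbling or boundary breaking at an infinite-length boundary edge;
\item escape of a relative end, meaning the relative edge acquires extra tropical edges or the matching condition degenerates into a higher codimension stratum.
\end{enumerate}
For (1), the Cieliebak--Mohnke domain-dependent perturbations with Donaldson hypersurface markings give the usual codimension-two argument: such limits lie in strata of expected dimension at most $-1$ and are avoided by a generic path. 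For (3), the key input is that each nontrivial constraint is a point constraint and each trivial constraint is the whole space. A point constraint is invariant neither under degeneration nor by the tropical symmetry group, so refinements of the tropical graph force extra constraints on the relative vertex. These refinements have negative expected dimension, which a dimension count in the spirit of the rigidity statements of \cite{vw:trop} makes precise. Fixing $\Upsilon_e$ guarantees that the tropical realization does not jump.

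The main obstacle is (2), disk bubbling, which in the non-monotone setting is exactly what makes counts depend on $J$. The primitivity hypothesis is used here. Suppose a disk vertex $v$ splits into two disk components along a boundary node. Then the type $\bGam_v$ decomposes as $\bGam_1\#\bGam_2$. Both pieces must have positive area, since constant disk components are unstable given the markings. This contradicts primitivity of $\bGam_v$. Disk bubbles off sphere vertices cannot occur, because those components have no boundary. Thus no codimension-one boundary appears from disk bubbling.

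The remaining points are boundary of $\widetilde{\M}$ at interior values of $t$, coming from broken-map strata of codimension one in the tropical parameters. I would try to exclude these by showing that the combined tropical symmetry group and the fixed $\Upsilon_e$ make the tropical realization rigid. Any codimension-one tropical deformation would then change the combinatorial type and hence the homology class or the constraint incidence. This rigidity check is the step I expect to require the most care.
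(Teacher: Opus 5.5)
\textbf{There is a genuine gap: the tropical degenerations are not handled.} Your overall frame matches the paper's proof: a generic path of data, the parametrized moduli space, primitivity ruling out disk bubbling, and the codimension-two count for ordinary (non-tropical) interior nodes. The trouble is in item (1), where you group the formation of new tropical edges with sphere bubbling and dismiss both by a codimension-two count. That is false for tropical nodes. In the broken-map framework, forming a tropical node does not change the expected dimension at all, as the paper notes in Remark~\ref{rem:troprel}. So no genericity or dimension-counting argument of that kind can exclude a limit whose tropical graph $\Gamma'$ differs from $\Gamma$. The same objection applies to your claim in (3) that refinements at the relative ends have negative expected dimension. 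Your last paragraph, which is meant to handle exactly these limits, also lacks a working mechanism:
\begin{itemize}
\item Rigidity of the realization of $\Gamma$ is already given by Remark~\ref{rem:troprel}(1), so it cannot do the work.
\item A change of tropical combinatorial type in a limit need not change the glued homology class or the constraint incidences, so nothing produces a contradiction.
\end{itemize}

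\textbf{The missing idea is the jump in the tropical symmetry group,} Remark~\ref{rem:troprel}\eqref{rem:troprel2}, coming from \cite[Theorem 8.3]{vw:trop}. Suppose a sequence of relative maps with tropical graph $\Gamma$, satisfying the fixed tropical constraints $\Upsilon_e$, converges to a limit with tropical graph $\Gamma' \neq \Gamma$. Then $\dim T_\trop(\Gamma') > \dim T_\trop(\Gamma)$. The argument then runs as follows.
\begin{itemize}
\item Since $\bGam$ has expected dimension zero, $T_\trop(\Gamma)$ is finite, so $\dim T_\trop(\Gamma') \ge 2$.
\item $T_\trop(\Gamma')$ acts freely on the moduli space of type $\bGam'$.
\item That moduli space has the same expected dimension as the one for $\bGam$, because tropical nodes do not change the index.
\item Hence a limit of this kind would sit in a stratum of dimension at least two inside a parametrized family of index one. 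For a generic path this is impossible.
\end{itemize}
This is the step that makes the cobordism compact. It also disposes of your case (3): the relative edges carry only point or trivial constraints, so both $\Gamma$ and $\Gamma'$ satisfy the tropical point constraints and the same argument applies.
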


\begin{proof}
The invariance statement in the Proposition is a standard cobordism argument: Given two   data $(\ul{P}_b, \ul \YY_b)$ for $b \in \{ 0,1 \}$ let
  $(\ul{P}_t, \ul \YY_t), t \in [0,1]$ be a generic homotopy between
  them.  Let $\ti{\M}_{\bGamma}(L,Y)$ be the parametrized moduli space
  for the family.  We obtain a compact oriented cobordism between the
  moduli spaces for $(\ul{P}_b, \ul \YY_b)$ for $b \in \{ 0,1 \}$,
  because bubbling is ruled out as follows: The primitiveness
  assumption ensures that there is no disk bubbling. Formation of
  non-tropical interior nodes cuts down the expected dimension by two.  Therefore, formation of interior nodes does not occur if $(\ul{P}_t, \ul \YY_t)$ is generically chosen. Since the index of the maps in the parametrized moduli space
  is $1$, by Remark \ref{rem:troprel} \eqref{rem:troprel2},
  there cannot be a limit map with an additional tropical node.
\end{proof}

\begin{remark}{\rm(On tropical graphs of relative maps)} \label{rem:troprel}
  \begin{enumerate}
  \item  If a relative map type $\bGam$ has associated moduli space
  $\M_\bGamma(\XX,L)$ of 
    expected dimension zero, then its tropical symmetry group
  is finite, as explained in Lemma 4.39 of \cite{vw:trop}.  
  Consequently the tropical graph of $\bGam$ is {\em rigid}. That is, 
  the tropical graph of $\bGam$ has a unique set of vertex positions that satisfy the
  tropical constraint \eqref{eq:tropcon} at relative markings.
\item \label{rem:troprel2} The tropical behavior in the limit of relative maps is similar to the case of broken maps without relative markings.
  Suppose a sequence $\{u_\nu\}_\nu$ of relative maps (all whose non-trivial constraints are point constraints) with a tropical graph $\Gamma$ converges to a limit $u_\infty$ with a tropical graph $\Gamma' \neq \Gamma$, then, the dimension of the tropical symmetry group satisfies 
  \[ \dim(T_\trop(\Gamma')) > \dim(T_\trop(\Gamma)) .\]
  The proof is the same as the non-relative case in \cite[Theorem 8.3]{vw:trop}, with the additional feature that both
  $\Gamma$ and $\Gamma'$ satisfy tropical point constraints.
  In particular, if $\Gamma$ is rigid, $\dim(T_\trop(\Gamma')) \geq 2$. Since the tropical symmetry group acts freely on the moduli space of maps, $\dim \M_{\bGam'}(L, \ul \YY) \geq 2$, and since formation of tropical nodes does not affect the expected dimension of moduli spaces, we conclude that $\dim \M_{\bGam}(L, \ul \YY) \geq 2$.
  \footnote{The tropical constraint \eqref{eq:tropcon} takes a more complicated form when a non-trivial constraint $\YY_e \subset \XX_{P(e)}/\bran{\cT(e)}$ has positive dimension, and the constraint needs to be valid for maps in the compactification $\ol \M(L,\ul \YY)$. Indeed, for maps in the compactification $\ol \M(L,\ul \YY)$, the vertex containing the relative  marking $z_e$ may lie in $P^\dual$ for some $P \subset P(v_e)$. A positive dimensional constraint $\YY_e$ is required to be a manifold with cylindrical ends,
    and the tropical constraint is defined not just in $P(e)^\dual$ but also in polytopes $P^\dual \supset P(e)^\dual$ where $\YY_e$ has a cylindrical end in the $P$-cylindrical end of $\XX_{P(e)}$. We do not need to consider such constraints in this paper.}
    \end{enumerate}
\end{remark}
In the following result, we show that the moduli space of broken maps
associated to a polyhedral decomposition of a symplectic four-manifold
is honestly a product of moduli spaces associated to the pieces,
rather than a fiber product.  Such a product description is special to
the genus zero, dimension four case; in general such a result can be
expected only after further degeneration as explained in our previous
work \cite{vw:split}.  Denote by $L_{P(v)}$ the intersection
$L \cap \Phinv(P(v))$, which is either trivial or equal to $L$.  The
result involves the automorphism group of the map type which we recall
here:

\begin{definition}
An {\em automorphism of a map type} $\bGam$ is a graph
automorphism $\phi : \bGam \to \bGam$ for which $P(v)=P(\phi(v))$ for
all vertices, the homotopy classes of the maps at $v$ and $\phi(v)$
are the same, and $\phi$ preserves the ordering of boundary markings
$e \in \Edge_{\to, \white}(\bGam)$ and interior markings
$\Edge_{\to, \black}(\bGam)$. The group of automorphisms of $\bGam$ is
denoted by $\Aut(\bGam)$.  
\end{definition}

\begin{theorem}  \label{thm:split}
{\rm(Distribution of constraints)}
Let $\ul{Y} = (p \in L)$ be a point constraint, and let $\bGam$ be a rigid map type for the constraint $\ul{Y}$ so that every subtype $\bGam_v$ consisting of a vertex $v$ and the adjacent 
edges $e$ is primitive.   There exists a system of constraints (see Definition \ref{def:constraints}) 
 \[ \ul \YY = (\ul \YY_v, \quad v \in \Ver(\bGam)) \] 
for the graphs $\bGam(v) \subset \bGam$ associated to the vertices $v$ so that the moduli space $\M_\bGam(\XX,L,\ul{Y})$ of broken maps of type $\bGam$ admits a compact oriented cobordism 
\[ \M_\bGam(\XX,L,\ul{Y}) \sim n(\bGam) \left(\prod_{e \in \Edge_\trop(\bGam)}\mu_e \right) \left(\prod_{v \in \Ver(\bGam)} \M_{\bGam(v)}(\XX_{P(v)},L_{P(v)},\ul \YY_v) \right)\]
to the product of moduli spaces over vertices $v$ of $\bGam$, 
where $\mu_e \in \Z_+$ is the lattice length of the edge slope $\cT(e)$, and
\[n(\bGam):=\tfrac {d_\black(\bGam)!} {|\Aut(\bGam)| \left( \prod_v d_\black(v)! \right)}  \]
Any tropical edge $e$ has one end-point $v_+$ where $\ul \YY_{v_+}$ has a point constraint at the lift of the node $w_e$, and another end-point $v_-$ where the constraint $\ul \YY_{v_-}$  at the node $w_e$ is trivial.
\end{theorem}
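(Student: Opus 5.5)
The plan is to first orient the tree, then distribute constraints along the orientation, and finally deform the fiber-product matching conditions into pointwise constraints using the invariance of Proposition \ref{prop:inv}.

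\emph{Orientation and placement of constraints.} Since $\bGam$ is rigid of genus zero in dimension four, each tropical node $w_e$, $e=(v_+,v_-)$, imposes a matching condition \eqref{eq:match-proj} in $\XX_{P(e)}/T_{\cT(e),\C}$. This quotient has complex dimension one modulo the tropical translation, so the condition is effectively codimension two. Working from the disk vertex carrying the point constraint $p\in L$, I would orient the edges of the tree so that each vertex moduli space $\M_{\bGam(v)}$, with its incoming edges constrained and its remaining edges free, has expected dimension zero. This is the dimension count behind Lemma \ref{lem:orient}, done inductively from the leaves: a subtree hanging off $v$ contributes a one-parameter family, namely the projected evaluation at the cut edge, whose position is fixed by rigidity. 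Accordingly, for each $e$ I put a point constraint $\YY_{e}$ at the lift $w_e^+$ at $v_+$ and a trivial constraint at $w_e^-$. The tropical constraint $\Upsilon_e$ is the line through the rigid realization $\cT(v_+)$ in direction $\cT(e)$, which is consistent by Remark \ref{rem:troprel}.

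\emph{From fiber product to product.} The moduli space $\M_\bGam(\XX,L,\ul Y)$ is the fiber product of the vertex moduli spaces over the projected evaluation maps, quotiented by framings and tropical symmetries. By Proposition \ref{prop:inv}, each vertex count is invariant under moving the point constraint $\YY_e$. I would build a cobordism in two steps. First, replace the diagonal matching condition at each edge by a one-parameter family in which the $v_-$ side is evaluated and its value is frozen as the point constraint on the $v_+$ side. Second, move that frozen point to a generic fixed point, proceeding edge by edge from the leaves toward the root. Compactness of the parametrized spaces follows as in the proof of Proposition \ref{prop:inv}. Primitivity rules out disk bubbling. Interior nodes have codimension two. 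Additional tropical nodes force $\dim T_\trop\ge 2$ by Remark \ref{rem:troprel}\eqref{rem:troprel2}, which is excluded in a one-parameter family.

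\emph{Multiplicities.} Next I account for framings and the $\mu_e$ factor. An unframed broken map admits finitely many framings at each edge, by Remark T-\ref{T-rem:nfr}. Matching tropical evaluations \eqref{eq:nodematch-intro} rather than projected ones yields a $\mu_e$-to-one correspondence, since $z\mapsto z^{\cT(e)}$ has degree $\mu_e$ on the stabilizer when $\cT(e)$ is $\mu_e$ times a primitive vector. Taking the product over edges gives $\prod_e\mu_e$. The factor $n(\bGam)$ comes from two sources. One is redistributing the $d_\black(\bGam)$ labelled interior markings among the vertices, which gives the multinomial $d_\black(\bGam)!/\prod_v d_\black(v)!$. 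The other is dividing by the relabellings of the graph, $|\Aut(\bGam)|$, which identify different products.

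\emph{Main obstacle.} I expect the hardest step to be the framing count: showing that the number of framed lifts of a matched pair is exactly $\mu_e$, with consistent orientations, uniformly along the cobordism. I would handle it in local coordinates near the node, where the tropical evaluation is a $\mu_e$-fold cover of the projected evaluation, and check that the orientation sign from the relative spin structure is compatible with this product.
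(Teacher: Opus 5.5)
Your proposal is correct and follows the paper's proof. In both, you cut one tropical edge at a time, observe that one side is rigid with the edge free, freeze its evaluation as a point constraint on the other (two-dimensional) side, move that point to a generic one via the cobordism of Proposition \ref{prop:inv}, and count $n(\bGam_+,\bGam_-)$ unframed gluings with $\mu_e$ framings each, inducting over the edges. The differences are cosmetic: the paper identifies the rigid side of each cut by a parity argument (both relative moduli spaces are even-dimensional and the matching condition has codimension two, so their dimensions are $0$ and $2$) rather than by fixing a global orientation in advance, and your first ``freezing'' step is an exact bijection rather than a one-parameter family.
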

\begin{proof}[Proof of Theorem \ref{thm:split}]   The proof is an inductive argument based on the fact that for each edge, the moduli space of relative maps on one side of the edge must be rigid. 
  First, we consider the case of replacing the matching condition at a single edge $e$ with a constraint at one of the lifts of the node $w_e$. Assuming that cutting the edge $e$ in $\bGam$ produces relative map types $\bGam_+$, $\bGam_-$,
  the moduli space of broken maps $\M_\bGam(\XX,L,\ul{Y})$ is the inverse image $\ev_e^{-1}(\Delta)$ of the diagonal where
  \[\ev_e=(\ev_{e_+}, \ev_{e_-}), \quad \ev_{e_\pm}: \M_{\bGam_\pm}(\XX,L,\ul{Y}) \to \XX_{P(e)}/\bran{\cT(e)} \]
  is the projected evaluation map.
  The moduli spaces $\M_{\bGam_\pm}(\XX,L,\ul{Y})$ are even dimensional, since they either involve spheres, or disks with their boundary on an oriented Lagrangian, and matching conditions are defined on even-dimensional manifolds. 
  Therefore, one of them, say, $\M_{\bGam_+}(\XX,L,\ul{Y})$ is two-dimensional and $\M_{\bGam_-}(\XX,L,\ul{Y})$
  is zero-dimensional.
  Then, we claim that the moduli space of broken maps is bijective to a product
  \[\M_\bGam(\XX,L,\ul{Y}) \simeq \mu_e n(\bGam_+,\bGam_-) \M_{\bGam_+}(\XX,L,\YY_+) \times \M_{\bGam_-}(\XX,L,\YY_-),  \]
  where  $\YY_\pm$ are obtained from $\ul{Y}_\pm$ by adding $\YY_{e_\pm}$ at the broken edges,  $\YY_{e_-}$ is the trivial constraint, and $\YY_{e_+}$ is a generic point constraint. To prove the claim, consider a map
  \[ (u_+,u_-) \in \M_{\bGam_+}(\XX,L, \ul{\YY}_+) \times \M_{\bGam_-}(\XX,L,\ul{\YY}_-) . \] 
  Via the cobordism in Proposition \ref{prop:inv},
  we may replace the point constraint $\YY_{e_+}$ by the point $\ev_{e_-}(u_-)$, and under the cobordism
  \[ \M_{\bGam_+}(\XX,L,Y_{e_+}) \sim \M_{\bGam_+}(\XX,L,\ev_{e_-}(u_-)) \] 
  we may replace the map $u_+$ with a map  a map $u_+'$. Then, $(u_+',u_-)$ gives rise to $n(\bGam_+, \bGam_-)$ unframed broken maps, and each of these has
  $\mu_e$ framings. 
  Theorem \ref{thm:split} follows by applying   the above reasoning inductively, cutting one edge at a time. The product of the factors $n(\bGam_+,\bGam_-)$ from all the steps is equal to 
$n(\bGam)$, which is the number of ways of distributing $d_\black$ markings onto vertices $v \in \Ver(\bGam)$ containing $d_\black(v)$ markings; two distributions are considered the same if they are related by an automorphism of $\bGam$.
\end{proof}

\begin{example}  We consider the polyhedral decomposition 
in Figure \ref{fig:poly}, and one of the Maslov-index-two disks contributing to the potential, which is an rigid element of the moduli space of broken maps with a point constraint on the Lagrangian.  The constraints are distributed as shown in Figure \ref{fig:distribconstraints}:
the moduli space for that type is a product of moduli spaces on the pieces with an additional constraint at one of the cylindrical ends on the piece containing the Lagrangian, and another additional constraint on moduli space for the piece meeting the toric boundary.
\end{example}

\begin{figure}[ht]\begin{center} 
\scalebox{.3}{\includegraphics{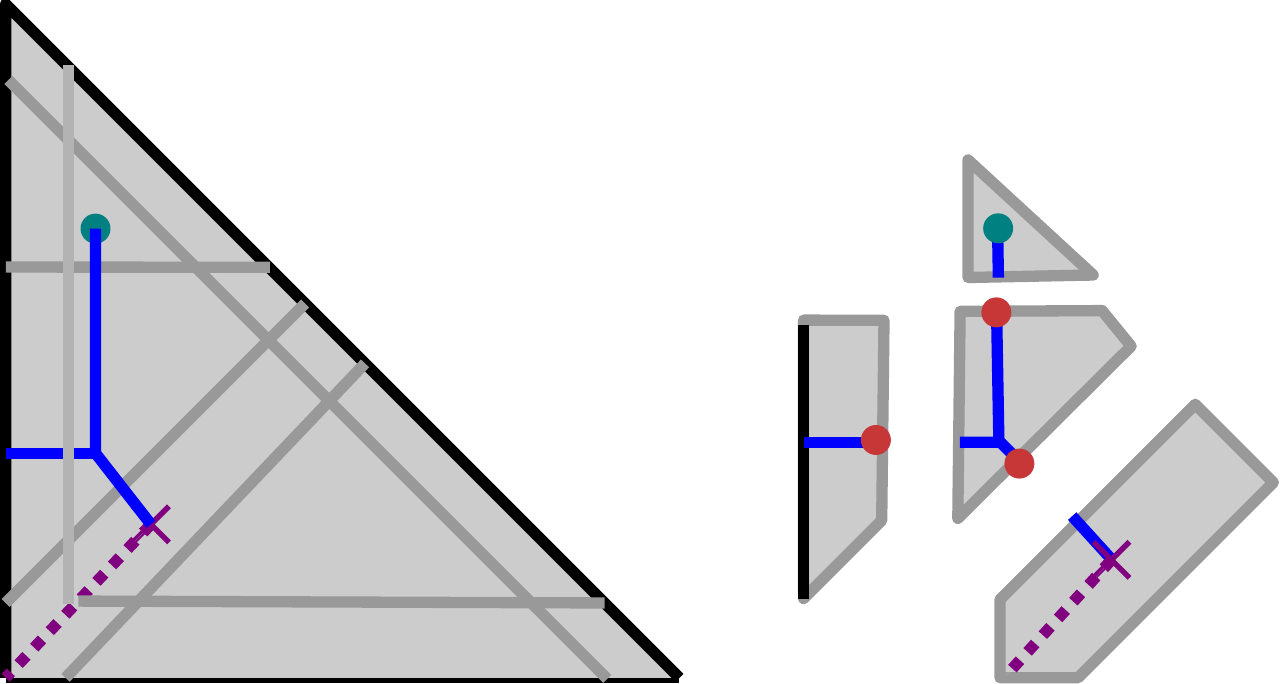}}
\end{center} 
\caption{The moduli space as a product of moduli spaces for its pieces} 
\label{fig:distribconstraints}
\end{figure}

The distribution of constraints give  orientations on the edges of the tropical graph. In the statement of Theorem \ref{thm:split}, for an edge $e=(v_+,v_-)$, if an end-point $v_+$ has a point constraint and the end-point $v_-$ has an trivial constraint, we orient the edge $e$ from $v_-$ to $v_+$. This system of orientations on the tropical graph is called the {\em constraint orientation} and it satisfies the following property.

\begin{lemma}\label{lem:orient}
  Let $\Gamma$ be the tropical graph of a rigid disk.
  At any vertex $v$ of $\Gamma$ with $P(v) \notin \PP_\partial$,
    there is exactly one outgoing edge $e \in \Edge(\Gamma)$ with respect to the constraint orientation. 
\end{lemma}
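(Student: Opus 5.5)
The plan is to use the inductive structure from the proof of Theorem \ref{thm:split}, together with a dimension count at each vertex. Because $u$ is rigid and of genus zero, the graph $\Gamma$ is a tree. The only boundary constraint is the point constraint $p \in L$ at the disk vertex $v_0$. Root the tree at $v_0$. Cutting any edge $e$ splits $\Gamma$ into $\Gamma_+ \ni v_0$ and a complementary subtree $\Gamma_-$. As in that proof, the two moduli spaces $\M_{\bGam_\pm}$ are even-dimensional and their dimensions add up to two, since matching at $e$ imposes a codimension-two condition on the projected evaluation in the four-dimensional $\XX_{P(e)}/T_{\cT(e),\C}$ modulo translations. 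So exactly one side is rigid and the other is two-dimensional. The constraint orientation points from the two-dimensional side (trivial constraint) toward the rigid side (point constraint).

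The first step is to decide which side is rigid. I would show that, for $P(v)\notin\PP_\partial$, the subtree $\Gamma_-$ away from $v_0$ is two-dimensional, i.e.\ it carries a free evaluation. The reason is that $\Gamma_-$ contains no Lagrangian constraint. Its leaves are closed vertices in boundary or focus-focus pieces, and these contribute rigid ends only after the evaluation at the cut edge is left free. Concretely, I would compute the index of $\Gamma_-$ with a trivial constraint at the relative edge using the log Calabi--Yau property: the Maslov/Chern number equals the intersection with the divisor at infinity. Subtracting the matching condition then gives dimension two.

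With that established, every edge is oriented toward $v_0$. Take a vertex $v\neq v_0$ whose polytope is not a boundary polytope. The edges at $v$ are one edge toward the root and the remaining edges toward its children. Each child subtree is two-dimensional and pointed at $v$, so those edges are incoming. The edge toward the root has the whole subtree through $v$ on its far side, which is two-dimensional, and the orientation runs from that side to the root side, so this edge is outgoing. That gives exactly one outgoing edge at $v$. The disk vertex $v_0$ needs separate treatment: its unique outgoing edge is the output semi-infinite edge or marking, in line with the index-two convention.

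The main obstacle is the first step, proving that the non-root side is always the two-dimensional one. This is a genuine index computation: show that $\dim \M_{\bGam_-}=2$ with a trivial constraint, using additivity of the index under cutting and the fact that closed relative pieces in a four-manifold have index $2+$(number of relative ends) minus corrections. I would try induction on the number of vertices of $\Gamma_-$, starting from leaves where a single cylinder hits a boundary facet or focus-focus fiber (dimension two, the free $\C^\times$ translate), and checking that gluing at a trivalent vertex preserves dimension two.
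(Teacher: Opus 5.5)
There is a genuine gap. It sits in the step you flag yourself, and a reversed convention makes it worse.

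First, the orientation rule is backwards. In the proof of Theorem~\ref{thm:split}, the two-dimensional side $\bGam_+$ receives the \emph{point} constraint $\YY_{e_+}$, namely the evaluation of the rigid side. The rigid side $\bGam_-$ keeps the trivial constraint $\YY_{e_-}$. Edges are oriented from the trivially constrained end to the point-constrained end, so they run from the rigid side toward the two-dimensional side. Your assignment of the trivial constraint to the two-dimensional side cannot be right: that factor would contribute a two-dimensional family to a product that must be rigid.

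Second, the claim that the subtree avoiding $v_0$ is always two-dimensional is false. Take the edge at a focus-focus leaf $v_b$. The side avoiding $v_0$ is $\{v_b\}$, whose map covers the exceptional sphere in a nodal fiber (Lemma~\ref{lem:nodal-sph}). That map is rigid with a trivial constraint, which is exactly why it has the finite count $(-1)^{d-1}/d^2$ (Proposition~\ref{prop:nd}). Your base case with a ``free $\C^\times$ translate'' does not exist there.

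If you carry out the index count you propose correctly, a subtree with a trivial constraint at the cut has dimension $2b$, where $b$ is its number of intersections with the divisor at infinity $D$. The Maslov index is twice the intersection number with $D$, so a Maslov index two disk has $b=1$ in total (compare Proposition~\ref{prop:col-int}). Hence the two-dimensional side is always the one containing the boundary vertex $v_\partial$, and all edges point toward $v_\partial$. In particular the edge at the disk vertex $v_0$ is outgoing; its direction is the initial direction of the graph. Your argument instead makes every edge at $v_0$ incoming, and then has to use the boundary marking, which is not an element of $\Edge(\Gamma)$, as the outgoing edge. Rooting the tree at $v_\partial$ instead of $v_0$, with the correct orientation rule, would repair your global argument.

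The paper avoids this global bookkeeping and argues vertex by vertex.
\begin{itemize}
\item By the log Calabi--Yau property, the Maslov (resp.\ Chern) index of $u_v$ equals twice its intersection number with the divisors of $\ol\XX_{P(v)}$.
\item The tangency conditions at the $k_v$ intersection points absorb exactly this. So the relative moduli space at $v$, with trivial constraints (and the point constraint on $L$ when $v=v_0$), has dimension $2k_v-2$. Each point constraint at an edge cuts this down by two.
\item Theorem~\ref{thm:split} presents the rigid moduli space, up to cobordism, as a product of vertex moduli spaces with the distributed constraints. Hence each factor is rigid.
\item So exactly $k_v-1$ intersections carry point constraints and exactly one is free.
\end{itemize}
If $P(v)\notin\PP_\partial$, all $k_v$ intersections are tropical edges. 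Exactly one edge at $v$ then has the trivial constraint at $v$, i.e.\ is outgoing. For boundary vertices the free intersection is the one with $D$, which is why they are excluded from the statement.
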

\begin{proof}
  The result follows from index considerations. The Maslov index of a
  disk or sphere is equal to twice the number of intersections with
  toric divisors, so for a rigid disk, exactly one of these
  intersections is unconstrained.
\end{proof}

\subsection{Intersecting polyhedral decompositions}

Two transversely intersecting polyhedral decompositions combine to
give a new polyhedral decomposition.  The combined decomposition has a
family of dual complexes. We outline this construction from
\cite[Example 3.30]{vw:trop}:

\begin{definition} 
\begin{enumerate}
    \item A pair of polyhedral decompositions
$\PP_0$, $\PP_1$ of $\t^\dual$ intersect {\em transversally} if any
pair of polytopes $P_0 \in \PP_0$, $P_1 \in \PP_1$ intersect
transversely. 
\item For a transversally intersecting pair, we define a {\em combined} polyhedral
decomposition
\[\PP:=\PP_0 \cap \PP_1:=\{P_{01}:=P_0 \cap P_1 : P_0 \in \PP_0, P_1 \in \PP_1\}. \]
\item 
For any minimal dimensional polytope $P_{01}$ and a constant
$\rho>0$, there is a family of dual polytopes
$(P_{01}^\dual)_\rho:=P_0^\dual \times \delta P_1^\dual$, which glue
(as in \eqref{eq:sim}) to yield a family of dual complexes
\begin{equation}
  \label{eq:Bdual-rho}
B^\dual_\rho:= (\cup_{P \in \PP}P^\dual_\rho)/\sim 
\end{equation}
\item  By assumption, each $b \in B^{\foc}$ is contained in the interior of a polytope 
$P \in \PP$ of maximal dimension, and we denote by $b^\dual := P_b^\dual$ the corresponding {\em dual focus-focus singularity} in $B^\dual$.
\end{enumerate}
\end{definition}

\begin{example}
  The simplest instance of this construction is a multiple cut consisting of two single cuts shown in Figure \ref{fig:rect}. The dual complex is a rectangle, and we obtain a family of possible complexes by varying the ratio of the sides of the rectangle.
  \end{example}

\begin{figure}[ht]\begin{center} 
\scalebox{.8}{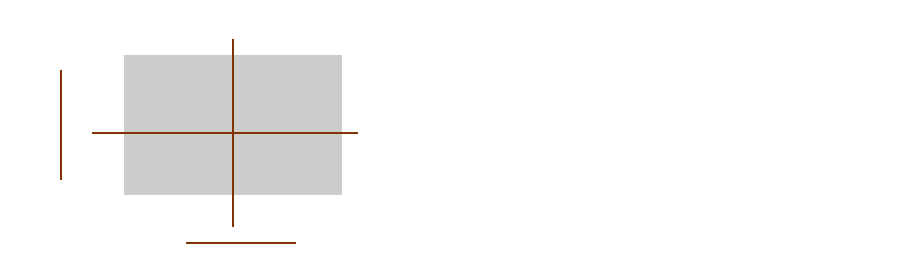}
\end{center} 
\caption{The combination $\PP$ of the polyhedral decompositions $\PP_0$, $\PP_1$ has a family of dual complexes $\{B^\dual_\rho\}_\rho$. The tropical graph $\Gamma$ is not realizable in $B^\dual_\rho$ if $\rho$ is large enough.}
\label{fig:rect}
\end{figure}

If the parameter $\rho$ is large enough, we may view the combined multiple cut $\PP$ as performing the cut $\PP_0$ followed by $\PP_1$. This is equivalent to saying that any tropical graph $\Gamma$ in $B^\dual_\rho$ can be transformed to a tropical graph in $B^\dual_0$ by collapsing
all edges whose directions have a non-zero in the $\t_{P_1}$-direction for any $P_1 \in \PP_1$.
The edges left uncollapsed are edges in $B^\dual_0$ and are defined as follows. 

\begin{definition}
  Let $\PP:=\PP_0 \cap \PP_1$ be the combination of two transversely
  intersecting multiple cuts $\PP_0$ and $\PP_1$. An edge $e$ of a
  tropical graph $\Gamma$ in $B^\dual_\rho$ (from \eqref{eq:Bdual-rho}) is a {\em $B^\dual_0$-edge} if
  $P(e)=P_0 \cap P_1$, $P_i \in \PP_i$, and
  $\cT(e)=(\cT_0,0) \in \t_{P_0} \times \t_{P_1}$. 
\end{definition}

Note that the condition in the Definition is automatically satisfied in the case when $P_1$ is top-dimensional and so, $\t_{P_1}=\{0\}$.

\begin{proposition}\label{prop:b0edge}
  Given an area bound $E_0$, there exists $\rho_0$ for which the
  following holds.  For a map type $\bGam$ with area $\leq E_0$ whose
  tropical graph $\Gamma$ is realizable in $B^\dual_\rho$ (from \eqref{eq:Bdual-rho}) for some 
  $\rho \geq \rho_0$,
  \begin{enumerate}
  \item
\label{part:b0edge1}
    $\Gamma$ is realizable in $B^\dual_\rho$ for all
    $\rho \geq \rho_0$, and 
  \item \label{part:b0edge2}
  there is a tropical graph $\Gamma_0$ in
  $B_0^\dual$ and an edge collapse morphism $\Gamma \to \Gamma_0$ that
  collapses all edges that are not $B_0^\dual$-edges.
  \end{enumerate}
\end{proposition}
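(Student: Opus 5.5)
The key input is a finiteness statement: there are only finitely many tropical graphs (edge directions and vertex polytopes, up to isomorphism) underlying map types of area at most $E_0$. This follows from Gromov compactness for broken maps in \cite{vw:trop}. The area of each component $u_v$, together with the lattice data $\cT(e)$, is bounded in terms of $E_0$, since the slopes $\cT(e)$ are determined by intersection numbers of $u_v$ with the cut divisors $X_Q$, and these are bounded by area. Given finiteness, it suffices to prove both parts for a single tropical graph $\Gamma$ and then take $\rho_0$ to be the maximum of the resulting thresholds.

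For a fixed $\Gamma$, realizability in $B^\dual_\rho$ is the feasibility of a linear system in the vertex positions $\cT(v) \in (P(v)^\dual)_\rho = P_0^\dual \times \rho P_1^\dual$. The unknowns split into a $\t_{P_0}$-component and a $\t_{P_1}$-component. The edge condition \eqref{eq:dircond} says $\cT(v_+)-\cT(v_-)$ is a nonnegative multiple of $\cT(e)=(\cT_0(e),\cT_1(e))$. I will rescale the $P_1$-components by $\rho^{-1}$. The constraint then becomes $\cT_0(v) \in P_0^\dual$, $\rho^{-1}\cT_1(v) \in P_1^\dual$, with edge length parameters $t_e \ge 0$ satisfying
\[ \cT_0(v_+)-\cT_0(v_-) = t_e \cT_0(e), \qquad \rho^{-1}(\cT_1(v_+)-\cT_1(v_-)) = \rho^{-1} t_e \cT_1(e). \]
The set of $\sigma = \rho^{-1}\in[0,\infty)$ for which this polyhedral system is feasible is a semialgebraic, in fact finite union of intervals, subset of the line. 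So feasibility for some arbitrarily large $\rho$ implies feasibility on a whole interval $[\rho_0,\infty)$. This gives part (\ref{part:b0edge1}). To be careful, I would take $\rho_0$ larger than every finite endpoint appearing for the finitely many $\Gamma$, so that any $\Gamma$ realizable beyond $\rho_0$ lies in an unbounded interval.

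For part (\ref{part:b0edge2}), I pass to the limit $\sigma \to 0$ along realizations. Any edge $e$ with $\cT_1(e) \neq 0$ has $t_e \cT_1(e)$ bounded by $\rho\,\mathrm{diam}(P_1^\dual)$, but $t_e\cT_0(e)$ must stay in the bounded $P_0^\dual$. After normalization the $P_0$-displacement of such edges can be forced to zero. More precisely, the limiting positions $\cT_0(v)$ define a realization in $B^\dual_0$ in which these edges have zero length. Collapsing them yields $\Gamma_0$. The edges that remain have $P(e)=P_0\cap P_1$ with direction $(\cT_0,0)$, i.e.\ they are $B^\dual_0$-edges. I also need to check that the collapsed graph still satisfies the balancing and direction conditions at the merged vertices, with each merged vertex assigned the polytope $P_0$. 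This comes from summing the balancing conditions over the collapsed subgraph.

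The main obstacle is justifying that edges with $\cT_1(e)\ne0$ genuinely collapse in the $P_0$-direction, i.e.\ that their $\cT_0$-displacement vanishes. This requires that the $\t_{P_1}$-direction is transverse to $\t_{P_0}$ and that $P_1^\dual$ is scaled only by $\rho$. I would try first to argue via uniqueness of such edges' realizations using the transversality $\t_{P_0}\cap\t_{P_1}=0$ together with boundedness of $P_0^\dual$.
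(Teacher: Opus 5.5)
Your part (1) is fine and is essentially the paper's argument. You combine finiteness of the tropical graphs of area-bounded types (uniform in $\rho$, since it does not use realizations) with control of the set of admissible $\rho$ for a fixed $\Gamma$. The paper gets a single interval from convexity of the set of pairs $(\rho,\text{realization})$; your semialgebraic ``finite union of intervals'' works equally well once $\rho_0$ exceeds all finite endpoints.

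The gap is in part (2), and it comes from your normalization $(P_{01}^\dual)_\rho=P_0^\dual\times\rho P_1^\dual$ with $\rho\to\infty$.
\begin{itemize}
\item In that scaling the $\t_{P_1}$-factor is the one that grows. Your bound only gives $t_e=O(\rho)$, and nothing forces the $\t_{P_0}$-displacement $t_e\cT_0(e)$ of an edge with $\cT_1(e)\neq0$ to vanish.
\item In your rescaled coordinates $(\cT_0,\rho^{-1}\cT_1)$, such an edge is displaced by $t_e(\cT_0(e),\rho^{-1}\cT_1(e))$. So edges with $\cT_0(e)\neq0$ tend to horizontal segments of nonzero length rather than collapsing.
\end{itemize}
Here is a concrete instance. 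Take the two transverse single cuts of Figure \ref{fig:rect}; in your normalization the crossing point has dual $[0,1]\times[0,\rho]$, with first coordinate in $\t_{P_0}$. Place two vertices over quadrants on opposite sides of the $\PP_0$-cut, with dual corners $(0,0)$ and $(1,0)$. Join them through a vertex at the crossing point by edges of directions $(1,1)$ and $(1,-1)$; add an edge of direction $(0,2)$ there if you want balancing. Then \eqref{eq:dircond} pins the middle vertex at $(\tfrac12,\tfrac12)$ for every $\rho\geq1$. So the graph is realizable for all large $\rho$ and every edge has $\cT_1\neq0$, yet the $\t_{P_0}$-positions of the outer vertices stay at $0$ and $1$. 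No limit of realizations collapses these edges, and transversality $\t_{P_0}\cap\t_{P_1}=0$ cannot change that. This is exactly the kind of graph that the example after the proposition asserts is \emph{not} realizable for large $\rho$. That assertion holds only in the opposite normalization.

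The missing point is that the statement (large $\rho$, limit $B^\dual_0$), Figure \ref{fig:rect}, and the paper's proof all use that opposite normalization, $\rho P_0^\dual\times P_1^\dual$: $\rho$ scales the $\PP_0$-factor and the $\PP_1$-factor stays bounded, despite the formula written in the definition. With this, the step you flagged as the main obstacle becomes a one-line estimate:
\begin{itemize}
\item Both endpoints of $e$ lie in $P(e)^\dual$, whose $\t_{P_1}$-factor is a fixed bounded polytope.
\item Hence $\cT_1(e)\neq0$ forces $t_e\le c$ independently of $\rho$.
\item In the rescaled complex $\frac1\rho B^\dual_\rho$, which converges to $B^\dual_0$, such an edge therefore has length at most $c/\rho$.
\end{itemize}
Now take realizations $\iota_\nu$ of $\Gamma$ in $\frac1{\rho_\nu}B^\dual_{\rho_\nu}$, available for $\rho_\nu\to\infty$ by part (1), and pass to a convergent subsequence. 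The limit is a realization $\iota_\infty$ in $B^\dual_0$ sending every non-$B^\dual_0$-edge to a point, and collapsing those edges yields $\Gamma_0$. No uniqueness or transversality argument is needed.
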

\begin{example}  The graph $\Gamma$ in Figure \ref{fig:rect} does not have
any $B_0^\dual$ edge in the path from $v_0$ to $v_1$. Since $v_0$ and
$v_1$ lie in different polytopes of $\PP_0$, the proposition implies
that the graph is not realizable in $B^\dual_\rho$ for large enough
$\rho$. This conclusion is easy to deduce from the figure.
\end{example}

\begin{proof}[Proof of Proposition \ref{prop:b0edge}]
We claim that  there are a finite number of tropical graphs $\Gamma$ corresponding to broken
  maps $u$ satisfying a given area bound $A(u) \leq E_0$, and that are realizable in some
  $B^\dual_\rho$.  This claim follows from the proof of Proposition
  T-\ref{T-prop:finno}, which bounds the number of tropical graphs in
  a fixed dual complex.  Indeed, the proof of Proposition
  T-\ref{T-prop:finno} applies even if the parameter $\rho$ varies,
  because the proof does not rely on realizations of tropical graphs. The proof of part
  \eqref{part:b0edge1} now follows, because the set of realizations of a given tropical graph $\Gamma$ is convex.  Therefore the set of $\rho$ for which $\Gamma$ is 
 representable in $B^\dual_\rho$ is  an interval. 

 For part \eqref{part:b0edge2},  consider a tropical graph $\Gamma$ that is representable in
 $B^\dual_\rho$ for $\rho \geq \rho_0$.    Consider the continuous family of complexes $\{\frac 1 {\rho} B^\dual_{\rho}\}_\rho$ which limits to $B^\dual_0$.  By assumption, there is a 
  sequence of realizations 
  \[ \iota_\nu : \Gamma \to \frac 1 {\rho_\nu} B^\dual_{\rho_\nu} .\] 
  By compactness, after passing to a subsequence, we may assume
  that $\iota\nu$ converges to a limit $\iota_\infty : \Gamma \to B^\dual_0$. In the limit $\iota_\infty$, any edge $e$ of $\Gamma$ that is not a $B^\dual_0$-edge is mapped to a point. Indeed, suppose that 
  $P(e)=P_0 \cap P_1$, $P_0 \in \PP_0$, $P_1 \in \PP_1$ so that $\t_{P(e)}=\t_{P_0} \times \t_{P_1}$, and the $\t_{P_1}$-component of $\iota_\nu(e)$ is non-zero. 
The length of $\iota_\nu(e)$
is bounded by $\frac c {\rho_\nu}$ for a uniform constant $c$.  Therefore, 
\[ \lim_{\nu \to \infty} \ell( \iota_\nu(e)) = 0  . \] 
Thus, if $\Gamma_0$ is defined by collapsing all the $B_0^\dual$-edges in $\Gamma$, $\iota_\infty$ is a realization of $\Gamma_0$ in $B_0^\dual$. 
\end{proof}

\section{Curves in the elementary pieces}
\label{sec:piecesec}

By an {\em elementary piece}, we mean a piece in the broken symplectic
manifold that is either a toric manifold possibly containing the
Lagrangian torus fiber, or is an almost toric manifold with a
collection of focus-focus singularities along the same branch cut.
We will show in Section \ref{sec:atinfinity} that almost toric del Pezzos
possess polyhedral decompositions with the following property:  For any broken disk contributing to the potential the tropical graph has ``collisions in the interior''  in the sense of Definition \ref{def:coll-interior}.
In the case of holomorphic spheres in toric varieties, the sphere with two intersections with the boundary divisors is called a {\em holomorphic cylinder}, and the one with three intersections is called a {\em holomorphic pair of pants}. 

The disk potential is a sum over the set of rigid tropical graphs, and each summand is a product of
the curve counts corresponding to each of the vertices. 
We precisely define the weighted curve count $m(v)$ on each vertex.  On a broken map
type $\bGam$ we apply the distribution of constraints result (Theorem
\ref{thm:split}) so that the moduli space is a (weighted) product of
the moduli spaces of relative maps
$\M_{\bGam(v)}(\XX_{P(V)}, L_{P(v)}, \ul \YY_v)$. 

\begin{definition}For any vertex
$v \in \Ver(\bGam)$, define, for some particular perturbation data, the {\em count of unframed relative maps} as
\[m(v)^\unfr:= \sum_{u \in  \M_{\bGamma(v)}(\XX_{P(v)},L_{P(v)},\ul \YY_v)}
\frac{    \Hol_L([\partial u_v]) \eps(u)}{d_\black(\Gamma(v))!},\]
where $ \Hol_L([\partial u_v]) $ is from \eqref{eq:wxl} and is equal to $1$ if $u$ is a sphere. 

We adopt the convention of absorbing the framing symmetry factor of an edge $e$
into the vertex $v$ where there is a point constraint on $u_v$ at the edge $e$, 
and define the {\em count of framed relative maps}
\begin{equation}
  \label{eq:mv}
m(v):= \left(\prod_e |\cT(e)| \right) m(v)^\unfr, 
\end{equation}
where the product is over edges $e$ incident on $v$, and for which $\YY_{v,e}$ is a point constraint. 
    \end{definition}

    \begin{remark}   Under a primitivity assumption, the counts $m(v)$
    are independent of the choice of perturbation by the argument of Proposition \ref{prop:inv}.
    \end{remark}

\begin{proposition}The disk potential of a monotone Lagrangian $L$ compatible
with the polyhedral decomposition $\cP$ above 
\[W_{X,L} = \sum_\bGam\tfrac 1 {|\Aut(\bGam)|}(\prod_v m(v)), \]
where the sum is over all rigid map types $\bGam$.
\end{proposition}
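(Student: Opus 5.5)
The argument combines three earlier results. Corollary \ref{cor:w} replaces the unbroken potential by the broken one. Theorem \ref{thm:split} then turns each broken moduli space into a product over vertices. Finally, we collect the framing factors and the holonomy and sign weights vertex by vertex.

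First, by Corollary \ref{cor:w}, $W_{X,L}=W_{\XX,L}$, where $W_{\XX,L}$ is given by \eqref{eq:bdp} as a sum over rigid broken types $\bGam$ with a single boundary point constraint $p\in L$. Each summand has weight
\[ \frac{\Hol_L([\partial u])\,\eps(u)}{d_\black(\bGam)!}. \]
Group the sum by type $\bGam$. Since $L$ is monotone, Theorem \ref{thm:cobord} and Gromov compactness give finitely many rigid types with nonzero contribution. For each rigid type, primitivity of the vertex subtypes $\bGam_v$ is needed to apply Theorem \ref{thm:split}. I would argue this as follows. Each disk vertex has exactly one tropical edge, because there is a single disk component containing $L$ and rigidity together with Lemma \ref{lem:orient} limits its intersections. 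Sphere vertices are primitive in the relevant sense, since relative divisors are toric.

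Second, I apply Theorem \ref{thm:split}. This gives a cobordism from $\M_\bGam(\XX,L,\ul Y)$ to
\[ n(\bGam)\Big(\prod_e\mu_e\Big)\prod_v\M_{\bGam(v)}(\XX_{P(v)},L_{P(v)},\ul\YY_v). \]
Since the cobordism is oriented, signed counts agree. The holonomy factorizes because $\Hol_L([\partial u])$ depends only on the boundary class of the unique disk vertex; all other vertices contribute $1$. The sign $\eps(u)$ factorizes as a product of vertex signs, granting compatibility of orientations under the product decomposition, which I take from the orientation conventions in \cite{vw:trop}. Substituting $n(\bGam)=d_\black(\bGam)!/(|\Aut(\bGam)|\prod_v d_\black(v)!)$, the factor $d_\black(\bGam)!$ cancels against the denominator in \eqref{eq:bdp}. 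The factors $1/d_\black(v)!$ are then absorbed into the definition of $m(v)^\unfr$.

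Third, the lattice-length factors $\mu_e=|\cT(e)|$ are distributed. Theorem \ref{thm:split} says each tropical edge has exactly one endpoint $v_+$ carrying a point constraint. Assigning $\mu_e$ to that endpoint reproduces exactly the product in \eqref{eq:mv}. Hence each type contributes $\frac{1}{|\Aut(\bGam)|}\prod_v m(v)$, and summing over rigid types gives the claim.

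The main obstacle is the bookkeeping of signs and of the symmetry factor. One must check that the inductive edge-cutting in Theorem \ref{thm:split} yields vertex-wise orientation signs with no extra global sign. One must also check that the automorphism count from the marking distribution is exactly $|\Aut(\bGam)|$, with no overcounting from interchangeable vertices. I would verify this by one induction on the number of tropical edges, cutting along an outgoing edge as in the proof of Theorem \ref{thm:split} and tracking the product orientation at each step.
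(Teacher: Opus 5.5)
Your route is the paper's: pass to the broken potential by Corollary~\ref{cor:w}, then apply Theorem~\ref{thm:split}. The paper treats the remaining bookkeeping as immediate, adding only that monotonicity forces every rigid broken disk to have area $1$, so all terms carry the same power of $q$. The sign and symmetry issues you single out as the main obstacle are already built into Theorem~\ref{thm:split}. The cobordism there is oriented, and the factor $n(\bGam)$ already contains $|\Aut(\bGam)|$, so you do not need a separate induction.

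The one step that fails as written is your justification of the primitivity hypothesis of Theorem~\ref{thm:split} at the disk vertex. Having a single disk component says nothing about how many relative intersections it has. Lemma~\ref{lem:orient} gives exactly one \emph{outgoing} edge, not univalence, and rigidity does not force univalence either. Take a disk vertex with $k$ tropical edges and simple intersections, so Maslov index $2k$. It moves in a $(2k-2)$-dimensional family through $p$. Each of the $k-1$ incoming edges imposes a codimension-two point constraint, so the vertex is still rigid.

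In the paper, univalence of the disk vertex is a standing hypothesis: the decomposition is one whose rigid graphs have collisions only at generic interior points, and disk vertices are required to be univalent. For the standard decompositions it is proved separately by the monotonicity area argument of part~\eqref{part:collg1} of Proposition~\ref{prop:collg}. There, keeping only the branch through the outgoing edge and completing it gives a Maslov-index-two broken disk of strictly smaller area. Cite one of these instead.

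Similarly, for sphere vertices the reason is not that the relative divisors are toric; the focus-focus pieces are not toric, and their multiple covers are not primitive in the literal sense. The relevant point is that sphere bubbling has codimension two, as in the proof of Proposition~\ref{prop:inv}, which needs primitivity only at disk vertices.
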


\begin{proof}  By monotonicity, the area of any rigid broken disk contributing to the potential is $1$, and therefore the exponent of $q$ is $1$.  The statement of the Proposition is immediate from Theorem \ref{thm:split}.
\end{proof}

Our goal is now to compute the multiplicities of the vertices.   In theory, all of the multiplicities already appear in the literature, but in various foundational schemes.  We give proofs for each case, since none are particularly long.

\subsection{Holomorphic spheres in toric varieties}
\label{sec:spheresintoric}

We first count holomorphic spheres in toric varieties that intersect boundary divisors at isolated points.
The holomorphic spheres we consider form a component of a broken map,
and are therefore modelled on a graph $\bGam_v$ single vertex $v$  and a set $\Edge_\rel(\bGam_v)$ of relative edges.

\begin{notation}  In the notation of Section \ref{subsec:bmap}, such maps are defined on punctured domain curves
\[u_v : C_v^\circ \to \XX_{P(v)}, \quad C_v^\circ = \P^1 \bs \{z_e\}_{e \in \Edge_\rel(\bGam_v)}. \]
We suppose that $P(v)$ is a polytope not containing any focus-focus singularity, 
or the image of the Lagrangian. If $P(v)$ does not intersect $\partial \Delta$ then
$\XX_{P(v)} \simeq (\C^\times)^n$; otherwise it additionally contains a
divisor at infinity corresponding to $X_{P(v)} \cap \Phinv(\partial \Delta)$. When $u_v$ is viewed as a map to the toric compactification $\ol \XX_{P(v)}$, $u$ has removable singularities at punctures. Equivalently, in the neighborhood of a relative marking $z_e$, the map $u$ is asymptotically close at the puncture to a trivial cylinder
\[z \mapsto (c_1(z-z_e)^{\cT(e)_1}, \dots, c_n(z-z_e)^{\cT(e)_n})\]
for some $c_i \in \C^\times$, where 
\[ \cT(e):=(\cT(e)_1,\dots, \cT(e_n)) \in \Z^n \] 
is the {\em direction} of the relative edge $e$.
\end{notation}

The directions $\{\cT(e)\}_e$ are part of the data of the map type $\bGam$ of the map $u$, and they satisfy a {\em balancing condition} as follows:

\begin{lemma} 
  Let $u_v : \P^1 \bs \{z_e\}_e \to (\C^\times)^n$ be a relative map with a single component in a toric piece $\XX_{P(v)} \simeq (\C^\times)^n$ that does not intersect the boundary divisor of $X$.  The edge directions $\cT_e \in \Z^n$ at the relative markings $z_e$ satisfy
  \begin{equation}
    \label{eq:balance}
    \sum_e \cT(e)=0.    
  \end{equation}
\end{lemma}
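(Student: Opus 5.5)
The balancing condition follows from the argument principle applied to each coordinate of $u_v$. Write $u_v = (u_1,\dots,u_n)$ in the coordinates of $\XX_{P(v)} \simeq (\C^\times)^n$. Each $u_i$ is a holomorphic function on $C_v^\circ = \P^1 \bs \{z_e\}_e$ with values in $\C^\times$, so it has no zeros or poles on $C_v^\circ$. By the asymptotic description of $u_v$ near each relative marking, the coordinate $u_i$ near $z_e$ has the form $c_i(z-z_e)^{\cT(e)_i}$ up to a factor tending to a nonzero constant. Hence $u_i$ extends to a meromorphic function on all of $\P^1$, with $\mathrm{ord}_{z_e}(u_i) = \cT(e)_i$.

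There is one point to check here. The map $u_v$ is only $J$-holomorphic for a cylindrical, $\t$-invariant almost complex structure, so we need $u_v$ to be honestly holomorphic into $(\C^\times)^n$, or at least need the winding numbers to be well defined. In the toric piece away from the boundary divisor, the cylindrical almost complex structure can be taken to be the standard one. Even if it is not, the order $\mathrm{ord}_{z_e}(u_i)$ can be defined as the winding number of $u_i$ around a small loop $\gamma_e$ encircling $z_e$:
\[ \cT(e)_i = \frac{1}{2\pi i}\oint_{\gamma_e} \frac{du_i}{u_i}. \]
This is a homotopy invariant of the map $C_v^\circ \to \C^\times$, and the asymptotic convergence to the trivial cylinder of slope $\cT(e)$ identifies it with $\cT(e)_i$.

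The conclusion is then topological. The loops $\gamma_e$, all oriented as boundaries of the small disks around the punctures, together bound the complement of those disks in $\P^1$. So $\sum_e [\gamma_e] = 0$ in $H_1(C_v^\circ;\Z)$. Pushing this relation forward along $u_i : C_v^\circ \to \C^\times$ and pairing with the class $[d\theta/2\pi] \in H^1(\C^\times;\Z)$ gives
\[ \sum_e \cT(e)_i = 0 \]
for each $i$. Equivalently, a meromorphic function on $\P^1$ has total degree zero. This is \eqref{eq:balance}.

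The main obstacle is justifying that the winding number at each puncture equals $\cT(e)_i$. This needs the exponential convergence of $u_v$ to the trivial cylinder at each cylindrical end, as in the removable singularity and asymptotics results of \cite{vw:trop}. That convergence lets us homotope $\gamma_e$ into the region where $u_v$ is $C^0$-close to $z\mapsto c(z-z_e)^{\cT(e)}$, and the winding numbers of the two agree there.
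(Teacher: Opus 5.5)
Your proof is correct and is essentially the paper's argument. The paper observes that the product of the loops $\gamma_e$ around the punctures is trivial in $\pi_1(\P^1 \bs \{z_e\}_e)$, and that $(u_v)_*[\gamma_e] = \cT(e)$ in the abelian group $\pi_1((\C^\times)^n) \cong \Z^n$. This is the same as your relation $\sum_e [\gamma_e] = 0$ in $H_1$, pushed forward coordinatewise. Your extra step, deriving the identification of each winding number with $\cT(e)_i$ from the asymptotics to the trivial cylinder, fills in a point the paper simply asserts.
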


\begin{proof}  The statement of the lemma is a computation with fundamental groups. 
  Let $\gamma_e$ be a loop around $z_e$. The product of the loops
  $\prod_e[\gamma_e]$ is the identity in $\pi_1(\P^1 \bs \{z_e\}_e)$,
  and therefore, 
  \[ (u_v)_* \left(\prod_e[\gamma_e] \right)=\Id \in \pi_1((\C^\times)^n) . \] 
  Since
  \[ (u_v)_*([\gamma_e])=\cT_e \in \pi_1((\C^\times)^n) \]
  the balancing condition follows.
\end{proof}

\begin{remark}
  We also consider holomorphic cylinders intersecting the
  boundary. Such a map lies on a piece $\XX_P$ that intersects the
  boundary divisor $\Phinv(\partial \Delta)$ of $X$, has a single
  relative marking $z_e$ of slope $\cT(e)$ and an intersection with
  the divisor $\Phinv(\partial \Delta) \cap \XX_P$. The argument in
  the proof of the balancing condition implies that $\cT(e)$ is the
  outward normal vector of $\Phinv(\partial \Delta) \cap \XX_P$.
\end{remark}



\begin{proposition}
  \label{prop:spheres}
  Let $X_{P(v)}$ be a projective toric variety with $\dim(X)=4$. Let $\bGam$ be a relative map type with a single vertex $v$ with a collection of constraints $\ul Y$ that are one of the following:
  \begin{enumerate}
  \item {\rm(Holomorphic cylinder in the interior)}  The vertex $v$ has
    two relative edges $e$, $e'$ with slopes $\cT(e)$, $-\cT(e)$, $\XX_{P(v)}$ does not intersect the boundary of $\Phinv(\partial \Delta)$,  and $\ul \YY$ consists of a point constraint at the relative marking $z_e$, or
  \item {\rm(Holomorphic cylinder intersecting the boundary)}
    the vertex $v$ has a relative edge of primitive slope $\cT(e)$, and a single intersection with a boundary toric divisor $\XX_{P(v)} \cap \Phinv(\partial \Delta)$
    that is not a relative divisor, and $\ul \YY$ consists of a point constraint at the relative marking $z_e$, or
  \item {\rm(Holomorphic pair of pants in the interior)} the vertex $v$ has three relative markings $z_1$, $z_2$, $z_3$, whose slopes $\cT_1$, $\cT_2$, $\cT_3$ sum to zero, $\XX_{P(v)}$ does not intersect the boundary of $\Phinv(\partial \Delta)$, 
    and $\ul \YY$ consists of point constraints at two of the markings $z_1$, $z_2$.
  \end{enumerate}
  In all the cases, the moduli spaces are zero-dimensional, and 
  \begin{equation}
    \label{eq:mv-sphere}
    m(v)=
    \begin{cases}
      1, &\text{$\bGam_v$ is a holomorphic cylinder,}\\
      |\det(\cT_1, \cT_2)|, &\text{$\bGam_v$ is a holomorphic pair of pants.}
    \end{cases}
  \end{equation}
\end{proposition}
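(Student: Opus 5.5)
The approach is to write the relative maps explicitly. In each case we use the standard toric complex structure on $\XX_{P(v)}$, which is translation invariant. The count is invariant by Proposition \ref{prop:inv}, since each type is primitive and the nontrivial constraints are point constraints. Because every curve has genus zero and $\XX_{P(v)}$ is (an open subset of) $(\C^\times)^2$, every relative map is a monomial map, after identifying the domain with $\P^1$ and the markings with distinct points $z_e$:
\[ u(z) = c \prod_e (z - z_e)^{\cT(e)}, \quad c \in (\C^\times)^2 . \]
Here we use the convention that a factor is omitted for the marking at $\infty$. For a divisor $\Phinv(\partial\Delta)\cap \XX_P$ that is not relative, the intersection point plays the role of a marking with slope the outward normal, as in the Remark above. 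This holds because $\log|u|$ is harmonic with prescribed logarithmic singularities, so $u$ is determined by its zeros and poles up to the constant $c$. Regularity of the standard structure follows from the fact that the linearized operator is the trivial $\bar\partial$ on $\C^2$-valued functions twisted by nothing. Hence it is surjective, and the moduli spaces are cut out transversally of the expected dimension.

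The dimension count and the cylinder cases come next. For two markings, fix them at $0,\infty$. The remaining domain automorphisms are $z\mapsto \lambda z$, which act on $c$ by $\lambda^{\cT(e)}$. The unframed moduli space is therefore $(\C^\times)^2/\C^\times_{\cT(e)}$, which is two-dimensional. The projected evaluation at $z_e$ is the class of $c$ in $\XX_{P(e)}/T_{\cT(e),\C}$, and this is a bijection. So the point constraint at $z_e$ leaves exactly one unframed map. For the cylinder meeting the boundary, the same argument applies with the boundary intersection replacing the second puncture. Primitivity of $\cT(e)$ ensures the curve is embedded and unmultiplied. Signs are $+1$ because the maps are holomorphic with complex-orientation, and there are no interior markings. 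The framing factor in \eqref{eq:mv} multiplies by $|\cT(e)|$, and this is offset by the $\lambda$-stabilizer. The subgroup $\{\lambda : \lambda^{\cT(e)}=1\}$ has order equal to the lattice length of $\cT(e)$, so the unframed count is $1/|\cT(e)|$ as an orbifold count. This gives $m(v)=1$. Checking this bookkeeping against the convention in \eqref{eq:mv} is a point to state carefully.

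The pair of pants is the main case, and the main obstacle is the computation of the determinant. Fix the three markings at $0,1,\infty$, so the domain has no automorphisms. The maps are $u(z)=c\, z^{\cT_1}(z-1)^{\cT_2}$ with $c\in(\C^\times)^2$, and the moduli space is two-dimensional. The two point constraints impose conditions in $\XX/T_{\cT_1,\C}\cong\C^\times$ and $\XX/T_{\cT_2,\C}\cong\C^\times$. The tropical evaluations at $z=0$ and $z=1$ are $c(-1)^{\cT_2}$ and $c$ respectively. The combined evaluation map is therefore, up to translation, the homomorphism
\[ (\C^\times)^2 \to (\C^\times)^2/T_{\cT_1,\C} \times (\C^\times)^2/T_{\cT_2,\C} , \]
whose character lattice map is given by the primitive annihilators of $\cT_1,\cT_2$. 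I expect the number of preimages of a generic point to equal the index of the sublattice spanned by $\cT_1^\perp/\ell(\cT_1)$ and $\cT_2^\perp/\ell(\cT_2)$. This index is $|\det(\cT_1,\cT_2)|/(\ell(\cT_1)\ell(\cT_2))$. Each solution counts with sign $+1$ by complex orientation. Multiplying by the framing factors $|\cT_1||\cT_2|$ from \eqref{eq:mv}, for the two point-constrained edges, gives $m(v)=|\det(\cT_1,\cT_2)|$. The step I would verify most carefully is this lattice index computation, together with consistency with the framing convention. It can be cross-checked against the case $\cT_1=(1,0)$, $\cT_2=(0,1)$, where the answer must be $1$.
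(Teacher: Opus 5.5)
Your proposal is correct and follows essentially the same route as the paper. Both arguments use the standard toric structure with explicit monomial maps. For the possibly multiply covered cylinder, a factor $1/|\cT(e)|$ is cancelled by the framing factor. For the pair of pants, the count is $|T_{\cT_1,\C}\cap T_{\cT_2,\C}| = |\det(\cT_1,\cT_2)|/(|\cT_1||\cT_2|)$, and your kernel/lattice-index computation is exactly this intersection count.

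The only bookkeeping difference is where the $1/d$ for a $d$-fold cylinder comes from. In the paper's stabilized setup it comes from the $N!/d$ distinct labellings of the $N$ interior Donaldson-divisor markings, on which the deck group acts freely, rather than from an orbifold stabilizer. So ``no interior markings'' is not literally true there, but the resulting weight is the same.
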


\begin{proof}
  For a toric variety equipped with the standard alost complex structure, the moduli spaces of spheres intersecting boundary divisors at isolated points is transversely cut out, and the evaluation map at a single marking is submersive. By a similar proof, the evaluation maps at two markings with non-parallel slopes is also submersive.
  
  Suppose that the almost complex structure is standard (and
  domain-independent).  For a holomorphic cylinder with $|\cT_e|=d$,
  the map $u_v$ is a $d$-fold cover of its image.  On any such curve, markings can be
  labelled in $\frac{d_\black(v)}{d}$ distinct ways, and therefore,
  every curve contributes $\frac 1 d$ to $m(v)^\unfr$.  Thus 
  \[ m(v)=|\cT(e)| m(v)^\unfr=1 . \] 

  The curve count for  holomorphic pairs of pants is
  similar to the counts in Mikhalkin \cite[Theorem 1]{mikhalkin} and 
  Nishinou-Siebert \cite[Proposition 8.8]{ns}. We outline the proof in
  Lemma 4.4 in Venugopalan-Woodward \cite{vw:trop}. For a map type $\bGam$
  of holomorphic pair of pants with constraints $\ul \YY$ on two of the markings, 
  any two maps in the moduli space 
  $\M_\bGam(X, \ul \YY)$ are related by an
  element $g$ of the tropical symmetry group
  $T_\trop(\bGam) \simeq (\C^\times)^2$ that does not alter the
  projected evaluation at the relative markings,
  A tropical symmetry $g \in T_\trop(\bGam)$ does not alter the projected evaluations at
   $z_1$, $z_2$ exactly if 
   $g \in T_{\cT_j, \C}$ for $j=1,2$.
  That is, there exist $z_j \in \C^\times$, $j=1,2$ such that
  \begin{equation}
    \label{eq:gz}
    g=z_1^{\cT_1}=z_2^{\cT_2}.   
  \end{equation}
  By the calculation in the proof of \cite[Lemma 4.4]{vw:trop},
  \eqref{eq:gz} has $\det(\cT_1, \cT_2)$ solutions for
  $(g,z_1,z_2)$. Given a solution $(g,z_1,z_2)$,
  the pairi $(g,\theta_1 z_1,\theta_2 z_2)$ is also a solution where
  $\theta_j^{|\cT_j|}=1$ for $j=1,2$, and so, there are
  $\frac {|\det(\cT_1, \cT_2)|}{|\cT_1||\cT_2|}$ distinct values of
  $g \in T_{\cT_1, \C} \cap T_{\cT_2,\C}$, Therefore,
  \[ m(v)^\unfr=\frac {|\det(\cT_1, \cT_2)|}{|\cT_1||\cT_2|}, \quad m(v)=|\det(\cT_1, \cT_2)| . \] 
\end{proof}

\subsection{Disks in toric varieties}

Next we deal with pieces of the degeneration containing the Lagrangian.
Let $X$ be a compact toric variety  and $L \subset X$ a Lagrangian torus orbit.  Let $Y = \{ y \} \subset L$ be a generic point constraint.  

\begin{lemma} Suppose $L$ is a moment fiber in a toric manifold $X$
equipped with its standard complex structure, and $D_i \subset X$
be a boundary divisor.  The number of rigid disks $u:  C \to X$ with boundary in $L$ passing through a generic point $\ul{Y} = \{ Y \}, 
Y = \{ y \in L \} $ and  with a single intersection with  $D_i$ is $m(X,\ul{Y}) = 1$.
\end{lemma}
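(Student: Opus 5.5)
The plan is to reproduce the Cho--Oh classification of holomorphic disks with boundary on a toric moment fiber, and then read off the count directly. For the standard complex structure, any holomorphic disk $u:(D,\partial D)\to (X,L)$ is, after writing $X$ as a quotient of $\C^N$ minus the unstable locus by the complexified kernel torus, a lift to a map into $\C^N$ whose coordinates are finite Blaschke products. The homotopy class is determined by the intersection numbers with the divisors $D_1,\ldots,D_N$. I take this classification from Cho--Oh \cite{chooh:fano} rather than reprove it; at most I would sketch the lifting argument via reflection across the boundary.

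\textbf{Step 1: form of the disks.} In the class $\beta_i$, where the disk meets only $D_i$ and meets it exactly once, the lift has all coordinates constant of modulus fixed by $L$, except the $i$-th. The $i$-th coordinate is a degree-one Blaschke factor $c\,\frac{z-\alpha}{1-\bar\alpha z}$. Modulo $\Aut(D)$ we may take $\alpha=0$. The moduli space of such disks with one boundary marking is therefore a torsor for $T\times S^1$ modulo the reparametrizations fixing the marking. Concretely, the unparametrized disks form a $T/S^1_i$-family and the boundary marking adds one circle, so the evaluation map $\ev:\M_1(\beta_i)\to L$ is a diffeomorphism onto $L\cong T$. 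The key check is that the stabilizer is trivial: the circle $S^1_i$ generated by the primitive normal $\nu_i$ acts by rotating the disk, and this rotation is absorbed by moving the marking.

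\textbf{Step 2: regularity and count.} Transversality of the linearized operator follows from Cho--Oh's computation that the index problem splits into $\bar\partial$ operators on line bundles with nonnegative Maslov indices, which are surjective. Hence the moduli space is cut out transversely of dimension $n+\mu-3+1=n$, where $\mu=2$. Since $\ev$ is a diffeomorphism, the preimage of a generic point $y$ is exactly one disk. Its sign $\eps(u)=+1$ for the standard torus-invariant spin structure, again by Cho--Oh. With trivial local system the contribution is $1$, and in general it is $\Hol_L(\partial\beta_i)$; the lemma concerns the underlying count.

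\textbf{Main obstacle.} The hardest point is reconciling this with the paper's setting. There the relevant moduli spaces use domain-dependent almost complex structures and Donaldson-divisor markings; here $D_i$ plays the role of a relative divisor in the cut space. I would argue this via Proposition~\ref{prop:inv}. The type is primitive, since it has a single intersection with the divisor, so the count is invariant under a change of perturbation data. It can therefore be computed with the standard complex structure, using the interior markings to absorb the factor $d_\black!$ exactly as in the cylinder case of Proposition~\ref{prop:spheres}.
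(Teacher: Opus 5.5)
Your proposal is correct and follows the paper's proof. Both use the Cho--Oh description of disks on a toric fiber as lifts to Blaschke products with Maslov index $\sum_j 2d_j$, which forces a rigid disk meeting $D_i$ once to be the degree-one factor in the $i$-th coordinate. Your additional steps — the evaluation map $\M_1(\beta_i)\to L$ being a diffeomorphism, Fredholm regularity, the sign, and invariance under change of perturbation data via Proposition~\ref{prop:inv} — make explicit what the paper's short proof leaves implicit.
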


\begin{proof}
The proof is similar to that for the statements about spheres in the previous subsection, 
but now using the description of holomorphic disks as  Blaschke products 
as described in Cho-Oh \cite{chooh:fano}. 
Suppose that $X$ is given as the git quotient of a vector space $\ti{X} = \C^k$ by the linear action of a torus $G \subset (\C^\times)^k$.   
Any disk in $X$ with boundary in $L$ lifts to a disk in $\ti{X}$ with boundary in
some torus $\ti{L} = (S^1)^k$ mapping to $L$ under the quotient map. 
Such a holomorphic disk is given by a product
\begin{equation} \label{fig:blaschke} 
u(z) = \left(   c_i \prod_{j=1}^{d_i} \frac{z - a_{i,j}}{1 - \ol{a}_{i,j} z}   \right)_{i=1}^k .\end{equation}
By \cite[Theorem 5.1]{chooh:fano}, the Maslov index of such a product is the sum 
\[ I(u) = \sum_{i=1}^{2k}  2d_i .\]
The only rigid such disks meeting $D_i$ are those Blaschke products
with $d_i = 1$.  This justifies \eqref{mv:toricdisk}.
\end{proof}

This completes the proof of Theorem \ref{thm:potthm}.

\begin{figure}[ht]\begin{center} 
\scalebox{.7}{\includegraphics{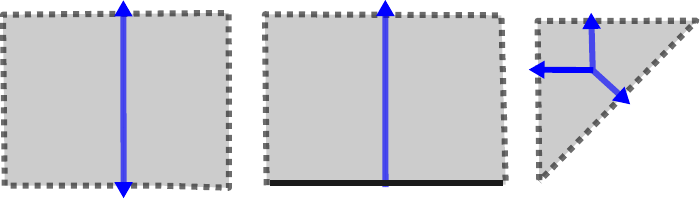}}
\end{center} 
\caption{Holomorphic curves in the toric local models} 
\label{fig:p_cyl}
\end{figure}

\begin{proposition}  \label{prop:noconstant} Suppose that $X$ is a compact symplectic manifold  of arbitrary dimension, $L$ is contained in a piece $\XX_{P_0}$ of $\XX$ which is a toric variety, that is, 
$P_0 \in \cP$ is a  polytope of maximal dimension and 
$\Phi(L)$ is contained in $P_0$, and $L$ is a toric moment fiber.  Then the disk potential $W_L$ has no constant term, that is, the coefficient of $y^0$
in $W$ vanishes. 
\end{proposition}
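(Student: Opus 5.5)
We compute $W_L$ by degeneration: by Corollary \ref{cor:w}, $W_L = W_{\XX,L}$, and by Theorem \ref{thm:split} (with the Proposition above) the broken potential is a sum over rigid map types $\bGam$ of products of vertex counts $m(v)$. A term contributes to the coefficient of $y^0$ exactly when the boundary loop $[\partial u]\in \pi_1(L)\cong \t_\Z$ vanishes. So the plan is to show that every rigid broken disk through the generic point $p \in L$ has nonzero boundary class.

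\emph{Step 1: the disk vertex is a single Blaschke disk with one relative edge.} Let $v_0$ be the unique disk vertex; it lies in the toric piece $\XX_{P_0}$. Since $P_0$ is top-dimensional and contains $\Phi(L)$ in its interior, the component $u_{v_0}$ is a holomorphic disk in the toric variety $\ol{\XX}_{P_0}$, with boundary on the moment fiber $L$. Its relative edges $e$ correspond to intersections with the toric divisors $X_Q$, $Q \subset P_0$, with slopes $\cT(e) \in \t_\Z$.

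\emph{Step 2: the boundary class equals the sum of slopes.} The lifted Blaschke product \eqref{fig:blaschke} shows that $[\partial u_{v_0}] \in \pi_1(L) = \t_\Z$ is the sum over relative markings of their slopes. This is the disk analogue of the fundamental-group computation in the proof of the balancing condition \eqref{eq:balance}: the loop $\partial C$ is homotopic in $C^\circ$ to the product of small loops around the interior punctures, and each such loop maps to $\cT(e)$.

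\emph{Step 3: nonvanishing.} The Maslov index of $u_{v_0}$ is positive, since the disk is nonconstant and has positive area after capping in the compactification. Hence there is at least one relative edge, and every slope $\cT(e)$ lies in the cone of inward normals of the facets of $P_0$ that it meets. All slopes of a single disk vertex point into the closed cone dual to $P_0$ at those facets. If the slopes met several facets of $P_0$ whose normals span a cone containing a line, the disk would be non-primitive, and a rigid disk with a single point constraint would have index exceeding two. Monotonicity and the index formula of Lemma \ref{lem:orient} (exactly one unconstrained edge) then force exactly one relative edge. The class $[\partial u]$ is therefore a nonzero multiple of a facet normal, so it is nonzero.

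The main obstacle is Step 3 in arbitrary dimension: ruling out disk components whose relative slopes cancel, for example two edges with opposite slopes through opposite facets of a slab $P_0$. My first attempt there is the area/energy argument. A toric disk with boundary on $L$ has area $\sum_e \langle \cT(e), \lambda - \text{(facet)}\rangle$-type positive contributions, and it is rigid through a point only when its Maslov index is two. The Cho--Oh classification then says that the Maslov index two toric disks are exactly the basic disks, each of which has boundary class equal to a primitive facet normal, which is never zero.
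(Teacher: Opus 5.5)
Your Steps 1--2 together with the closing appeal to Cho--Oh are the paper's proof. The paper's proof consists only of the assertion that a holomorphic disk in the toric piece $X_{P_0}$ with boundary on the moment fiber has nontrivial boundary class. Here the boundary class of the broken disk is that of its disk component, namely $\sum_e \cT(e)$. You rightly notice that Cho--Oh does not give this for free: a Blaschke product with one factor toward each of two opposite facets of $P_0$ (or toward any facets whose normals sum to zero) has trivial boundary class. So the real content is that the disk vertex $v_0$ carries a single relative edge. The argument you give for this in Step 3 does not work.

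Index considerations cannot force $v_0$ to be univalent. A disk component with $k$ relative edges has the form $u(z)=c\prod_e\bigl(\frac{z-a_e}{1-\ol{a}_e z}\bigr)^{\cT(e)}$ with $c\in T$. With its boundary marking sent to $p$, these form a family of real dimension $2k-2$. Each of the $k-1$ incoming edges carries a point constraint on the projected evaluation, of real codimension $2$ in dimension four. Hence the component is rigid for every $k$. In particular:
\begin{itemize}
\item Lemma \ref{lem:orient} (exactly one outgoing edge) allows arbitrarily many incoming edges.
\item Non-primitivity alone gives no contradiction.
\item ``Rigid through a point only if Maslov index two'' holds for an unbroken toric disk, but not for a component that also carries point constraints at relative markings.
\end{itemize}

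What is missing is the energy argument from monotonicity, Proposition \ref{prop:collg}\eqref{part:collg1}. Suppose $v_0$ had edges $e_1,\dots,e_k$ with $k\ge 2$ and $e_1$ outgoing. Replace $u_{v_0}$ by the basic disk in direction $\cT(e_1)$, and keep only the sphere tree beyond $e_1$, re-matched at $e_1$ using the cobordism of Proposition \ref{prop:inv}. This broken disk has positive area strictly less than that of $u$, hence Maslov index strictly between $0$ and $2$. That is impossible for a monotone orientable torus, whose Maslov indices are even. So $v_0$ is univalent, $[\partial u]=\cT(e_1)\neq 0$, and the $y^0$ coefficient vanishes. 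Your last paragraph names an area argument but never runs it, and falls back on the same rigidity claim.
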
   

\begin{proof}  The proof of the statement of the Proposition 
follows from the non-existence of Blaschke products with trivial boundary class. 
That is, let $u: C \to X_{P_0}$ be a holomorphic in the toric variety $X_{P_0}$ with boundary in a Lagrangian torus fiber.  By the classification in Cho-Oh \cite{chooh:fano}, the boundary class $[ u(\partial C)] \in H_1(L)$ is non-trivial.
\end{proof}

\subsection{Spheres meeting the focus-focus singularities}

 In this section we give the Bryan-Pandharipande formula from \cite{bryan:lgw} for the holomorphic spheres in pieces containing the focus-focus singularities.   In elementary polyhedral decompositions, the piece $\XX_P$ containing a focus-focus singularity has moment polytope of the form shown in Figure \ref{fig:ff-general} or Figure \ref{fig:ffd}.   The compactification $\ol \XX_P$ obtained by adding boundary divisors corresponding to the facets of $\Phi(\XX_P)$ is a singular fibration
 \begin{equation}
   \label{eq:sing-fib}
   f : \ol \XX_P \to \P^1.  
 \end{equation}
 The fibers $f^{-1}(0)$ and $f^{-1}(\infty)$ are orbifold spheres. For a focus-focus singularity $x \in \XX_P$, the fiber $f^{-1}(f(x))$ is a nodal sphere with a node at $x$. Any other fiber of $f$ is a $\P^1$. We denote by $S_0$ and $S_\infty$ the sections of $f$ that are inverse images of boundary divisors of $\ol \XX_P$.   A special case of such a cut space occurs when $f^{-1}(0)$ and $f^{-1}(\infty)$ are smooth spheres, and one of the sections $S_0$ is a $(-1)$-sphere. In this case $\ol \XX_P$ is isomorphic to a point blow-up of $\P^1 \times\P^1$. 
 
 \begin{figure}[ht]
  \begin{center}
    \scalebox{.8}{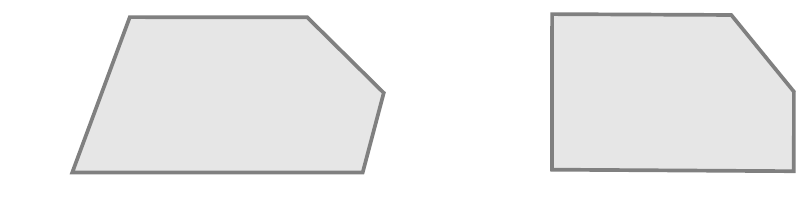}
  \end{center}
  \caption{Left: Moment image of a cut space $X_P$ containing a focus-focus singularity, and a holomorphic curve $u$ in $X_P$. Right: The particular case when $\ol \XX_P=\Bl(\P^1 \times \P^1)$, and $1=m=-m_1$, $n-m=-n_1$.}
  \label{fig:ff-general}
\end{figure}

\begin{lemma}\label{lem:nodal-sph}
  Let $u : \P^1 \to \ol \XX_P$ be a holomorphic map with a single intersection point $u^{1}(\ol \XX_P - \XX_P)$ with the boundary divisor,
  given by the inverse images of the four facets in 
  Figure \ref{fig:ff-general}.  Then the image of $u$ lies on one of the nodal fibers of the map $f$ (in \eqref{eq:sing-fib}).
\end{lemma}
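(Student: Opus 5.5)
The approach is to combine the fibration structure \eqref{eq:sing-fib} with a homological computation. Write $F$ for the class of a smooth fiber of $f : \ol\XX_P \to \P^1$. The boundary $\ol\XX_P - \XX_P$ consists of the two sections $S_0, S_\infty$ together with the two end fibers $f^{-1}(0), f^{-1}(\infty)$; these are the inverse images of the four facets in Figure \ref{fig:ff-general}.

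\emph{Step one: $f\circ u$ is constant.} Suppose not. Then $f \circ u : \P^1 \to \P^1$ is surjective, so $u$ meets both $f^{-1}(0)$ and $f^{-1}(\infty)$. These are disjoint divisors, so $u$ has at least two intersection points with the boundary. This contradicts the hypothesis that $u^{-1}(\ol\XX_P - \XX_P)$ is a single point. Hence $u$ maps into a single fiber $f^{-1}(c)$. Moreover $c \neq 0,\infty$: otherwise the whole image would lie in the boundary, and $u^{-1}$ of the boundary would be all of $\P^1$, not one point.

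\emph{Step two: the fiber must be nodal.} Suppose $f^{-1}(c)$ is a smooth fiber. It is a $\P^1$ meeting $S_0$ and $S_\infty$ transversally in one point each, and these two points are distinct. A nonconstant map $u : \P^1 \to f^{-1}(c)$ is a branched cover of positive degree, so it is surjective and hits both points. That gives at least two boundary intersections, again a contradiction. (The map cannot be constant, since it is a component of a relative map with positive area, i.e.\ with a nonzero edge slope.) Therefore $f^{-1}(c)$ is one of the nodal fibers $f^{-1}(f(x))$, $x \in X^{\foc}$.

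Such a nodal fiber is a union of two spheres meeting at the node $x$, and each component meets exactly one of $S_0, S_\infty$. So $u$ covers exactly one component, which is consistent with a single boundary intersection.

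\emph{Main obstacle.} The key fact to justify is the structure of the fibers: every fiber other than $f^{-1}(0), f^{-1}(\infty)$ and the focus-focus fibers is a smooth $\P^1$ meeting each of $S_0, S_\infty$ exactly once. This is what makes the counting argument in Steps one and two work.

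- \emph{Away from the cut:} it follows from the local toric model. There, $f$ is the moment-map coordinate transverse to the branch cut, and the fibers are the toric $\P^1$'s over segments joining the two facets that correspond to $S_0$ and $S_\infty$.
- \emph{Near the focus-focus value:} it follows from the standard local model, in which the fiber through $x$ is the nodal union of two disks that close up to two spheres.

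If the almost complex structure is not integrable, I would first arrange (using the freedom in Proposition \ref{prop:inv}) that $f$ is $J$-holomorphic. Then positivity of intersections makes each intersection count above at least one, and the argument goes through unchanged.
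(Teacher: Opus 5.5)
Your proposal is correct and follows the paper's argument. A nonconstant $f\circ u$ would meet both $f^{-1}(0)$ and $f^{-1}(\infty)$, giving two boundary points, and a smooth fiber is excluded because it meets both sections $S_0$ and $S_\infty$; your extra checks (ruling out $c\in\{0,\infty\}$, and nonconstancy of $u$, which follows directly from the hypothesis since a constant map has empty or full preimage of the boundary) just make explicit what the paper leaves implicit.
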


\begin{proof}  The proof is a computation involving intersection numbers with the fibers.    The projection $f \circ u : \P^1 \to \P^1$ is holomorphic and
  intersects at most one of $0, \infty \in \P^1$. Therefore,
  $f \circ u$ is constant, and the image of $u$ lies in a fiber of
  $f$.  Since regular fibers of $f$ intersect two of the boundary
  divisors of $\ol \XX_P$, the only possibility is that $u$ maps to one
  of the spheres in a nodal fiber.
\end{proof}

 \begin{figure}[ht]
  \begin{center}
    \scalebox{.8}{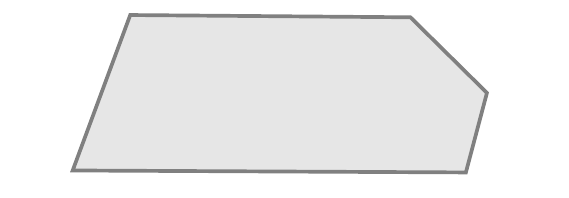}
  \end{center}
  \caption{Moment image of a cut space $X_P$ with multiple focus-focus singularities whose shear matrices have the same eigen-direction, and a holomorphic curve $u$ in $X_P$.}
  \label{fig:ffd}
\end{figure}

In the rest of the section, we count the number of holomorphic spheres in a focus-focus piece that have a single relative marking, or in other words, the sphere has a single intersection with boundary divisors. For simplicity of notation, we assume that $\XX_P$ has a single focus-focus singularity, but the proof in the general case is similar.  
Denote by 
\[ \beta_0 \text{ resp.}\  \beta_\infty \in H_2(\ol \XX_P) \] 
the homology class of the sphere in 
the nodal fiber that meets the section $S_0$ resp. $S_\infty$. We consider one of these, say $\beta_0$, since the other can be analyzed in a similar way. For any positive integer $d$, denote by 
\[ m(\XX_P, d\beta_0, \ul Y) \in \Q \]
the count of relative spheres in $\ol \XX_P$ in the class $\d\beta_0$ that have a single intersection (of maximum multiplicity) with the divisor $S_0$. The constraint $\ul Y$ has a single element $S_0$, indicating that the evaluation map at the relative marking is unconstrained. 



We first consider the special case when the cut space is the point blow-up of a product of projective lines and prove the following Proposition. Later, as part of the proof of Theorem \ref{thm:potthm} (part Definition \ref{def:mvs} \eqref{mv:multcov}) we show that the case of general $\ol \XX_P$ reduces to this special case.


\begin{proposition} \label{prop:nd} {\rm(Bryan-Pandharipande formula)}
  Let $X$ be the point blow-up of $\P^1 \times \P^1$ with exceptional
  divisor $E$ intersecting the divisor $Y$.  The count $m(X,\ul{Y})$
  of ${d}$-fold covers of $E$ meeting $Y$ with a single intersection
  of maximal tangency is
  \[ m(X,\ul{Y}) = \frac{(-1)^{{d}-1}}{{d}^2} \]
  as in Definition \ref{def:mvs} \eqref{mv:multcov}.
\end{proposition}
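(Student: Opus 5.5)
We plan to reduce the relative count to a local computation near the exceptional curve $E$ and then invoke the known relative/local Gromov--Witten calculation of Bryan--Pandharipande \cite{bryan:lgw}. The first step is geometric. Every relative sphere in class $d[E]$ with a single point of contact with $Y$ has image contained in $E$: since $E\cdot E=-1$, positivity of intersections (or the fiber argument of Lemma \ref{lem:nodal-sph}) forces any such curve to lie in $E$. The moduli space of such maps is therefore the space of degree $d$ covers $\P^1\to E\cong\P^1$ fully ramified over the point $E\cap Y$, with no further tangency conditions.

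Expected dimension zero holds: $c_1(d[E])=d$, and the contact order $d$ with $Y$ cuts down $2(d-1)$ real dimensions, giving a rigid count. The actual moduli space, however, is positive dimensional (the covers have free branch points away from $E \cap Y$) and is obstructed. The count is therefore governed by an obstruction bundle, whose fiber is $H^1(C,\varphi^*N_E)$ with $N_E=\mathcal O(-1)$, twisted by the relative condition at $Y$.

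The second step identifies this obstruction theory with the one computed by Bryan--Pandharipande. The normal bundle of $E$ in $X$ is $\mathcal O(-1)$, while the normal bundle of $E$ relative to $Y$ is modeled on $\mathcal O(-1)\oplus\mathcal O(0)$ relative to a fiber. This is exactly the local curve situation of \cite{bryan:lgw}, namely the level $(-1,0)$ cap with one relative point of full ramification. Their computation of the partition function of the cap, specialized to genus zero and the partition $(d)$, gives
\[ \frac{(-1)^{d-1}}{d^2}\quad\text{or}\quad\frac{(-1)^{d+1}}{d}\cdot\frac1d .\]
Here one factor $1/d$ is the automorphism of the fully ramified cover, and the other comes from the obstruction integral. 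We then reconcile normalizations with the unframed/framed convention \eqref{eq:mv}: the framing factor $|\cT(e)|=d$ is absorbed at the point-constrained end. So we must check whether Bryan--Pandharipande's relative invariant includes the $1/d$ from the framing automorphism group $\Z/d$ or not, and adjust accordingly.

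To make the identification with our domain-dependent perturbation scheme legitimate, we will use the invariance of Proposition \ref{prop:inv} together with a deformation/cobordism to the algebraic virtual count. Alternatively, the count could be computed directly by degenerating once more, via a further cut along the fibration $f$, and comparing with the known multiple cover formula $1/d^3$ for $(-1,-1)$ curves; this is a fallback.

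The main obstacle is this comparison between the geometric (perturbative) count used in the paper and the virtual algebraic count. Generic perturbations do not make the multiply covered $E$-curves regular in a naive sense. We would first try a Cieliebak--Mohnke-type argument: rigid perturbed curves near $d[E]$ are counted by the Euler class of the obstruction bundle over the space of covers, and that Euler class equals the virtual integral computed in \cite{bryan:lgw}, with sign $(-1)^{d-1}$ coming from the orientation of $H^1(\mathcal O(-1))$-type obstructions.
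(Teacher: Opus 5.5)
\textbf{There is a genuine gap: the step you call the main obstacle is never closed.} You are taking the route the paper explicitly declines. Just before its proof, the paper remarks that the proposition would be a special case of the localization computation in \cite{bryan:lgw}, but that virtual localization is not available in its foundational scheme, so it argues differently.

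Proposition \ref{prop:inv} only gives cobordisms between moduli spaces for different perturbation data \emph{within} the domain-dependent, Donaldson-hypersurface-stabilized scheme. It provides no cobordism to an algebraic virtual class.

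Your Cieliebak--Mohnke sketch would need an obstruction-bundle gluing theorem for domain-dependent perturbations over the \emph{compactified} space of fully ramified relative covers. That compactification includes nodal covers, covers whose simple branch points collide with the contact point, and components sinking into $Y$, where $h^1(C,\varphi^*N_E)$ can jump. Even granting such a theorem, evaluating the resulting Euler class is exactly the localization computation you wanted to import.

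Two further points. The theory of \cite{bryan:lgw} is for threefolds, so identifying your surface invariant with the level $(-1,0)$ cap is an additional comparison. Your fallback also fails as stated: a four-manifold has no $(-1,-1)$-curves, and the $1/d^3$ formula is again a virtual count needing the same comparison.

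\textbf{The missing idea is that $n_d$ can be determined without analyzing the obstruction theory at all.} The paper uses an indirect degeneration argument that stays inside its own framework.

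Since the curves lie near $E$, one may take $X=\Bl_p\P^2$ with $e=[E]$ and $f=[Y]$, and count curves of class $de+f$ through $d+1$ generic points. For $d>1$ this count vanishes. With $E$ holomorphic, $(de+f)\cdot e=1-d<0$ forces a component in $E$, and such a curve cannot pass through generic points.

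Now degenerate $X$ into $X_+=\P^1\times\P^1$, which contains all the point constraints, and $X_-=\Bl_1\P^2$. The $X_-$-part of a broken curve consists of covers of $e_-$ and determines a partition $\Theta$ of $d$, contributing $\prod_i d_i n_{d_i}/\prod_j\nu_j(\Theta)!$. The $X_+$-part is a curve of bidegree $(1,d)$ with the given contact orders, and a toric computation shows there is exactly one.

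This gives the recursion $0=\sum_\Theta\prod_i d_i n_{d_i}/\prod_j\nu_j(\Theta)!$ for $d>1$, which together with $n_1=1$ determines every $n_d$. Setting $m_d=d n_d$, the right side is the $x^d$-coefficient of $\exp(\sum_d m_d x^d)$. The choice $m_d=(-1)^{d+1}/d$ makes this $\exp(\ln(1+x))=1+x$, whose $x^d$-coefficient vanishes for $d>1$, so $n_d=(-1)^{d-1}/d^2$.
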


Proposition \ref{prop:nd} would be a special case of localization computation in 
Bryan-Pandharipande \cite[(13)]{bryan:lgw}, except that we are using a different foundational scheme for which virtual localization is not easily available.  For the sake 
of completeness, we give a different argument.  Note that the relevance of the Bryan-Pandharipande formula for disk counts near the focus-focus singularities appears also in Lin \cite{lin:open} and Gr\"{a}fnitz-Ruddat-Zaslow \cite{grz:proper}


 \begin{definition} \label{def:rel}
 For any $d \in \N$, define for short a relative Gromov-Witten invariant
\[n_d:=  \# \{ \text{$u: \P^1 \to E$ of class $d[E]$ with a tangency of order $d$ with $Y$} \}  \in \Q \]
where $\#$ denotes the signed, weighted count described in \cite{vw:trop}.  
\end{definition}
\noindent The results of \cite{vw:trop} imply that the number $n_d$
is independent of the choice of domain-dependent almost complex structure, using a Donaldson hypersurface to stabilize domains.  
Proposition \ref{prop:nd} claims that 
\[ n_d=(-1)^{d-1}/d^2 .\]
The proof uses the following recursive relation, for which we introduce some terminology. 

\begin{definition} A {\em partition} $\Theta$ of a positive integer $d$ is a decomposition 
\[ d=d_1+\dots+d_k \] 
for some $k \geq 1$, positive integers $d_i$ that are non-decreasing in $i$.  Set 
\[ \nu_j(\Theta):=\#\{d_i: d_i=j\} . \] 
\end{definition}

\begin{lemma}
  The curve counts $\{n_d\}_d \in \Q$ of \eqref{def:rel} satisfy a recursive relation
  \begin{equation}
    \label{eq:recrel}
    0=\sum_{\Theta \text{ is a partition of $d$}} \frac {\prod_i d_i n_{d_i}}{\prod_j \nu_j(\Theta)!}
  \end{equation}
  for all $d > 1$. 
\end{lemma}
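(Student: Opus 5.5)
The recursive relation will come from a degeneration (neck-stretching) argument applied to a count that vanishes for trivial reasons. Work in $X = \Bl(\P^1\times\P^1)$ with exceptional divisor $E$ meeting $Y$. Consider the class $d[E]$ relative to $Y$, with a single intersection of maximal tangency $d$ at a fixed generic point $y \in Y$. Alternatively, consider relative maps to $X$ in class $d[E]$ whose contact with $Y$ is concentrated at one point.

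First, this count vanishes. The curve $E$ meets $Y$ at a single point $E\cap Y$. Any curve in class $d[E]$ has image in $E$, since $E\cdot E=-1<0$. So for a generic point $y \in Y$ not equal to $E \cap Y$, the moduli space of such relative maps is empty. The invariant is therefore $0$.

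Second, I would compute the same number by moving the point constraint $y$ to the point $E\cap Y$, or equivalently by degenerating along a cut normal to $Y$. This is a polyhedral decomposition separating a neighbourhood of $Y$ from the rest. The invariance used is that of Proposition \ref{prop:inv}, for constraints that are point or trivial. In the broken limit, a map of class $d[E]$ splits as follows. On the $X$-side there are components that are $d_i$-fold covers of $E$, each with a single tangency point of order $d_i$ to the cut divisor; these are counted by $n_{d_i}$. On the neck/$Y$-side (a $\P^1$-bundle over $Y$) there is a rigid sphere carrying the original point constraint with total tangency $d$ at that point and contacts $d_1,\dots,d_k$ on the other side. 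These pieces are glued at tropical edges of weights $d_i$.

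Third, I would apply Theorem \ref{thm:split}. The count of broken maps with tropical graph given by the partition $\Theta$ becomes a product. It contains the edge multiplicities $\prod_i d_i$ and the vertex counts $\prod_i n_{d_i}$, together with the count on the $Y$-side piece, which I expect to be $1$ by the torus-invariant computation of Proposition \ref{prop:spheres}. The symmetry factor $1/\prod_j \nu_j(\Theta)!$ comes from $|\Aut(\bGam)|$, which permutes edges of equal weight. Summing over partitions and equating with $0$ gives \eqref{eq:recrel}. The case $d=1$ is excluded because there the single partition is the only term. It is the undeformed count, which is nonzero and fixes $n_1 = 1$.

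The main obstacle is step two. I must identify exactly which broken configurations occur, verify that the $Y$-side piece has count one for every partition, and verify that the tropical graphs are rigid so that Theorem \ref{thm:split} applies. This includes checking that the primitivity hypotheses hold, or are circumvented, for the multiply covered $E$-components. I would first try a decomposition where the $Y$-side piece is toric, so that its curves are explicit, as in Proposition \ref{prop:spheres}. Rigidity of the tropical graph then follows from the fixed point constraint at the root vertex.
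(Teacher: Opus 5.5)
There is a genuine gap: the count you start from is not a zero-dimensional problem, so its vanishing gives no relation. Relative maps in class $d[E]$ with one contact of order $d$ with $Y$ already have expected real dimension $2c_1(d[E]) - 2d = 0$; this is why $n_d$ is a number. Also requiring the contact point to be a fixed $y\in Y$ costs two more real dimensions. That moduli space therefore has expected dimension $-2$ and is empty for trivial reasons. Moving $y$ to the non-generic point $E\cap Y$ is not covered by the cobordism of Proposition \ref{prop:inv}.

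The degeneration shows the same thing. The $Y$-side piece has class $d$ times the fibre of the $\P^1$-bundle over $Y$, so it lies in a single fibre. The point constraint puts that fibre over $y$, while matching with the covers of $E$ puts it over $E\cap Y$. So there are no broken maps, and the $Y$-side count you expect to be $1$ is $0$. A sanity check exposes the problem: your vanishing argument never uses $d>1$, and at $d=1$ it would give $n_1=0$.

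The missing idea is to add the class of $Y$ and use interior point constraints. Take $X=\Bl_p\P^2$ with $Y$ the strict transform of a line through $p$, and set $e=[E]$, $f=[Y]$. Count genus-zero curves of class $de+f$ through $d+1$ generic points; this has expected dimension zero. Choose $J$ so that $E$ is $J$-holomorphic. For $d>1$ the count vanishes because $(de+f)\cdot e=1-d<0$, and positivity of intersections forces any such curve into $E$. For $d=1$ one has $(e+f)\cdot e=0$ and the count is $1$ (lines through two points), which is why the relation only holds for $d>1$.

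Now cut parallel to $Y$, essentially your decomposition, into $X_+=\P^1\times\P^1$ containing all the point constraints and $X_-=\Bl_1\P^2$. The $X_-$ part is a union of $d_i$-fold covers of $e_-$. These give the partition $\Theta$ and the factor $\prod_i d_i n_{d_i}/\prod_j\nu_j(\Theta)!$, exactly as in your third step. The $X_+$ part now has bidegree $(1,d)$: it is the graph of a degree-$d$ rational function, not a multiple fibre cover. It passes through $d+1$ points with contacts $d_1,\dots,d_k$, and there is exactly one such curve. Your bookkeeping via Theorem \ref{thm:split} works once the global count is replaced by this one.
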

\begin{proof}
The two sides of the expression in \eqref{eq:recrel} can be realized as curve counts for different almost complex structures. After perturbation, the curves counted by $m_d$ lie in a small
  neighborhood of $E$. So, we may assume that $X$ is the blow-up of $\P^2$ at a point $p$ and $Y$ is the strict transform of a line
  through $p$.  Denote the homology classes of $E, Y \subset X$ by
  \[e:=[E], \quad f:=[Y] \in H_2(X).\]
  Given $d+1$ generic points $\underline x=(x_0,\dots,x_d)$ in $X$, let 
  \[ N(de+f, \underline x) \in \Q \] 
  be the number of curves $u: \P^1 \to X$ of class $de+f \in H_2(X)$ passing
  through $x_0,\dots,x_d$. We claim that
  \[N(de+f, \underline x)=0\]
  if $d>1$. Indeed, we may perform the curve count using an almost
  complex structure $J$ for which $E$ is $J$-holomorphic.  Since
  \[ (de + f)\cdot e=1-d < 0 \] 
any $J$-holomorphic curve with class $de + f$ necessarily lies in $E$, which is impossible. 

To obtain the expression as a sum over partitions in \eqref{eq:recrel}, we perform the curve count in a carefully chosen broken manifold. We cut $X$
  into two pieces
  \[X_+:=\P^1 \times \P^1, \quad X_-=\on{Bl}_1 \P^2\]
  in such a way that all the point constraints $x_0,\dots,x_d$ lie in $X_+$. The
  exceptional divisor class $e$ splits into classes
  $e_\pm \in H_2(X_\pm)$. Any broken curve 
  \[ u=(u_-: C_- \to X_-,u_+: C_+ \to X_+ ) \] 
  counted by
  $N(de+f, \underline x)$ belongs to the class $f+de_+$ in $X_+$ and $d e_-$
  in $X_-$. The curve component $C_-$ may be disconnected, and the
  class of each connected component of $u_-$ is a multiple of $e_-$. Thus,
  $u_-$ determines a partition $\Theta$ of $d$.  The expression inside
  the summation sign in \eqref{eq:recrel} is the count of the curves
  $u_-$ with no constraint on the relative divisor $Y$, weighted by intersection multiplicities $d_i$ at the relative divisor $Y$   The denominator
  accounts for permutations of markings arising from the stabilizing
  divisor. To finish the proof, it remains to count the number of possibilities for the second component $u_+ $ for a fixed first component $u_+ $. The curve $u_+$ has degree $(1,d)$ with $d+1$ non-relative point constraints, and point constraints of order $d_1,\dots,d_k$ along the relative divisor. An explicit calculation 
  as Section \ref{sec:spheresintoric}    shows that there is exactly one curve satisfying these constraints,   and \eqref{eq:recrel} follows.
\end{proof}

\begin{proof}
  [Proof of Proposition \ref{prop:nd}] There is a unique sequence
  $\{n_d\}_{d \in \N}$ that satisfies $n_1=1$ and the recursive
  relation \eqref{eq:recrel} for all $d>1$. Therefore, it is enough to
  show that $n_d=(-1)^{d+1}/d^2$ satisfies \eqref{eq:recrel}.  Denote
  $m_d:=d n_d$.  Observe that the right-hand-side of \eqref{eq:recrel} is
equal to the coefficient of $x^d$ in the product 
\[
  \left( 1+m_1x +m_1^2 \frac{x^2}{2!} + m_1^3 \frac{x^3}{3!}+\dots \right) \left(1+m_2x^2 + m_2^2 \frac{x^4}{2!}
  + \dots \right)\dots,
\]
which is equal to
\begin{equation} \label{eq:equalto}
e^{(m_1 x + m_2 x^2 +\dots)}.\end{equation}
  The substitution $m_d=(-1)^{d+1}/d$ reduces the expression 
  \eqref{eq:equalto} to  
\[ e^{\ln(1+x)}=1+x .\]
Therefore, the expression \eqref{eq:recrel} vanishes for $d>1$,
proving the Proposition.
\end{proof}

\begin{proof}[Proof of Theorem \ref{thm:potthm}, part Definition \ref{def:mvs} \eqref{mv:multcov}]  
The computation reduces to the Bryan-Pandharipande formula as follows.
  Let $\XX_{P(v)}$ be a piece of the broken manifold containing focus-focus singularities. By assumption $\XX_{P(v)}$ does not contain the Lagrangian, and the moment image of $\XX_{P(v)}$ is as in  Figure \ref{fig:ff-general}. 
  We are considering tropical graphs with a single vertex $v$ mapping to $P(v)$; the corresponding component of the broken map has a single intersection with the divisor at infinity.  The space $\ol \XX_{P(v)}$ is a singular fibration over $\P^1$ (see \eqref{eq:sing-fib}).  By Lemma \ref{lem:nodal-sph}, the map $u_v$ maps to a sphere $E_0$ in a nodal fiber.  For a suitably chosen symplectic form on $X:=\Bl_p(\P^1 \times \P^1)$, a neighborhood $U$ of the exceptional divisor $E \subset X$ is symplectomorphic to a neighborhood $U_0$ of $E_0 \subset \ol \XX_{P(v)}$.
Via a deformation of the symplectic form on $\ol \XX_{P(v)}$, 
we may assume the symplectic area of the spheres $E_0$ (and $E$) are sufficiently small. By Gromov's monotonicity theorem, we conclude that holomorphic spheres of area $\om(E_0)$ that intersect $E_0 \subset X$ are contained in $U_0$. Therefore, the count  is 
\[ m(\ol \XX_{P(v)}, d[E_0], \ul Y) = m(X,d[E], \ul Y) = (-1)^{d-1}/d^2 \] 
by Proposition \ref{prop:nd}.  
\end{proof}

\subsection{Computing multiplicities via desingularization}
\label{subsec:desing}

So far we have proved the counting formula in Theorem \ref{thm:potthm} under the assumption that the tropical graphs contributing to the sum do not have valence higher
than three. In this Section, we extend the result to the case that higher valence
vertices occur.   The idea is that the relative curve counts are invariant under perturbations of the positions of the edges of the tropical graph, so that we may assume that all of the vertices are trivalent.    This is a special case of the 
splitting argument in our previous paper \cite{vw:split},  by splitting the edge
matching condition at all incident edges of higher valent vertices. The resulting split graphs only contain trivalent graphs and can be counted in the standard way.

 As in the Introduction, we define
%
\[m^\pert(v):= \sum_{\Gamma_{v,\pm}^\pert}m(\Gamma_{v,\pm}^\pert),\]
where the sum ranges over all the 
perturbations $\Gamma_{v,\pm}$ of $\Gamma_v$.

\begin{figure}[ht]
  \begin{center}
    \scalebox{.8}{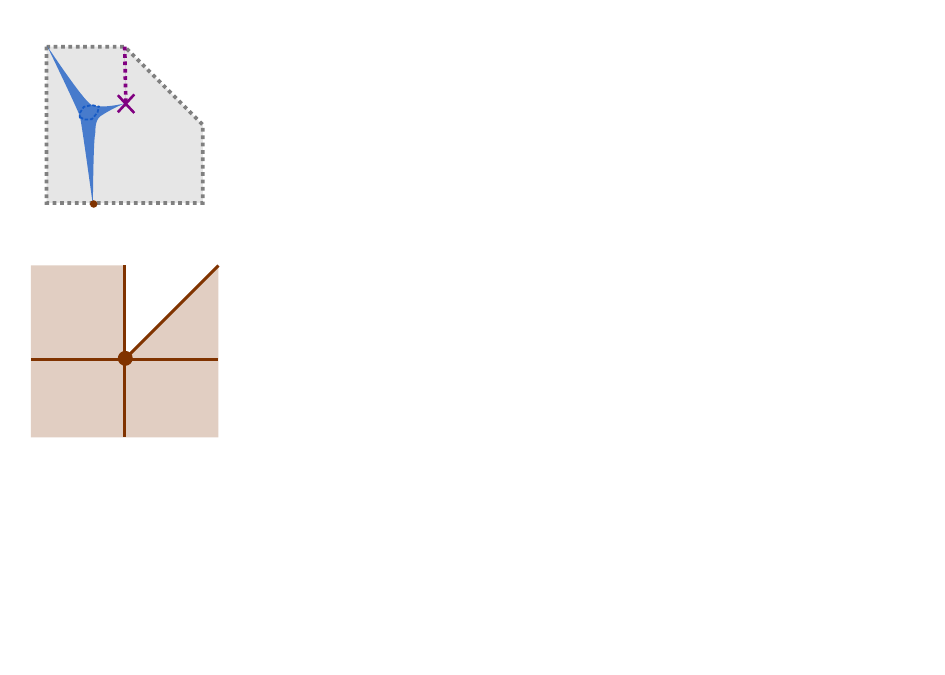}
  \end{center}
  \caption{
  Perturbations of $\Gamma_v$. In each case, the figure shows the space and the multiple cut, the dual complex, and the tropical graph.}
  \label{fig:crossff}
\end{figure}

\begin{proposition}
\label{prop:ffcross}
  In the above setting, $m(v) = m^\pert(v)$.
 \end{proposition}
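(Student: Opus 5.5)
The idea is to realize both $m(v)$ and $m^\pert(v)$ as counts of relative maps of the same glued type, computed with respect to two different polyhedral decompositions of the piece $\XX_{P(v)}$. The two counts are then compared by the invariance result, Proposition~\ref{prop:inv}, together with the degeneration cobordism of Theorem~\ref{thm:cobord}.

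Concretely, $m(v)$ is the count of rigid relative maps in $\XX_{P(v)}$ with the constraints distributed as in Theorem~\ref{thm:split}. These are point constraints on the incoming edges $e_1,\dots,e_k$ and a trivial constraint on the outgoing edge $e_0$. By Proposition~\ref{prop:inv}, this count does not depend on the generic choice of the point constraints $\YY_{e_i}$, nor on the almost complex structure.

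Next we choose a transverse polyhedral decomposition $\PP_1$ of $P(v)$ (see Figure~\ref{fig:crossff}) and form the combined decomposition $\PP_0\cap\PP_1$ with its family of dual complexes $B^\dual_\rho$. The positions of the point constraints are taken in a chain $\ell_1\gg\dots\gg\ell_k$, as in \eqref{eq:ell-order}. By moving the constraints $\YY_{e_i}$ in the $\t$-direction transverse to $\cT(e_i)$, we make the tropical constraint $\Upsilon_{e_i}$ equal to the line $e_i'$, that is, the line through the original edge translated by $\ell_i$. Stretching the neck along $\PP_1$ and applying the analogue of Theorem~\ref{thm:cobord} for relative maps then equates $m(v)$ with the count of broken relative maps for the refined decomposition. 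By rigidity (Remark~\ref{rem:troprel}), the tropical graphs of these broken maps are exactly the rigid graphs in $B^\dual$ whose incoming legs lie on the lines $e_i'$. Since the translation vector is generic, these are precisely the perturbations $\Gamma_v^\pert$ of Definition~\ref{def:vpert}. Proposition~\ref{prop:b0edge} guarantees that for $\rho$ large every such graph collapses back to $\Gamma_v$, so no other tropical types appear.

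For each perturbed graph all sphere vertices are trivalent, and a disk vertex, if present, is univalent. We therefore apply Theorem~\ref{thm:split} to the refined type. The count factors as $\prod_{v_1}m(v_1)$, with the edge-length factors absorbed into the vertex multiplicities as in \eqref{eq:mv}, and the vertex counts are evaluated by Proposition~\ref{prop:spheres} and the disk lemma. Summing over the perturbations gives $m^\pert(v)$. Since $m(v)$ is independent of the ordering of the $\ell_i$, the same argument also yields the parenthetical claim in Definition~\ref{def:higherval} that different orderings give the same multiplicity.

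The main obstacle is the middle step: showing that the limiting broken maps have exactly the perturbed graphs as tropical types, each contributing with the correct weight. Two points need care. First, coincident incoming edges (as in Figure~\ref{fig:pert-edge}) must be separated by the perturbation without creating higher valence vertices. Second, the constraint orientation of Lemma~\ref{lem:orient} must hold in each perturbed graph, so that the primitivity hypotheses needed for Theorem~\ref{thm:split} are satisfied. The approach I would try first is a direct dimension count of tropical symmetry groups: a nongeneric limit graph would have $\dim T_\trop\ge 2$, which Remark~\ref{rem:troprel} excludes.
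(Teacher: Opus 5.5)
The gap is in your middle step, which is where the content of the proposition lies. Generically, a higher-valent sphere vertex at which balancing holds sits in a zero-dimensional polytope $P(v)$, for instance in the interior of a rectangle $P^\dual_{\on{in}}\times P^\dual_\ann$. Then $\XX_{P(v)}\cong(\C^\times)^2$, and all vertices of $\Gamma_v^\pert$ lie in the single two-cell $P(v)^\dual$.

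A decomposition $\PP_1$ of the base transverse to $\PP$ can only meet the point $P(v)$ in the interior of a top-dimensional $P_1\in\PP_1$. So $P(v)\cap P_1=P(v)$, and the dual cell $P(v)^\dual\times\rho P_1^\dual$ is just $P(v)^\dual$. Forming $\PP\cap\PP_1$ therefore does not degenerate $\XX_{P(v)}$ at all, and Proposition~\ref{prop:b0edge} tells you nothing about how $v$ resolves. The cut drawn in Figure~\ref{fig:crossff} is of a two-dimensional piece containing a focus-focus value, which is a different situation.

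Nor can you obtain the lines $e_i'$ by moving the map constraints. Proposition~\ref{prop:inv} lets you vary the points $\YY_{e_i}$ only with the tropical constraints $\Upsilon_{e_i}$ held fixed. For $\Gamma_v$ these are lines in the directions $\cT(e_i)$, all concurrent at $\cT(v)$. Replacing them by the non-concurrent $e_i'$ makes $\Gamma_v$ unrealizable. What must be proved is exactly that the count is unchanged under this change of tropical constraints. The dimension count from Remark~\ref{rem:troprel} that you plan to use is established for fixed tropical constraints, so it cannot bridge the two configurations.

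The paper supplies this step by splitting the matching conditions instead of degenerating the target, following \cite{vw:split}.
\begin{itemize}
\item Order the incoming edges so that $e_{k-1},e_k$ are not parallel, and drop the matching conditions at $e_1,\dots,e_{k-2}$ in that order.
\item Require the discrepancies $\pi^\perp_{\cT(e_i)}(\cT(v_{i,+})-\cT(v_{i,-}))=x_i\eta_i$ to satisfy the nested cone condition $x_1<\eps_1$, $x_i<\eps_i x_{i-1}$.
\item A discrepancy at $e_i$ is exactly a translation of $e_i$ transverse to $\cT(e_i)$, and the nesting is exactly $\ell_1\gg\dots\gg\ell_{k-2}$. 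Hence split tropical graphs $\tGam\to\Gamma$ correspond to perturbations $\Gamma_v^\pert$.
\item The cobordism in \cite{vw:split} between split maps and deformed maps with large deformation parameter identifies the two counts.
\end{itemize}
After that, your last step (Theorem~\ref{thm:split} together with Proposition~\ref{prop:spheres} on the trivalent vertices) finishes the argument.

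If you want to keep your route, you would have to degenerate a toric compactification of $\XX_{P(v)}$ that contains divisors for all the directions $\cT(e_j)$, along a decomposition of its own moment polytope. You would place the relative point constraints so that the legs are forced onto the lines $e_i'$, and prove the corresponding cobordism. That amounts to reproving the relative Mikhalkin correspondence within the broken-map framework.
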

 
 \begin{proof}
To prove the Proposition,  we recall the notion of {\em splitting the matching condition} from our earlier work \cite{vw:split}.  The splitting process produces a
version of a broken map, called a {\em split map} in which there is no
matching condition on a subset of tropical edges, called {\em split
  edges}. Defining a split map requires a choice of a non-zero vector,
called the {\em cone direction} $\eta_e \in \t/\bran{\cT(e)}$ at each
of split edge.  A {\em split map} with a single split edge
$e=(v_+,v_-)$ and a cone direction $\eta_e$ is defined as a broken map
$u = (u_v)$ except that the  matching condition at $e$ is not necessarily satisfied, 
and instead the tropical graph $\Gamma$ must satisfy a version of the Fulton-Sturmfels 
cone condition:  There exists $\eps>0$ such that
\[\Disc(\tGam)=[0,\eps)\eta,\]
where the {\em discrepancy cone}
\[\Disc(\tGam):=\{\pi^\perp_{\cT(e)}(\cT(v_+)-\cT(v_-)) \in \t/\bran{\cT(e)} \}\]
and $\cT_\tGam$ ranges over all vertex positions for the graph
$\tGam$.  The cone condition implies that the tropical graph $\tGam$
has one degree of freedom and $T_\trop(\tGam) \simeq \C^\times$. The Fulton-Sturmfels condition appeared in their study of the K\"unneth decomposition of the diagonal in toric varieties.  

Splitting an edge has the effect of dropping the matching condition at
the node, and increasing the dimension of the tropical symmetry group
by $2$.  In the four-dimensional case, suppose that cutting an edge $e$ splits a rigid tropical graph $\Gamma$ into $\Gamma_+$ and $\Gamma_-$, and the matching condition is a constraint
on $\M_{\bGam_+}(\XX)$ (as in Theorem \ref{thm:split}).
In the split tropical graph $\tGam$ obtained by splitting the edge $e$, 
the tropical symmetry group
$T_\trop(\tGam_+)$ is $\C^\times$, and $T_\trop(\tGam_-)=\Id$.

The cone condition in the case that there is more than one split edge
can be described as follows.  Suppose that there are $n$ split edges ordered as $e_1,\dots, e_n$ with deformation directions $\eta_i \in \t/\bran{\cT(e_i)}$.  The set of
discrepancies is a cone for each of the split edges, with the
discrepancy being much larger for $e_i$ compared to $e_j$ whenever
$i<j$. Specifically, the cone condition states that there are
constants $\eps_1,\dots,\eps_n >0$ such that the set of discrepancies
\[\Disc(\tGam) =\{(\pi_{\cT(e_i)}^\perp(\cT(v_{i,+})-\cT(v_{i,-})))_i\} \subset \oplus_i (\t/\bran{\cT(e_i))}\]
at the split edges is 
\[\Disc(\tGam) = \left\{\sum_i c_i \eta_i : c_i \in C \right\},\]
where
\[C=\{(x_1,\dots,x_n) \in (\R_{\geq 0})^n : x_1<\eps_1, x_i < \eps_i x_{i-1}\}.\]
The same moduli space of split maps is obtained if the splitting step is performed one split edge at a time in the order $e_1,\dots,e_n$.

\begin{figure}[ht]
  \begin{center}
    \scalebox{.8}{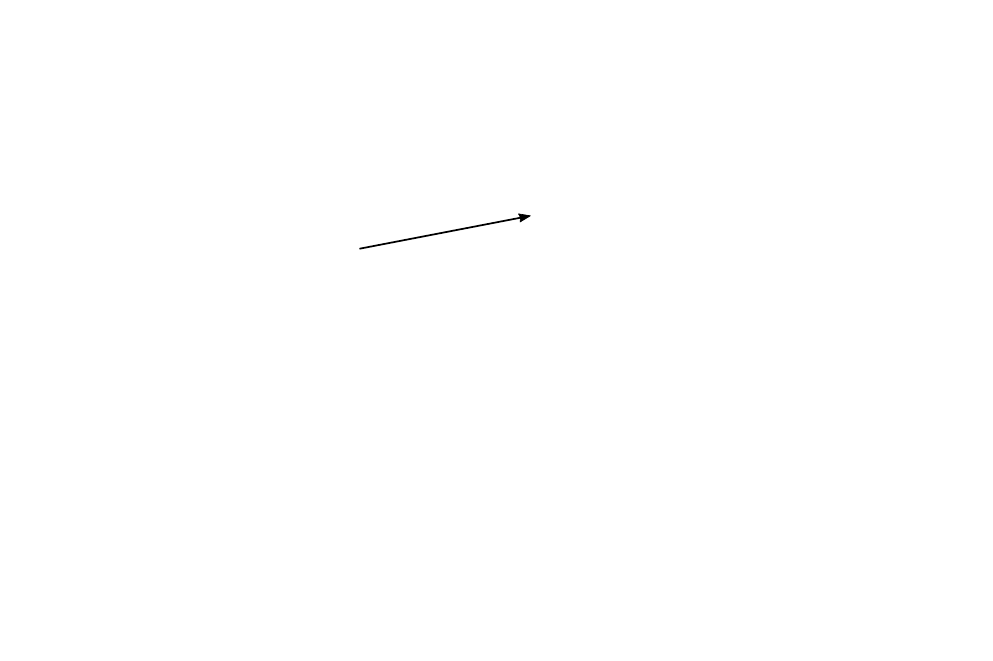}
  \end{center}
  \caption{The vertex $v$ in the tropical graph $\Gamma$ has two incoming edges in the direction $(0,1)$. Splitting $e_2$ produces the split graph $\tGam$. The corresponding perturbed graph at $v$ is $\Gamma_v^\pert$.}
  \label{fig:split2pert}
\end{figure}

  Let $\Gamma$ be rigid tropical graph contributing to the disk potential. 
  We split the matching condition at the following collection of nodes:
  Consider a spherical vertex $v \in \Ver_\black(\Gamma)$ with
  valence more than three.  Choose an ordering $e_1,\dots,e_k$ of
  incoming edges at $v$. We may assume that $e_{k-1}$ and $e_k$ are
  not parallel. We split the edges $e_1,\dots, e_{k-2}$ in that
  order. We carry out such splitting at all higher valent spherical
  vertices, in any order.    Given a rigid tropical graph $\Gamma$, a split tropical graph
  $\tGam \xrightarrow{\kappa} \Gamma$ is uniquely determined by a
  $\Gamma_v^\pert$ at every higher valent vertex $v$ as
  follows:  We obtain the perturbation $\Gamma_v^\pert$ by
  starting from a realization of the split graph $\tGam$, and drawing
  the incoming (split) edges $e_1, \dots e_{k-2}$ parallel to their
  original direction $\cT(e_i)$. See Figure \ref{fig:split2pert} for
  an example.

%
  Proposition
  S-\ref{S-prop:defmorph} gives a cobordism between the moduli space
  of 
  split maps and deformed maps with a large enough
  deformation parameter.
\end{proof}

Similarly, perturbations allows us to avoid the situation that vertices at focus-focus value have valence more than one, since for generic perturbations no edge of $\Gamma$
passes through a focus-focus value $b^\dual \in B^\dual, b \in B^{\foc}$.

\subsection{Counting by bunching}

We give a formula for counting curves where there are multiple edges emanating from a focus-focus singularity. A consequence is the binomial nature of the coefficients of the disk potential along edges of its Newton polytope.  We start by defining a bunched version of tropical graphs.

\begin{definition}
\label{def:bunch}
  \begin{enumerate}
  \item {\rm(Coincident edges)}
    Given a map type $\bGam$, a collection of edges $e_i=(v_i,w)$ is {\em coincident} if they share an end-point $w$, the other end-points  of each of the edges lie on the same focus-focus singularity in $X$, that is, for all $i$, $v_i$ is univalent,  $P(v_i)=P$, and the map $u_{v_i}$ is incident on a focus-focus singularity $b \in X_P$. 
\item  {\rm(Bunching of a tropical graph)}
  Given a broken map type $\bGam$, its {\em bunching} $\bGam_b:=b(\bGam)$ is given by replacing any set of coincident edges $e_1,\dots,e_k$ with a single edge $e$ with $\cT(e):=\sum_i \cT(e_i)$.  Thus a {\em bunched map type} $\bGam_b$ has the property that no two distinct edges are coincident. 
 \end{enumerate}

The {\em multiplicity of a bunched map type} $\bGam_b$ is the sum of the multiplicity of all map types whose bunching it is. That is,
\[m(\bGam_b):=\sum_{\bGam: b(\bGam)=\bGam_b} m(\bGam).\]
This ends the Definition.
\end{definition}

We describe a formula to compute multiplicity of bunched graphs under the assumption that
at any vertex there is at most one bunched incident edge.

\begin{definition}
    The {\em bunched multiplicity $m_b(v)$ of the vertex $v$} is equal to the ordinary multiplicity $m(v)$ 
\begin{enumerate}
    \item if there is no bunched edge incident on $v$, and 
    \item if $v$ has a bunched edge $e$ incident on it, $m_b(v)$ is defined as follows:  Let $\bGam_{b,e} \subset \bGam_b$ be the subgraph consisting of vertices of $v$, $w$ and the non-bunched incoming edge of $v$, denoted by $f_0$ and the outgoing edge of $v$ denoted by $f_1$. Let $\bGam_e$ be any subgraph whose bunching is $\bGam_{b,e}$. Thus, $\bGam_e$ consists of vertices $v$, $w_1,\dots,w_n$ where $e$ is the bunching of $e_i=(w_i,v)$ and $w_i$ is univalent. Then, we define
\begin{equation}
  \label{eq:mbv}
m_b(v):=\sum_{\bGam_e: b(\bGam_e)=
  \bGam_{e,b}} \tfrac 1 {|\Aut(\bGam_e)|} m(v) \prod_i m(w_i).   
\end{equation}
\end{enumerate}
\end{definition}

 The multiplicity $m(\bGam_b)$ can be written as a product of multiplicities over trivalent vertices 
\[m(\bGam_b)=\prod_{v : \on{val}(v)=3} m_b(v) .\]

\begin{proposition}\label{prop:bunchmult}
  Let $\bGam_b$ be a bunched map type, and let $v \in \Ver(\bGam_b)$ be a trivalent vertex with an incident bunched edge $e$, another incoming edge $f_0$ and an outgoing edge $f_1$. 
  Then,
\[m_b(v)=\binom k n,\]
where the direction $\cT(e)=n \mu_b$, $n \in \N$ 
and $\mu_b \in \t_\Z$ is primitive, and $k:=|\mu_b \times \cT(f_0)|$. 
\end{proposition}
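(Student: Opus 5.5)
The plan is to unwind the definition \eqref{eq:mbv} of $m_b(v)$ as a sum over partitions of $n$, evaluate each summand using Definitions \ref{def:mvs} and \ref{def:higherval}, and then recognize the total as a coefficient of a generating function, in the same way as in the proof of Proposition \ref{prop:nd}.

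\textbf{Step 1: organize the sum by partitions.} A graph $\bGam_e$ whose bunching is $\bGam_{b,e}$ is specified by univalent vertices $w_1,\dots,w_r$ at the same focus-focus point $b$, with edges $e_i=(w_i,v)$ of directions $\cT(e_i)=d_i\mu_b$ and $\sum_i d_i=n$. Hence such graphs correspond to partitions $\Theta=(d_1,\dots,d_r)$ of $n$. I expect the automorphisms of $\bGam_e$ to be exactly the permutations of the $w_i$ carrying equal multiplicities, so that $|\Aut(\bGam_e)|=\prod_j \nu_j(\Theta)!$. By Definition \ref{def:mvs} \eqref{mv:multcov}, each focus-focus vertex contributes
\[ m(w_i)=\frac{(-1)^{d_i-1}}{d_i^2}. \]

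\textbf{Step 2: compute $m(v)$ by perturbation.} The vertex $v$ now has valence $r+2$, so $m(v)$ is given by Definition \ref{def:higherval}. I take the perturbation that translates the parallel edges $e_1,\dots,e_r$ by generic, strongly ordered amounts, while $f_0$ is not translated. Since the edges $e_i$ are parallel to one another, they cannot meet each other. Each one must therefore meet the chain that starts with $f_0$, and the resulting perturbed graph is a single caterpillar with $r$ trivalent vertices. At the $i$-th such vertex the incoming chain edge has direction
\[ \cT(f_0)+\sum_{j<i} d_j\mu_b. \]
Because $\mu_b\times\mu_b=0$, the Mikhalkin multiplicity there is
\[ \Bigl|\det\Bigl(d_i\mu_b,\ \cT(f_0)+\textstyle\sum_{j<i}d_j\mu_b\Bigr)\Bigr| = d_i\,|\mu_b\times\cT(f_0)| = d_i k. \]
This gives $m(v)=k^r\prod_i d_i$.

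\textbf{Step 3: sum via a generating function.} Combining Steps 1 and 2,
\[ m_b(v)=\sum_{\Theta\vdash n}\frac{1}{\prod_j\nu_j(\Theta)!}\prod_i \frac{k(-1)^{d_i-1}}{d_i}. \]
Exactly as in the proof of Proposition \ref{prop:nd}, this is the coefficient of $x^n$ in
\[ \exp\Bigl(\sum_{d\ge1}\frac{k(-1)^{d-1}}{d}x^d\Bigr)=\exp\bigl(k\ln(1+x)\bigr)=(1+x)^k, \]
and that coefficient is $\binom kn$.

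\textbf{Main obstacle.} I expect the delicate point to be Step 2. One must justify that the perturbed graph really is the unique caterpillar described above, and that it is independent of the chosen ordering. In particular one must check that no perturbed edge $e_i$ misses the $f_0$-chain on the wrong side, that is, that every such edge is incoming and is forced to meet the chain once the ordering \eqref{eq:ell-order} is fixed. One also has to check that the automorphism count in Step 1 matches the labeling conventions of Theorem \ref{thm:split}. Both checks should follow from the shear geometry near $b^\dual$, where all the $e_i$ point along the eigendirection $\mu_b$.
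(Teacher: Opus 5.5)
Your proposal is correct and follows essentially the same route as the paper's proof: you index the unbunchings by partitions of $n$ with $|\Aut(\bGam_e)|=\prod_\nu d_\nu!$, perturb $v$ into trivalent vertices of Mikhalkin multiplicity $k d_i$, and read off $\binom kn$ as the coefficient of $x^n$ in $e^{k\ln(1+x)}=(1+x)^k$. The obstacle you flag is harmless, and the paper does not even address it: the edges of the perturbed $f_0$-chain differ by multiples of $\mu_b$, so they all have the same transverse component $\mu_b\times\cT(f_0)$; hence for $k\neq 0$ the chain is a graph over the coordinate transverse to $\mu_b$, and each translated $e_i$ meets it exactly once, whatever the ordering.
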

\begin{proof}
  First we write down the contribution to the multiplicity of a
  subgraph $\bGam_e$ consisting of all edges that are bunched into $e$ in $\bGam_b$. That is, 
 $b(\bGam_e)=\bGam_{e,b}$. An equivalence class
 \footnote{The equivalence class in this context is given by permuting the
   set of edges which have the same multiplicity and which
   get bunched to $e$.}
 of a subgraph $\bGam_e$ whose bunching is $\bGam_{e,b}$ is determined by the
  number of occurrences $d_\nu$ of the input $\nu \mu_b$ for each $\nu \in \N$ in the graph $\bGam_e$.  We apply the perturbation described in Definition \ref{def:vpert} to the higher valent vertex $v$ in $\bGam_e$ to obtain a perturbed graph $(\bGam_e)_v^\pert$ where the vertex $v$ is split into trivalent vertices $v_1,\dots,v_d$, each with an incoming edge $e_i=(w_i,v_i)$, so that
  \[m(v)=\prod_i m(v_i)=\prod_i(k \nu_i) , \quad m(w_i)=\tfrac {(-1)^{\nu_i+1}}{\nu_i^2}.\] 
Furthermore, the automorphism group of $\bGam_e$ has order
\[ \# \Aut(\bGam_e)=\prod_{\nu \in \N} d_\nu!  \]
corresponding to permutations of edges $e_i$ going to the focus-focus value with the
same direction $\cT(e_i)$.   Thus the contribution of the graph $\bGam_e$ to the sum $m_b(v)$ in \eqref{eq:mbv}
 is   %
  \[ (\# \Aut(\bGam_e))^{-1} \prod_i m(v_i) m(w_i)=\prod_{\nu
      \in \N}\left(\frac {(-1)^{\nu+1} k }{\nu}\right)^{d_\nu}\frac 1 {d_\nu!}. \]
  The sum of contributions of all subgraphs $\bGam_e$ is the coefficient of $x^n$ in
  \begin{multline} \left(1+ k x+\frac{(k x)^2}{2!} + \dots)(1 -\frac{k x^2}{2} + \frac{(k x^2/2)^2}{3!} - \dots \right) \\ \left(1+ \frac{k x^3}{2} + \frac{(k x^3/3)^2}{3!} + \dots \right) \dots.  \end{multline}
  Using the power series expansion of $\ln(1+x)$ this is equal to
  \[e^{k x}e^{-k x^2/2}e^{k x^3/3}\dots=e^{k\ln(1+x)}=(1+x)^k.\]
  The coefficient of $x^n$ in the expression is $\binom{k}{n}$.
\end{proof}

\begin{figure}[ht]
  \begin{center}
    \scalebox{.8}{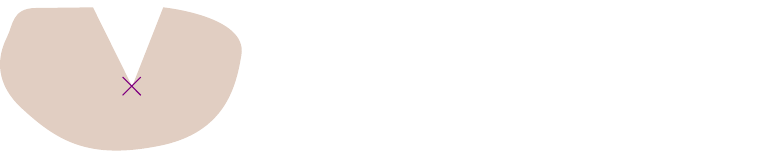}
  \end{center}
  \caption{Bunchings of one-sided perturbations of a vertex at a singular point $b$.}
  \label{fig:bunch}
\end{figure}

\begin{remark}
  Proposition \ref{prop:ffcross}
  on the equality of counts of $(+)$ and $(-)$-perturbations can alternately be proved by explicitly
  counting both sides via bunching. 
\end{remark}

 \section{Tropical graphs of rigid disks}
\label{sec:atinfinity}

In this Section we study special polyhedral decompositions for the almost toric manifolds of the type considered in Vianna \cite{vianna:dp}, which we call standard decompositions, for which all of the multiplicities of vertices appearing in tropical graphs corresponding to Maslov-index-two disks are those described in Definition \ref{def:mvs}.    Similar to Mikhalkin \cite{mikhalkin} and  Bardwell-Evans  et al \cite{bardwellevans}, we have a holomorphic-to-tropical correspondence in this case which takes place in affine manifold diffeomorphic to 
the punctured plane (with number of punctures equal to the number of corners in the moment polytope)  which we call the {\em dual affine manifold.}

\subsection{Standard decompositions}
\label{goodsec}

We begin by describing a standard polyhedral decomposition in the case of
a base diagram where the focus-focus values are close to the boundary.

\begin{proposition} \label{prop:interior}
  Let $X$ be a monotone almost toric four-manifold with moment map
  $\Phi : X \to \t^\dual$,
  $\Phi(X)$ is a convex polytope, and a
  monotone Lagrangian torus $L:=\Phinv(\ell)$ for $\ell \in
  \Phi(X)$. There is
  an elementary polyhedral decomposition (as in Definition
  \ref{def:elementary}) $\PP$ and cutting datum for which all collisions in rigid  tropical disks occur at interior generic points. Consequently,
  Theorem \ref{thm:potthm} applies to the triple $(X,\Phi, \PP)$.
\end{proposition}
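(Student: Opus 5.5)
I will construct $\PP$ explicitly as the combination $\PP = \PP_0 \cap \PP_1$ of two transversely intersecting decompositions, and then use Proposition \ref{prop:b0edge} with a large ratio $\rho$ to control where vertices of rigid tropical disks can sit.

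\textbf{Step 1: the first decomposition $\PP_0$.} Since the base diagram is of Vianna type, I first use nodal slides (Lemma \ref{lem:indep}, which keeps the Hamiltonian isotopy class of $L$) to move every focus-focus value very close to its vertex $\zeta$ of $\Phi(X)$ along its branch cut. Then $\PP_0$ consists of:
\begin{itemize}
\item a small polygon $P_\zeta$ around each vertex $\zeta$, containing the focus-focus values on the cut ending at $\zeta$;
\item the polygon $P_0$ containing $\lambda = \Phi(L)$;
\item thin collar pieces along each facet, each meeting exactly one facet and containing no vertex.
\end{itemize}
Each $P_\zeta$ is cut so that, after the shear, it is $GL(2,\Z)$-equivalent to the model of Figure \ref{fig:ff-general}. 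Where a vertex carries no focus-focus value (an elliptic corner), I perform a nodal trade first, so that every corner piece is of focus-focus type. This makes $\PP_0$ elementary in the sense of Definition \ref{def:elementary}.

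\textbf{Step 2: refinement by $\PP_1$.} I take $\PP_1$ to be a generic family of rational lines, with slopes among the finitely many edge directions allowed by the area bound. Its role is to separate the interior region into toric pieces, so that interior vertices of tropical graphs become isolated generic points.

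\textbf{Step 3: controlling tropical graphs.} By monotonicity, every Maslov index two broken disk has area $1$. Proposition \ref{prop:b0edge} then gives $\rho_0$ such that, for $\rho \ge \rho_0$, each rigid tropical graph in $B^\dual_\rho$ collapses onto a graph in $B^\dual_0$. I would then check the two conditions of Definition \ref{def:coll-interior}.
\begin{itemize}
\item \emph{Collisions in the interior.} For a vertex $v$ with $P(v)$ meeting $\partial\Phi(X)$ in a collar piece, I apply the Remark following the balancing lemma together with Lemma \ref{lem:orient} and index counting. The component $u_v$ meets the boundary divisor, and Maslov index two forces exactly one such intersection overall, transversally and of multiplicity one. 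Hence $v$ is univalent and its edge direction is the primitive normal to $Q$.
\item \emph{Corner pieces.} For $P(v)=P_\zeta$, Lemma \ref{lem:nodal-sph} forces $u_v$ into a nodal fiber. This makes $v$ univalent, with edge in the shear eigendirection and meeting one focus-focus point $x_v$, provided $u_v$ has no intersection with the boundary divisor in that piece. The latter again follows from the Maslov two count: the unique boundary intersection is already used by the outgoing edge.
\end{itemize}

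\textbf{Step 4: genericity and valence.} Higher-valent vertices may remain, and I allow them through Definition \ref{def:higherval} and Proposition \ref{prop:ffcross}. To get generic interior collisions, I choose the cutting datum (the dual polytope sizes) generically. Then no edge of a rigid graph passes through a dual singular point $b^\dual$ except at a univalent endpoint, and open vertices are univalent by Cho--Oh primitivity.

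\textbf{Main obstacle.} I expect the hardest step to be the Maslov-index bookkeeping in Step 3. One must show that a rigid Maslov two disk cannot have a component in a corner piece that also touches the toric boundary there, and that an edge cannot pass near $\zeta$ without terminating. My first approach is to use the log Calabi–Yau identity $c_1 = [D]^\dual$ to express the Maslov index as $2$ times total boundary intersection. I would combine this with positivity of intersections, after nodal trades make $D$ a smooth torus, to show there is exactly one boundary intersection in total. That single intersection is accounted for by the final outgoing leaf in a collar piece, which rules out all the other configurations.
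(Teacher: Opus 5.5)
Your Step 3 does not establish what is needed. The log Calabi--Yau identity plus positivity of intersections only shows that a Maslov index two broken disk meets $D$ exactly once. It gives no control over vertices $v$ with $P(v)$ meeting $\partial\Phi(X)$ but $u_v\cdot D=0$: balanced cylinders or pairs of pants in a boundary piece contribute nothing to the Maslov index. Nor does it stop the vertex that does meet $D$ from carrying extra edges. Lemma \ref{lem:orient} bounds the number of outgoing edges, not the valence. Both the remark following the balancing lemma and Lemma \ref{lem:nodal-sph} \emph{assume} a single relative marking, so using them to deduce univalence is circular, at boundary pieces and at focus-focus pieces alike.

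Step 4 does not repair this. If the disk's boundary class is the primitive normal of a facet $P_0\cap P_b$, with $P_b$ a focus-focus piece, the edge runs along $(P_0\cap P_b)^\dual$ into the point $P_b^\dual$ for every choice of dual polytope sizes. So no genericity of the cutting datum avoids a multivalent vertex there. Disk-vertex univalence also does not follow from Cho--Oh ``primitivity'': in this toric setting primitivity is equivalent to univalence, so the argument is again circular.

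The decomposition itself is not of the required type either. A corner piece $P_\zeta$ containing both $\zeta$ and the focus-focus values violates the exclusivity in Definition \ref{def:elementary}. A focus-focus leaf there has direction $\pm\mu_b$, which annihilates the cut direction. Since the cut is transverse to both facets through $\zeta$, $\pm\mu_b$ is not a facet normal, contrary to Definition \ref{def:coll-interior}(2). Your nodal trades also change $\Phi$, which the statement keeps fixed; the paper keeps elliptic corners as the zero-dimensional pieces $R_x$.

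What is missing is a quantitative use of the area bound, which is how the paper argues. Monotonicity fixes the area of every Maslov index two broken disk at $1$. The paper uses a standard decomposition $\PP_\ann\cap\PP_{\on{in}}$ (Definition \ref{def:good}) with $P_0\supset(1-\eps)\Phi(X)$; your nodal slides are the right preparation for this. In it the focus-focus pieces lie between $P_0$ and the annular cut, hence away from $\partial\Phi(X)$, and the short facets of $P_0$ bordering them have normals outside $\Phi(X)^\dual$.

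By Lemma \ref{lem:disk-area}, a disk component with boundary class $\xi$ has area at least $(1-\eps)c_\xi$, and Lemma \ref{lem:cxi} gives $c_\xi\ge C>1$ for $\xi\notin\Phi(X)^\dual$. Hence the initial slope lies in $\Phi(X)^\dual$ and the disk cannot run straight into a singular point. Since all other edges stay in the outward cone of the annulus (Lemma \ref{lem:no-interior}), focus-focus vertices are leaves. A competing broken map of smaller positive area rules out a multivalent disk vertex (Proposition \ref{prop:collg}).

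At the boundary, Proposition \ref{prop:b0edge} applied to $\PP_\ann$ and $\PP_{\on{in}}$ with $\rho$ large puts an outward-pointing edge on every path from the disk vertex to the boundary. The inner part has area $(1-\eps)\sum_e n_\partial(\cT(e))$ over these edges, and each edge costs at least $\eps\, n_\partial(\cT(e))$ more outside. So there is exactly one such edge, of multiplicity one, and the outer part is a single fiber cylinder (Proposition \ref{prop:col-int}). This is what yields univalence and the normal direction at the boundary vertex. Your generic lines $\PP_1$ carry none of this annular structure, so your appeal to Proposition \ref{prop:b0edge} never produces a conclusion.
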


The decomposition is constructed by taking the polytopes to be the intersections of polytopes in two polyhedral decompositions defined as follows.

\begin{definition}
  {\rm(Intersection of polyhedral decompositions)} Two decompositions $\PP_0$
  and $\PP_1$ of $\R^n$ {\em intersect transversally} if any pair
  $P_0 \in \PP_0$, $P_1 \in \PP_1$ intersects transversely.  The {\em intersection} 
  $\PP:=\PP_0 \cap \PP_1$ is defined as the polyhedral decomposition,
  each of whose  polytopes is an intersection
  $P_0 \cap P_1$ of $P_0 \in \PP_0$, $P_1 \in \PP_1$.
\end{definition}
  
\begin{definition} \label{def:good}
  For an almost toric manifold with no elliptic singularities, 
  define a {\em standard decomposition}
  to be a polyhedral decomposition of the form
  \[\PP:=\PP_\ann \cap \PP_{\on{in}}.\]
  where
  \begin{enumerate}
  \item $\PP_\ann$ is an {\em annular cut}, which is a single
    piece-wise linear path encircling $\lambda = \Phi(L)$ and all the
    focus-focus values $B^{\foc}$ and whose complement is the union of
    an inner ball and an outer annulus in $\Phi(X)$;
  \item and $\PP_{\on{in}}$ is an {\em inner cut} that separates each
    of the branch cuts $C_1,\ldots, C_k$ and the Lagrangian projection
    $\ell = \Phi(L)$ from each other 
    and is precisely described as follows: The one-dimensional part of
    $\PP_{\on{in}}$ consists of a polygon $P_0$ that bounds $\ell$,
    and semi-infinite rays $P_i,P_i' \in \PP_{\on{in}}$ emanating from the vertices, where $i$ is the index set for the vertices, pairs of  which, $P_i$, $P_i'$, are parallel to branch cuts $C_i$, and $C_i$
    lies between $P_i$, $P_i'$. See Figure \ref{fig:2p} for example.
  \end{enumerate}
  In the case when the almost toric structure has elliptic singularities, a standard decomposition is defined similarly, with the only difference that the annular cut is a piece-wise linear path with self-crossings near elliptic singularities.
  Each elliptic singularity $x$ corresponds to a zero-dimensional polytope $R_x$ which a self-crossing of the annular cut. The collection of these zero-dimensional polytopes is denoted by
  \[\PP^{\on{ell}}:=\{R_x \in \PP : \text{$x$ is an elliptic singularity}\}.\]
  See Figure \ref{fig:b4p2dual} for an example. This ends the Definition. 
\end{definition}

\begin{notation}
The polytope $P_0$ containing $\ell$ has facets consisting of {\em long facets}
\begin{equation}
  \label{eq:P0-long}
  F_\mu:=\{\bran{x,\mu}=1-\eps\}   
\end{equation}
parallel to each facet of $\Phi(X)$ and {\em short facets}
\begin{equation}
  \label{eq:P0-short}
  F_\nu:=\{\bran{x,\nu}=c_\nu\} 
\end{equation}
near the vertices of $\Phi(X)$, where the vector $\nu \in \t_\Z$ lies outside $\Phi(X)^\dual$ (Definition \ref{def:polydual} below).
The constants $\eps$, $c_\nu$ are chosen so that for a small enough $\eps_0>0$, $P_0$ contains $(1-\eps_0) \Phi(X)$, see Proposition \ref{prop:collg} in the following section.
\end{notation}
\begin{definition}\label{def:polydual}
  {\rm(Dual of a polytope)} Let $\Delta \subset \t^\dual$ be a
  polytope with rational edge directions. The {\em dual} of $\Delta$,
  denoted by $\Delta^\dual \subset \t$, is the convex hull of the
  primitive normals $\mu \in \t_\Z$ of the facets of $\Delta$.
\end{definition}

\begin{figure}[ht]
  \begin{center}
    \scalebox{.8}{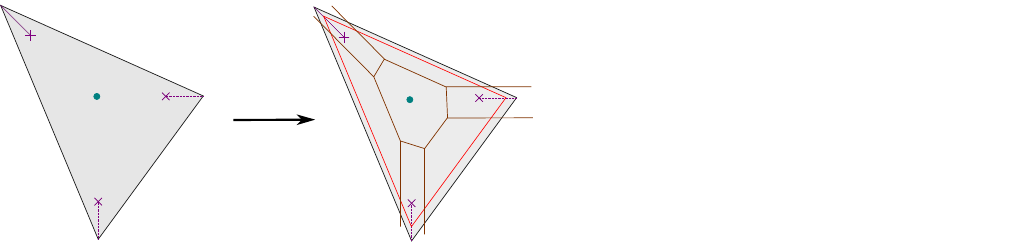}
  \end{center}
  \caption{A standard polyhedral decomposition $\PP$ on $\Bl_8\P^2$ defined as 
     the intersection of $\PP_{\on{ann}}$ and $\PP_{\on{in}}$.}
  \label{fig:2p}
\end{figure}

The dual complexes of $\PP_\ann$ and $\PP_{\on{in}}$ combine to give a one-parameter family of dual complexes
\begin{equation}
  \label{eq:dualrho}
  \{B^\dual(\rho)\}_{\rho>0}  
\end{equation}
for $\PP$ as in the following Lemma which is proved as part of
\cite[Example 3.30]{vw:trop}.  

\begin{lemma}\label{lem:union-cd}
  {\rm(A family of dual complexes)}
  Let $\PP_0$, $\PP_1$ be polyhedral decompositions of $\R^n$ with
  dual complexes $B_0^\dual$ and $B_1^\dual$ respectively, and both
  $\PP_0$ and $\PP_1$ have a cutting datum. Then, the intersection
  $\PP_0 \cap \PP_1$ has a family of dual complexes and cutting data
  parametrized by $\rho >0$, where for any polytope
  $P_{01} \in P_0 \cap P_1$ in $\PP$, where $P_0 \in \PP_0$ and $P_1 \in \PP_1$, the dual
  polytope $P_{01}^\dual(\rho)$ is the product $P_0^\dual \times \rho P_1^\dual$.
\end{lemma}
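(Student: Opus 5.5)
Since the lemma is a local statement about polytopes, the plan is to build the family of dual polytopes and the cutting data by taking products of the two given ones, and then check that the face identifications agree.

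First, the faces of an intersection polytope. Transversality of $\PP_0$ and $\PP_1$ means every face of $P_{01} = P_0 \cap P_1$ is $Q_0 \cap Q_1$ for faces $Q_0 \subset P_0$ and $Q_1 \subset P_1$ (with $Q_i \in \PP_i$). For such a face,
\[ \operatorname{span}(T(Q_0\cap Q_1))^{\perp} = \operatorname{span}(TQ_0)^\perp \oplus \operatorname{span}(TQ_1)^\perp , \]
so $\t_{Q_0 \cap Q_1} = \t_{Q_0} \oplus \t_{Q_1}$. Consequently $T_{P_{01}} = T_{P_0} \times T_{P_1}$ locally. This splitting is what allows a product $P_0^\dual \times \rho P_1^\dual$, with $P_0^\dual \subset \t_{P_0}$ and $P_1^\dual \subset \t_{P_1}$, to sit inside $\t_{P_{01}}$ with the correct dimension $\codim P_{01}$.

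Second, the cutting datum. For a face inclusion $P_{01} = P_0 \cap P_1 \subset Q_0 \cap Q_1 = Q_{01}$ we have $Q_i \supseteq P_i$. The original cutting data supply inclusions $\iota_i : Q_i^\dual \hookrightarrow P_i^\dual$ (the dual of the larger polytope is a face of the dual of the smaller one). We define
\[ \iota_0 \times \rho\,\iota_1 : Q_0^\dual \times \rho Q_1^\dual \to P_0^\dual \times \rho P_1^\dual . \]
A product of faces is a face of the product, and scaling by $\rho$ preserves face relations, so this is a face inclusion.

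Third, compatibility. Composites of such maps are again products of composites, so the identifications are compatible and the union $B^\dual(\rho) = \bigcup_{P \in \PP} P^\dual(\rho)/\sim$ is well defined for every $\rho > 0$. Any remaining axioms of a cutting datum in the sense of Definition T-\ref{T-def:cut-datum} also hold factorwise, because each axiom is a condition on each factor separately. I would check in particular that the tangent space of $P^\dual(\rho)$ is the annihilator of $TP$, which follows from the splitting above.

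The only real subtlety is the first step. When $P_0$ or $P_1$ is top-dimensional, $\t_{P_i} = 0$ and the corresponding dual is a point. When they are not, transversality must be used to guarantee that the face poset of $P_0 \cap P_1$ is the product of the face posets near the intersection. I would verify this in a local chart where $P_0$ and $P_1$ are products of half-spaces in complementary coordinates. Transversality gives exactly this local product structure, and it reduces the whole construction to the product of the two given cutting data.
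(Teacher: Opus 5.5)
Your proposal is correct and is essentially the paper's construction, which the paper takes from \cite[Example 3.30]{vw:trop}. The dual of $P_0\cap P_1$ is $P_0^\dual\times\rho P_1^\dual$ sitting in $\t_{P_0}\oplus\t_{P_1}=\t_{P_0\cap P_1}$, the face inclusions are products of the factor inclusions, and the cells are glued as in \eqref{eq:sim}, with transversality giving the local product structure of the star of $P_0\cap P_1$ exactly as you describe; one small imprecision is that $T_{P_0}\times T_{P_1}\to T_{P_0\cap P_1}$ is in general a finite cover rather than an isomorphism, since $\t_{P_0,\Z}\oplus\t_{P_1,\Z}$ can have finite index in $\t_{P_0\cap P_1,\Z}$, but this is harmless because your argument only uses the real splitting.
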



Via the following lemma, we show that for any $\rho>0$ the dual complex $B^\dual:=B^\dual(\rho)$ is  a singular integral affine manifold with corners whose singular set is 
\[B^{\dual,\foc} :=\{P^\dual: \codim(P)=0, \text{ $P$ contains a focus-focus value}\}.\]

\begin{lemma}
  {\rm(A singular affine manifold)}
  Let $\PP$ be a polyhedral decomposition defined on a two-dimensional base diagram $\Delta \subset \t^\dual$ of an almost toric manifold, such that any focus-focus value is contained in a top-dimensional polytope $P \in \PP$ and edges intersect branch cuts transversally. Then, the interior of the dual complex $B^\dual$ is a singular affine manifold modelled on $\t$ with singularities at $B^{\dual,\foc}$.
\end{lemma}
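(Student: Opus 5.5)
We build affine charts on the interior of $B^\dual$ star by star, one chart around each point. We then check two things: the transition maps lie in $\t\rtimes GL(2,\Z)$, and the only points where no chart exists are the dual cells $P^\dual$ of top-dimensional polytopes $P$ containing focus-focus values.

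The first step is to recall the cutting datum. Each $P\in\PP$ has dual polytope $P^\dual\subset \t_P$, where $\t_P$ is the annihilator of the span of $TP$. For each face $Q\subset P$ there is an inclusion $P^\dual\to Q^\dual$, and its image is a face of $Q^\dual$ whose normal directions span $\t_P$. At a point $x$ in the relative interior of some $Q^\dual$, the cells containing $x$ are the $P^\dual$ with $P\subseteq Q$. Since $P\cap(\text{small ball around } Q)$ can be read in a single affine chart of $B-B^\foc$ near $Q$, we have the following.

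\begin{enumerate}
\item If $Q$ is a vertex (a $0$-cell) lying in the interior of $\Delta$ and away from $B^\foc$, then a neighborhood of $Q$ is a single chart $U\cong$ open subset of $\t^\dual$. The polytopes $P\ni Q$ form a complete fan at $Q$.
\item The duals $P^\dual$ then glue, exactly as in the toric picture, to a neighborhood of $Q^\dual$ in the two-dimensional normal fan dual complex. This neighborhood sits inside $\t$, and it is the chart.
\item For an edge $Q$, the same argument applies using a chart along the whole segment $Q$. This is possible because $Q$ does not contain singular values, every focus-focus value lying in the interior of a top-dimensional cell.
\end{enumerate}

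The second step concerns transition maps, where the branch cuts enter. If an edge $Q$ crosses a branch cut $C_i$ (transversally, by hypothesis), then the two charts of $B$ on either side of $C_i$ differ by the shear $s_i$. Dualizing, the induced charts on $\t$ differ by the transpose-inverse $s_i^{-T}$, which is in $GL(2,\Z)$. Compatibility of the inclusions $P^\dual\to Q^\dual$ with the gluing $\sim$ of \eqref{eq:sim} then shows that all overlaps of charts are integral affine maps.

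The third step treats the points not yet covered. These lie in cells $P^\dual$ with $P$ top-dimensional. Such a $P^\dual$ is a single point. If $P$ contains no focus-focus value, then $P$ lies in one chart of $B$, and the fan of its faces is complete, so step one gives a smooth chart at that point. If $P$ does contain a focus-focus value, the loop around the point $P^\dual$ passes through the duals of the faces of $P$. Composing the transition maps along this loop gives the dual of the monodromy of $B$ around $b$, which is conjugate to a nontrivial shear. Hence no affine chart extends over $P^\dual$, and these points are exactly $B^{\dual,\foc}$. Cells meeting $\partial\Delta$ give the boundary/corners, so they are excluded from the interior.

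The main obstacle I expect is step two together with the monodromy computation. One must carefully identify the affine structure induced on $\t$ from the dual cells, in particular that the dual of a shear $s_i$ acts on $\t$ by $s_i^{-T}$ and is again a shear. One must also verify that gluing the $P^\dual$ across a face crossing the cut is consistent with the chosen cutting datum. I would handle this by working in the local model $\Delta_-\cup s_i\Delta_+$ from Definition \ref{def:mutate}, where the computation reduces to a two-polytope check.
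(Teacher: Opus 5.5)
Your proof is correct in outline but takes a different route from the paper, and step one needs one correction: it has the inclusion backwards.

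As you note at the start, the cells through a point of $\on{relint}(Q^\dual)$ are the duals of the \emph{faces} of $Q$. The $P^\dual$ with $P\ni Q$ are instead faces of $Q^\dual$, and the fan of $\PP$ at a vertex $Q$ only fixes the shape of the polygon $Q^\dual$. So for a vertex $Q$ there is nothing to prove. The real content is in two places:
\begin{itemize}
\item at the $1$-cells $E^\dual$: a chart of $B\bs B^\foc$ along the whole closed edge $E$ puts the $2$-cells dual to both endpoints of $E$ into one copy of $\t$, on opposite sides of $E^\dual$;
\item at the $0$-cells $P^\dual$: the tangent cones there of the $2$-cells dual to the vertices of $P$ are the normal cones of $P$, and they tile $\t$.
\end{itemize}
Your step three already says the second point. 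The clean statement of your method is: a chart of $B\bs B^\foc$ on a neighbourhood of a closed cell $Q$ induces a chart of $B^\dual$ near $\on{relint}(Q^\dual)$, and chart changes act on $\t$ by the contragredient.

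The paper works instead with a single global chart. The base-diagram coordinates on $\Delta$ minus the cuts embed every $P^\dual$ with $P\notin\PP^\br$ into $\t$ at once, via \eqref{eq:embedt}. The pieces are then glued by the shear along the duals of edges crossing a cut, which is where the transversality hypothesis enters. That version is concrete, and it is how the explicit matrices $A_1,A_2,A_3$ for $\Bl^8\P^2$ are read off. It leaves implicit, however, why $P^\dual$ is a smooth point when $P$ is a top-dimensional cell crossed by a cut but containing no focus-focus value.

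Your intrinsic argument handles that case automatically, since such a $P$ lies in one chart of $B\bs B^\foc$, and it never uses transversality. Your monodromy computation also shows that the points of $B^{\dual,\foc}$ are genuinely singular, which the lemma does not require.

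One caveat on that last claim. If $P$ contains several focus-focus values, the holonomy around $P^\dual$ is the product of their monodromies. That product is a nontrivial shear when the values lie on one branch cut, as in elementary decompositions. Your ``exactly'' needs this assumption.
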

\begin{proof}
  Let $\PP^\br \subset \PP$ denote the subset of polytopes $P$ that intersect branch cuts. For any polytope $P \in \PP \bs \PP^\br$, the dual $P^\dual \subset \t_P \subset \t$.  An affine structure is induced by
  the embedding
  \begin{equation}
    \label{eq:embedt}
    i:\left(\bigsqcup_{P \in \PP \bs \PP^\br}P^\dual/\sim \right)\hra \t.
  \end{equation}
  Here, the equivalence relation is as in \eqref{eq:sim}.
 We remark that top-dimensional dual polytopes $Q^\dual$ are embedded by \eqref{eq:embedt}.
  If a one-dimensional polytope $P\in \PP^\br$ with end points
  $Q_0, Q_1 \in \PP$ intersects a branch cut $C_i$, the dual $P^\dual$
  has two embeddings $i_j : P^\dual \to \t$, $j=0,1$ whose images lie on the boundary of the image of $i$. That is, 
  $i_j(P^\dual) \subset i(\partial Q_j^\dual)$. The embeddings
  $i_0, i_1$ are related to each other by a shear transformation
  \eqref{eq:shear}. Thus, we obtain an affine structure on
  $B^\dual \bs B^{\dual,\foc}$ by gluing the affine structure on
  the domain of \eqref{eq:embedt} along the one-dimensional edges
  $i_j(P^\dual)$, $P\in \PP^\br$.
\end{proof}

\begin{example}
  Figure \ref{fig:dual1chart} shows a standard polyhedral decomposition of an almost toric moment polytope of $\Bl^8 \P^2$. The dual complex $B^\dual$ is an affine manifold obtained by gluing
  together the dual polytopes 
\[ \{P^\dual \subset \t  : P \in \PP, \dim(P)=0\} . \]
We choose coordinates arising from the moment polytope, that is,
the complement of the branch cut produces an chart of the dual complex that is a subset of $\t$. In particular, in this chart, the direction $\mu_{Q^\dual} \in \t$ of  any one-dimensional dual polytope $Q^\dual$ ($Q \in \PP$) is given by the normal of the facet $Q \subset \t^\dual$.
The dual complex $B^\dual$, which is an affine manifold is obtained by gluing three pairs of edges by shear maps, namely the outer green edges of $P_0^\dual$ and $P_1^\dual$ are glued by the shear map $A_1 \in GL(2,\Z)$, those of $P_2^\dual$ and $P_3^\dual$ are glued by $A_2$, and  those of $P_4^\dual$ and $P_5^\dual$ are glued by $A_3$. 
The shear maps $A_1$, $A_2$, $A_3 \in GL(2,\Z)$ are  given by 
  \[A_1=
  \begin{pmatrix}
    1&9\\
    0&1
  \end{pmatrix}, \quad
  A_2=
  \begin{pmatrix}
    -2&9\\
    -1&4
  \end{pmatrix}, \quad
  A_3=
  \begin{pmatrix}
    1&0\\
    -1&1
  \end{pmatrix}.
  \]
  The matrix $A_1$ has $(1,0)$ as an eigenvector with eigenvalue $1$, and sends $(3,-1)$ to $(-6,-1)$; $A_2$, $A_3$ can similarly be read off from Figure \ref{fig:dual1chart}. More precisely, the gluing is defined on neighborhoods of the outer green edges by a map
  \[ \t \to \t, \quad x \mapsto A_ix+ b,\]
  where the constant $b \in \t$ is chosen so that the respective edges
  are identified by the gluing.  Various tropical graphs corresponding
  to rigid broken maps are given in Figures \ref{fig:divcurve},
  \ref{fig:vertcurve} and \ref{fig:dp1full}.  This ends the Example.
\end{example}

\begin{figure}[ht]
  \begin{center}
    \scalebox{.8}{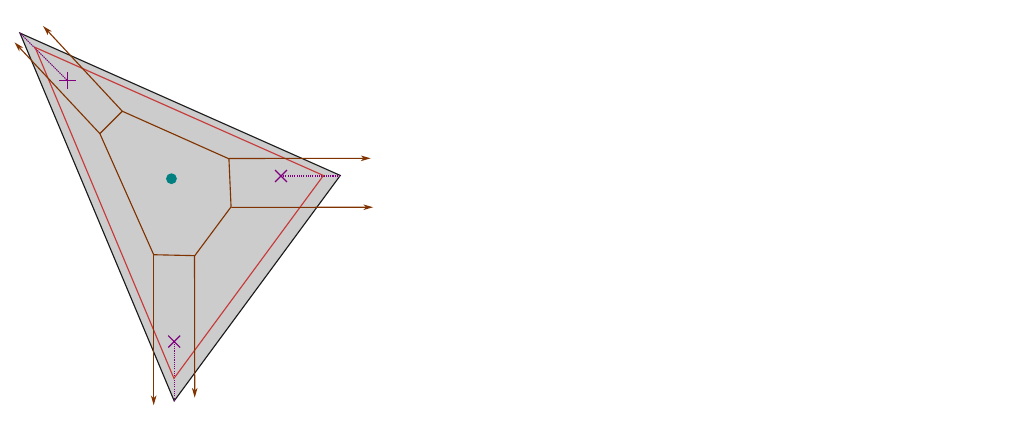}
  \end{center}
  \caption{Left: A polyhedral decomposition $\PP$ for $X:=\Bl^8\P^2$.
The polytope $P_0$ is top-dimensional, $P_i$, $Q_i$, $i=1,\dots,6$ are zero-dimensional. 
    Right: Dual complex $B^\dual(\rho)$. }
  \label{fig:dual1chart}
\end{figure}

\begin{figure}[ht]
  \begin{center}
    \scalebox{.8}{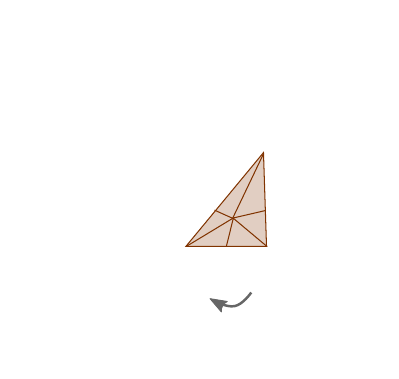}
  \end{center}
  \caption{Dual affine manifold corresponding to the polyhedral decomposition $\PP$ of  $\Bl_8\P^2$ in Figure \ref{fig:dual1chart}.}
  \label{fig:b8p2-affine}
\end{figure}

\begin{figure}[ht]
  \begin{center}
   \scalebox{.8}{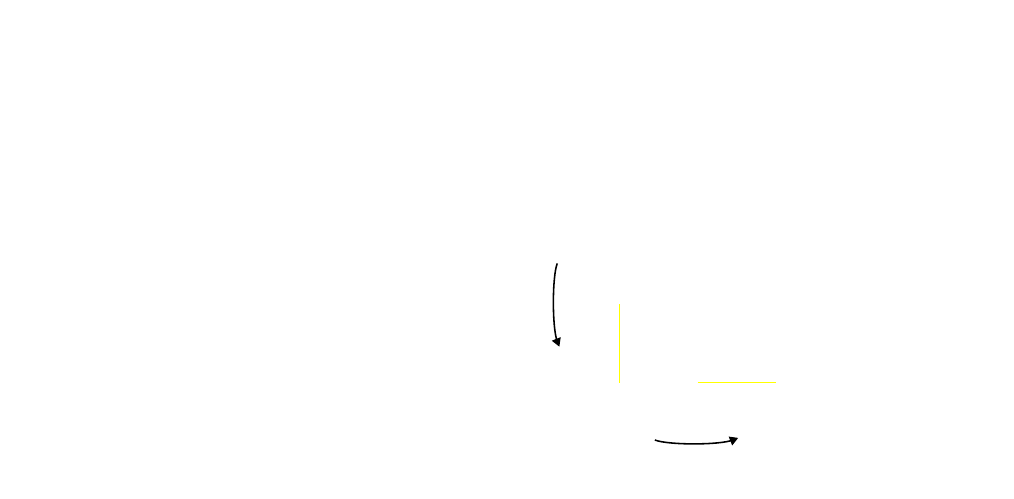}
  \end{center}
  \caption{Left: A multiple cut on an almost toric manifold with three elliptic singularities at vertices of the pentagon. Right: The dual complex. An outward pointing vector field $\mu_{out}$ is defined on $Q_i^\dual$, $i=1,\dots,10$.}
  \label{fig:b4p2dual}
\end{figure}

\begin{figure}[ht]
  \begin{center}
   \scalebox{.8}{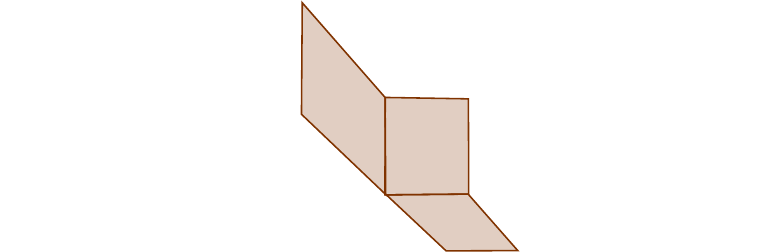}
  \end{center}
  \caption{Dual polytope $R_x^\dual$ corresponding to an elliptic singularity $x$, and the extensions of the outward normal vector $\muout$ to $R_x^\dual$.}
  \label{fig:ellip}
\end{figure}

\begin{notation}
  Let $\PP$ be a standard polyhedral decomposition of an almost toric
  manifold $X$, with dual complex $B^\dual$.
  \begin{enumerate}
  \item {\rm(Annular part of the dual complex)}
    The {\em annular part of the dual complex} is a subset of the dual complex $B^\dual$ defined as
    \begin{equation}
      \label{eq:bann-def}
      B^\dual_\ann:=\bigcup(P^\dual=P^\dual_{\on{in}} \times P^\dual_\ann ), \quad \dim(P^\dual_\ann) \geq 1.      
    \end{equation}
In the absence of elliptic singularities, the top-dimensional polytopes in $B^\dual_\ann$ are rectangles, where both $P^\dual_{\on{in}}$, $P^\dual_\ann$ are one-dimensional.
In case there are elliptic singularities, $B^\dual_\ann$ also consists of the dual polytope $R_x^\dual$ corresponding to each elliptic singularity $x$.

\item   {\rm(Normals to facets)} In the absence of elliptic singularities,
  the affine manifold $B^\dual_\ann \subset B^\dual$ has a parallel vector field
  \begin{equation}
    \label{eq:muout}
    \muout \in \Vect(B^\dual_\ann)    
  \end{equation}
  made up of primitive normals to facets. Such a vector field exists,
  since in the absence of elliptic singularities, the shear at a
  vertex maps the primitive normal of a facet to that of the adjacent
  one.  If there are elliptic singularities, $\muout$ is defined only
  on the complement
  $B^\dual_\ann \bs (\cup_{R \in \PP^{\on{ell}}}R^\dual)$ 
  of the dual polytopes corresponding to the elliptic singularities.
  In the dual polytope $R^\dual_x$ corresponding to an elliptic
  singularity $x$, that shares facets with rectangular dual polytopes
  $Q_1^\dual$ and $Q_2^\dual$, the parallel extension of
  $\muout|Q_i^\dual$, $i=1,2$, to $R_x^\dual$ is denoted by
  $\muout^{(i)}$, see Figure \ref{fig:ellip}.
  \end{enumerate}
\end{notation}

\begin{definition}\label{def:op}
  A vector $v \in T_bB^\dual_\ann$ is {\em outward pointing} if
\begin{enumerate}
  \item \label{op1} $b$ is in a rectangular dual polytope in
    $B^\dual_\ann$, and $v =n\mu_{\on{out}}$ where $n \in \Z_{>0}$ and
    $\mu_{\on{out}}$ is from \eqref{eq:muout}),
\item \label{op2} or $b$ lies in a polytope $R_x^\dual \in \PP^{\on{ell}}$
    corresponding to an elliptic singularity, and
    $v=n_1 \mu_{\on{out}}^{(1)} + n_2 \mu_{\on{out}}^{(2)}$, where
    $n_i \in \Z_{\geq 0}$ and $\mu_{\on{out}}^{(i)}$ is as in Figure
    \ref{fig:ellip}.
  \end{enumerate}
  For an outward pointing vector $v$, the {\em intersection number $n_\partial(v)$ with
    the boundary divisor} is $n$ resp. $n_1+n_2$ in 
  case \eqref{op1} resp. \eqref{op2}.
\end{definition}

\begin{remark}\label{rem:bal}
  {\rm(Balancing condition)}
  Recall that for a tropical graph $\Gamma$, 
  the standard balancing condition holds for vertices $v$ lying in zero-dimensional  polytopes,
  that is, $\dim(P(v))=0$. The balancing condition is that
  \begin{equation}
    \label{eq:bal0}
    \sum_{e \ni v} (-1)^{\sig_v(e)}\cT(e)=0,
  \end{equation}
  where $\sig_v(e)$ is $-1$ resp. $1$ if $e$ points towards resp. away from $v$. 
  We generalize the condition to vertices $v$ with
  $\dim(P(v))=1$.
  For an edge $e$ if $P(e) \notin \PP^\br$ then, we view $\cT(e)$ as an element in $\t$ via the embedding \eqref{eq:embedt}. If $P(e) \in \PP^\br$,
  there are two embeddings $i_0, i_1 : P(e)^\dual \to \t$
  related by a shear $A \in GL(2,\Z)$, 
  and we have $A(i_0(\cT(e)))=i_1(\cT(e))$.
  The following cases arise.
  \begin{description}
  \item[Case 1]   Suppose $P(v) \notin \PP^\br$ and $P(v)$ does not intersect $\partial \Phi(X)$. Then, the standard balancing condition holds.
  \item[Case 2] Suppose $P(v) \notin \PP^\br$ and $P(v)$ intersects a face $F_\nu$ of $\partial \Phi(X)$ with outward normal vector $\nu \in \t_\Z$. Then, 
       \begin{equation}
         \label{eq:bal1}
    \sum_{e \ni v} (-1)^{\sig_v(e)}\cT(e) + n_v \nu=0,
   \end{equation}
  where $n_v \in \Z_{\geq 0}$ is the intersection number of $u_v$ with $\Phinv(F_\nu)$. 
\item[Case 3] Suppose $P(v) \in \PP^\br$, with incident edges $\{e_i\}_i$ in the $i_0$-side and $\{f_j\}_j$ in the $i_1$-side. If $P(e) \in \PP^\br$, we choose a side arbitrarily. Then,
  the balancing condition is
\[A \left(\sum_i (-1)^{\sig_v(e_i)}\cT(e_i) \right) + \sum_j (-1)^{\sig_v(f_j)}\cT(f_j)=0.\] 
\end{description}
\end{remark}

\begin{remark}
  {\rm(Inner and annulus parts of the dual complex)}
\label{rem:B-ann}
  The dual complex
  partitions into an inner and an annulus part.  The inner part is the
  dual complex of $\PP_{\on{in}}$, and is independent of $\rho$.  For
  any $\rho$, define
  \[B^\dual_{\on{in}}:=\cup_{Q \in \PP_{\on{in}} : \dim(Q)=0}Q^\dual
    \subset B^\dual(\rho), \quad B^\dual_{\ann}(\rho):=B^\dual(\rho)
    \bs \on{int}(B^\dual_{\on{in}}(\rho)).\]
  For example, in Figure \ref{fig:dual1chart},
  $B^\dual_{\on{in}}=\cup_{1 \leq i \leq 6}P_i$ and
  $B^\dual_{\ann}=\cup_{1 \leq i \leq 6}Q_i$.  If the almost toric
  manifold has no elliptic singularities, the annulus part
  $B^\dual_\ann$ is obtained by gluing a set of rectangular dual
  complexes $Q_i^\dual$ via shears. Each $Q_i^\dual$ corresponds to a
  point of intersection between the decompositions $\PP_{\on{in}}$ and
  $\PP_\ann$.

  If the almost toric manifold has elliptic singularities, ``gluing by
  shear'' is replaced by additional polytopes. For every elliptic
  singularity $x \in X$, there is a top-dimensional dual polytope
  $R_x^\dual$ that shares facets with rectangles
  $Q_i^\dual$, $Q_j^\dual$. There is no shear going from $Q_i^\dual$
  to $R_x^\dual$ to $Q_j^\dual$. See Figures \ref{fig:b4p2dual} and \ref{fig:ellip}. 
  This ends the Remark.
\end{remark}

\begin{figure}[ht]
  \begin{center}
    \scalebox{.8}{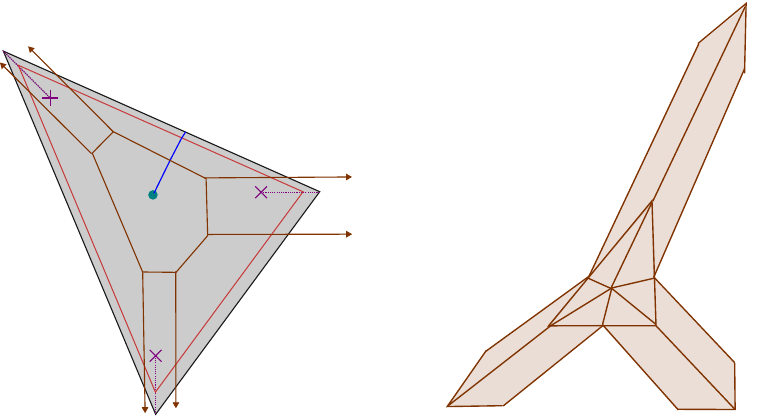}
  \end{center}
  \caption{Left: A broken map in $\Bl_8\P^2$ whose starting direction is a normal to a divisor. Right: Tropical curve in the dual complex. The polyhedral decomposition and dual complex are as in Figure \ref{fig:dual1chart}}
  \label{fig:divcurve}
\end{figure}

\begin{figure}[ht]
  \begin{center}
    \scalebox{.8}{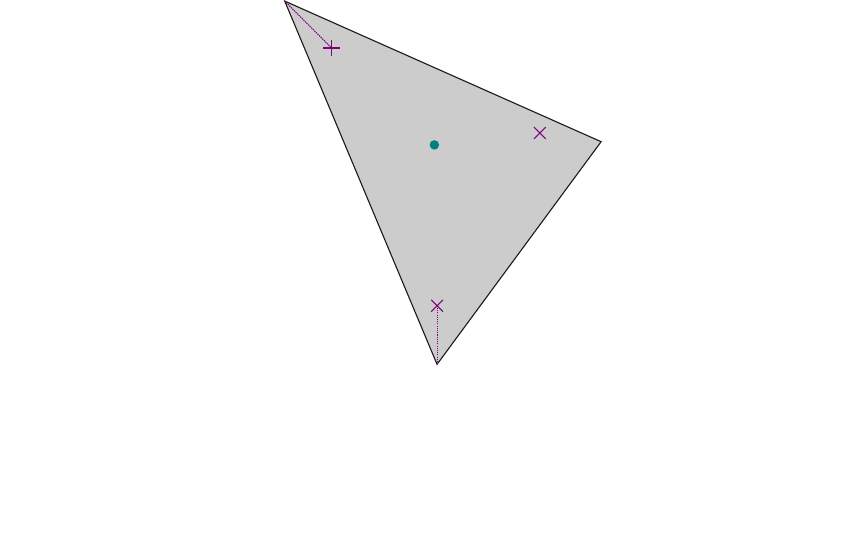}
  \end{center}
  \caption{Left: A broken map in $\Bl_8\P^2$. Middle: The corresponding cartoon tropical curve.    Right: Actual tropical graph in the dual complex. }
  \label{fig:vertcurve}
\end{figure}

\begin{figure}[ht]
  \begin{center}
  \scalebox{.8}{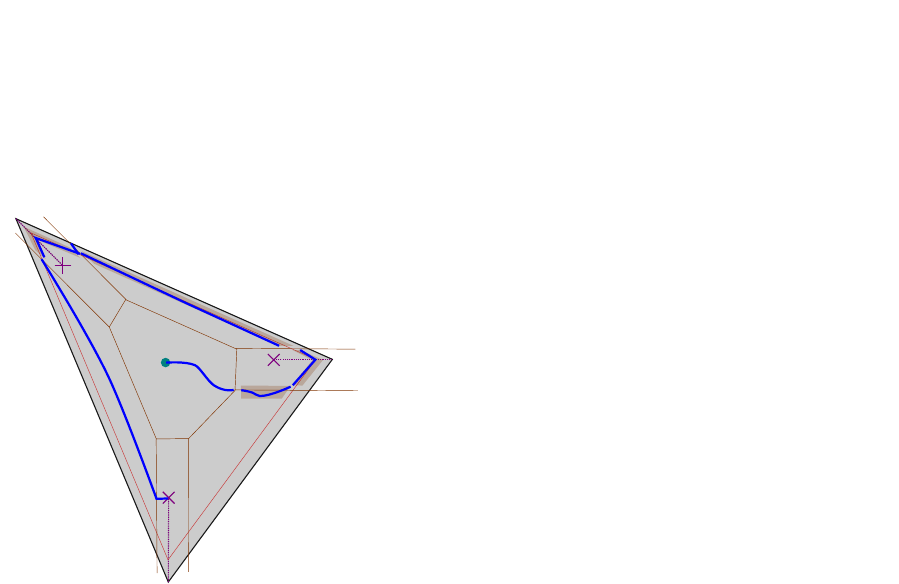}
  \end{center}
  \caption{Left: A broken map in $\Bl_8\P^2$ whose starting direction is a normal to a divisor. Middle: The corresponding cartoon tropical curve. 
    Right: Actual tropical graph in the dual complex. }
  \label{fig:dp1full}
\end{figure}

\subsection{Collisions at generic points}

In this section we describe the conditions on a standard polyhedral
decomposition that ensures that for broken disks of Maslov index $2$,
collisions occur at generic points in the sense of Definition
\ref{def:coll-interior}. The following is the precise statement:

\begin{proposition}{\rm(Collisions at generic points)}\label{prop:collg}
  There exists a constant $\eps>0$ such that the following holds: Suppose $\PP$ is a standard polyhedral decomposition $\PP$  satisfying 
  \begin{enumerate}
  \item  $P_0 \supset (1-\eps)\Phi(X)$ where $P_0 \in \PP$ is the polytope containing the Lagrangian fiber $\Phi(L)$, and 
  \item for any cut space $X_{P_b}$ ($P_b \in \PP$) that contains focus-focus singularities, the primitive normal $\nu \in \t_\Z$ to the facet $P_b \cap P_0$ does not lie in $\Phi(X)^\dual$.
  \end{enumerate}
  Then in a  broken disk of Maslov index two in $\XX_\PP$ with tropical graph $\Gamma$,
  \begin{enumerate}
  \item \label{part:collg1} the disk vertex $v_0$ of $\Gamma$ is univalent,
  \item \label{part:collg2} the initial slope of $\Gamma$ lies in $\Phi(X)^\dual$, and
  \item \label{part:collg3} if for a vertex $v$, $X_{P(v)}$ contains a focus-focus singularity, then, $v$ is univalent.
  \end{enumerate}
\end{proposition}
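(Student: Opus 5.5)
The plan is to combine monotonicity (fixed area) with the structure of the disk piece $X_{P_0}$, which is toric, and with the constraint orientation of Lemma \ref{lem:orient}. By Corollary \ref{cor:w} and the discussion after Theorem \ref{thm:cobord}, a Maslov index two broken disk has area equal to the monotonicity constant; normalize so that $\Phi(X)=\{x:\bran{x,\mu}\le 1\}$ over the primitive facet normals $\mu$, and place $\lambda=\Phi(L)$ at the origin. The disk component $u_{v_0}$ lies in $\XX_{P_0}$. By the Cho--Oh classification (Blaschke products, as in the proof for disks in toric varieties) its relative ends have slopes among the normals of facets of $P_0$. The Maslov index of $u_{v_0}$ counts its intersections with the toric divisors of $\ol\XX_{P_0}$.

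First I would compute areas. A disk in $\ol\XX_{P_0}$ meeting the divisor over facet $F$ with multiplicity $k$ has area $k\,\mathrm{dist}_{\mathrm{aff}}(\lambda,F)$.
\begin{itemize}
\item A long facet $F_\mu$ contributes $1-\eps$.
\item A short facet $F_\nu$ contributes $c_\nu$. Since $\nu\notin\Phi(X)^\dual$ and $P_0\supset(1-\eps)\Phi(X)$, the facet $F_\nu$ cuts off a neighborhood of a vertex, so $c_\nu=\max_{\Phi(X)}\bran{\cdot,\nu}-O(\eps)$. This maximum is at least $1+\delta$ for some $\delta>0$ depending only on $\Phi(X)$, because $\nu$ lies outside the dual polytope.
\end{itemize}

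Second, I would bound the remaining components. Each non-disk component has nonnegative area, and components in the annulus region have area bounded below through positivity of intersections with $D$ and $\omega$-areas of cylinders. Comparing with the total area $1$ then gives two conclusions. First, $u_{v_0}$ has exactly one relative end, so $v_0$ is univalent; this is part (1). Two ends would give area at least $2(1-\eps)>1$. Second, the end has primitive slope and lies on a long facet, so its slope $\mu$ is a vertex of $\Phi(X)^\dual$; this is part (2). A short facet would give area $c_\nu>1$ once $\eps<\delta/2$. More generally the slope, read in the dual complex, must lie in $\Phi(X)^\dual$.

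The delicate point is the area accounting for the outer part of the tree. Part (1) needs the total area of everything downstream of $v_0$ to be small when the end is on a long facet, and comparable when it is on a short one. I would argue this by pushing the tree outward with the balancing conditions of Remark \ref{rem:bal}. The final direction is $\muout$ times the boundary intersection number $n_\partial$. By the log Calabi--Yau relation, the Maslov index equals $2\sum n_\partial$, hence $n_\partial=1$ in total. The area is then $\bran{\lambda\text{-to-boundary}}$-type pairings, which are additive along edges.

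For part (3), let $v$ have $P(v)$ containing a focus-focus value. By Lemma \ref{lem:orient} $v$ has exactly one outgoing edge. Incoming edges at $v$ would have to come from the disk vertex through $P_b\cap P_0$, with slope equal to the normal $\nu$ of that facet, but by hypothesis (2) $\nu\notin\Phi(X)^\dual$, which contradicts part (2) applied along the chain. Otherwise the incoming edges come from other focus pieces or boundary pieces. Those would create a sphere component meeting $D$ plus extra intersections, forcing $n_\partial\ge 2$ or area exceeding $1$. Hence $v$ has only its outgoing edge and is univalent. Lemma \ref{lem:nodal-sph} confirms that such $u_v$ is a cover of a nodal fiber sphere.

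I expect the main obstacle to be making the area/index bookkeeping across broken components rigorous, specifically establishing the uniform $\delta$ and the lower area bounds for non-disk components independent of the decomposition.
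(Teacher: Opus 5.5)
\textbf{Part (1) has a genuine gap.} You claim that the relative ends of $u_{v_0}$ have slopes among the facet normals of $P_0$; this is false. The vertices of $P_0$ are zero-dimensional polytopes of the standard decomposition. So an edge at $v_0$ can enter the two-cell dual to a vertex of $P_0$, with any integral slope $\xi$ in the cone of the two adjacent normals. Such an end contributes area $\max_{P_0}\bran{\cdot,\xi}\le\max_{\Phi(X)}\bran{\cdot,\xi}$, which is $<1$ whenever $\xi\neq 0$ is interior to $\Phi(X)^\dual$.

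These slopes really occur. For $\Bl^8\P^2$ with the potential of Table~\ref{table:pptable}:
\begin{itemize}
\item $\Phi(X)^\dual$ is the triangle with vertices $(-1,-2),(5,-2),(-1,4)$;
\item the monomial $y_1$ has coefficient $90$;
\item $\max_{\Phi(X)}\bran{\cdot,(1,0)}=\max_{\Phi(X)}\bran{\cdot,(0,1)}=1/3$.
\end{itemize}
So a disk vertex with ends of slopes $(1,0)$ and $(0,1)$ has area at most $2/3$. Hence your step ``two ends would give area at least $2(1-\eps)$'' fails, and comparing the area of $u_{v_0}$ alone with the total area cannot yield univalence.

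The missing idea is to apply monotonicity to a sub-configuration: every nonconstant broken disk has area at least that of a Maslov-index-two disk. Suppose $v_0$ has edges $e_1,\dots,e_k$ with $k\ge 2$. The index count behind Lemma~\ref{lem:orient} makes exactly one of them, say $e_1$, outgoing. Then:
\begin{itemize}
\item split $u_{v_0}$ into single-ended Blaschke factors $u_1,\dots,u_k$;
\item discard $u_2,\dots,u_k$ together with the subtrees feeding $e_2,\dots,e_k$;
\item replace the components beyond $e_1$ by a homotopic relative map matching $u_1$ at $e_1$.
\end{itemize}
The result is a broken disk of positive area strictly less than $\on{Area}(u)$. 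This contradiction does not depend on the slopes of the ends.

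\textbf{Part (2).} Your intermediate claim that the initial slope is a vertex of $\Phi(X)^\dual$ is likewise wrong; the $2y_1$ term for $\Bl^5\P^2$ is a counterexample. However, your $\delta$-estimate works once you apply it to an arbitrary integral $\xi\notin\Phi(X)^\dual$ rather than only to short-facet normals. In that form it is exactly Lemmas~\ref{lem:disk-area} and~\ref{lem:cxi}, and it proves (2) once (1) holds.

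\textbf{Part (3) also has a gap.} Your case split omits edges reaching $P_b^\dual$ from sphere vertices in the annular cells. You also do not argue the claim that other incoming edges force $n_\partial\ge 2$ or area $>1$. The missing input is the outward-spiral property, Lemma~\ref{lem:no-interior}. With the constraint orientation, every edge other than $e_0$ lies in the outward-pointing cone, whereas $P_b^\dual$ lies on the inner boundary of $B^\dual_\ann$. So the only possible incoming edge at a focus-focus vertex is $e_0$ itself. Its slope would be the normal to $P_0\cap P_b$, which hypothesis (2) together with part (2) excludes.
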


\begin{lemma}{\rm(Areas of disks in toric orbifolds)}
  \label{lem:disk-area}
Let  $X$ be a toric orbifold with moment map $\Phi : X \to \t^\dual$
  and suppose the moment polytope $\Delta:=\Phi(X)$ containing $0$ in the
  interior.
  Let $u: D\bs \{0\} \to X \bs \Phinv(\partial \Delta)$ be a holomorphic
  disk with boundary in the Lagrangian $L=\Phinv(0)$, with 
  $\lim_{z \to 0}u(z) \in \Phinv(\partial \Delta)$, and $u_*[\partial D]=\xi \in \t_\Z$. Then,
  \begin{equation}
    \label{eq:disk-area}
    \on{Area}(u)=c_\xi,  
  \end{equation}
  where $c_\xi >0$ is such that for the plane $F_\xi:=\{\bran{x,\xi}=c_\xi\} \subset \t^\dual$, the intersection $\Delta \cap F_\xi$ is non-empty and contained in $\partial \Delta$. 
\end{lemma}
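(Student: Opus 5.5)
We will prove the area formula \eqref{eq:disk-area} by combining the Cho--Oh description of holomorphic disks with Stokes' theorem. Since $u$ has a single puncture at $0$ (going to the boundary divisor) and $u$ maps $D\bs\{0\}$ into the open dense torus orbit $X\bs\Phinv(\partial\Delta)\cong T_\C$, we write $u$ in torus coordinates. Identify $X\bs\Phinv(\partial\Delta)\cong \t^\dual_{\on{int}(\Delta)}\times T$ via action-angle coordinates $(x,\theta)$. Then $\omega = \sum_i dx_i\wedge d\theta_i$ and $\omega = d\alpha$ with $\alpha=\sum_i x_i\, d\theta_i$ on the open orbit. On $L=\Phinv(0)$ we have $x=0$, so $\alpha|_L=0$.

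The first step is to reduce the map. By removal of singularities in the toric compactification, or equivalently by the Blaschke product description \eqref{fig:blaschke} restricted to one puncture, $u$ is asymptotic near $0$ to the trivial cylinder $z\mapsto c\,z^{\xi}$. Here the exponent $\xi$ is forced by $u_*[\partial D]=\xi$, since the loop around $0$ is homotopic to $\partial D$ in $D\bs\{0\}$. In fact $u(z)=c z^{\xi}$ exactly, since $\log|u|$ is a harmonic $\t$-valued function on $D\bs\{0\}$ with bounded behavior and constant value on $\partial D$ after subtracting $\xi\log|z|$. We will not need this explicit form, only the asymptotics.

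The second step is Stokes on the annulus $\{r\le |z|\le 1\}$:
\[ \int_{r\le|z|\le1}u^*\omega = \int_{\partial D}u^*\alpha - \int_{|z|=r}u^*\alpha = -\int_{|z|=r} \langle x(u), d\theta(u)\rangle .\]
As $r\to 0$, $x(u(z))$ tends to the face of $\Delta$ that the limit point $\lim_{z\to0}u(z)$ lies over. Meanwhile $\theta(u)$ winds once along $\xi$, up to sign and orientation conventions. The limit is therefore $\langle x_\infty,\xi\rangle$ with $x_\infty$ in the face $F$ of $\Delta$ toward which the cylinder $z^\xi$ flows. This face is exactly where $\langle\cdot,\xi\rangle$ is maximized over $\Delta$: along the orbit $c\,e^{s\xi}$ as $s\to\infty$, the moment map tends to the face maximizing $\xi$.

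Hence $\on{Area}(u)=\max_\Delta\langle x,\xi\rangle=:c_\xi$. This value is positive because $0\in\on{int}\Delta$, and the plane $F_\xi$ is a supporting hyperplane, so $F_\xi\cap\Delta\subset\partial\Delta$ is nonempty. That last claim is the identification required in \eqref{eq:disk-area}.

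The main obstacle is justifying the limit of the boundary term near the orbifold/boundary stratum. There the action-angle coordinates degenerate, since $d\theta$ is undefined on $\Phinv(\partial\Delta)$. We handle this by never evaluating at $r=0$. For small $r$, $u(\{|z|=r\})$ lies in the open orbit, and by the asymptotic form $u\sim cz^\xi$ we have $x(u)\to$ the $\xi$-maximal face uniformly while $u^*d\theta\to \xi\, d\arg z$. This uses the standard fact that in logarithmic coordinates the moment map of $c e^{s\xi}$ converges to the face $\arg\max_\Delta\xi$. The orbifold case is identical, working on a finite uniformizing cover near the limit point, which changes neither the area nor $\xi$.
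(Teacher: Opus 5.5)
Your argument is correct and is essentially the paper's. The paper just defers to Cho--Oh's Theorem~8.1, whose proof likewise uses that such a disk is a single-factor Blaschke product $u=cz^{\xi}$ and reads its area off the moment map; that is exactly your Stokes computation with the primitive $\sum_i x_i\,d\theta_i$ vanishing on $L$, and your version has the small advantage of handling non-primitive $\xi$ and limit points on lower-dimensional faces or orbifold strata explicitly, which the paper covers only by the word ``identical''.

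One bookkeeping point: you reach $c_\xi$ only through two cancelling sign slips. Since $\log|z|\to-\infty$ as $z\to 0$, under your conventions ($\omega=\sum_i dx_i\wedge d\theta_i$, $u=cz^{\xi}$) the limit face is where $\langle\cdot,\xi\rangle$ is \emph{minimized}, not maximized. Keeping the minus sign from Stokes, which your next line drops, the computation then yields $\max_\Delta\langle x,-\xi\rangle$. The stated formula holds in the paper's convention, where the disk slope $\xi$ is the outward facet normal (equivalently, $\Phi$ replaced by $-\Phi$), so fix that convention at the start rather than invoking ``up to sign''.
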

\begin{proof}
  The proof is identical to that of Theorem 8.1 in Cho-Oh \cite{chooh:fano} which computes the area of disks that intersects the boundary divisor at a single smooth point. 
\end{proof}

The next result gives a lower bound on the constants $c_\xi$ occuring
in the area formula \eqref{eq:disk-area} when $\xi$ is not in the dual
of $\Phi(X)$.

\begin{lemma}\label{lem:cxi}
   Let $\Delta \subset \t^\dual \simeq \R^2$ be the moment polytope of a compact toric orbifold all whose faces are of the form $F_\mu=\{\bran{x,\mu}=1\}$ where $\mu \in \t_\Z$ is a primitive vector. 
  For any
  $\xi \in \t_\Z$ let $c_\xi$ be the constant such that for the
  hyperplane 
  \[ F_\xi:=\{\bran{x,\xi}=c_\xi\} \subset \t^\dual , \] 
  the
  intersection $\Delta \cap F_\xi$ is non-empty and contained in
  $\partial \Delta$.  There is a constant $C>1$ such that for any
  $\xi \in \t_\Z\bs \Phi(X)^\dual$,
  \[c_\xi \geq C.\]
\end{lemma}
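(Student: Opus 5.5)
The plan is to rewrite $c_\xi$ as a support function and then use the geometry of the dual polytope. Since $\Delta$ is compact, $\Delta \cap F_\xi$ is nonempty and contained in $\partial\Delta$ exactly when
\[ c_\xi = h_\Delta(\xi) := \max_{x \in \Delta} \bran{x,\xi} , \]
so the claim becomes a uniform lower bound $h_\Delta(\xi) \geq C > 1$ for all lattice vectors $\xi \notin \Phi(X)^\dual = \Delta^\dual$.

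The first step is to identify $\Delta$ with the polar of $\Delta^\dual$. Because every facet has the form $F_\mu=\{\bran{x,\mu}=1\}$, we have
\[ \Delta = \{ x : \bran{x,\mu} \le 1 \text{ for all primitive facet normals } \mu \} . \]
This is exactly the polar of the convex hull of the normals, i.e.\ $\Delta = (\Delta^\dual)^\circ$. Since $0$ lies in the interior of $\Delta$ and $\Delta$ is compact, $\Delta^\dual$ contains $0$ in its interior. Bipolarity then gives
\[ \Delta^\dual = \{ \xi : h_\Delta(\xi) \le 1 \} . \]
Consequently $\xi \notin \Delta^\dual$ if and only if $h_\Delta(\xi) > 1$.

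The second step upgrades this strict inequality to a uniform constant by a finiteness and homogeneity argument. The function $h_\Delta$ is positively homogeneous, continuous, and positive away from $0$. Hence for $|\xi| \ge R$ we have $h_\Delta(\xi) \ge R \min_{|\eta|=1} h_\Delta(\eta)$, and choosing $R$ large makes this at least $2$. The lattice points $\xi \in \t_\Z \setminus \Delta^\dual$ with $|\xi| < R$ form a finite set, and each satisfies $h_\Delta(\xi) > 1$. Setting
\[ C := \min\Big( 2, \min\{ h_\Delta(\xi) : \xi \in \t_\Z\setminus\Delta^\dual,\ |\xi|<R\} \Big) > 1 \]
gives the desired bound.

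The main point needing care is the polarity identification in the first step. It relies on $0 \in \on{int}\Delta$, which is implicit in the normalization $F_\mu=\{\bran{x,\mu}=1\}$ since $\bran{0,\mu}=0<1$ for every $\mu$, together with the fact that the facet inequalities cut out $\Delta$ exactly, which holds because $\Delta$ is a full-dimensional polytope. If one prefers to avoid polarity, an alternative is to write $\xi$ as a real combination of the two normals of a cone of the normal fan of $\Delta$ containing it, and evaluate $h_\Delta(\xi)$ at the corresponding vertex of $\Delta$. Then $h_\Delta(\xi)$ equals the sum of the two coefficients, and this sum exceeds $1$ precisely when $\xi$ lies beyond the segment joining those two normals, that is, outside $\Delta^\dual$.
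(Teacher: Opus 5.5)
Your argument is correct and is essentially the paper's. Both proofs identify $c_\xi$ with the support function $h_\Delta(\xi)$, i.e.\ the gauge of $\Delta^\dual$, so that $c_\xi>1$ exactly when $\xi \notin \Delta^\dual$, and then use discreteness of $\t_\Z$ to get a uniform $C>1$. The paper computes $c_\xi$ through the normal cone at the vertex where $F_\xi$ touches $\Delta$ (your ``alternative''), and says the facet case is handled the same way. Your bipolar argument covers all $\xi$ at once, and your homogeneity-plus-finiteness step justifies that the paper's ``least value'' $C$ is attained and exceeds $1$.
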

\begin{proof}
  We consider the case when $\xi \notin \Phi(X)^\dual$ and $F_\xi$
  intersects $\Delta$ at a vertex $x_0$, the proof of the other cases
  is the same. Suppose $x_0$ is the intersection of facets
  $F_{\mu_i}=\{\bran{y,\mu_i}=1\}$, $i=1,2$ of $\Delta$. Then, $\xi$
  lies in the cone generated by $\mu_1$, $\mu_2$, that is,
    \[\xi=C_1(t \mu_1 + (1-t)\mu_2)\]
    for some $t \in [0,1]$, $C_1 \geq 0$.
We have $c_\xi=C_1$, because  $F_\xi$ passes through $F_{\mu_1} \cap F_{\mu_2}$.
    Since $\xi \notin \Delta^\dual$,
    $C_1>1$.  
      Define $C>1$ to be the least value for which
    the set $(C \Delta^\dual \bs \Delta^\dual) \cap \t_\Z$ is non-empty.
    We conclude $c_\xi=C_1 \geq C$ because $\xi \in \t_\Z \cap C_1 \Delta^\dual$. 
\end{proof}

\begin{proof}
  [Proof of Proposition \ref{prop:collg}] We first prove
  \eqref{part:collg1}, namely that the disk vertex is
  univalent. Suppose the disk vertex $v_0$ of $\Gamma$ is not
  univalent, and as incident edges $e_1,\dots, e_k$, $k \geq 2$. There
  is a decomposition of the homology class
  \[[u_0]=\sum_{i=1}^k [u_i] \in H_2(X_{P_0},L),\]
  where $u_i$ is disk in $X_{P_0}$ with a single relative node with
  direction $\cT(e_i)$.  For reasons of dimension of the moduli space,
  among the edges $e_1,\dots, e_k$, there is exactly one outgoing edge
  (with respect to the constraint orientation from Lemma \ref{lem:orient})  which we
  assume to be $e_1$.  We arrive at a contradiction by constructing a
  non-constant broken map whose area is smaller than that of $u$. Let
  $u_1'$ be the relative map consisting of all the sphere components
  of $u$ that are connected to $u_0$ via $e_1$. There is a relative
  map $u_1''$ homotopic to $u_1'$ such that $(u_1,u_1'')$ satisfy the
  matching condition at the edge $e_1$. Then, $(u_1,u_1'')$ is a
  broken map with positive area, but whose area is less than
  $\on{Area}(u)$ which is a contradiction, since $(X,L)$ is
  monotone. Therefore, the disk vertex of $\Gamma$ is univalent.

  To prove part \eqref{part:collg2}, we again assume the contrapositive, namely that the initial slope of the broken map does not lie in $\Phi(X)^\dual$. In other words, we assume that for the disk component
  $u_0:D \to X_{P_0}$ of $u$, the element 
  \[ \xi:=(u_0)_*[\partial D] \in \t_\Z \] 
  does not lie in $\Phi(X)^\dual$. By Lemma \ref{lem:disk-area},
  and the condition $P_0 \supset (1-\eps)\Phi(X)$, we get 
  $\on{Area}(u_0)=(1-\eps)c_\xi$. Here $c_\xi>0$ is defined by the condition that the plane $\{\bran{x,\xi}=c_\xi\}$ intersects $\Phi(X)$ at the boundary $\partial \Phi(X)$. By Lemma \ref{lem:cxi}, $c_\xi \geq C$, and thus,
  \[\on{Area}(u_0) \geq (1-\eps)C.\]
  We fix $\eps$ so that $(1-\eps)C>1$ so that the assumed case
  does not occur. That is, the initial slope of the broken map $u$
  lies in $\Phi(X)^\dual$.

  Next, we prove \eqref{part:collg3} which says that there are no
  collisions at singular points of the dual complex, that is, there
  cannot be a vertex $v$ with valence at least $2$ and $P(v)=P_b$ and
  $X_{P_b}$ contains a focus-focus singularity.  Lemma \ref{lem:no-interior} shows
  that tropical graphs in the annular part of the dual complex spiral
  outwards.   A collision at a singular point can occur
  only if the disk component $u_0:D \to X_{P_0}$ of $u$ intersects the
  divisor $X_{P_0} \cap X_{P_b}$.
  This can happen only if
  $\xi$ is equal to the primitive normal
  of the facet $P_0 \cap P_b$, which by assumption, does not lie in
  $\Phi(X)^\dual$. Therefore, by \eqref{part:collg3} such a disk does
  not exist.
\end{proof}

\subsection{Collisions in the interior}
\label{sec:interior}

In this Section, we prove that collisions happen in the interior.

\begin{proposition}\label{prop:col-int}
  {\rm(Collisions in the interior)}   Suppose that $X$ is a monotone symplectic four-manifold with an almost toric structure with no elliptic singularities.   For a large enough $\rho$, for a tropical graph $\Gamma$ corresponding to a disk of Maslov index $2$, there is a single vertex $v_\partial$ with $P(v_\partial) \in \PP_{\partial}$. Furthermore,
\begin{enumerate}
\item the map $u_{v_{\partial}}$ is a holomorphic cylinder intersecting a boundary divisor $\Phinv(F_\nu)$ where $F_\nu$ is a facet of $\Phi(X)$ with normal vector $\nu \in \t_\Z$, and 
\item  $v_\partial$ is univalent with incident edge $e_\partial$ and $\cT(e_\partial)=\nu$. 
\end{enumerate}
\end{proposition}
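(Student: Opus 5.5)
The plan is to combine the Maslov index count of Lemma \ref{lem:orient} with the log Calabi-Yau property of $(X,D)$, so that a Maslov-two broken disk meets the divisor at infinity exactly once. Then I would use the structure of the annular part $B^\dual_\ann(\rho)$ for large $\rho$ to pin down the boundary vertex. The first step is the count. By Symington's result, $c_1(X) = [D]^\dual$ in $H^2(X,L)$, and the Maslov index of a disk with boundary on the fiber $L$ is twice its intersection number with $D = \Phinv(\partial\Phi(X))$. Via the cobordism of Theorem \ref{thm:cobord}, this holds for broken disks, with the intersection number being the sum of $n_v$ over vertices $v$ with $P(v) \in \PP_\partial$, as in Case 2 of Remark \ref{rem:bal}. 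Each such vertex has $n_v \geq 1$: otherwise $u_v$ lies in $(\C^\times)^2$ after removing the relative ends, and a non-constant component there needs at least two relative edges, which contradicts rigidity and stability once Case 1 balancing is used. Maslov index two therefore forces exactly one vertex $v_\partial$ with $P(v_\partial)\in\PP_\partial$, and $n_{v_\partial}=1$.

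The second step shows that $v_\partial$ is univalent and that its edge has direction $\nu$. Since there are no elliptic singularities, $P(v_\partial)$ meets a single facet $F_\nu$, and $u_{v_\partial}$ meets $\Phinv(F_\nu)$ once, transversally. By Lemma \ref{lem:orient} (applied to non-boundary vertices) and the dimension count of Theorem \ref{thm:split}, the constraint orientation has every edge at $v_\partial$ incoming, and $u_{v_\partial}$ must be rigid with point constraints at all of its relative ends. A sphere in the toric piece $\ol\XX_{P(v_\partial)}$ with $k$ relative ends and one boundary intersection has expected dimension $2(k+1)-4$ modulo the tropical symmetry. Imposing $k$ point constraints, each of real codimension two modulo the $\C^\times$ directions, gives zero only if $k=1$. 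So $v_\partial$ is univalent. Balancing \eqref{eq:bal1} with $n_v=1$ then gives $\cT(e_\partial)=\nu$, and $u_{v_\partial}$ is the holomorphic cylinder of Proposition \ref{prop:spheres}(2).

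The main obstacle is the preliminary step that the graph actually reaches the boundary only through a single rectangle. In principle, tropical edges in $B^\dual_\ann(\rho)$ could wind around the annulus, or enter boundary polytopes several times with different normals. This is where large $\rho$ enters. By Proposition \ref{prop:b0edge}, for $\rho \geq \rho_0$ every edge with a nonzero $\t_{P_\ann}$-component collapses in the $B^\dual_0$ picture. I would then use the outward spiralling in Lemma \ref{lem:no-interior}: the $\muout$-component of edge directions is non-decreasing along oriented paths in $B^\dual_\ann$, because shears fix $\muout$ and balancing preserves it. Together these show that the unique outgoing path from the disk vertex exits through one rectangle $Q_i^\dual$ in direction a positive multiple of $\muout = \nu$. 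The multiple is then fixed to $1$ by $n_{v_\partial}=1$. Carrying out the monotonicity of the $\muout$-component across shear gluings is the step I expect to need the most care.
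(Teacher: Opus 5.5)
Your Maslov count is sound: since $(X,D)$ is log Calabi--Yau, a Maslov index two broken disk has $\sum_v n_v = 1$. The gap is in the two local steps that are meant to turn this into the statement.

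\emph{Extra boundary vertices.} The claim that every vertex with $P(v)\in\PP_\partial$ has $n_v\ge 1$ is not justified. A component in $\XX_{P(v)}$ that avoids $D$ and has two or three balanced relative ends is exactly a cylinder or pair of pants as in Proposition \ref{prop:spheres}, which is rigid and stable. So you get no contradiction, and nothing local rules out extra boundary-polytope vertices with $n_v=0$, for instance ones whose edges run tangentially along the outer boundary of $B^\dual_\ann(\rho)$.

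\emph{Univalence.} The dimension count is wrong. Rational curves in a toric surface meeting the toric boundary in $k+1$ points with prescribed contact orders form a family of complex dimension $k$. A point constraint at a relative end has complex codimension one. Hence $u_{v_\partial}$ with $k$ point-constrained relative ends and one free transverse intersection with $D$ is rigid for \emph{every} $k$. For $k=2$ it is a pair of pants with one leg on $D$, whose count is computed exactly as in Proposition \ref{prop:spheres}.

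\emph{Your third step.} It does not close this gap, for three reasons. The outward cones are closed, so balancing $\sum_e \cT(e)=\nu$ at $v_\partial$ still allows additional incoming edges with zero outward component. Proposition \ref{prop:b0edge} only guarantees that \emph{some} edge on each path to the boundary is radial, not the edge entering $v_\partial$. And there is no single flat ``$\muout$-component'' to be monotone: the lines bounding the half-planes $H_x$ change from rectangle to rectangle (in the paper's example, $(1,0)$ in $Q_1^\dual$ and $(-3,-1)$ in $Q_2^\dual$). Lemma \ref{lem:outward} gives only that edges stay in the closed cones, not an integer flux.

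\emph{The missing idea.} You need an energy bound measured at the outward-pointing edges rather than at the boundary vertex; this is how the paper argues.
\begin{enumerate}
\item For $\rho\ge\rho_0$, Proposition \ref{prop:b0edge} gives the set $S$ of outward-pointing edges. Every path from $v_0$ to a boundary vertex crosses $S$. Let $\Gamma_0$ be the component of $\Gamma\setminus S$ containing $v_0$.
\item Cap each $e\in S$ by an $n_e$-fold cover of a fiber of the outer slab. This has area $n_e\eps$, because $P_0\supset(1-\eps)\Phi(X)$.
\item The capped broken disk has $\sum_{e\in S}n_e$ intersections with $D$, hence area $\sum_{e\in S}n_e$ by monotonicity. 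So $u|_{\Gamma_0}$ has area $(1-\eps)\sum_{e\in S}n_e$.
\item Since $u$ has area one, $S$ is a single edge with $n_e=1$. The rest of $u$ then has area exactly $\eps$, which forces it to be that single fiber cylinder.
\end{enumerate}
This yields, simultaneously, the uniqueness of $v_\partial$, its univalence, $\cT(e_\partial)=\nu$, and the cylinder property. Your Maslov count uses the same monotonicity input, but applied only at $v_\partial$ it cannot see the energy already spent crossing the annulus. That energy is exactly what excludes the configurations above.
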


\begin{proof}
  First, we show that for a large $\rho$, any tropical graph has
  outward-pointing edges.  We recall from Definition \ref{def:good}
  that a good polyhedral decomposition is a combination of $\PP_\ann$
  and $\PP_{\on{in}}$. We apply Proposition \ref{prop:b0edge} with
  $\PP_0:=\PP_\ann$ and $\PP_1:=\PP_{\on{in}}$ and conclude the
  following: There exists $\rho_0$ such that the set of tropical
  graphs realizable in $B^\dual_\rho$ is the same for all
  $\rho \geq \rho_0$. Proposition \ref{prop:b0edge}
  \eqref{part:b0edge2} implies that in any such tropical graph
  $\Gamma$, any path from the disk vertex $v_0$ to a vertex in the
  boundary contains an outward-pointing edge in the sense of
  Definition \ref{def:op} (or a $B_0^\dual$-edge in the terminology of
  Proposition \ref{prop:b0edge}). Let $S \subset \Edge(\Gamma)$ denote
  the set of outward-pointing edges in $\Gamma$, and let
  $\Gamma_0 \subset \Gamma \bs S$ be the connected component
  containing the disk vertex $v_0$. By the terminology of Definition
  \ref{def:op}, for any edge $e$ in $S$, $n_\partial(\cT(e))$ is the
  intersection number of $\cT(e)$ with the boundary divisor. For the
  reader's convenience we recall that in the case with no elliptic
  singularity, the direction of an outward pointing edge $e$ is
  $n_\partial(\cT(e)) \mu_{\on{out}}$ and $P(e)^\dual$ is a rectangle.  The
  proof proceeds via the following claim.

  \begin{claim} For the subgraph $\Gamma_0 \subset \Gamma$ as above, 
  \begin{equation}
    \label{eq:agam0}
    \sum_{v \in \Ver(\Gamma_0)} A(u_v)= (1-\eps)\sum_{e \in S} n_\partial(\cT(e)), 
  \end{equation}
  where $n_\partial(\cT(e))$ is the intersection number of $\cT(e)$ with the boundary
  divisor (Definition \ref{def:op}).    
  \end{claim}
  \begin{proof}
    [Proof of Claim] We construct an augmentation of $u|\Gamma_0$
    denoted by $u_{\on{aug}}$ with tropical graph $\Gamma_{\on{aug}}$, by adding
     vertices to $\Gamma_0$ corresponding to each edge in $S$. 
    
    First, consider the case that an edge $e \in S$ that lies in a rectangular dual
    polytope $P(e)^\dual$. Suppose $e$ is incident on the vertex $v_e$
    in $\Gamma_0$. In a realization of $\Gamma$, 
    the edge $e$ exits $P(e)^\dual$ at an edge $Q^\dual \subset P(e)^\dual$. We 
    add vertex $v_e'$ as the other end-point of $e$,
    with $P(v_e'):=Q$, and let $u_{v_e'}$ be a
    holomorphic cylinder which is an $n_e$-fold cover
    of a fiber of the projection $\ol{X}_{P(e)^\dual} \to \ol{X}_Q$.
    Each fiber $F \cong \P^1$ itself is a toric variety, with moment polytope the fiber 
    $\Phi(F)$ of the projection from $P(e)^\dual$ to $Q$. As a special case
    of the Archimedes' formula \cite[Theorem 6.4]{gu:ka}
    the area of the fiber is $\eps$, up to a universal normalization constant, which 
    with our conventions is equal to $1$.   Indeed, as $\eps \to 1$, the broken map corresponding to a Maslov index two disk with direction $\cT(e)$ consists of a sphere with image $F$ and a disk with small area; it follows that the area of $F$ tends to $1$ as $\eps \to 1$.  Hence the area of $u_{v_e'}$ is $n_e \Vol(F)$. 
    
    Next, consider an edge $e \in S$ that lies in the dual polytope $R_x^\dual$ corresponding to an elliptic singularity $x$.   Suppose $\cT(e)=n_1 \mu_{\on{out}}^{(1)} + n_2 \mu_{\on{out}}^{(2)}$ as in Definition \ref{def:op} \eqref{op2}. If $n_1$ or $n_2$ vanishes, we add a single vertex
    $v_e'$ as the second endpoint of $e$ exactly as in the preceding case. If both
    $n_1$, $n_2$ are non-vanishing, 
    we add $3$ vertices as in Figure \ref{fig:corner}. In all cases, 
    the sum of the areas of the maps corresponding to the new vertices
    is $n_e \eps$.  The map $u_{\on{aug}}$ has $\sum_{e \in S} n_e$
    intersections with boundary divisors.  Therefore, the area of
    $u_{\on{aug}}$ is $\sum_{e \in S} n_e$.  The equation \eqref{eq:agam0}
    follows.
  \end{proof}
  
  Since the total area if $u$ is $1$, from \eqref{eq:agam0}, we conclude that $S$ has a single element $e$ and $n_e=1$. Any map $u$ other than $u_{\on{aug}}$ that contain $u|\Gamma_0$ has area more than that of $u_{\on{aug}}$. We conclude that $u=u_{\on{aug}}$, which proves the Proposition.
\end{proof}

\begin{figure}[ht]
 \begin{center}
  \scalebox{.8}{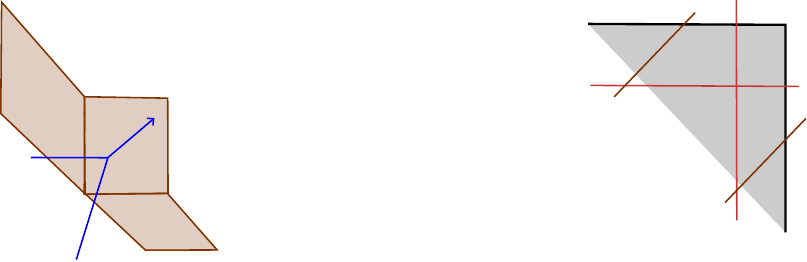}
 \end{center}
 \caption{Left: The subgraph of the tropical graph $\Gamma_0 \subset \Gamma$} 
 \label{fig:corner}
\end{figure}

\begin{figure}[ht]
  \begin{center}
    \scalebox{.8}{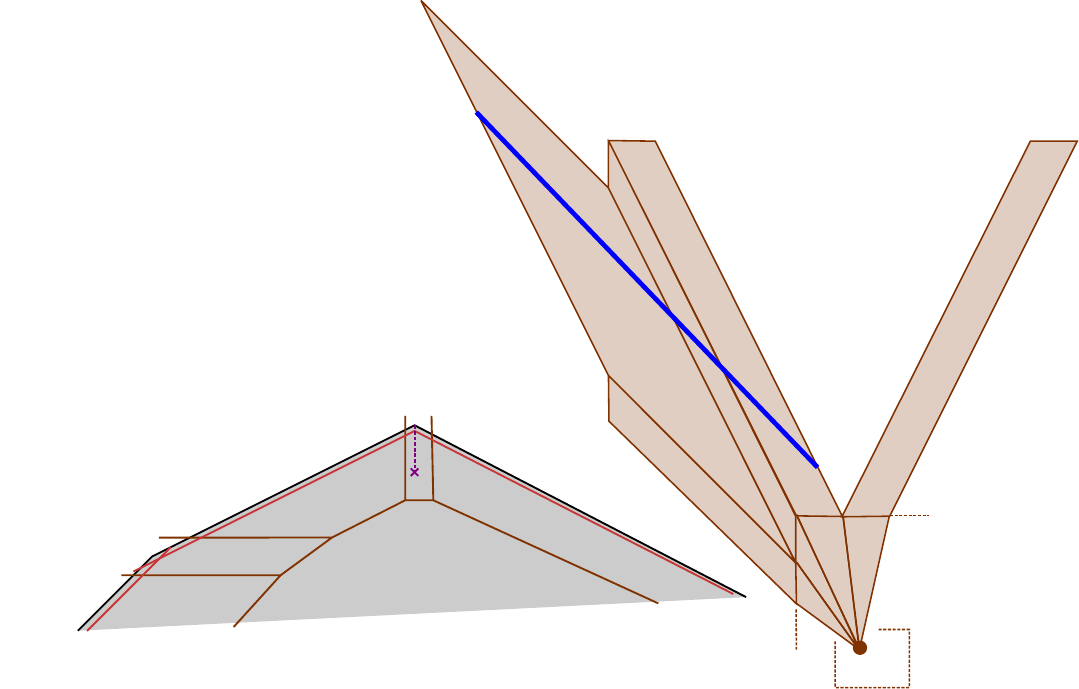}
  \end{center}
  \caption{Left: A portion of a moment polytope containing a $A_1/\Z_2$-singularity and an elliptic singularity. Right : A tropical graph of a Maslov index two disk.} 
  \label{fig:ff2ell}
\end{figure}

\subsection{Tropical graphs spiral outward}
\label{sec:spiral}

We now give a complete description of tropical graphs corresponding to rigid broken disks in standard decompositions, and prove Proposition \ref{prop:interior}.
We assume that edges in the tropical graph are oriented by the constraint orientation from Lemma \ref{lem:orient}. 

We define a notion of {\em outward-pointing} cones on the annulus part of the dual complex,
which will be used to show that the tropical graphs corresponding to broken disks of Maslov index two 
{\em spiral outward} towards
the boundary of the moment polytope.

\begin{definition}{\rm(Outward pointing cone)}
  \label{def:outcone}
  \begin{enumerate}
  \item {\rm(On the interior of a rectangle)}
    Let $P^\dual \subset B^\dual_\ann(\rho)$ be a rectangular dual polytope given by
    $P^\dual=P_0^\dual \times \rho P_1^\dual$. 
    For any $x \in \on{int}(P^\dual)$, the {\em outward pointing cone} is  a half space
\[H_x:=\{v \in T_x B^\dual : g_P(v,\mu_{\on{out}}) \geq 0\},\]
    where $g_P$ is the product metric on $P_0^\dual \times \rho P_1^\dual$. 
  \item {\rm(On the boundaries of rectangles)}
    Let $P^\dual, Q^\dual \subset B^\dual$ be rectangular polytopes whose intersection $P^\dual \subset Q^\dual$ is an edge. For any $x \in P^\dual \cap Q^\dual$,
    the outward pointing cone is
    \[H_x:=\{v \in T_x B^\dual : g_P(v,\mu_{\on{out}}), g_Q(v,\mu_{\on{out}}) \geq 0\}.\]
  \item {\rm(Near elliptic singularities)}
    Let $x \in X$ be an elliptic singularity and let $R_x^\dual \subset B^\dual$ be the corresponding dual polytope that shares edges with rectangular dual polytopes $P_0^\dual$, $P_1^\dual$. For any $y \in R_x^\dual$, the outward-pointing cone $H_y \subset T_yB^\dual$ is obtained by parallel transporting (with respect to the affine structure) the half planes in the interior of $P_0^\dual$ or $P_1^\dual$. Both give the same half planes.
    
  \end{enumerate}
\end{definition}

\begin{example}
  In Figure \ref{fig:dual1chart}, for $x \in Q_1^\dual$ the half plane
  $H_x$ is bounded by $(1,0)$, whereas for $x \in Q_2^\dual$ the half
  plane $H_x$ is bounded by $(-3,-1)$. The half-planes remain constant
  while passing over a shear.  For $x \in Q_1^\dual \cap Q_2^\dual$,
  $H_x$ is a cone $\R_{\geq 0}\bran{(1,0),(-3,-1)}$.
\end{example}

\begin{lemma} \label{lem:outward}
  Suppose $\ell \subset B^\dual_{\ann}$ (defined in Remark \ref{rem:B-ann}) is an oriented line segment starting at a point $x \in B^\dual_{\ann}$ and which is outward-pointing at $x$. Then, $\ell$ is outward-pointing everywhere. 
\end{lemma}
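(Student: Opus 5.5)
The plan is to reduce the statement to a local convexity check on each rectangle, a check across each common edge of adjacent rectangles, and a check at each dual polytope $R_x^\dual$ of an elliptic singularity. An affine segment $\ell$ in $B^\dual_\ann$ is piecewise straight in the charts given by the rectangles $P^\dual = P_0^\dual \times \rho P_1^\dual$. Its direction is constant in each chart and changes only by the shear $A \in GL(2,\Z)$ when it crosses a glued edge coming from a branch cut. It therefore suffices to show that outward-pointing is preserved along a straight piece inside one chart, and that it is preserved when the segment passes from one chart to the next.

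\emph{Inside a rectangle.} Here $\muout$ is parallel and the product metric $g_P$ is constant, so the condition $g_P(v,\muout)\ge 0$ does not depend on the point of $\operatorname{int}(P^\dual)$. A straight segment with constant direction $v$ that is outward at one interior point is thus outward at every interior point of that rectangle.

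\emph{Crossing into a neighbour.} Suppose $\ell$ meets a common edge $P^\dual\cap Q^\dual$. I will use that $\muout$ is the primitive normal of the facet of $\Phi(X)$ dual to the rectangle. I will also use that the edge $P^\dual \cap Q^\dual$ is dual to a segment of the annular path. That segment is transverse to the inner ray separating the two rectangles. It follows that in $Q^\dual$ the outward half-plane is bounded by the new vector $\muout|_Q$, which is obtained from $\muout|_P$ by the shear (in the no-shear case, by the change of facet normal at the corresponding corner). The key sub-claim is the following: if $g_P(v,\muout)\ge 0$ and $v$ points from $P^\dual$ into $Q^\dual$, then $g_Q(v,\muout)\ge 0$ as well. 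Equivalently, a segment entering $Q^\dual$ does so through the side of the cone $H_x$ at the edge.

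To prove the sub-claim I would use two facts. First, the shears fix $\muout$ (the monodromy $h$ fixes $\muout$). Second, the rectangles are arranged cyclically, so that consecutive outward normals $\mu_i,\mu_{i+1}$ form a positively oriented basis with $\det(\mu_i,\mu_{i+1})>0$. Writing $v=a\mu_i+b\mu_{i+1}$ in the corner chart, an admissible direction crossing from $Q_i^\dual$ to $Q_{i+1}^\dual$ must increase the angular coordinate. Combined with $g(v,\mu_i)\ge0$, this forces $g(v,\mu_{i+1})\ge 0$, possibly after rescaling $\rho$ large. For $R_x^\dual$ the outward cones are parallel transports of the adjacent half-planes (Definition \ref{def:outcone}), so the same argument applies with $\muout^{(1)},\muout^{(2)}$ in place of $\mu_i,\mu_{i+1}$.

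\emph{Main obstacle.} The hard step is the edge-crossing sub-claim. Specifically, one must rule out segments that pass from $Q_{i+1}^\dual$ back into $Q_i^\dual$ against the orientation. One must also make precise that the product metric $g_P$ interacts correctly with the $\rho$-scaling. I expect this to use $\rho$ large, so that the rectangles are long in the annular direction, together with the convexity of $\Phi(X)$. Finally, an induction over the finitely many chart transitions along $\ell$ will finish the proof.
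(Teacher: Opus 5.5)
The paper leaves this lemma to the reader, so your argument has to stand on its own. The crossing step, which you rightly single out, is not established by the mechanism you propose.

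First, you misdescribe the half-planes. In $P^\dual=P^\dual_{\on{in}}\times\rho P^\dual_\ann$ the vector $\muout$ spans the annular factor. Its $g_P$-orthogonal complement is therefore the inner factor, i.e.\ the direction $w_P$ of the inner and outer edges of the rectangle. So $H_x=\R w_P+\R_{\ge 0}\muout$ contains $\muout$ rather than being bounded by it. It is also independent of $\rho$, so taking $\rho$ large cannot help.

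Second, $\muout$ does not change across a shared lateral edge. It is parallel: literally the same vector at a regular vertex, and transported by the shear at a singular one. You have the two cases reversed. The chart expression of $\muout$ changes only across a singular vertex $b_i^\dual$, via $A_i$ (e.g.\ $(3,-1)\mapsto(-6,-1)$). The two rectangles meeting at a regular vertex both carry the normal of the same facet. Hence an ``angular coordinate'' argument based on $\det(\mu_i,\mu_{i+1})>0$ never touches the quantity that actually varies, namely the transverse direction $w_P$ versus $w_Q$.

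What is needed is the following. Work in a chart developed across the lateral edge and write $v=a\muout+b\,w_P=a'\muout+b'w_Q$. Then $v\in H_P$ pointing into $Q^\dual$ forces $v\in H_Q$ (and symmetrically for the reverse crossing) exactly when the inner boundary of $B^\dual_\ann$ has angle at most $\pi$ on the inner side at the vertex where the lateral edge starts. Here the inner boundary means the inner edges along $w_P$ and $w_Q$. At a reflex vertex, the vector tangent to the inner edge of $P^\dual$ and pointing towards $Q^\dual$ lies in $H_P$ but not in $H_Q$, so your sub-claim would be false.

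The missing idea is therefore to check this local convexity.
\begin{itemize}
\item At a singular vertex $b_i^\dual$, the rays $P_i,P_i'$ are parallel to $C_i$. So both rectangles have transverse direction along the eigenvector of $A_i$, which the shear fixes. The boundary is straight and $a'=a$; this is the paper's remark that the half-planes remain constant across a shear.
\item At a regular vertex, dual to the region between $P_i'$ and $P_{i+1}$, convexity amounts to these two rays diverging as they approach the facet. This holds because they are parallel to $C_i,C_{i+1}$, which in the monotone diagrams considered lie on lines from $\lambda$ to the two endpoints of that facet.
\end{itemize}
With this in hand, both crossing directions work, so nothing about segments running against the cyclic order needs to be ruled out. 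Your induction over chart transitions, with the analogous check at the polygons $R_x^\dual$, then completes the proof.
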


The proof is straightforward and left to the reader. 

\begin{lemma}\label{lem:no-interior}
  Let $\Gamma$ be a 
  tropical graph in $B^\dual(\rho)$ corresponding to a broken disk of Maslov index two, with disk vertex $v_0$ with incident edge $e_0$. 
  \begin{enumerate}
  \item Any edge $e \in \Edge(\Gamma) \bs \{e_0\}$, when equipped with the constraint orientation, has direction lying in the outward pointing cone. 
  \item All vertices
    $v \in \Ver(\Gamma)$ except for disk vertices
    lie in
 the annulus part  $B^\dual_\ann(\rho)$ of the dual complex. 
  \end{enumerate}
  
\end{lemma}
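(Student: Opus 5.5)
The plan is to propagate outward-pointing directions from the disk vertex along the constraint orientation. Throughout, $\rho$ is large, so that Proposition \ref{prop:b0edge} applies with $\PP_0=\PP_\ann$ and $\PP_1=\PP_{\on{in}}$.

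\emph{Step 1: the edges at the disk vertex.} By Proposition \ref{prop:collg}, the disk vertex $v_0$ is univalent. By Lemma \ref{lem:orient}, each non-boundary vertex has exactly one outgoing edge. The graph is a tree with root $v_0$ and leaves at focus-focus or boundary polytopes. Hence the constraint orientation points every edge away from $v_0$: $e_0$ is outgoing from $v_0$, and each subsequent vertex receives its incoming edges from leaves (focus-focus vertices, whose single edge is outgoing) and one edge from the side of $v_0$. I will order the remaining edges by distance from $v_0$ and prove part (1) inductively.

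\emph{Step 2: the inductive step for part (1).} Consider a vertex $v\neq v_0$. By induction, its incoming edges from the $v_0$-side have directions in the outward cone. Its other incoming edges come from univalent focus-focus vertices $w$. The key observation is that $\mu_b^\pm$ at a singular point $b^\dual$ is tangent to the boundary of the outward half-plane, as one reads off from the shear along a branch cut, since the shear fixes the normal. So these edges lie in the cone, up to sign, and with the orientation pointing into $v$, their negatives enter the balancing condition. The outgoing edge equals minus the sum of the incoming contributions, by the balancing conditions of Remark \ref{rem:bal}. Since the outward cone $H_x$ is a convex cone, this gives an outward-pointing outgoing direction, provided the edges from focus-focus leaves contribute vectors in $H_x$. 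I expect this sign bookkeeping to be the main obstacle: one must check, using the edge directions of Section \ref{sec:at}, that the shear eigendirection paired with incoming orientation lies on the outward side, or at worst on the boundary line, and that this holds consistently on both sides of a shear, where Case 3 of Remark \ref{rem:bal} applies. Along a single edge, Lemma \ref{lem:outward} shows that outwardness persists as the edge crosses rectangles and shears. For the base case, $e_0$ leaves $P_0^\dual$. By Proposition \ref{prop:collg}(2) its slope lies in $\Phi(X)^\dual$, and once $e_0$ enters $B^\dual_\ann$ it is outward pointing. Lemma \ref{lem:outward} then applies to it.

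\emph{Step 3: part (2).} By Proposition \ref{prop:b0edge}(2), collapsing the non-$B^\dual_0$-edges yields a graph in the dual complex of $\PP_\ann$. So for large $\rho$ the only edge leaving the inner part is the one adjacent to $v_0$, and every vertex of the inner part $B^\dual_{\on{in}}$ other than $v_0$ would have to be joined to $v_0$ by non-outward edges. By Step 2 every edge other than $e_0$ points outward, and an outward-pointing segment never re-enters $B^\dual_{\on{in}}$, because the outward cone increases the annular coordinate. The annular coordinate is monotone along outward segments by Lemma \ref{lem:outward}. Hence all vertices except $v_0$ lie in $B^\dual_\ann(\rho)$. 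The focus-focus vertices lie there because $P_b$ meets the annular cut by the choice of the standard decomposition.
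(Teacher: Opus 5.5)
Your strategy matches the paper's. Outwardness is created at the sources: the disk edge where it enters $B^\dual_\ann$, and the focus-focus edges leaving $P_b^\dual\in\partial B^\dual_\ann$ along $\pm\mu_b$. It persists along segments by Lemma \ref{lem:outward}, and it survives collisions because $H_x$ is a convex cone.

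\textbf{The orientation in Step 1 is wrong, and the induction built on it fails.} By Lemma \ref{lem:orient}, every vertex outside $\PP_\partial$ has exactly one outgoing edge. So in the tree, a walk along outgoing edges can only stop at a vertex of $\PP_\partial$. The constraint orientation is therefore directed toward the boundary vertex $v_\partial$, not away from $v_0$. Two things follow.
\begin{enumerate}
\item The edge from a focus-focus leaf points toward the path from $v_0$ to $v_\partial$.
\item Several focus-focus rays may collide with one another at a vertex $u$ off that path before the merged ray reaches it.
\end{enumerate}
Induction on distance from $v_0$ visits such a $u$ before its incoming edges, which are farther from $v_0$. Also, where the merged ray joins the disk path, the extra incoming edge does not come from a univalent focus-focus vertex, contrary to Step 2. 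The fix is to induct along the orientation: list the vertices so that every edge goes from an earlier vertex to a later one, starting with the sources $v_0$ and the focus-focus leaves. This is what the paper means by ``mergers of outward paths are outward.''

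This also removes your sign worry. Take each $\cT(e)$ in the direction of the orientation; then balancing gives $\cT(e_{\mathrm{out}})=\sum\cT(e_{\mathrm{in}})$, after applying the shear in Case 3 of Remark \ref{rem:bal}. In each rectangle $Q^\dual$ with corner $P_b^\dual$, both $\pm\mu_b$ lie on the boundary line of the closed half-plane $H_x$. The reason is not that the shear fixes the normal; the shear fixes $\mu_b$ and carries one facet normal to the next. Rather, $Q$ is where the annular cut meets an inner ray parallel to $C_i$. So the sides of $Q^\dual$ point along $\muout$ and along the ray's normal $\mu_b$, which are orthogonal in the product metric.

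\textbf{Step 3 does not prove part (2).} Part (1) excludes $e_0$, and $H_x$ is only defined on $B^\dual_\ann$. So your base case already assumes that $e_0$ reaches $B^\dual_\ann$ before its far endpoint. Deducing this from Step 2 is circular. Proposition \ref{prop:b0edge}(2) concerns edges collapsing as $\rho\to\infty$; it does not say that $e_0$ is the only edge leaving $B^\dual_{\on{in}}$.

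One way to close this gap:
\begin{enumerate}
\item The focus-focus subtrees are handled by the reordered induction without reference to the disk path, and they stay in $B^\dual_\ann$.
\item Hence a vertex $v\neq v_0$ in the interior of $B^\dual_{\on{in}}$ would have a single incoming edge, so it would be bivalent with collinear edges.
\item Away from $P_0^\dual$, its polytope $P(v)$ is a facet or a vertex of $P_0$. Then $u_v$ would be a trivial cylinder in a neck piece, which stability excludes.
\end{enumerate}
The paper only asserts this step, but your Step 3 does not supply it either.
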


\begin{proof}[Proof of Lemma \ref{lem:no-interior}]
  A tropical graph $\Gamma$ has a path emanating from a disk vertex
  $v_0$ lying in $P_0^\dual$, and this path intersects the annulus
  $B^\dual_\ann$ in an outward-pointing direction.  In fact, in such a
  path $v_0$ is the only vertex not lying in the annulus
  $B^\dual_\ann$.  Any other path starting from a univalent vertex
  starts out at a piece $X_P$ where $P$ contains a focus-focus value
  and $P^\dual$ is a point in $\partial B^\dual_\ann$ and is outward
  pointing; therefore such a path is contained in
  $B^\dual_\ann$. Finally, when paths collide, their merger is a path
  with an outward-pointing direction, because the set of outward
  pointing directions is a cone at any point.

\end{proof}


%



\section{Newton polygons and mutations} 
\label{sec:compute}

In this section we prove various theorems on the potentials of almost toric four-manifolds manifolds; namely that potentials for mutations of almost toric diagrams are related by
algebraic mutation in Theorem \ref{thm:nowall}, that  the 
Newton polygon is the convex hull of the normal vectors to the facets in Theorem \ref{thm:mutthm}, and that the potential coefficients are binomial along edges in Theorem \ref{thm:bithm2}.

\subsection{The dual affine manifold}

Given a monotone almost toric manifold, the standard decomposition of
Section \ref{sec:atinfinity} gives a family of dual complexes, whose
limit is an dual affine manifold.  Up to equivalence, a dual affine
manifold can be constructed directly from the base diagram of the
almost toric manifold, without considering the polyhedral
decomposition.  We show that the potential is preserved by these
equivalences.  Thus we obtain a direct combinatorial way of computing
the potential from the base diagram of a monotone almost toric
manifold.  In the next section, we consider the question of
deformations of the dual affine manifold that involve crossing a wall,
causing the potential to mutate.

Given an almost toric manifold $X$ and a standard decomposition $\PP$, the
corresponding dual affine manifold $\A(X)$ is defined as the limit of
dual complexes:

\begin{notation}
A standard polyhedral decomposition $\PP$
of an almost toric manifold $X$ has a family of dual complexes $\{B^\dual(\rho)\}_{\rho>0}$
 (see \eqref{eq:dualrho}).
The parameter $\rho$ is the ratio of sides of certain rectangles that occur in the dual complex. 
By taking the parameter $\rho$ in the dual complex  to infinity, we obtain a singular affine manifold (complete, without boundary) as follows.
We have natural inclusions of singular integral affine manifolds 
\[ \on{int} B^\dual(\rho) \subset 
\on{int} B^\dual(\rho'), \quad \rho < \rho' \] 
arising from the canonical isomorphisms between the neighborhood of
$P_0^\dual$ in $B^\dual(\rho)$ and $B^\dual(\rho')$. 
(We recall that $P_0$ contains the Lagrangian fiber, and thus, $P_0^\dual$ is the point in the dual complex where the disk vertices lie.)
\end{notation}

\begin{definition}
    The union 
\begin{equation} \label{eq:ax}
\A(X) = \bigcup_{\rho \ge 0} \on{int} B^\dual(\rho) \end{equation}
is the {\em dual affine manifold} for $X$. 
The Lagrangian point is the element $\lam:=P_0^\dual$ where $P_0$ contains the image of the Lagrangian.
\end{definition}

 See Figure \ref{fig:b8p2-affine} for an example.
The dual affine manifold depends on the decomposition
$\PP$, but the dual affine manifolds corresponding to various standard decompositions have the same potential, see Remark \ref{rem:nowall}. 
 We denote by $T_{\Z}(\A(X)) \subset T \A(X)$ the
set of integral tangent directions.

\begin{remark} \label{rem:ax}
Given an almost toric manifold $(X,\Phi)$ that does not have 
elliptic singularities, the dual affine manifold $\A(X)$ is given by
\begin{equation}
  \label{eq:dualconst}
  \left(\Phi(X) \cup (\cup_i F_i \times \R_{\geq 0}) \right)/\sim   
\end{equation}
where $\Phi(X)$ is embedded in $\t$ by an arbitrary choice of a linear
identification $\t \simeq \t^\dual$, $F_i$ ranges over all facets of
$\Phi(X)$, $F_i \times \R_{\geq 0} \subset \t$ is a rectangle where
$R_{\geq 0}$ is in the outward normal direction of the facet $F_i$,
$F_i \times \{0\}$ is the facet $F_i \subset \Phi(X)$, and $\sim$
makes the following identification: For a vertex $v \in F_i \cap F_j$
of $\Phi(X)$, the line $\R_{\geq 0} \times \{v\}$ in the rectangles
$F_i \times \R_{\geq 0}$, $F_j \times \R_{\geq 0}$ are identified and
the affine structures in the neighborhoods are identified via a shear. 
See Figure \ref{fig:affineex}. 
\end{remark}

\begin{definition}
Given an almost toric diagram and gluing datum, the {\em tropical potential} of the dual affine manifold $\A$ with distinguished point $\lam$ is
\[\W_{(\A,\lam)}: \Hom(T_\lam \A, \C) \to \C, \quad y \mapsto \sum_\bGam y(\partial \bGam) m(\bGam),\]
where $\partial \bGam \in (T_\lam \A)_\Z$ is the initial slope of $\bGam$, and $\bGam$ ranges over all 
$\A$-tropical trees of index two, 
and $\m(\bGam) = \prod_{v \in \Ver(\bGam)} m(v)$ is defined in Definition \ref{def:mvs}.
\end{definition}

\begin{remark} 
By the definition of the dual affine manifold as the limit of dual complexes, the potential $W_{(\A,\lam)}$ is equal to the potential computed from broken map in a standard decomposition  in \eqref{eq:bdp}. 
\end{remark}

\begin{remark} By choosing an identification
\begin{equation}
  \label{eq:Tlam-r2}
  T_\lam \A \simeq \t \simeq \R^2  
\end{equation}
that sends $(T_\lam \A)_\Z$ to $\t_\Z$ and $\Z^2$, the potential is a Laurent polynomial
\[W_{(\A,\lam)} : \C^2 \to \C.\]
Transforming the identification by $A \in GL(2,\Z)$ has the effect of applying a multiplicative
transformation to each monomial in the potential $W$: the monomial $x^a y^b$ is transformed to $x^c y^d$ where $A^t \big(\begin{smallmatrix}
  a\\
  b
\end{smallmatrix}\big) = \big(\begin{smallmatrix}
  c\\
  d
\end{smallmatrix}\big)$. 
\end{remark}

The potential of a dual affine manifold is unchanged by homotopies
that do not contain a ``wall'' in the sense of the following
definition:

\begin{definition}
  {\rm(Wall configurations)}
  \begin{enumerate}
  \item An
  $\A$-tropical disk has {\em index zero} if it does not have any
  semi-infinite edges.
\item 
  A dual affine manifold $\A(X)$ with a
  Lagrangian point $\lam \in \A(X)$ is a {\em wall configuration} if
  there is an $\A$-tropical disk of index zero in $(\A(X),\lam)$. A wall configuration is {\em simple} if the $\A$-tropical disk has two vertices -- a disk vertex at
  $\lam$ and a sphere vertex at a singular point $b$. (More complicated wall configurations exist, but in this paper we only encounter simple walls.)


\item Dual affine manifolds $\A_0$, $\A_1$ are {\em equivalent}, if there is a continuous family of dual affine manifolds $\{A_t\}_{t \in [0,1]}$ that does not contain any wall configuration. 
  \end{enumerate}
\end{definition}

For example, two distinct singular points $b_0$, $b_1$ on the same branch cut in an affine manifold $\A_0$ may coalesce to the same point in an equivalent affine manifold $\A_1$ producing equivalent dual affine manifolds.  See Figure \ref{fig:moving-ff} for example.

\begin{figure}[ht]
 \begin{center}
   \scalebox{.8}{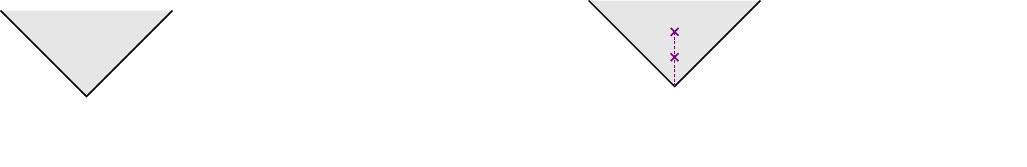}
 \end{center}
 \caption{Moving the singular point $b_1$ inward (towards the Lagrangian point $\lam$) removes a rectangle from the dual affine manifold, but does not change the equivalence class. }
 \label{fig:moving-ff}
\end{figure}

For an almost toric manifold $X$ and a monotone Lagrangian fiber, the
dual affine manifold can be determined up to equivalence by the
construction \eqref{eq:dualconst}, which produces a dual manifold
where all focus-focus singularities in a branch cut
coincide. Equivalent dual manifolds are obtained by moving the
singularities and $P_0^\dual$, while avoiding walls.

\begin{theorem}\label{thm:nowall}
  For a pair of equivalent dual affine manifolds $(\A_0, \lam_0)$, $(\A_1, \lam_1)$,
  $W_{(\A_0,\lam_0)}=W_{(\A_1,\lam_1)}$.
\end{theorem}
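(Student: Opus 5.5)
The plan is a continuity-and-rigidity argument in the family $\{(\A_t,\lam_t)\}_{t\in[0,1]}$: the tropical potential can only change when the set of index two $\A$-tropical trees changes combinatorially, and such a change forces an index zero $\A$-tropical disk, which is excluded because the family contains no wall configuration.

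\emph{Step 1: finiteness and persistence of trees.} Fix $t_0\in[0,1]$. First show that, for $t$ near $t_0$, only finitely many combinatorial types of index two $\A$-tropical trees in $(\A_t,\lam_t)$ can occur. The final direction is $\muout$ and the outgoing edge is unique, so the tree is determined by a finite combinatorial datum together with edge multiplicities. These multiplicities are bounded because the total intersection with the boundary divisor is one, as in Proposition \ref{prop:col-int}. The monotone area bound then gives finiteness, as in the proof of Proposition \ref{prop:b0edge}. Each type is rigid: its realization is cut out by the tropical constraints at $\lam$ and at the singular points, and these constraints vary continuously with $t$. Hence a rigid type either persists with nearby positions or degenerates, in which case some edge length tends to $0$ or to $\infty$. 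The multiplicities $m(v)$ of Definition \ref{def:mvs}, namely determinants, Bryan--Pandharipande factors, and the value $1$, depend only on the type. Consequently, $t\mapsto W_{(\A_t,\lam_t)}$ is locally constant away from degenerations.

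\emph{Step 2: classify degenerations.} Edge lengths cannot blow up, because vertices lie in the bounded region outside the annulus by Lemma \ref{lem:no-interior} and trees spiral outward. Two kinds of degeneration remain. In the first, an interior edge shrinks so that two trivalent vertices merge into a higher-valent vertex, or an edge passes through another vertex. This is exactly the situation handled by Definition \ref{def:higherval} and Proposition \ref{prop:ffcross}: the sums over the perturbed resolutions on either side agree, since $m(v)$ is independent of the ordering \eqref{eq:ell-order}. The Mikhalkin-type invariance also covers the standard four-valent moves. In the second kind, an edge passes through a singular point $b^\dual$. Bunching, via Proposition \ref{prop:bunchmult}, makes the count on both sides a binomial expression that is invariant, provided the lamination does not pass through $\lam$.

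\emph{Step 3: the only genuine change is a wall.} The remaining degeneration is the one in which a subtree, emanating from singular points or from a monodromy-invariant segment and not containing the semi-infinite edge, acquires its root at $\lam$. In that case the disk vertex is joined by a finite subtree with no outgoing semi-infinite edge, which is an index zero $\A$-tropical disk, that is, a wall configuration at the time $t_0$. This is excluded by hypothesis, so $W$ is locally constant in $t$, and connectedness of $[0,1]$ gives the theorem.

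I expect Step 3, together with the singular-point case of Step 2, to be the main obstacle: one must show that every jump in the set of trees, through $b^\dual$ or through $\lam$, either cancels or produces an index zero disk. My first approach would be to cut the tree at the offending edge and apply Lemma \ref{lem:orient} to the piece containing $\lam$, which must then have no outgoing edge.
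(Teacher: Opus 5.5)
Your plan is essentially the paper's proof. In both, the count can change only when an edge collapses or an edge passes through a singular point. Collapses between higher-valent vertices are absorbed by the perturbation invariance of Section~\ref{subsec:desing}, which is the paper's Case 1A. Singular-point events are handled by equality of the $(\pm)$-perturbation counts, the paper's Cases 1B and (2); your bunching/binomial argument there is the alternative the paper itself points to in the remark after Proposition~\ref{prop:bunchmult}. A collapse onto the disk vertex gives an index zero disk, i.e.\ a wall, which is Case 1C.

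One small correction. Lemma~\ref{lem:no-interior} places the non-disk vertices in the annulus, and the annulus is unbounded in $\A(X)$, so it cannot be why edge lengths stay bounded. That boundedness comes instead from the vertex positions being intersections of lines of fixed integral slope through points that move continuously in $t$. The paper does not list this case at all.
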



\begin{proof}
  Suppose, as in the assumption in the statement of the Theorem, that the affine manifolds $\A_0$, $\A_1$ are connected by a path
  $\{\A_t\}_t$ that does not contain a wall configuration. After a small
  perturbation, we may assume that the path $\{\A_t\}_t$ is generic in
  a sense pointed out later in the proof.

  We first list the possible reasons a family of tropical
  graphs
  ends at a point. In particular, 
  if a tree $\bGam$ with edge slopes $\{\cT(e)\}_e$ has an embedding $\cT_t$ in $\A_t$ for $t \in (t_0 -\eps, t_0)$ but does not have an embedding in $\A_{t_0}$, then one of the following occurs:
  \begin{enumerate}[label*=(\arabic*)]
  \item \label{part:e0} An edge length goes to zero, that is, for an edge $e=(v_-, v_+)$ of $\bGam$, 
  \[ \lim_{t \to t_0}\cT_t(v_-)=\lim_{t \to t_0}\cT_t(v_+); \] 
  \item \label{part:eff} or in the limit $t \to t_0$, the embedding of an edge $e$ passes through a singular point. 
  \end{enumerate}
  %

Consider case \ref{part:e0} 
  where the length of an edge $e=(v_-, v_+)$ goes to zero in the
  tropical embedding.
  Define the limit tropical graph 
  \[ \bGam_0:=\lim_{t \to t_0^-} (\bGam, \cT_t) \] 
  to be the tropical disk where the end-points of $e$, $v_+$ and $v_-$ are replaced by a single vertex $v$. Let $S_\pm$ be the set of index two tropical disks
  in $(\A_{t_0 \pm \eps}, \lam_{t_0 \pm \eps})$ for a small enough $\eps>0$ which deform to $\bGam_0$ as $\eps \to 0$.

  The curve count in $S_+$ and $S_-$ is the same since they are both
  smoothings of $\bGam_0$ as we now explain.  We observe that in
  $\bGam$ at most one end-point of $e$ has valence one, in which case,
  the univalent vertex is $v_-$ since the edge incident on a univalent
  vertex is outgoing. Therefore the following cases (see Figure \ref{fig:types}) arise:

\begin{figure}[ht]
 \begin{center}
   \scalebox{.8}{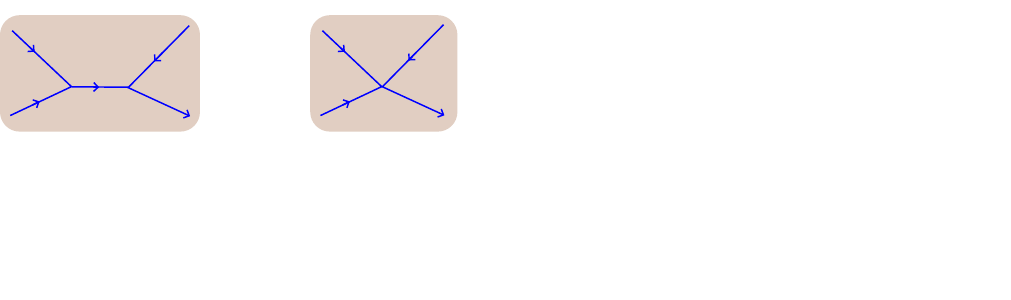}
 \end{center}
 \caption{Types of end-points for a one-dimensional family of tropical graphs.}
 \label{fig:types}
\end{figure}

  {\rm Case 1A:} First, consider the case that both $v_+$, $v_-$
  have valence at least $3$ in $\bGam$. Consider any tropical graph $\bGam_\pm \in S_\pm$, and let $\bGam_{\pm,v} \subset \bGam_\pm$ be the subgraph 
  consisting of the vertices that get collapsed to $v$ in $\bGam_0$. The positions of the incoming edges \footnote{By `position', we mean the starting point and direction.}
  of $\bGam_{\pm,v}$,
   collectively denoted by $\ell_\pm(v)$, 
  are fixed across all graphs $\bGam_\pm \in S_\pm$, and these positions are perturbations of the edge positions in $\bGam_0$. Thus $\{\bGam_{\pm,v} : \bGam_\pm \in S_\pm\}$ is the set
  of split perturbations (as in Definition \ref{def:vpert}) of $\bGam_{0,v} \subset \bGam_0$ with splitting data $\ell_\pm$. Therefore,
 by the proof of the last part of Theorem \ref{thm:potthm}
  in Section \ref{subsec:desing},
  \[ \sum_{\bGam_{+,v}} m(\bGam_{+,v}) = \sum_{\bGam_{-,v}} m(\bGam_{-,v}).\]
  For any $\bGam_\pm \in S_\pm$, the vertices in
  $\bGam_\pm \bs \bGam_{\pm,v}$, alongwith their incident edges, are
  the same as the corresponding vertices in $\bGam_0$. Therefore, the
  count of tropical graphs in $S_+$ and in $S_-$ is the same.

  {\rm Case 1B:} Next, consider the case that $v_-$ is univalent and
  maps to a singular point $b$ of $\A(X)$. In this case, $e$ may be
  one of the multiple coincident (as in Definition \ref{def:bunch})
  edges $e_1,\dots,e_k$ whose lengths go to zero simultaneously, all
  of which are incident on univalent vertices lying on the singular
  point $b$, and whose other end-point is $v_+$.  In $\bGam_0$, the
  vertex $v$ obtained by collapsing the edges $e_1,\dots, e_k$ lies on
  the singularity $b \in \A_{t_0}$. By the genericity of the path
  $\{\A_t\}_t$, we may assume the following: $b$ does not coincide
  with other singular points in $\A_t$ for $t$ close to $t_0$; $v$ has
  a single input edge; and there is no collision at the singular point
  for $t \neq t_0$ in the neighborhood of $t_0$. Then for any
  $\bGam_\pm \in S_\pm$, the subgraph $\bGam_{\pm,v}$ is a
  $(\pm)$-perturbation (or equivalently $(\mp)$-perturbation,
  depending on the sign convention) of $\bGam_{0,v}$. By Proposition
  \ref{prop:ffcross}, the counts of $(+)$ and $(-)$-perturbations of
  $(\bGam, \cT_{t_0})$ are equal to each other.  Therefore, the count
  of tropical graphs in $S_+$ is equal to the count in $S_-$.
 
  {\rm Case 1C:} Next, consider the case that $v_-$ is univalent and is
  a disk vertex. We rule out this case by showing that $\A_{t_0}$ is a
  wall configuration. Since $v_+$ has valence at least $3$ in $\bGam$,
  the vertex $v$ in $\bGam$ has valence at least $2$. The tropical
  disk $\bGam$ has one semi-infinite edge, and therefore the same is
  true of $\bGam_0$. Therefore, there is an edge $e_0 \ni v$ of
  $\bGam_0$, such that the subgraph $\bGam'_0 \subset \bGam_0$
  consisting of $v$ and all the vertices whose path to $v$ contains
  $e$, is a tropical disk of index $0$. Therefore, $\A_{t_0}$ is a
  wall configuration which is a contradiction.

  Finally, we consider the case \ref{part:eff} where an edge $e$
  passes through a focus-focus singularity $b$ in $\A_{t_0}$. We
  subdivide the edge, and add a vertex $v$ that lies on $b$. This
  graph is the same as the graph in case 1B and can be analyzed in the
  same way. See the top row of Figure \ref{fig:slide-wall} for an
  example.
\end{proof}

  \begin{figure}[ht]\begin{center}
    \scalebox{.8}{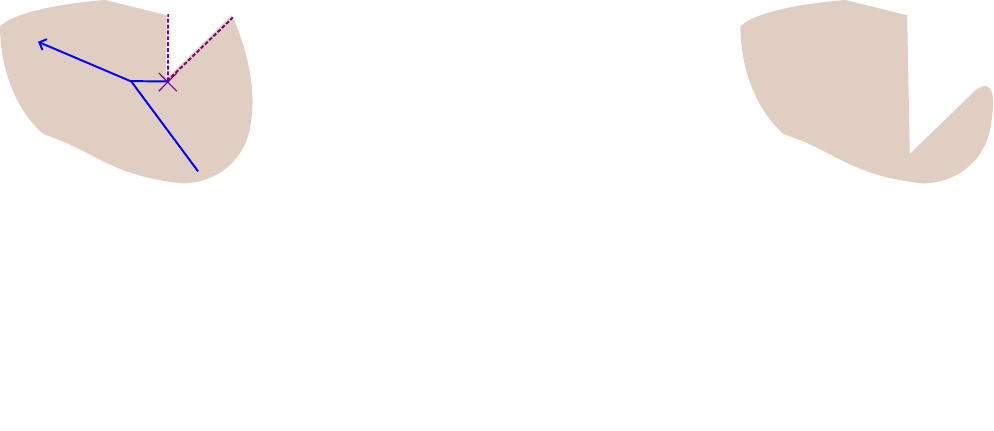}
  \end{center}
  \caption{Sliding a node does not change the count of these tropical graphs. The top left is a bunched tropical graph.}
  \label{fig:slide-wall}
\end{figure}

To apply Theorem \ref{thm:nowall}, we need to show that a path of dual affine manifolds
does not contain a wall. For this purpose, 
it is helpful to have a notion of an annular part of a
dual affine manifold, which is a slight variation of the corresponding
notion \eqref{eq:bann-def} for dual complexes: For our purposes we
consider dual affine manifolds $(\A,\lam)$ that are close to those
obtained from a base diagram $\Delta$ of an almost toric manifold and
a polyhedral decomposition $\PP$.  Therefore, we may assume that
for any $(\A,\lam)$ there is a related moment polytope $\Delta$.  In
such a case, we choose a bounded convex polygon $S \subset \A(X)$ that
contains all the singular points of $\A(X)$, $\lam$ lies in the
interior of $S$, and for any vertex $v$ of $\Delta$ from which a
branch cut emanates, the outermost singular point $b_v$ on the branch
cut is a vertex of $S$, and furthermore the line $\ell_{b_v}$ of slope
$\mu_{b_v}$ through $b_v$ intersects $S$ only at $b_v$.  We define the
{\em annular part of the dual manifold} to be
\begin{equation}
  \label{eq:ax-ann}
  \A(X)_\ann:= \A(X) \bs \on{int}(S).  
\end{equation}

\begin{remark}\label{rem:nowall}
  In this remark, we show that certain dual affine manifolds are not walls. 
  Using the definition of $\A(X)_\ann$,
  it can be seen in a
  straightforward way that for a dual affine manifold $(\A_0,\lam_0)$
  corresponding to a standard polyhedral decomposition, there are no walls
  nearby.  Therefore the potential is invariant under small
  deformations.
  \begin{enumerate}
  \item Firstly, in $(\A_0,\lam_0)$, all collisions involving inputs
    from singular points happen in the annulus, and consequently
    $(\A_0,\lam_0)$ is not a wall. Similarly, if $(\A_t,\lam_t)$ is a
    family of dual affine manifolds obtained from polyhedral
    decompositions $\PP_t$, the potential $W_{(\A_t,\lam_t)}$ is
    $t$-independent, since no element in the family $(\A_t,\lam_t)$ is
    a wall.
  \item By the definition of a standard polyhedral decomposition, all
    singular points on a branch cut coincide in $\A_0$. Let $\A_t$ be
    a family obtained by separating the singular points. Collisions
    involving inputs from singular points continue to happen in the
    annulus if the singular points are moved by small amounts, and
    therefore the potential is not altered.
  \end{enumerate}
\end{remark}

\subsection{Mutations and wall-crossing}
\label{mutsec}
By a mutation of the moment polytope of an almost toric manifold, we
mean a change in the almost toric structure by which one of the
focus-focus values is moved from one side of the polytope to the
other, as in Definition \ref{def:mutate}.  Lemma \ref{lem:indep}
implies that the disk potential is not altered by a nodal slide as
long as the focus-focus value does not cross the image of the monotone
torus in the moment polytope.  By Vianna's work \cite{vianna:inf},
such a crossing changes the Hamiltonian isotopy class of the monotone
Lagrangian torus fibers.
Pascaleff-Tonkonog \cite{pasc} proved that the change in potential is
given by a mutation of Laurent polynomials.
We prove the same result (Proposition \ref{prop:wallA}) using tropical methods. 
The tropical approach to
the change in potential also figures prominently in the Gross-Siebert
approach to mirror symmetry. See for example
Gr\"{a}fnitz-Ruddat-Zaslow \cite{grz:proper} (where the potential is
computed for a non-monotone torus close to the anticanonical divisor.)

\begin{definition} \label{def:mutform}
  {\rm(Mutation of a Laurent polynomial)}
  Given a vector 
  \[ \nu=(\nu_1,\nu_2) \in \Z^2 ,\] 
the {\em mutation coordinate change} associated to $\nu$ is the map 
\begin{equation}
  \label{eq:mut}
  \cS_\nu : (\C^\times)^2 \to (\C^\times)^2, \quad (y_1,y_2) \mapsto (y_1(1+y_1^{\nu_2}y_2^{-\nu_1})^{\nu_1}, y_2(1+y_1^{\nu_2}y_2^{-\nu_1})^{\nu_2}).
\end{equation}
  The {\em mutation} of a function $W: (\C^\times)^2 \to \C$ in the direction $\nu$ is defined as the pullback\footnote{More general mutations are considered in 
  Coates et al \cite{coates:max}.}
  \[ \cS_\nu^*W:  (\C^\times)^2 \to \C .\] 
  \end{definition} 
  
  \begin{remark} 
  Suppose that $W$ is a Laurent polynomial.  The mutation $\cS_\nu^*W$ is a rational 
  function obtained by replacing each monomial in $W$ as follows:
  \[y_1^{\mu_1}y_2^{\mu_2} \mapsto y_1^{\mu_1}y_2^{\mu_2}(1+y_1^{\nu_2}y_2^{-\nu_1})^{\mu_1\nu_1 + \mu_2 \nu_2}.\]
\end{remark}



\begin{figure}[ht]\begin{center} 
\scalebox{.8}{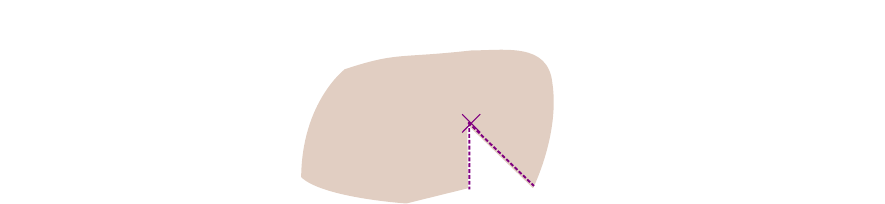}
\end{center} 
\caption{Crossing a wall in the direction $\nu=(0,1)$.} 
\label{fig:mute1}
\end{figure}

In the following result, we show that a mutation of an almost toric manifold
 (Definition \ref{def:mutate} \eqref{part:mutate}) 
corresponds to a simple wall crossing in the dual affine manifold. 

\begin{proposition}\label{prop:simple-wall}
  Suppose $X_0$, $X_1$ are almost toric manifolds related by a mutation 
  where a single focus-focus singularity $b \in X^\foc$
  is moved from one side of the polytope to another. 
  Let 
  $\Delta_0$, $\Delta_1$ be base diagrams of $X_0$, $X_1$  of Vianna type and
  let $(\A_0, \lam_0)$, $(\A_1, \lam_1)$ be dual affine manifolds obtained via standard decompositions applied to $\Delta_0$, $\Delta_1$, with singular points $b_0 \in \A_0$, $b_1 \in \A_1$ corresponding to
 the singularity $b$ in the almost toric manifold. 
  Then, $\A_0$, $\A_1$ are connected by a path of dual manifolds $(\A_t, \lam_t)_{t \in [0,1]}$ which contains a single wall $(\A_{t_0}, \lam_{t_0})$, $0<t_0<1$.
  Furthermore, $(\A_{t_0}, \lam_{t_0})$ is a simple wall, with an index zero tropical disk connecting $\lam_{t_0}$ to $b_{t_0}$ by a linear segment with slope $\mu_b$. 
\end{proposition}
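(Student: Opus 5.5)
The plan is to realize the mutation as a continuous family of almost toric base diagrams, and then track the dual affine manifolds along that family. Recall from Definition \ref{def:mutate} \eqref{part:mutate} that a mutation is a nodal slide of $b$ along its branch cut, followed by transferring the cut. Transferring the cut does not change the singular affine manifold $B$; it only changes its planar representation. It therefore does not change the dual affine manifold $\A(X)$, up to a change of chart. So the family is carried entirely by the nodal slide.

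We parametrize the slide by $t\in[0,1]$, so that the focus-focus value $b_t$ moves along the line $\ell_b$ spanned by the branch cut $C_b$. The moving node crosses the monotone point $\lambda$ exactly once, at a time $t_0$. Using the explicit description \eqref{eq:dualconst} from Remark \ref{rem:ax}, we write down a continuous family $(\A_t,\lam_t)$ in which the dual singular point $b_t^\dual$ moves along the line through $\lam_t$ of slope $\mu_b$. This line is the eigendirection of the shear, since $\ell_b$ is parallel to the monodromy-invariant direction. The other singular points stay fixed, and the two endpoints agree with the dual manifolds obtained from standard decompositions of $\Delta_0,\Delta_1$. By Remark \ref{rem:nowall}, we may first move all singular points on a common branch cut together, and only afterwards separate the one being slid.

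Next, we exhibit the simple wall at $t=t_0$. There the segment from $\lam_{t_0}$ to $b_{t_0}^\dual$ has direction $\mu_b$. It therefore gives an $\A$-tropical disk of index zero, with a disk vertex at $\lam$, a univalent sphere vertex at the singular point, and no semi-infinite edge. This is admissible because, by the leaf condition, edges ending at a singular point must have direction a multiple of $\mu_b^\pm$.

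The main obstacle is showing that no other wall occurs for $t\neq t_0$, and that no non-simple index zero disk occurs at $t_0$. The approach is the following.
\begin{enumerate}
\item Suppose an index zero tree exists at some time $t$. Since it has no semi-infinite edge, its edge orientations (every vertex has exactly one outgoing edge) must terminate at the disk vertex. Hence it consists of paths from singular points that merge and end at $\lam_t$.
\item Lemma \ref{lem:no-interior} shows that, in the annular part \eqref{eq:ax-ann}, edges emanating from singular points point outward and never return inward. Lemma \ref{lem:outward} shows that outwardness persists along segments.
\item Choose the convex set $S$ so that, for every singular point other than the moving one, the line $\ell_{b_v}$ meets $S$ only at $b_v$. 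Then edges leaving those singular points immediately enter the annulus and can never reach $\lam_t$.
\item The only remaining possibility is an edge leaving the moving point $b_t^\dual$ with direction a multiple of $\mu_b$ and hitting $\lam_t$. This happens exactly when $\lam_t\in b_t^\dual+\R_{\ge0}\mu_b$, that is, only at $t=t_0$.
\item Merges of several such paths are excluded. By the balancing condition they would have to include edges from other singular points, which step 3 rules out.
\end{enumerate}
The delicate part is step 3 for the Vianna-type diagrams. I expect to use the assumptions that the cuts do not intersect and that $\lambda$ lies off all cuts, and to check that a slightly shrunk $S$ still works uniformly in $t$ away from the moving node.
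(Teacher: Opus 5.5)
There is a genuine gap in the family you construct. You let the dual singular point move along the line through $\lam_t$ of slope $\mu_b$, because the base node slides along its eigenline. But $\mu_b$ is the eigendirection of the monodromy of $T\A(X)$, which is modelled on $\t$. That monodromy is the inverse transpose of the shear acting on $\t^\dual$, so its invariant line is the \emph{annihilator} of the base eigendirection $w$ along which the node slides, not a line parallel to $w$. For example, a shear of $\t^\dual$ fixing $(1,0)$ acts on $\t$ by a shear fixing $(0,1)$.

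With your family, $\lam_t-b_t^\dual$ is a nonzero multiple of $\mu_b$ for every $t\neq t_0$. So the segment from $b_t^\dual$ to $\lam_t$ satisfies the leaf condition (multiples of $\mu_b^\pm$ are allowed) and is an index zero disk at every such time. At $t_0$ the singular point sits on the Lagrangian point, which is a degenerate configuration. Your step 4 conclusion ``only at $t=t_0$'' therefore contradicts the family you wrote down. The correct translation of ``the node crosses $\lambda$ in the base'' is that $b_t$ crosses the line $\lam+\R\mu_b$ transversally in the dual, not that it moves along that line.

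The fix is what the paper does. In $\A_t$, move $b_t$ along the straight segment from $b_0$ to $b_1$, perturbed slightly so that $\lam_t$ never lies on it. This segment runs transversally to $\mu_b$, roughly in the direction $\nu=\mu_b^\perp$ of Notation \ref{note:sign} and Figure \ref{fig:mute1}. Moving the node inward only removes a rectangle from $\A_t$ (Figure \ref{fig:moving-ff}), so the outward-pointing cones remain defined on $\A_{t,\ann}$. The wall condition $\lam_t\in b_t+\R\mu_b$ is then a transverse crossing of a line by the path $\{b_t\}$, which happens exactly once.

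With that family, your remaining steps are essentially the paper's argument and go through:
\begin{itemize}
\item paths from the other singular points are trapped in the annulus \eqref{eq:ax-ann} by outwardness;
\item merges can occur only in the annulus;
\item hence any wall is a single segment from $b_t$.
\end{itemize}
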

\begin{proof}[Proof of Proposition \ref{prop:simple-wall}]
  The path of dual manifolds is given by moving the singular point
  $b_0$ to $b_1$ along a straight line, with a slight perturbation so that the
  Lagrangian point $\lam_t$ does not lie on it.  We rule out any other
  wall occuring in the path $\{\A_t\}_{t \in [0,1]}$ as follows:
  Moving the singular point $b_t$ inward has the effect of removing a
  rectangle from the dual affine manifold (see Figure \ref{fig:moving-ff}),
  therefore, the notion of
  outward-pointing cones from Definition \ref{def:outcone} is defined
  on the annulus part of $\A_t$.  In a tropical graph in $\A_t$, paths
  emanating from focus-focus singularities are in the outward pointing cone,\footnote{Note that this property is special to the particular kind of almost toric diagrams, in the monotone case, considered here.} and
  are contained in the annulus $\A_{t,\ann}$ (as in \eqref{eq:ax-ann})
  except if their starting point is
  $b_t$. Therefore, two such paths can merge only in the annulus, and
  a merged path can not be pass through the Lagrangian point
  $\lam_t$. Therefore, any wall in $\A_t$ is a simple wall with the
  index zero tropical disk emanating from $b_t$. The path $\{\A_t\}$ is
  such that a line of slope $\mu_b$ passing through $\lam$ can
  intersect the path $\{b_t\}$ at only one point $b_{t_0}$, and there
  is exactly one wall configuration in the path $\{\A_t\}_t$.
\end{proof}

\begin{notation}{\rm(Sign convention)}
  \label{note:sign}
  A mutation formula is defined for a simple wall-crossing satisfying
  a sign convention. Let $\{(\A_t, \lam_t)\}_{t \in [-\eps,\eps]}$ be
  a path of dual affine manifolds, which at $t=0$, crosses a simple
  wall at $t=0$ with index zero disk being the path $(\lam_0,b_0)$ in the primitive direction $\mu_b \in \Z^2$. A
  mutation formula is defined for this wall-crossing if the vector
  path $(\lam_t, b_t)$ rotates counter-clockwise (with respect to the
  $\R^2$ coordinates from the identification \eqref{eq:Tlam-r2} ) at
  time $t=0$, and the wall-crossing is in the direction $\nu = \mu_b^\perp$, where for a vector $(a,b) \in \R^2$,
  \begin{equation}
    \label{eq:perp}
    (a,b)^\perp:=(-b,a).
  \end{equation}
  For example, in Figure \ref{fig:mute1}, the index zero disk $(\lam_0,b_0)$ has primitive direction $\mu_b=(1,0)$, and $\nu=(0,1)$.
\end{notation}

 A simple wall-crossing respecting the above sign convention changes the potential of the dual affine manifold by a mutation.
\begin{proposition}\label{prop:wallA}
  Suppose the path $\{(\A_t, \lam_t)\}_{t \in [-\eps,\eps]}$ has no wall besides a simple wall at $t=0$. Suppose the wall-crossing at $t=0$ respects the sign convention from Notation \ref{note:sign} and is in the direction $\nu$. 
Then,
\begin{equation}
  \label{eq:wtransf}
  W_{(\A_\eps,\lam_\eps)}=S_\nu^* W_{(\A_{-\eps},\lam_{-\eps})}.
\end{equation}
\end{proposition}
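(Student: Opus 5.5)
We compare the sets of index-two $\A$-tropical trees in $(\A_{-\eps},\lam_{-\eps})$ and $(\A_\eps,\lam_\eps)$, together with their multiplicities. Choose coordinates \eqref{eq:Tlam-r2} with $\mu_b=(1,0)$, so $\nu=(0,1)$ and $\cS_\nu^*$ sends $y_1^{a}y_2^{c}$ to $y_1^ay_2^c(1+y_1)^{c}$. Away from the wall the argument of Theorem \ref{thm:nowall} applies. Families of trees degenerating at $t=0$ through edge collapses of type 1A or 1B, or through edges passing over a singular point, contribute equally on both sides. The only new phenomenon comes from the index zero disk $\delta$ joining $\lam_0$ to $b_0$ in direction $\mu_b$.

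So the main step is to classify how trees interact with $\delta$ as $t\to 0$. I would set this up as follows. Let $\Gamma$ be an index-two tree on the side $t=-\eps$ with initial slope $\xi=(a,c)$. Its first edge leaves $\lam$ and, for small $\eps$, passes close to the segment $\lam_t b_t$. On the side $t=+\eps$ new trees appear. Each is obtained by grafting onto the initial edge of $\Gamma$ some number $n\ge 0$ of edges emanating from $b_t$, with directions that are multiples of $\mu_b$. Bunching these into a single edge of direction $n\mu_b$ gives a new disk vertex with slope $\xi+n\mu_b=(a+n,c)$. The grafting vertex has $k=|\mu_b\times \xi|=|c|$, and Proposition \ref{prop:bunchmult} gives its bunched multiplicity $\binom{k}{n}$. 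The rest of $\Gamma$ is unchanged, because its other vertices lie in the annulus, as in Lemma \ref{lem:no-interior} and Remark \ref{rem:nowall}.

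Summing over $n$, the contribution $y^\xi m(\Gamma)$ becomes $y_1^ay_2^c\sum_n\binom{c}{n}y_1^n=y_1^ay_2^c(1+y_1)^c$, which is the mutation rule term by term. The sign convention in Notation \ref{note:sign} (counter-clockwise rotation of the vector from $\lam_t$ to $b_t$) decides which side acquires the grafted trees. This is what makes the exponent $c=\langle\xi,\nu\rangle$ rather than $-c$.

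For $c<0$ the exponent is negative, and the identity must be read as saying that trees disappear when crossing in the other direction. I would handle this by running the same bunching argument in reverse, that is, applying $\cS_{-\nu}$ from the side $t=+\eps$. Then I would use $\cS_\nu\circ\cS_{-\nu}=\Id$ on monomials whose exponent is non-negative relative to the appropriate half-plane. Alternatively, one can pass to the branch-cut transferred chart, where the shear $A_b$ converts the rational expression into a polynomial one.

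The main obstacle is this classification: showing that every tree on either side arises from a unique tree on the other by grafting or removing a bunch from $b$ at the initial edge, with no other degenerations. I would argue it from the outward-spiraling property together with genericity of the path. Near $t=0$, the only edge that can come close to $b_t$ while not lying in the annulus is the initial edge, because it is the only edge passing near $\lam$. So the Case 1B analysis is needed only at that one vertex, and the multiple-cover factors $(-1)^{\ell-1}/\ell^2$ have already been absorbed into Proposition \ref{prop:bunchmult}.
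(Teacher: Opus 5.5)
Your route is the paper's. You normalize to $\mu_b=(1,0)$, $\nu=(0,1)$ and use the proof of Theorem~\ref{thm:nowall} to see that only the type 1C degeneration at the simple wall changes the count. You then graft bunched edges from $b$ onto the initial edge of a tree of slope $(a,c)$ with weight $\binom{|c|}{n}$ from Proposition~\ref{prop:bunchmult}, and let the sign convention decide which side carries the grafted trees. The case $c\ge 0$ is fine, up to a sign slip: the bunched edge leaving $b$ has direction $-n\mu_b$, pointing back toward $\lam$. That is what balancing needs to produce the new slope $\xi+n\mu_b$.

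The step that fails is your bookkeeping for $c<0$, because $\cS_{-\nu}$ is not inverse to $\cS_\nu$. For $\nu=(0,1)$ one has $\cS_{-\nu}^*(y^\mu)=y^\mu(1+y_1^{-1})^{-\mu_2}$, hence $\cS_\nu^*\cS_{-\nu}^*(y^\mu)=y^\mu y_1^{\mu_2}$. This is a monomial shear on every monomial, so restricting to a half-plane does not rescue the identity. Moreover, $\cS_{-\nu}^*$ expands $y_1^ay_2^c$ (with $c<0$) into slopes $(a-n,c)$, while the reversed grafting actually produces slopes $(a+n,c)$.

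Drop $\cS_{-\nu}$ and read off the reversed bunching count directly. Each term $y_1^ay_2^c$ with $c<0$ in $W_{(\A_\eps,\lam_\eps)}$ corresponds to $y_1^ay_2^c(1+y_1)^{-c}$ in $W_{(\A_{-\eps},\lam_{-\eps})}$, which is exactly $(\cS_\nu^*)^{-1}$ of that monomial. Since $\cS_\nu^*$ preserves the $y_2$-exponent, the parts of the potentials with $c\ge 0$ and with $c<0$ can be matched separately. Together they give \eqref{eq:wtransf}, which is what the closing sentence of the paper's proof does. Your alternative via the transferred chart is then unnecessary.
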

\begin{figure}[ht]\begin{center} 
\scalebox{.8}{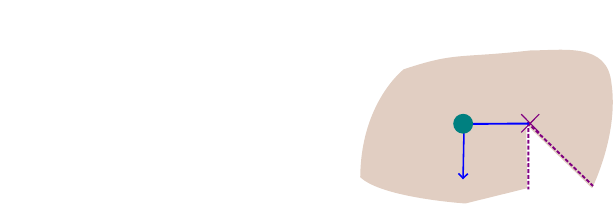}
\end{center} 
\caption{The edge length $\ell(e)$ goes to $0$ in $(\A_0, \lam_0)$.}
\label{fig:mute2}
\end{figure}

\begin{proof}[Proof of Proposition \ref{prop:wallA}]
  By the proof of Theorem \ref{thm:nowall}, the potential $W_{(\A_t,\lam_t)}$ is discontinuous at $t=t_0$ only if $\A_t$ has a tropical disk of type 1C, which can occur only if $\A_{t_0}$ has a wall configuration, so $t_0=0$. Furthermore, since $\A_0$ only has simple walls, the tropical disk of type 1C is of the form shown in Figure \ref{fig:mute2}.

  \begin{figure}[ht]\begin{center} 
\scalebox{.8}{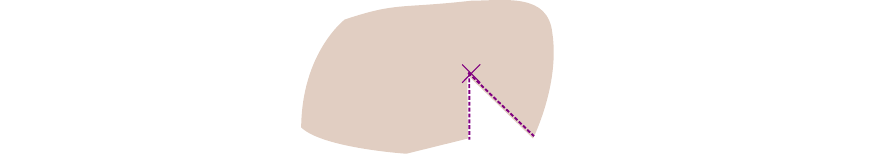}
\end{center} 
\caption{Corresponding to a disk with initial slope $(k,l)$, $l \geq 0$,  in $\A_{-\eps}$, there is a
bunched disk in $\A_\eps$ for each $k_1$. Here, the edge with direction $(-k_1,0)$ is a bunched edge.
}
\label{fig:mute3}
\end{figure}

We count all such contributions. By the $GL(2,\Z)$-equivariance of the
mutation formula \eqref{eq:mut} (see \cite[Remark 4.2]{pasc}), it is enough to consider the case
when $\nu=(0,1)$ and $\mu_b=\pm(1,0)$. Corresponding to any tropical
disk with initial slope $(k,l)$, $l \geq 0$ in $\A_t$, $t<0$, there is
a bunched tropical disk of the form shown in Figure \ref{fig:mute3} in
$\A_t$, $t>0$ for each $k_1 \in [0,k] \cap \Z$. Such a bunched
tropical disk has a multiplicity of $\binom l {k_1}$ by Proposition
\ref{prop:bunchmult}.  Thus, going from $\A_{-\eps}$ to $\A_\eps$
transforms the potential as
  \[x^ky^l, l \geq 0  \mapsto \sum_{k_1} x^{k+k_1} y^l \binom l {k_1} = x^k y^l (1+x)^l,\]
  which is the same as the mutation $S_\nu$. In a similar way,
  corresponding to a collection of terms
  $x^k y^{-l}(1+x)^l$, $l \geq 0$, in $W_{\A_\eps}$, there is a single term in $x^k y^{-l}$ in $W_{\A_{-\eps}}$. Therefore,  \eqref{eq:wtransf} follows. 
\end{proof}

\begin{theorem} \label{thm:mutthm} Let $X$ be a compact almost-toric
  manifold with polytope $\Delta = \Phi(X) \subset \R^2$ and monotone
  Lagrangian $L$.  Let $\Delta' \subset \R^2$ be a new almost-toric
  diagram obtained by a nodal slide that moves the focus-focus value
  $b \in \Delta$ across $\lambda = \Phi(L)$ in the direction
  $\nu \in \Z^2$ followed by transferring the cut as in Figure
  \ref{fig:signs},
\footnote{The Lagrangian fiber in the middle picture in Figure \ref{fig:signs} is depicted as perturbed, to show that in the affine dual, $\lam$ lies to the left of the line of motion of $b$, so the sign convention of Figure \ref{fig:mute1} is followed. In the third picture, the cut is transferred so that the perturbed $\lam$ lies in the part of the moment polytope whose embedding in $\R^2$ is left unchanged.}
  and let $X'$ be the corresponding almost toric
  manifold with monotone Lagrangian $L' \subset X'$.  The potentials
  of $L$ and $L'$ are related by a mutation
  $W_{L'} = \cS_{\nu}^* W_L$.
\end{theorem}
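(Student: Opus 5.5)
The plan is to reduce Theorem \ref{thm:mutthm} to the wall-crossing statement Proposition \ref{prop:wallA}, using the dual affine manifold as an intermediary. First, by Corollary \ref{cor:w} together with Theorem \ref{thm:potthm-T}, and the remark that the tropical potential of $(\A,\lam)$ agrees with the broken potential of a standard decomposition, we have
\[ W_L = W_{(\A(X),\lam)}, \qquad W_{L'} = W_{(\A(X'),\lam')}, \]
where $\A(X)$, $\A(X')$ are the dual affine manifolds built from standard decompositions of $\Delta$ and $\Delta'$ (Proposition \ref{prop:interior}). Replacing the base diagrams by ones of Vianna type does not change the Hamiltonian isotopy class of the fibers or the potentials, by Lemma \ref{lem:indep} and Remark \ref{rem:nowall}. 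Hence it suffices to compare the two tropical potentials.

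Second, the nodal slide across $\lambda$ followed by transferring the cut is a mutation in the sense of Definition \ref{def:mutate}\eqref{part:mutate}. Proposition \ref{prop:simple-wall} then gives a path $(\A_t,\lam_t)_{t\in[0,1]}$ from $(\A(X),\lam)$ to $(\A(X'),\lam')$ containing exactly one wall, at $t_0$. This wall is simple, and its index zero disk runs from $\lam_{t_0}$ to $b_{t_0}$ in direction $\mu_b$.

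On $[0,t_0-\eps]$ and $[t_0+\eps,1]$ the path contains no walls, so Theorem \ref{thm:nowall} shows the potential is constant on each of these intervals. Across the wall, Proposition \ref{prop:wallA} gives
\[ W_{(\A_{t_0+\eps},\lam_{t_0+\eps})} = \cS_{\nu'}^* W_{(\A_{t_0-\eps},\lam_{t_0-\eps})}, \]
where $\nu' = \mu_b^\perp$ under the sign convention of Notation \ref{note:sign}.

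The main obstacle is matching conventions, so that $\nu'$ equals the direction $\nu$ of the nodal slide in the theorem, with the correct sign and without an extra inverse. There are two sources of mismatch. The tangent space $T_\lam\A$ is identified with $\t\cong\R^2$ through the chart of $\Delta$. The direction $\mu_b$ is the eigendirection of the shear, which is parallel to the branch cut, and so parallel to the direction of motion of $b$ in the base. I would check the sign using the perturbation of $\lambda$ in Figure \ref{fig:signs}: it puts $\lam$ to the left of the line along which $b_t$ moves, so the vector from $\lam_t$ to $b_t$ rotates counterclockwise, as the convention of Figure \ref{fig:mute1} requires. I would also check that transferring the cut leaves unchanged the chart containing the perturbed $\lam$. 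Then $T_\lam\A$ gets the same identification on both sides, and no extra $GL(2,\Z)$ change of coordinates enters. If this is not the case, the $GL(2,\Z)$-equivariance of $\cS_\nu$ (the remark after \eqref{eq:Tlam-r2}) absorbs the change of coordinates.

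Finally, I would confirm the conventions on a single example, the $\P^2$ to Chekanov mutation of Example \ref{ex:chek}. Running this mutation from the standard diagram with potential $y_1+y_2+1/(y_1y_2)$ should produce the Chekanov potential, up to a monomial change. That check fixes any residual ambiguity in whether the formula is $\cS_\nu$ or $\cS_{-\nu}$.
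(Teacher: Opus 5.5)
Your reduction is the paper's. You identify both potentials with tropical potentials of dual affine manifolds, connect them by the path of Proposition~\ref{prop:simple-wall}, which crosses a single simple wall, and apply Proposition~\ref{prop:wallA} at the wall and Theorem~\ref{thm:nowall} elsewhere. The paper stops there and leaves the identification of $\mu_b^\perp$ with the slide direction $\nu$ to the footnote and Figure~\ref{fig:signs}.

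The gap is in that step, which you call the main obstacle: $\mu_b$ is \emph{not} parallel to the branch cut. The cut direction $v\in\t^\dual$ is the fixed line of the base shear $s$. By contrast, $\A(X)$ is modelled on $\t$, where the facet normals live, and is glued by the induced action $(s^{-1})^T$ on normals. Its fixed primitive vector $\mu_b$ therefore satisfies $\langle v,\mu_b\rangle=0$.

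For example, in the $\Bl^8\P^2$ chart, $A_1$ fixes $\mu_b=(1,0)$. The base shear at the vertex $(0,-3)$, whose inverse transpose is $A_1$, fixes the cut direction $(0,1)$.

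Taken literally, your identification makes $\mu_b^\perp$ orthogonal to the slide direction, so Proposition~\ref{prop:wallA} would give $\cS_{\nu^\perp}$ instead of $\cS_\nu$. It also contradicts your own sign check: $b_t-\lam_t$ can only sweep across the ray $\R_{>0}\mu_b$ if $b_t$ moves transversally to $\mu_b$.

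The correct bookkeeping goes as follows. Since $\nu$ is parallel to $v$, under the identification $\t\cong\t^\dual$ used in the figures we get $\nu=\pm\mu_b^\perp$. Placing $\lambda$ to the left of the line of motion of $b$ makes $b_t-\lam_t$ rotate counterclockwise, which selects the $+$ sign. Transferring the cut so that the chart containing $\lam$ is unchanged ensures no extra $GL(2,\Z)$ coordinate change enters.

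On your fallback: equivariance of $\cS_\nu$ would not ``absorb'' such a change. It would leave an additional monomial substitution in the final formula. With this correction your argument is complete and agrees with the paper's.
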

  \begin{figure}[ht]\begin{center} 
\scalebox{.8}{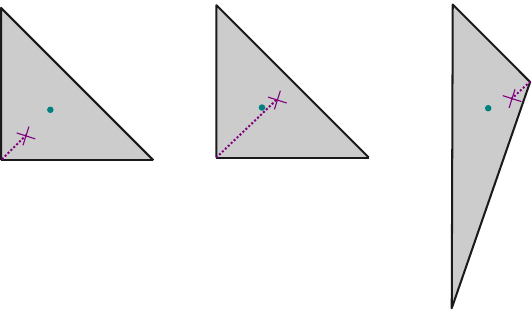}
\end{center} 
\caption{Nodal slide in the direction $\nu$ and transferring the cut respecting the sign convention of Notation \ref{note:sign}.
}
\label{fig:signs}
\end{figure}
\begin{proof}
  The potential in both cases is equal to the potential of the 
  corresponding dual affine manifold $\A$, $\A'$ as defined in
  \eqref{eq:ax}, and by Proposition \ref{prop:simple-wall}, the dual
  affine manifolds $\A$, $\A'$ are connected by a path that crosses a simple wall.
  By Proposition \ref{prop:wallA}, crossing a simple wall mutates the
  potential as claimed in the Theorem.
\end{proof}

\begin{theorem} \label{thm:bithm2} 
Let $X$ be a 
monotone almost toric four-manifold with a polytope $\Delta = \Phi(X) \subset \t^\dual$ whose facets have normal vectors $\mu(Q_1),\ldots, \mu(Q_k)$ with convex hull $N$.  Let $L$ be a (unique if it exists) monotone moment fiber in $X$.  The potential of $L$ has coefficients $1$ at each $\nu_i$ and has binomial
coefficients on the edges joining $\mu(Q_i)$ and $\mu(Q_{i+1})$
for each $i.$   Namely if $\zeta$ is the $k$-th lattice point between $\mu(Q_i)$ and $\mu(Q_{i+1})$ out of a total of $n+1$ then the coefficient of $y^\zeta$ in $W_L$ is $\binom{n+1}{k}$.
\end{theorem}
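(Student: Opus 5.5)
Working in the dual affine manifold, we reduce the theorem to a computation on the extremal monomials of the tropical potential. By Theorem \ref{thm:potthm-T} and the Remark identifying $W_{(\A,\lam)}$ with the broken potential, $W_L = W_{(\A(X),\lam)}$, a sum over $\A$-tropical trees of index two weighted by their initial slopes. By Proposition \ref{prop:collg}\eqref{part:collg2}, every initial slope lies in $\Phi(X)^\dual = N$. It therefore suffices to identify the trees whose initial slope lies on $\partial N$ and to compute their total multiplicity. By Theorem \ref{thm:nowall} and Remark \ref{rem:nowall} we may use the representative \eqref{eq:dualconst} of $\A(X)$, in which all singular points on a branch cut coincide, with the annulus glued from the rectangles $F_i\times\R_{\ge 0}$.

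Vertices of $N$. The initial slope $\mu(Q_i)$ is realized by the tree with a single edge from $\lam$ in direction $\muout$ across the facet $Q_i$. Its multiplicity is $1$ by Definition \ref{def:mvs} \eqref{mv:toricdisk} and \eqref{mv:cylhit}. We then show that any other tree $\Gamma$ with this initial slope has all further inputs in directions that would push $\partial\Gamma$ outside $N$. The tool is Lemma \ref{lem:no-interior}: all edges after $e_0$ lie in the outward cone. Combined with balancing, $\partial\Gamma = \mu_{\rm out} - \sum(\text{inputs from singular points})$, with the inputs lying in eigendirections $\mu_b$. A nontrivial input at a vertex adjacent to $Q_i$ moves $\partial\Gamma$ off the extreme point $\mu(Q_i)$ of $N$. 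We would make this precise by pairing with a linear functional that is maximized on $N$ exactly at $\mu(Q_i)$.

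Edges of $N$. Consider the corner between $Q_i$ and $Q_{i+1}$. At the corresponding vertex of $\Phi(X)$ there is a branch cut carrying the coincident singular points $b$, with eigendirection $\mu_b$. Vianna-type monotonicity gives $\mu_b$ parallel to $\mu(Q_{i+1})-\mu(Q_i)$, up to primitivity. Write $\mu(Q_{i+1})-\mu(Q_i) = (n+1)\mu_b$, so that there are $n+1$ singular values, or equivalently a shear of total multiplicity $n+1$. Trees with initial slope on this edge of $N$ are exactly those with a single trivalent vertex in the annulus: an incoming edge from $\lam$, a bunched input $k\mu_b$ coming from the coincident singular points, and output $\muout$. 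Again this follows by the linear-functional argument, with the functional now maximized along the edge. Proposition \ref{prop:bunchmult} then gives bunched multiplicity $\binom{K}{k}$ with $K = |\mu_b\times \cT(f_0)|$, and we compute $K = n+1$ from the determinant of $\mu_b$ against $\mu(Q_i)$, using monotonicity and the shear matrix \eqref{eq:shear}. Multiplying by the multiplicity $1$ at the disk and boundary vertices yields the coefficient $\binom{n+1}{k}$ at $\zeta = \mu(Q_i)+k\mu_b$.

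Main obstacle. The hard step is the exclusion argument: proving that no tree with several trivalent vertices, or with inputs from other corners' singular points, has its initial slope on $\partial N$. We would handle this by induction along the outward-spiraling path of Lemma \ref{lem:no-interior}. Each merge with an input from a singular point on a different branch cut strictly decreases the supporting functional of the relevant face of $N$, because monotonicity puts every $\mu_b$ pointing strictly into $N$ relative to the non-adjacent faces. A secondary check is that $K$ equals the total count $n+1$ and not the individual shear parameters $k_i$; if the singular points are separated, the corresponding products of binomials combine by Vandermonde's identity.
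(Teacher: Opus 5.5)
\paragraph{The gap: excluding other trees.}
Both your vertex case and your edge case rest on the exclusion step, and balancing plus a supporting linear functional cannot decide which trees have $\partial\Gamma\in\partial N$.

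Take the degree four del Pezzo. There $\Phi(X)=\{|x|+|y|\le 1\}$, $N$ is the square with vertices $(\pm1,\pm1)$, and the cut at the corner $(1,0)$ carries two focus-focus values with $\mu_b=\pm(0,1)$. Balancing alone admits a tree with $\zeta=(1,1)$ that absorbs the bunched input $2(0,-1)$ from that corner and exits along $\muout=(1,-1)$ through the facet with that normal. This is allowed: the definition of an index-two tree permits either of $\pm\mu_b$ at a leaf, and $(0,-1)$ points into that rectangle. By Proposition \ref{prop:bunchmult} this tree would have weight $\binom{2}{2}=1$, which would make the coefficient of $y_1y_2$ equal to $2$ rather than $1$.

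Now check your mechanism against it. For the supporting functional $\phi(\xi)=\xi_2$ of the edge $[(-1,1),(1,1)]$, this input \emph{raises} $\phi$ from $-1$ to $1$. For $\xi_1+\xi_2$, which is maximized on $N$ only at $(1,1)$, the tree still lands exactly on that vertex. At the opposite corner $(0,-1)$ one has $\mu_b=\pm(1,0)$ and $\phi(\mu_b)=0$. So neither ``foreign inputs strictly decrease the functional'' nor your vertex claim holds.

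Such trees are excluded only by where things actually sit: the ray from $\lam$ in direction $(1,1)$ never enters that rectangle. In general the exclusion also depends on how inputs are transported across the shear lines the path crosses, and your sketch tracks none of this.

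The paper supplies the exclusion by energy, in Lemma \ref{lem:pb}. If $\zeta$ lies on the edge of $N$ dual to a corner $b$, the Blaschke disk component in $X_{P_0}$, with $P_0\supset(1-\epsilon)\Phi(X)$, already has area $1-\epsilon$ out of the total monotone area $1$. Any component in a piece beyond the corner costs some $\delta>\epsilon$. Hence $\Gamma$ lies in the corner pieces $\cP_b$, whose dual complex is the local model of Proposition \ref{prop:bi}. You need this, or a genuine tropical substitute, before the count means anything.

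\paragraph{The count.}
You assert both that the corner carries $n+1$ singular values and that $K=|\mu_b\times\cT(f_0)|=n+1$. These cannot both hold when $n+1>1$.

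Since $\cT(f_0)=\mu(Q_i)+k\mu_b$, one has $K=|\mu_b\times\mu(Q_i)|=:p$. With $d$ focus-focus values on the cut, the shear gives $\mu(Q_{i+1})=\mu(Q_i)+dp\,\mu_b$, so $n+1=dp$ and the corner is the smoothing of $\tfrac{1}{dp^2}(1,dpq-1)$. For example, the Chekanov torus has $d=1$, $p=2$, and the degree four del Pezzo has $d=2$, $p=1$.

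Proposition \ref{prop:bunchmult} treats one focus-focus singularity and yields $(1+x)^p$. Separating the $d$ values (Remark \ref{rem:nowall}) and multiplying gives $(1+x)^{dp}=(1+x)^{n+1}$. This is your Vandermonde remark, with exponent $p$ per value rather than a ``shear parameter''.

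You also need to note that for $k$ near $n+1$ the merge may happen in the $Q_{i+1}$ rectangle, with input $(n+1-k)\mu_b$ of the other sign. The weight $\binom{n+1}{n+1-k}$ is the same.

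With these repairs, the bunching computation is a legitimate alternative to the paper's evaluation of Proposition \ref{prop:bi} by $d$ simple wall-crossings (Proposition \ref{prop:wallA}). Your route is more direct; the paper's reuses the mutation formula it has already proved. The paper itself remarks that bunching would also work.
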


We first prove a lemma about tropical graphs contributing to the potential that are on the boundary of the moment polytope.    Let $b \in B^{\foc}$ be a focus-focus value in a base diagram of Vianna type, close to a corner of $\Delta = \Phi(X)$ that is the intersection of facets 
$Q_i$ and $Q_{i+1}$.  Let $\cP_b \subset \cP$ be the collection of those polytopes meeting the cone  $C_b$ on vectors $\nu_i,\nu_{i+1}$
normal to adjacent facets $Q_i,Q_{i+1}$ of $\Delta$, 
with vertex of the cone $C_b$  at $\lambda = \Phi(L)$,  that is, the polytopes in the corner ``near $b$''. 

\begin{lemma} \label{lem:pb}  Suppose that $\Gamma$ is a tropical graph contributing to the potential
with initial direction $\zeta$ that is in the convex hull of two vectors $\nu_i,\nu_{i+1}$
normal to adjacent facets $Q_i,Q_{i+1}$ of $\Delta$, meeting at a vertex $b \in \Delta$.  
Then $\Gamma$ is contained in the union of polytopes $P^\dual$ for $P \in \cP_b$.
\end{lemma}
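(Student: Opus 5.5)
We argue using the outward-spiralling structure of Lemma \ref{lem:no-interior} together with the area bookkeeping from the proof of Proposition \ref{prop:col-int}. Fix a tropical graph $\Gamma$ contributing to the potential, with disk vertex $v_0$ at $\lambda$ and initial edge $e_0$ of slope $\zeta \in \on{conv}(\nu_i,\nu_{i+1})$. We orient $\Gamma$ by the constraint orientation of Lemma \ref{lem:orient}, and we work in the chart of the dual affine manifold obtained by cutting $\A(X)$ along the branch cuts away from the corner at $b$. In this chart the cone $C_b$ based at $\lambda$ is spanned by $\nu_i,\nu_{i+1}$, and the polytopes in $\cP_b$ are exactly those whose duals lie in the sector $\lambda + C_b$ together with the adjacent strips of the annulus. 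The claim is therefore that every vertex and edge of $\Gamma$ lies in this sector.

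\textbf{Step 1 (the disk edge and the exit edge).} By Proposition \ref{prop:collg}, the initial edge $e_0$ leaves $P_0^\dual$ in the direction $\zeta \in C_b$. By Proposition \ref{prop:col-int}, the unique vertex $v_\partial$ meeting the boundary has an incoming edge of direction $\nu$ equal to a single primitive normal. Balancing along the path from $v_0$ to $v_\partial$ then shows that the total displacement equals $\zeta$ plus the sum of the inputs coming from focus-focus leaves.

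\textbf{Step 2 (leaf inputs).} Each univalent closed vertex sits at a singular point $b'^\dual$ and emits an edge in direction $\mu_{b'}$ (Definition \ref{def:mvs}\eqref{mv:multcov}). In the chart, the outgoing direction at the root of the tree is
\[ \zeta + \sum (\text{leaf directions, transported}) = \nu_j . \]
Every edge direction lies in the outward cone (Lemma \ref{lem:no-interior}), and we pair each edge with the dual functional $\langle \cdot , x \rangle$ for $x$ in the facets $Q_i$ and $Q_{i+1}$. Using monotonicity (total area $1$, as in \eqref{eq:agam0}), this forces $\nu_j \in \{\nu_i,\nu_{i+1}\}$. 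It also forces every leaf input to pair to zero with the vertex $Q_i \cap Q_{i+1}$, because any positive pairing adds area beyond $1$. Consequently the only singular points that can feed into $\Gamma$ are those whose eigendirection $\mu_{b'}$ is parallel to the edge of $N$ joining $\nu_i$ and $\nu_{i+1}$, namely the focus-focus values on the branch cut at the corner $b$.

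\textbf{Step 3 (containment).} Since all inputs come from singular points inside the sector, and all edge directions lie in the convex cone generated by $\nu_i,\nu_{i+1}$ and $\pm\mu_b$ (the last being parallel to $\nu_{i+1}-\nu_i$), convexity and Lemma \ref{lem:outward} show that the image of $\Gamma$ never leaves $\lambda + C_b$ intersected with the annulus near $b$. Hence $\Gamma$ lies in $\bigcup_{P \in \cP_b} P^\dual$.

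The main obstacle is Step 2. Making the area and pairing argument rigorous across shears requires care: once an edge crosses a branch cut at another corner, the chart changes, and the transported directions must be compared using the $h$-invariance of $\muout$. We would handle this first by showing that no edge of $\Gamma$ reaches a neighbouring corner's cut, using Lemma \ref{lem:cxi} to bound the area cost of such an excursion below by $C>1$.
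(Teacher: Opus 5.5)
Your Step 2 carries the whole argument, and as written it does not go through. Monotonicity gives area $1$ to \emph{every} Maslov-index-two broken disk, wherever its tropical graph goes. So ``total area $1$'' cannot by itself force $\nu_j\in\{\nu_i,\nu_{i+1}\}$ or constrain the leaves.

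Pairing leaf directions with the vertex $x_b=Q_i\cap Q_{i+1}$ does not isolate the corner $b$ either. Apart from the chart dependence across shears that you flag, your own description gives a problem: at any corner whose edge of $\Phi(X)^\dual$ is parallel to $[\nu_i,\nu_{i+1}]$, the eigendirection also pairs to zero with $x_b$. An example is the opposite corner when $\Phi(X)^\dual$ is the square with vertices $(\pm1,\pm1)$, as for the degree-four del Pezzo of Figure \ref{fig:b5p2_at}. Hence the conclusion that only focus-focus values on the cut at $b$ can feed into $\Gamma$ does not follow.

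Step 3 does not repair this. The cone generated by $\nu_i,\nu_{i+1},\pm\mu_b$ is a closed half-plane rather than $C_b$. Lemma \ref{lem:outward} only keeps edges outward-pointing, which still permits travel along the annulus to another corner. Nor can Lemma \ref{lem:cxi} bound the cost of such an excursion: it bounds the area $c_\xi$ of a \emph{disk} whose boundary class lies outside $\Phi(X)^\dual$, and says nothing about sphere components.

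The missing idea is to spend the area budget at the disk vertex. Since $\zeta\in\on{conv}(\nu_i,\nu_{i+1})$ and $P_0\supset(1-\eps)\Phi(X)$, the Blaschke/Cho--Oh area formula (Lemma \ref{lem:disk-area}) shows that the disk component has $[u_0]=\sum_j m_j[v_j]$ with $m_i+m_{i+1}=1$. Hence $A(u_0)=1-\eps$, and by monotonicity all other components of $u$ together have area only $\eps$. Now choose the decomposition so that the divisors $X_{P_i}$ of $\ol X_{\ol P_0}$ opposite the facets $Q_i$ have symplectic volume $\delta_i>\eps$. Then any component $u_1$ in a piece outside the corner would give $A(u)\ge(1-\eps)+\delta_i>1$, a contradiction. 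This is the paper's proof; it needs no analysis of leaf directions, charts or convexity.
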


\begin{proof} As before, we consider a polyhedral decomposition 
$\cP$ with $\Phi(L)$ contained in a ``big inner piece'' $P_0$ as in Figure
\ref{fig:2p}.  Consider a coefficient $\kappa_\nu$ of $y^\nu$ in $W_L$ where
$\nu$ is on the boundary of the Newton polygon $\Newt(W_L)$ of $W_L$.
Let $u: C \to \XX$ be a broken disk corresponding to the type $\bGam$ with inner piece $u_0:C_0 \to \XX_{\ol P_0}$.  Necessarily $u_0$ is a Blaschke disk with homology class
\[ [u_0] = \sum m_i [v_i] \in H_2(X_{P_0}, \Q)  \]
where $v_i$ is the disk with degree one in the $i$-th component
from \eqref{fig:blaschke} in the direction $\nu_{R_i}$.
The assumption that $\nu$ is in the convex hull of $\nu_i,\nu_{i+1}$ implies 
\[ m_i + m_{i+1} = 1 .\] 
Hence 
\[ A(u) - A(u_0) = 1 - (1 - \eps)(m_i + m_{i+1}) = \eps. \] 

We claim that there is not enough energy left for the tropical graph to escape the corner. For each face  $Q_i \subset \Phi(X)$, let $P_i \subset \Phi(X)$ be the parallel face of one of the outer trapezoids opposite the face $Q_i$
and let $\delta_i$ be the symplectic volume of the corresponding divisor $X_{P_i}$ in $\ol{X}_{\ol P _0}$.  We may assume that the decomposition $\cP$ is chosen 
with $P_i$ sufficiently close to $Q_i$ so that the symplectic volume $\delta_i$ of $X_{P_i}$
satisfies $\delta_i > \eps$.  Suppose $u$ has components $u_0$,
mapping to $\XX_{\ol P_0}$, and $u_1$ mapping to some $\XX_{\ol P_1}$.  Then 
\[ A(u) \ge A(u_0) + A(u_1) =  (1 - \eps) + \delta_i  > 1 \] 
which is a contradiction.   
\end{proof}

\begin{proof}[Proof of Theorem \ref{thm:bithm2}]  The proof is by reduction to case of potentials for cyclic quotient singularities computed previously.  Let $L$ be a monotone torus fiber. 
By Lemma \ref{lem:pb}, the tropical graphs $\Gam$ that contribute to
the $y^\nu$-coefficient in $W_L$ are contained in
the union $B_b^\dual$ of polytopes $P^\dual$ for $P \in \cP_b$.  The dual complex
$B_b^\dual$ is the dual complex considered in Proposition \ref{prop:bi}
and the claim follows from that Theorem.
\end{proof}

 \subsection{The Newton polygon of the potential} 

We first show that the Newton polygon as the dual of the moment
polytope, that is, the convex hull of the normal vectors to the
facets. 

 \begin{definition} Let $W: (\C^\times)^n \to \C$ be a Laurent polynomial 
 given as a sum of terms
\[ W(y) = \sum_{\nu \in \Z^n} \kappa_\nu y^\nu . \] 
The {\em Newton polytope} 
\[ \Newt(W_L) = \on{hull} \{ \nu | \kappa_\nu \neq 0 \} \subset \R^n \]
of $W_L$ is the convex hull of the exponents $\nu$ for non-zero coefficients $\kappa_\nu \neq 0.$
\end{definition}

\begin{theorem} \label{thm:newt}  Let $X$ be an almost toric
  four-manifold with moment polytope $\Phi(X) \subset \R^2$ and let
  $W_L$ be the disk potential of a monotone torus fiber $L \subset X$.  The Newton polytope $\Newt(W_L) \subset \Z^2$ is equal to the dual $\Phi(X)^\dual$ (as in Definition \ref{def:polydual}).
\end{theorem}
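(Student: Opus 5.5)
We prove the two inclusions $\Newt(W_L) \subseteq \Phi(X)^\dual$ and $\Phi(X)^\dual \subseteq \Newt(W_L)$ separately, using a standard decomposition as in Proposition \ref{prop:interior}. By Corollary \ref{cor:w} we may compute $W_L$ as the broken potential $W_{\XX,L}$. By Theorem \ref{thm:potthm-T} this broken potential is a sum over $\A$-tropical trees of index two in $\A(X)$. The monomial contributed by a tree $\Gamma$ is $y^{\partial\Gamma}$, where $\partial\Gamma$ is the initial slope, that is, the class $(u_0)_*[\partial D]\in\t_\Z$ of the disk vertex.

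For the inclusion $\Newt(W_L)\subseteq \Phi(X)^\dual$, we invoke Proposition \ref{prop:collg}\eqref{part:collg2}. Every broken Maslov index two disk has initial slope in $\Phi(X)^\dual$. The argument behind it combines the area formula of Lemma \ref{lem:disk-area} with the bound $c_\xi\ge C>1$ of Lemma \ref{lem:cxi}. It also uses the total area, which equals $1$ by monotonicity. Any exponent with a nonzero coefficient therefore lies in $\Phi(X)^\dual$. Since $\Phi(X)^\dual$ is convex, the convex hull of these exponents lies in it as well.

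For the reverse inclusion, it suffices to show that each vertex of $\Phi(X)^\dual$ appears with nonzero coefficient. The vertices of $\Phi(X)^\dual$ are among the primitive normals $\mu(Q_i)$, so we show that the coefficient of $y^{\mu(Q_i)}$ is nonzero. Theorem \ref{thm:bithm2} gives exactly this: each $\mu(Q_i)$ has coefficient $1$.

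If we want to avoid citing Theorem \ref{thm:bithm2}, we argue directly with Lemma \ref{lem:pb}. Any tree contributing to $y^{\mu(Q_i)}$ has a disk component $u_0$ with $m_i=1$ and area $1-\eps$, which leaves only energy $\eps$. Proposition \ref{prop:col-int} then forces a single outward edge ending at a univalent boundary vertex with $n_\partial=1$. Hence the tree is the straight edge from $\lam$ in direction $\mu(Q_i)$, ending at a holomorphic cylinder meeting $\Phinv(Q_i)$. Its multiplicity is $1\cdot 1$ by Definition \ref{def:mvs}\eqref{mv:cylhit},\eqref{mv:toricdisk}. There are no other graphs with this slope, so no cancellation can occur.

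The main obstacle is the no-cancellation step: showing that only this one graph contributes to the vertex monomial. In particular, we must exclude graphs with the same slope $\mu(Q_i)$ that run through a focus-focus piece near a corner, which could carry the signed Bryan--Pandharipande weights. We would handle this by the energy bound of Lemma \ref{lem:pb}, choosing the decomposition so that $\delta_i>\eps$. We would also check that an extra univalent focus-focus input changes the initial slope, by the balancing condition at trivalent vertices. Finally, if $\mu(Q_i)$ fails to be a vertex of $\Phi(X)^\dual$, no check is needed for it, since only the extreme points determine the convex hull.
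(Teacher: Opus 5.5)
Your first inclusion is the paper's, via Proposition \ref{prop:collg}\eqref{part:collg2}; the reverse inclusion is correct but goes by a different route. You cite Theorem \ref{thm:bithm2}. That is legitimate, since its proof through Lemma \ref{lem:pb} and Proposition \ref{prop:bi} does not use Theorem \ref{thm:newt}. But it pulls in the whole wall-crossing apparatus: Theorem \ref{thm:nowall}, the mutation formula of Proposition \ref{prop:wallA}, the bunching count, and the local $T$-singularity model.

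The paper instead gives a short direct energy argument using only the inner cut $\PP_{\on{in}}$ with $P_0\supset(1-\eps)\Phi(X)$. A broken disk with initial slope $\mu_j$ has a disk component of area $1-\eps$, leaving energy $\eps$. The sphere $u_1$ attached to it lies in the neck or in the trapezoid piece $X_{P_j}$, which fibers over $\P^1$. Any sphere there projecting non-constantly to the base has area at least some $\delta$ independent of $\eps$. So for small $\eps$, $u_1$ is a single fiber, meets no other relative divisor, and $u=(u_0,u_1)$, giving coefficient $1$.

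This is exactly what settles the no-cancellation problem you single out: focus-focus branches cannot attach because $u_1$ has no further nodes. The paper's route is self-contained and needs neither the annular cut nor Proposition \ref{prop:col-int}. Yours yields more (the binomial coefficients along the edges too), but at the cost of much heavier input.

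Two remarks on your fallback argument. First, Proposition \ref{prop:col-int} is stated only in the absence of elliptic singularities, so it would not cover, e.g., the toric del Pezzos; the inner-cut argument has no such restriction. Also, that proposition alone does not yield ``hence the tree is the straight edge.''

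Second, the balancing check you propose is misdirected. A focus-focus input at a trivalent vertex does not change the initial slope, which is fixed at $\mu(Q_i)$; it changes the outgoing direction, and balancing by itself cannot exclude such merges. For the $T$-singularity of Proposition \ref{prop:bi}, $\mu_2-\mu_1=dp\,(q,-p)$ is a multiple of the eigendirection $\mu_b$. So a graph with initial slope $\mu_1$ that absorbs a bunched focus-focus edge of weight $dp$ and then heads to the second facet is balanced. Graphs of this shape do occur in the wall-crossing analysis of Proposition \ref{prop:wallA} (Figure \ref{fig:mute3}). Their exclusion has to come from energy (or position), as in the paper, not from balancing.
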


\begin{proof} The inclusion of $\Newt(W_L)$ in the convex hull of the
  vectors $\mu(Q_i)$ was shown in Proposition \ref{prop:collg} \eqref{part:collg2}.  To
  prove the reverse inclusion, we need to show that the primitive normal 
  $\mu_j \in \t_\Z$  to a facet of $\Phi(X)$ appears in the Newton polygon
  of 
  the potential $W_L$.  For
  this purpose, we use the multiple cut $\PP_{\on{in}}$ from Definition
  \ref{def:good} (see Figure \ref{fig:2p}) with the polytope $P_0$
  containing $\Phi(L)$ being large enough that it contains
  $(1-\eps) \Phi(X)$ for a small $\eps>0$ determined later in the
  proof. For a broken map $u$ with initial slope $\mu_j$, the disk
  component $u_0$ has Maslov index two and intersects a relative
  divisor of $X_{P_0}$ corresponding to a long facet $Q_j \subset P_0$.
  The broken map
  has a sphere component $u_1$
that shares a tropical node with the disk $u_0$ and 
that lies
  either in the neck piece $\XX_{Q_j}$ or in the piece $X_{P_j}$
  corresponding to the trapezoid $P_j \in \PP$ adjacent to the facet
  $Q_j$.  The piece $X_{P_j}$ has a fibration structure over $\P^1$
  and any sphere that projects to a non-constant sphere in $\P^1$ has
  area bounded below by some constant $\delta > 0 $ independent of
  $\eps$.  As a result, for $\eps$ sufficiently small, $u_1$ lies 
  in $X_{P_j}$ and must be a fiber. The sphere $u_1$ does not meet the other relative divisors
  of $X_{P_j}$, and thus the broken map $u$ just consists of $u_0$ and $u_1$. 
  It follows that the number
  of such broken maps is equal to $1$.  In particular, each
  normal vector $\nu_j$ to a facet appears in the Newton polygon.
\end{proof}

\subsection{Potentials of smoothings of toric singularities }

In this section we compute the disk potential for a class of non-compact toric manifolds 
which model del Pezzo surfaces locally.   These are the so-called {\em smoothings of cyclic quotient $T$-singularity}, which are the most general cyclic quotient surface singularities of algebraic varieties admitting a $\Q$-Gorenstein smoothing.  
These singularities also possess an almost toric smoothing in the following sense:
By an orbifold, we mean a Deligne-Mumford stack in the sense of \cite{le:stacks}.
The notions of symplectic orbifolds, toric orbifolds etc. are then obtained by 
obvious modifications of the usual definitions. 

\begin{definition} An almost toric manifold $X$ a {\em smoothing} of a
  toric orbifold $X_0$ if the almost toric diagram $\Delta$ for $X$ is
  the same as that $\Delta_0$ of $X_0$, with some collection of
  focus-focus values $b \in B^{\foc}$ with branch cut pointing towards
  the vertex of $\Delta$.
\end{definition}

We describe a toric model for cyclic surface singularities, and an
almost toric smoothing for $T$-singularities.

\begin{notation} 
\begin{enumerate}
\item   A {\em cyclic surface
singularity} is a quotient of $\C^2$ by a finite cyclic group and is
specified by a pair of coprime integers $p$, $q$. 
    \item The singularity
denoted by $\frac 1 p(1,q)$ is a quotient $\C^2/\Upsilon$ by the group
\[ \Upsilon := \{\zeta \in \C : \zeta^p=1\} \] 
with action given by
\[ \zeta \cdot (z_1,z_2)=(\zeta z_1, \zeta^q z_2) . \]
The function
\[ \mu(z_1,z_2):=(\hh |z_2|^2, \frac 1 {2p}(|z_1|^2 + q|z_2|^2)) \] 
descends to $\C/\Upsilon$ and is a moment map for a $(S^1)^2$-action,
and the moment polytope is the wedge
\[\mu(\C/\Upsilon)=\{(x,y) \in (\R_{\geq 0})^2: py \geq qx\}.\]
\item  A {\em cyclic quotient $T$-singularity} is a cyclic quotient singularity of the form $\frac {1} {dp^2}(1, dpq-1)$. Its smoothing is the almost toric manifold $X_0$ with the same moment polytope as $X$, along with $d$-focus-focus singularities lying on the line $\{\lam(p,q): \lam \geq 0\}$. Note that the base diagram of $X$
is the most general one possible.  Therefore, a cyclic surface
singularity has an almost toric smoothing exactly if it is a
$T$-singularity. 
\item  In the particular case when $p=1$, the $T$-singularity is an
$A_{d-1}$-singularity which we denote by $M_d$.  Furthermore, the
$T$-singularity is the $\Z_p$-quotient of an $A_{dp-1}$-singularity,
and the smoothing of example 7.8 in Evans \cite{evans:lec} is fiberwise a
$\Z_p$-quotient of a smoothing of an $A_{dp-1}$-singularity.
\end{enumerate}
\end{notation}

\begin{figure}[ht]\begin{center} 
\scalebox{.8}{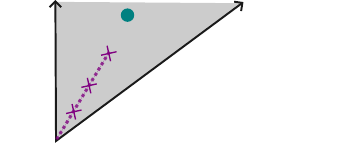}
\end{center} 
\caption{Moment polytope of a $T$-singularity $\frac 1 {dp^2}(1,dpq-1)$}
\label{fig:Tsing}
\end{figure}


The sense in which $X$ is a smoothing of $X_0$ is explained in Evans \cite[Chapter 7]{evans:lec}.   The following example is the important one for our purposes:

\begin{example}
Cyclic quotient $T$-singularities are quotients of $A_n$-singularities by finite groups.    For coprime positive integers $q <  p$ and an integer $d \ge 0$ let $X_0$  be a cyclic quotient of type $\frac{1}{dp^2}(1,dpq - 1)$ in the language of Evans \cite[Chapter 7]{evans:lec}.  The orbifold $X_0$ is equipped with a toric structure whose moment polytope is the cone 
\[ \Phi(X_0) = \on{Cone}((0,1),(dp^2, dpq - 1)) \]
on the vectors $(0,1)$ and $(dp^2,dpq - 1)$.  It has a smoothing $X$ with $d$ focus-focus singularities along a line in the $p,q$ direction.  The dual polytope is a line segment with vertices at $(-1,0)$ and $(dpq - 1, - dp^2)$.  It contains $dp$ lattice points 
\[ (-1,0), (-1 + q,-p), (-1 + 2q, - 2p), \ldots, (-1 + dpq, -dp^2). \]
In particular, the {\em smoothing of the $A_n$ singularity} is the  almost toric manifold with polytope  given by the cone on the rays $(0,1)$ and $(1,n+1)$ and 
$n+1$ focus-focus singularities along the ray $(0,1)$.
\end{example}

Convexity of the cylindrical end, as in Ritter-Smith \cite[Section 3]{rittersmith} implies that monotone Lagrangians in such smoothings have well-defined potentials. For any monotone Lagrangian torus $L$  the moduli space of holomorphic disks bound $L$ with any particular energy bound is compact.  As such $L$ has a well-defined  {\em local potential}
\[ W_L: \Rep(L) \to \C^\times \]
by counting Maslov index two holomorphic disks in $X$ with boundary in $L$ passing   
through a generic point in $L$.

\begin{figure}[ht]\begin{center} 
\scalebox{.8}{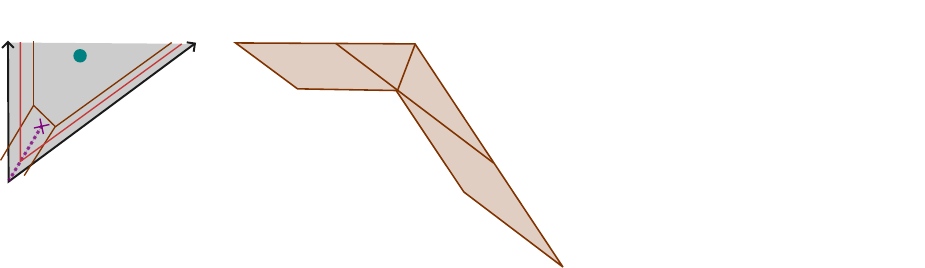}
\end{center} 
\caption{Multiple cut on the $T$-singularity, its dual complex $B^\dual_\rho$ and dual affine $\A(X)$.}
\label{fig:cutTsing}
\end{figure}

\begin{figure}[ht]\begin{center} 
\scalebox{.8}{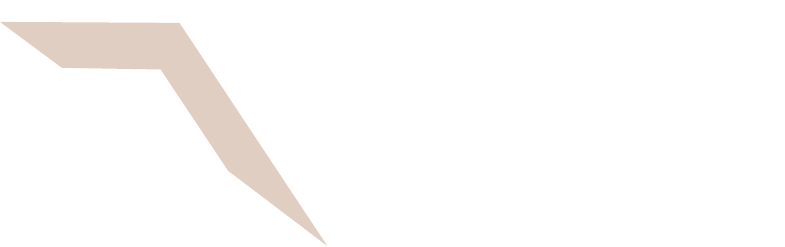}
\end{center} 
\caption{A tropical disk in a $T$-singularity, before and after crossing the wall indicated by $\mu_b$.}
\label{fig:curveT}
\end{figure}

The main result of this subsection is the computation of the potential of a smoothing of a cyclic quotient $T$-singularity:

\begin{proposition}  \label{prop:bi} Let $X$ be the smoothing of a 
cyclic quotient $T$-singularity $X_0$ with normal vectors to facets
$\mu_1 = (-1,0)$
and $\mu_2 = (-1 + dpq, -d p^2)$.  Let $L \subset X$ be a monotone Lagrangian
torus fiber over a point $\lambda \in \Phi(X)$.  Thus 
\[ \lan \lambda ,  \mu
\ran = \lan \ell, \mu_2 \ran  \] 
and $\ell$ is further away from the
vertex of $\Phi(X)$ than any of the focus-focus singularities. 
The potential 
\[ W_L: \Rep(L) \to \C^\times \] 
has coefficients given by binomial coefficients on the line segment
from $\mu_1$ to $\mu_2$; namely
\[ W_L(y_1,y_2) = y_1^{-1} ( 1 + y_1^q y_2^{-p})^{dp}. \] 
\end{proposition}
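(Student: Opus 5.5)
We would compute $W_L$ directly in the dual affine manifold $\A(X)$ of Figure \ref{fig:cutTsing}, as a sum over $\A$-tropical trees of index two. The local potential is well defined by the convexity remark above. The tropical formula of Theorem \ref{thm:potthm}/\ref{thm:potthm-T} applies because the standard decomposition of Definition \ref{def:good} can be built for $\Phi(X)$: take an inner cut around $\lambda$ with long facets parallel to the two facets, and an annular cut. The compactness and area arguments only use energy bounds, which hold here by convexity. By Theorem \ref{thm:nowall} and Remark \ref{rem:nowall}, we may first separate the $d$ focus-focus values $b_1,\dots,b_d$ along the ray $\R_{\ge 0}(p,q)$. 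They stay closer to the vertex than $\lambda$, so no wall is crossed. All $d$ singular points then have the same primitive eigendirection $\mu_b=\pm(q,-p)$, which is the lattice direction of the dual segment from $\mu_1$ to $\mu_2$.

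\emph{Step 1: constrain the shape of contributing trees.} By Proposition \ref{prop:collg}, the disk vertex is univalent and the initial slope $\zeta$ lies in $\Phi(X)^\dual=[\mu_1,\mu_2]$. Vertices at singular points are univalent with edges in direction $n\mu_b$, $n\ge 1$. By Lemma \ref{lem:no-interior}, all other vertices lie in the annulus and every edge points into the outward cone. Proposition \ref{prop:col-int} says the tree ends in a single edge in direction $\muout$, which is $\mu_1$ on the chart near the facet with normal $\mu_1$.

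Reading the tree backwards from the output, the trunk is a single path from $\lambda$ to infinity. Leaves from the singular points attach to it, and two leaves cannot first merge with each other away from the trunk, since their directions are parallel. Balancing forces the initial slope to be $\zeta=\mu_1+N\mu_b$ with $N=\sum_i n_i$. Here $n_i$ is the total multiplicity of the (bunched) edges from $b_i$, after orienting $\mu_b$ so that these points land on the segment. Concretely, the trunk starts at $\lambda$ with slope $\mu_1+N\mu_b$. Each time it crosses the line through $b_i$ of slope $\mu_b$, it absorbs a bunched edge $n_i\mu_b$. After all crossings its slope has become $\mu_1$, which is parallel to $\muout$ across the shears.

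\emph{Step 2: multiplicities.} At the collision with the bunched edge from $b_i$, Proposition \ref{prop:bunchmult} gives $m_b=\binom{k_i}{n_i}$ with $k_i=|\mu_b\times \cT(f_0)|$. The incoming trunk direction $\cT(f_0)$ differs from $\mu_1$ by a multiple of $\mu_b$, so
\[ k_i=|\det(\mu_b,\mu_1)|=|\det((q,-p),(-1,0))|=p \]
for every $i$. The disk vertex and the final boundary cylinder contribute $1$, by Definition \ref{mv:toricdisk} and Definition \ref{mv:cylhit}. Summing over all $(n_1,\dots,n_d)$ gives the coefficient of $y^{\mu_1+N\mu_b}$ as $\sum_{\sum n_i=N}\prod_i\binom{p}{n_i}=\binom{dp}{N}$. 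In terms of $y_1,y_2$ this is
\[ W_L=y_1^{-1}\,(1+y_1^{q}y_2^{-p})^{dp}. \]
As a consistency check, $N=dp$ gives the other endpoint $\mu_2$, with coefficient $1$ as expected.

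\emph{Expected main obstacle.} The delicate step is Step 1: showing that there are no further trees. Two things must be ruled out. First, a tree in which the trunk crosses the line through some $b_i$ twice, or passes through a singular point; this can be excluded by the outward-spiraling Lemma \ref{lem:outward} together with a generic choice of $\lambda$ (Case 1B of Theorem \ref{thm:nowall}). Second, a leaf direction from $b_i$ that is a multiple of the other eigendirection; this must be excluded by the orientation convention forcing leaves outward. We would also need to check that the sign and shear conventions make $\cT(f_0)\times\mu_b$ equal to $+p$ rather than being altered by the monodromy. If the direct analysis becomes messy, a fallback is Theorem \ref{thm:mutthm}: slide all $d$ nodes outward past $\lambda$ one at a time. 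In the resulting diagram the only tree is the single disk with slope $\mu_1$, so $W=y_1^{-1}$. Each wall crossing applies $\cS_\nu$ with $\nu=\mu_b^\perp$ and multiplies by $(1+y_1^q y_2^{-p})^{p}$, again giving the claimed formula.
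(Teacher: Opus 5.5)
Your direct count is a viable route and your final sum is right, but it differs from the paper's proof, and Step~1 as you state it is false and needs repair. The paper never classifies trees in $\A(X)$ with the nodes present. It slides the $d$ nodes past $\lambda$, so that in $(\A_{-\eps},\lam_{-\eps})$ the only tropical disk has slope $\mu_2=(dpq-1,-dp^2)$. It then applies Proposition \ref{prop:wallA} $d$ times with $\cS_{(-p,-q)}$. Since $\langle\mu_2,(-p,-q)\rangle=p$ and $y_1^{-q}y_2^{p}$ is invariant, each crossing multiplies by $(1+y_1^{-q}y_2^{p})^{p}$, and $y^{\mu_2}(1+y_1^{-q}y_2^{p})^{dp}=y_1^{-1}(1+y_1^{q}y_2^{-p})^{dp}$. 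Your argument is the alternative the paper only mentions in passing (``separating the focus-focus values and applying Proposition \ref{prop:bunchmult} to each''). The local input is the same, because Proposition \ref{prop:bunchmult} also drives the proof of Proposition \ref{prop:wallA}, and your Vandermonde sum is the one-shot form of composing $d$ mutations. Your route gives an explicit list of contributing trees. The paper's route needs tree classification only in the trivial configuration $\A_{-\eps}$, and the side-of-the-node bookkeeping is absorbed into Theorem \ref{thm:nowall} and Proposition \ref{prop:wallA}.

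That bookkeeping is exactly where Step~1 goes wrong. It is not true that every trunk absorbs only leaves $-n_i\mu_b$ and reaches $\mu_1$. The opposite eigendirection also cannot be excluded by the outward-cone condition. In the standard decomposition both $\pm\mu_b$ at $b^\dual$ are directions of edges of $\partial B^\dual_\ann$, dual to the rays $P_i,P_i'$ flanking the branch cut. Both therefore lie in the closed cone of Definition \ref{def:outcone}, and which one a trunk meets depends on which side of the node it passes. For example, the $y^{\mu_2}$ term ($N=dp$) comes from the direct disk through the facet with normal $\mu_2$, as in the proof of Theorem \ref{thm:newt}. It does not come from a trunk absorbing $p\mu_b$ at every node.

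The fix is to count both sides rather than exclude one. Passing $b_i$ on the other side, the trunk absorbs a bunched leaf $(p-m_i)\mu_b$ of the opposite sign. The shear of size $|\det(\mu_b,\mu_1)|=p$ across the cut of $b_i$ compensates, so the net change is again $-m_i\mu_b$, with weight $\binom{p}{p-m_i}=\binom{p}{m_i}$; this is Proposition \ref{prop:ffcross} and Figure \ref{fig:bunch}.

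To rule out meeting a node twice, use the function $h=\det(\mu_b,\cdot)$:
\begin{itemize}
\item $h$ is a single-valued affine function on $\A(X)$, since all monodromies are shears fixing $\mu_b$;
\item every leaf of $b_i$ lies in the level set $\{h=h(b_i)\}$;
\item every trunk has constant $h$-velocity $\det(\mu_b,\mu_1)=-p\neq 0$.
\end{itemize}
With these changes, $\sum_{\sum m_i=N}\prod_i\binom{p}{m_i}=\binom{dp}{N}$ goes through.

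Two smaller points. For non-primitive initial slopes (e.g.\ $p=2$, $q=1$, $N=1$ gives $(0,-2)$), Definition \ref{def:mvs}\eqref{mv:toricdisk} does not literally apply. You need that such a disk vertex still has multiplicity one, as is used implicitly in Example \ref{ex:chek} and in Proposition \ref{prop:wallA}. In your fallback, $\cS_{\mu_b^\perp}^*(y_1^{-1})=y_1^{-1}(1+y_1^{q}y_2^{-p})^{-p}$ for $\mu_b=(q,-p)$, which is not the factor you want. Start from $y^{\mu_2}$ as the paper does, or use the inverse mutation.
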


\begin{proof}
  [Proof of Proposition \ref{prop:bi}]
  The proof is by applying the mutation formula. Under a standard decomposition, the affine dual manifold $\A(X)$ corresponding to a $T$-singularity is as in Figure \ref{fig:cutTsing}.
  This space is the same as $(\A_\eps,\lam_\eps)$ in Figure \ref{fig:curveT}, which is obtained  $(\A_{-\eps},\lam_{-\eps})$ by $d$ simple wall-crossings. Each of the wall-crossings respects the sign convention of Notation \ref{note:sign}, and is  in the direction $\nu=(p,q)$. 
In $(\A_{-\eps},\lam_{-\eps})$, there is only one rigid tropical disk, and it has slope $(dpq-1,-dp^2)$.  
Then,
\begin{multline*}
  W=W_{(\A_\eps,\lam_\eps)} = (\S_\nu^*)^dW_{(\A_{-\eps}, \lam_{-\eps})}=(\S_{(-p,-q)}^*)^d(y_1^{dpq-1}y_2^{-dp^2}) \\
  =(y_1^{dpq-1}y_2^{-dp^2})(1+y_1^{-q} y_2^{p})^{dp}=y_1^{-1}(1+y_1^{q} y_2^{-p})^{dp}.
\end{multline*}

\end{proof}

\begin{figure}[ht]\begin{center} 
\scalebox{.3}{\includegraphics{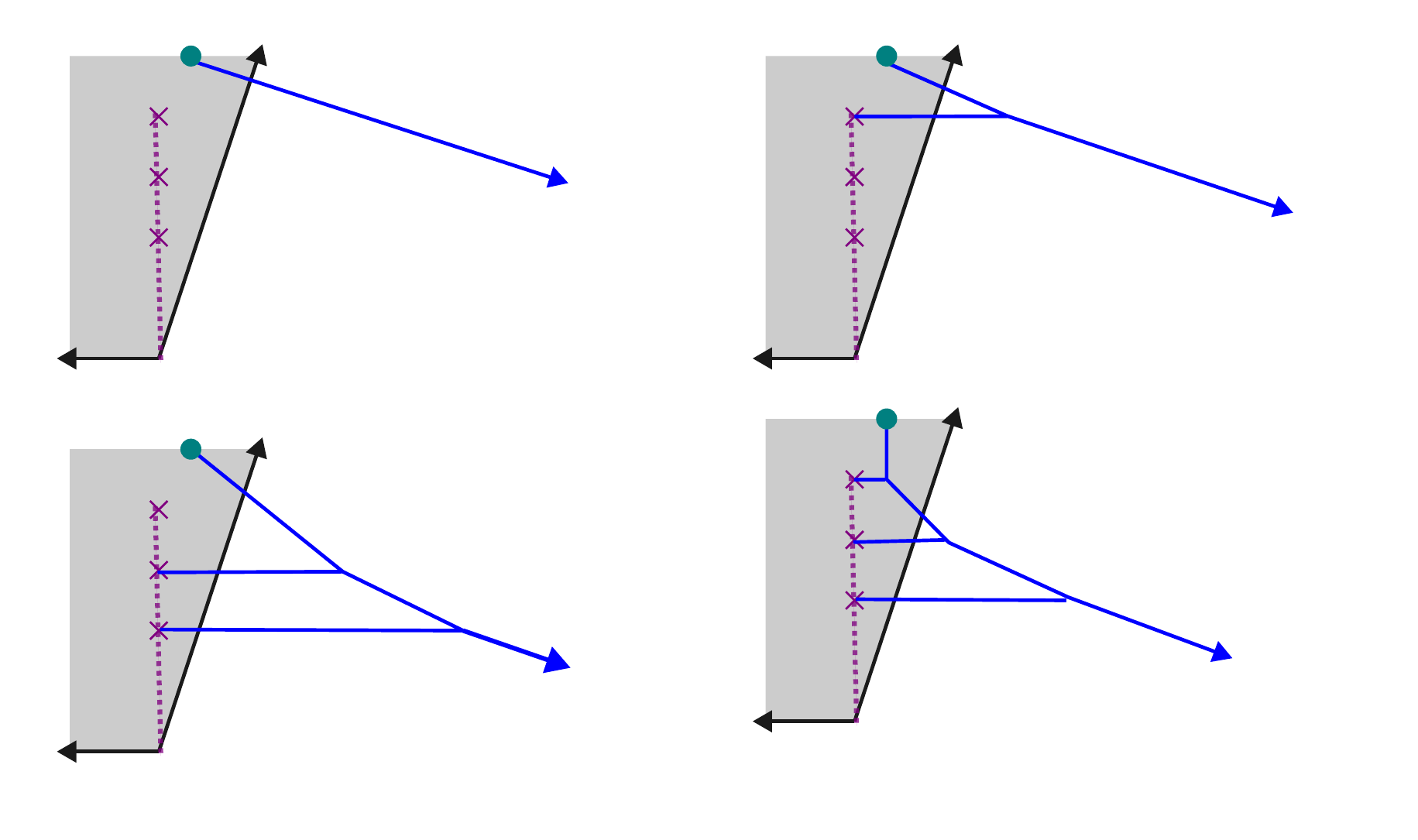}}
\end{center} 
\caption{Four of of the eight tropical disks for the smoothing of an $A_3$ singularity} 
\label{a3fig}
\end{figure}

\begin{example}  We describe the computation for the disk
potential of the spherical pendulum from Example \ref{ex:pend}.  The potential of the toric fiber
depicted in Theorem \ref{pendulum} on the left 
is 
\[ W_L(y_1,y_2) =       (y_1 + 2 + y_1^{-1}) /y_2 .\]
The potential of the moment fiber on the right is 
\[ W_{L'}(y_1',y_2') = (y_1'^{-1} + 1) (y_2')^{-1} .\]
The two potentials $W_L, W_{L'}$ are related by the {\em mutation formula}
\begin{equation} \label{mut1} 
y_1 = y_1' , \ y_2 = y_2' (1 + y_1^{-1})  \end{equation}
from Definition \ref{def:mutform}.
\end{example}

\begin{figure}[ht]\begin{center} 
\scalebox{.2}{\includegraphics{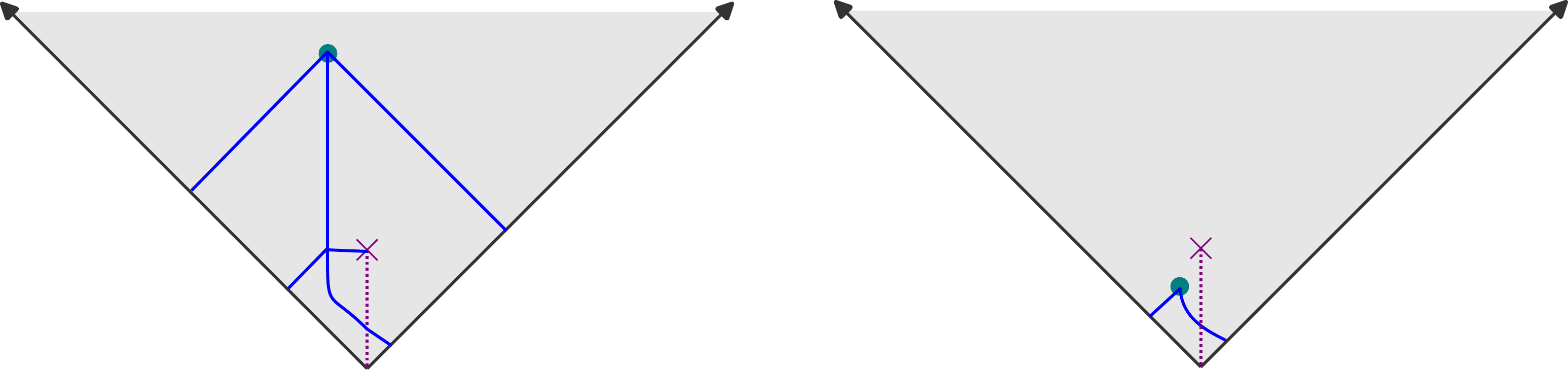}}
\end{center} 
\caption{Computing the disk potential of the spherical pendulum} 
\label{pendulum}
\end{figure}

The reader may deduce Proposition \ref{prop:bi} by taking a decomposition separating the focus-focus values and applying 
Proposition \ref{prop:bunchmult} to each focus-focus value.  However, we formally give the proof at the end of the next section after proving the mutation formula.

\subsection{The classification of potentials}

In this section we reprove the Pascaleff-Tonkonog result \cite{pasc} that the potentials are 
the mutable polynomials as expected, via a somewhat indirect argument involving their mutability.  A similar argument was used in Pascaleff-Tonkonog \cite{pasc},  but the argument here is entirely combinatorial.   We will need some definitions from 
Coates-Kasprzyk-Pitton-Tveiten \cite{coates:max}.

\begin{definition}
A convex polytope $P$ in $\R^2$ is {\em Fano} if  if it is of maximum dimension
$\dim(P) = 2 $, its dual $P^\dual$ contains the origin, and  the vertices $v \in \Ver(P)$ of  $P$ are primitive lattice vectors in $\Z^2$.
\end{definition}

We give a slightly more general definition of mutation that the one in \ref{def:mutform}, to follow the terminology in  \cite{coates:max}:

\begin{definition}  Given a 
vector $w \in \Z^2$ and a Laurent polynomial $a(y_1,y_2)$, the {\em mutation}  with respect
to $(w,a)$ is the map 
\[ \C(y_1,y_2) \to \C(y_1,y_2), \quad 
y^\mu \mapsto y^\mu a^{w(\mu)}.  \]
\end{definition}

\begin{definition} Given a Laurent polynomial $W$ the {\em mutation graph} $G(W)$ is the graph whose vertices $\Ver(G(W))$ are repeated mutations $W'$ of $W$ and edges $\Edge(G(W))$
connecting $W,W'$ are mutations of $W$ to $W'$.
\end{definition}

\begin{example}  Let $W(y_1,y_2) = y_1 + y_2 + (y_1 y_2)^{-1}$ be the standard potential for the projective plane.  The associated graph is then the graph of Markov triples, as explained in \cite[Section 3.2]{akhtar}.  Indeed, in this case the possible mutations are those corresponding to edges, and correspond to the mutations of almost toric structure described in Definition \ref{def:mutate}, see the proof of Proposition 3.9 in 
 \cite{coates:max}.
\end{example}

\begin{definition} 
A Laurent polynomial $W$  is 
 is {\em maximally mutable} if its Newton polygon $P = \Newt(W)$ is a Fano polytope, the constant term of $W$ is zero, and the mutation graph $G(W)$ of $W$
is maximal among all Laurent polynomials with Newton polytope $P$. 

Given a Fano polytope $P \subset \R^2$ and an edge $e$, denote by $v_e \in \Z^2$ the direction of the edge $e$ and $n_e \in \Z$ the {\em singularity content} of the cone over $e$ in the sense of 
Coates-Kasprzyk-Pitton-Tveiten \cite{coates:max}. This ends the Definition.
\end{definition}

\begin{remark}
In the cases considered here, the singularity content $n_e$ of an edge $e$ will simply be its lattice length. 
\end{remark}

We recall from  Coates-Kasprzyk-Pitton-Tveiten \cite{coates:max}:
 
 \begin{theorem}\label{thm:coates} (Proposition 3.9,  Theorem 3.12 of \cite{coates:max})  Let  $W$ be a Laurent polynomial in two variables with Fano Newton polygon $\Newt(W)$. Then the following are equivalent:   
Denote by $w_e \in \R^2$ the primitive inward pointing normal vector to $e$.   
 \begin{enumerate}
 \item $W$ is maximally mutable; 
 \item $W$ is mutable with respect to $(w_e, (1+ y^{v_e})^{n_e})$ for each edge $e$ of $\Newt(W)$;
\item   $W$ coincides with one of the polynomials in Table \ref{table:pptable}, up to mutation and $SL(2,\Z)$-equivalence.
\end{enumerate}
\end{theorem}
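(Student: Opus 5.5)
Since this statement is quoted from Coates-Kasprzyk-Pitton-Tveiten \cite{coates:max}, the plan is to reconstruct their argument rather than give a new proof. The argument splits into two equivalences: a local one, (1)$\Leftrightarrow$(2), and a global classification, (2)$\Leftrightarrow$(3).

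For (1)$\Leftrightarrow$(2), the first step is to show that a Laurent polynomial with Fano Newton polygon is mutable with respect to $(w,a)$ only if $a$ has a specific form. Write $W=\sum_h W_h$, grading by the height $h=\langle w,\mu\rangle$. Mutability with respect to $(w,a)$ requires $a^{-h}\mid W_h$ for every negative $h$. At the most negative height the slice of $\Newt(W)$ is an edge $e$ with inner normal $w$, so $a$ must be a factor of a power of $(1+y^{v_e})$. Its degree is bounded by the singularity content $n_e$ once we impose that the mutated polygon is again Fano, with primitive vertices and the origin in the interior.

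Maximality of the mutation graph then forces every such factor to be present at full strength. Suppose some edge lacked the factor $(1+y^{v_e})^{n_e}$. One could then exhibit a polynomial with the same Newton polygon whose mutation graph strictly contains that of $W$. The candidate is the one obtained by imposing the binomial edge coefficients $\binom{n_e}{k}$, consistent with Theorem \ref{thm:bithm2}. This would give (1)$\Rightarrow$(2).

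The converse (2)$\Rightarrow$(1) comes from an invariance statement. After a mutation of the form in (2), the new polynomial again satisfies (2) on every edge of its new Newton polygon. This requires tracking how the edges and their singularity contents transform under the piecewise-linear map on Newton polygons. Granting this, the mutation graph is as large as the combinatorics of the polygon allows, hence maximal.

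For (2)$\Leftrightarrow$(3), I would use the classification of Fano polygons up to combinatorial mutation. In the cases relevant here these are polygons whose cone singularities are $T$-singularities, that is, singularity content equal to lattice length. There are exactly ten mutation classes, matching the ten smooth del Pezzo surfaces in Table \ref{table:pptable}. The invariants separating the classes are the singularity content (a count plus a residual basket) and the degree $\sum_e$ of the polygon, computed via Lemma \ref{lem:degree}. Given $W$ satisfying (2), I would mutate it to a representative polygon in its class, for example a triangle in the Vianna/Markov form. On that polygon, condition (2) plus vanishing constant term determines the coefficients uniquely, so one can simply check that they agree with the corresponding entry of Table \ref{table:pptable}. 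Conversely, each table entry visibly carries the factors $(1+y^{v_e})^{n_e}$ on its edges.

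I expect the main obstacle to be the uniqueness step: showing that, on a representative polygon, the edge-mutability conditions force all interior coefficients, up to the constant term, which is set to zero. For the large cases $\Bl^7\P^2$ and $\Bl^8\P^2$ the polygons have many interior lattice points. My first attempt would be an induction that mutates successively to reduce the polygon's area. Failing that, I would verify the interior coefficients by explicit linear algebra on the finitely many representatives.
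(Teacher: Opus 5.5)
The paper gives no argument for this statement. It is quoted from \cite{coates:max} and used as a black box in Proposition~\ref{prop:maxmut} and Corollary~\ref{cor:ptable}, where it is applied only to potentials whose Newton polygon is $\Phi(X)^\dual$. Your sketch therefore has to stand on its own, and it has two genuine gaps.

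\textbf{First gap: the hypotheses.} As quoted, they do not put you in ``the cases relevant here.'' Condition (2) does not force the cones of $\Newt(W)$ to be $T$-cones, and (2)$\Rightarrow$(3) is false without that. Take $W_c=y_1+y_2+c\,y_2^{-1}+y_1^{-1}y_2^{-3}$, whose Newton polygon has vertices $(1,0),(0,1),(-1,-3)$ and is Fano. Its two height-one edges are mutable with factors $1+y_1^{-1}y_2$ and $1+y_1^{-1}y_2^{-4}$. The third edge has length $1$, height $3$ and cone $\tfrac13(1,1)$, so $n_e=0$. Hence (2) holds for every $c$. But the basket $\{\tfrac13(1,1)\}$ is a mutation invariant, while the Newton polygon of every entry of Table~\ref{table:pptable} has empty basket. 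Similarly, $2(y_1+y_2+(y_1y_2)^{-1})$ satisfies (1) and (2) but not (3); compare classical periods.

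The two-variable classification in \cite{coates:max} is about \emph{rigid} maximally mutable polynomials. It is rigidity, together with a normalization of coefficients, that excludes residual cones and free parameters. You must assume this; you cannot derive it from (1) or (2). Two smaller corrections:
\begin{itemize}
\item A $T$-cone means $\ell_e=n_e r_e$, not $n_e=\ell_e$. The triangle for $\Bl^8\P^2$ has edges of heights $2$ and $3$.
\item Degree plus singularity content do not separate $\P^1\times\P^1$ from $\Bl^1\P^2$: both have degree $8$, $n=4$ and empty basket. You only need the count of ten classes, though.
\end{itemize}

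\textbf{Second gap: the core of the theorem is assumed.} Even in the normalized $T$-case, you assume the substance rather than prove it. Your first step is false. Mutability only gives $a^{r_e}\mid W_{-r_e}$ at the edge, and $W_{-r_e}$ is arbitrary. That divisibility already yields $\deg a\le n_e$, with no Fano condition on the mutated polygon needed. For instance, $y_1+y_2+2(y_1y_2)^{-1}$ is a torus rescaling of $2^{1/3}(y_1+y_2+(y_1y_2)^{-1})$, so it is exactly as mutable as the $\P^2$ entry. Yet along the edge from $(0,1)$ to $(-1,-1)$ its only factor is $1+2y_1^{-1}y_2^{-2}$, so it fails (2).

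That the factor must be the full binomial $(1+y^{v_e})^{n_e}$ is precisely what maximality has to force. Saying ``one could exhibit a polynomial whose mutation graph strictly contains that of $W$'' gives no construction. Such a competitor must satisfy $a^{-h}\mid W_h$ on every negative-height slice, interior points included, and its entire iterated graph must dominate that of $W$.

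In the other direction you grant that (2) is preserved by mutation. That lemma is what each remaining implication rests on:
\begin{itemize}
\item (2)$\Rightarrow$(1) depends on it directly.
\item In (2)$\Rightarrow$(3), it is what lets you carry $W$ along a chain of combinatorial mutations to a representative polygon.
\item It also gives (3)$\Rightarrow$(2). Condition (3) allows any mutation of a table entry, and (2) demands divisibility on interior slices, not just visible edge factors.
\end{itemize}

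By contrast, the step you expect to be hardest is routine. Take the triangle with vertices $(-1,-2),(5,-2),(-1,4)$ for $\Bl^8\P^2$, with binomial boundary coefficients. Divisibility on the slices $x+y=2$, then $y=-1$, then $x+y=1$ determines all nine interior coefficients other than the constant term.

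The missing idea is a proof that binomial edge mutability is preserved under mutation. Without it, the proposal restates the theorem rather than proving it.
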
 

To apply the Theorem, we first prove, similar to Pascaleff-Tonkonog \cite{pt:wall}:

\begin{proposition} \label{prop:maxmut} Let $X$ be a compact monotone almost-toric 
four-manifold with monotone Lagrangian torus fiber $L$.  The disk potential $W_L$ is maximally mutable. 
\end{proposition}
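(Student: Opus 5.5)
The plan is to verify criterion (2) of Theorem \ref{thm:coates}, namely that $W_L$ is mutable with respect to $(w_e,(1+y^{v_e})^{n_e})$ for each edge $e$ of $\Newt(W_L)$. Once this holds, the equivalence (2)$\Leftrightarrow$(1) in that theorem gives maximal mutability.

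First, the hypotheses on the Newton polygon. By Theorem \ref{thm:newt}, $\Newt(W_L)=\Phi(X)^\dual$. Its vertices are the primitive normals $\mu(Q_i)$. By monotonicity, with the facets normalized as $\{\bran{x,\mu}=1\}$ around the monotone point, the polytope contains the origin in its interior. Hence $\Newt(W_L)$ is Fano. By Proposition \ref{prop:noconstant}, applied to the standard decomposition in which $L$ lies in the toric piece $X_{P_0}$, the constant term of $W_L$ vanishes.

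Second, mutability along each edge. Fix an edge $e$ of $\Newt(W_L)$ joining $\mu(Q_i)$ and $\mu(Q_{i+1})$. This edge is dual to the vertex $Q_i\cap Q_{i+1}$ of $\Phi(X)$. Theorem \ref{thm:bithm2} (equivalently, Lemma \ref{lem:pb} together with Proposition \ref{prop:bi}) shows that the restriction of $W_L$ to $e$ is $y^{\mu(Q_i)}(1+y^{v_e})^{n_e}$, where $n_e$ is the lattice length. This already says that the edge term is divisible by the required factor. To get mutability of the whole polynomial, I would realize the algebraic mutation geometrically. The vertex is of Vianna type, so either it carries focus-focus values on a branch cut, or it is an elliptic vertex, and a nodal trade (Definition \ref{def:mutate}) produces such values. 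In either case we have the $T$-singularity model of Proposition \ref{prop:bi}, whose focus-focus values have eigendirection $\mu_b$ perpendicular to $w_e$. We then perform the mutation of Theorem \ref{thm:mutthm}: slide all $n_e$ focus-focus values on that cut across $\lambda$ and transfer the cut. This gives a new monotone almost toric structure with Laurent potential $W_{L'}$. By Theorem \ref{thm:mutthm}, applied once per node (each slide crosses a simple wall by Proposition \ref{prop:simple-wall}), we get $W_{L'}=(\cS_\nu^*)^{n_e}W_L$ with $\nu=\mu_b^\perp$. This is exactly the mutation with data $(w_e,(1+y^{v_e})^{n_e})$, up to the sign conventions of Notation \ref{note:sign}. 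Since $W_{L'}$ is again a Laurent polynomial, $W_L$ is mutable in that direction.

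The main obstacle is matching the geometric and algebraic data in the second step. One must check that the singularity content $n_e$ equals the number of nodes available on the cut: by the remark above it is the lattice length, and by Theorem \ref{thm:bithm2} it is the exponent appearing in the edge restriction. One must also check that the nodal trade at elliptic vertices does not change $W_L$, since Lemma \ref{lem:indep} and Theorem \ref{thm:nowall} require the move to avoid walls. I would first handle the $GL(2,\Z)$ normalization $\mu_b=(1,0)$, $\nu=(0,1)$ as in the proof of Proposition \ref{prop:wallA}. In that normalization the comparison of $w_e$, $v_e$ with $\nu$ is explicit, and the general case follows by equivariance of \eqref{eq:mut}.
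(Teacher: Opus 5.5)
Your route is the paper's. You realize the mutation along each edge of $\Newt(W_L)$ by a geometric mutation of the diagram, use Theorem~\ref{thm:mutthm} to identify $\cS_\nu^*W_L$ with the potential of the new monotone fiber (hence a Laurent polynomial), and conclude with criterion (2) of Theorem~\ref{thm:coates}. You are more careful than the paper's one-line proof in two places. You check the Fano and constant-term conditions for $W_L$ itself. You also slide every node on the cut, which is what produces the full factor rather than a single $(1+y^{v_e})$.

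The one real problem is your bookkeeping of $n_e$, exactly at the step you call the main obstacle. You equate three numbers: the singularity content, the number of nodes on the cut, and the lattice length of $e$ (the exponent in the edge restriction from Theorem~\ref{thm:bithm2}). The first two agree, but the third differs as soon as $e$ has lattice height greater than one.

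In the model of Proposition~\ref{prop:bi}, a corner with $d$ nodes is the smoothing of $\frac{1}{dp^2}(1,dpq-1)$. Its dual edge has lattice length $dp$, which is the exponent in $y_1^{-1}(1+y_1^qy_2^{-p})^{dp}$, but height $p$, so its singularity content is $d$. Two examples from the paper show the discrepancy:
\begin{itemize}
\item For the Chekanov torus of Example~\ref{ex:chek}, the edge from $(1,1)$ to $(3,-1)$ has coefficients $1,2,1$, length $2$ and height $2$, yet there is a single node.
\item For $\Bl^8\P^2$ in Table~\ref{table:pptable}, all three edges have length $6$, but their contents are $3,6,2$.
\end{itemize}
The Remark equating singularity content with lattice length holds only for height-one edges.

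Consequently, ``slide all $n_e$ nodes'' with $n_e$ the lattice length is impossible. Mutability with respect to $(w_e,(1+y^{v_e})^{dp})$ also fails for $p>1$, since at height $-p$ it would require divisibility by $(1+y^{v_e})^{dp^2}$.

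The correct matching is $n_e=d$, the number of nodes; an elliptic corner after a nodal trade is the case $d=p=1$. The $d$ wall crossings give $(\cS_\nu^*)^d$, which is the mutation with weight $w_e$ (equal to $-p$ on $e$) and factor $(1+y^{v_e})^{d}$. This is consistent with the edge restriction, because $d\cdot p$ is exactly the lattice length. With $n_e$ read this way, your argument goes through.
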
 

\begin{proof}   Theorem \ref{thm:mutthm}
shows that the potential $W_{L'}$ of the  monotone moment fiber
in the mutated diagram is a mutation of $W_L.$  Since $L'$ is a disk potential for a monotone Lagrangian, $W_{L'}$ is a Laurent polynomial, and since it is an almost toric moment fiber, it has vanishing constant term by Proposition \ref{prop:noconstant}.  It follow that $W_L$ is mutable with respect to all three of the directions corresponding to the edges
of $\Newt(W_L)$.  By Theorem \ref{thm:coates},  the mutation graph of $W_L$ is maximal. 
\end{proof}

\begin{corollary}  \label{cor:ptable} Let $L$ be the smooth monotone fiber of an almost toric structure on a monotone del  Pezzo surface $X$. Then the disk potential $W_L$ is the mutation of the potentials in Table \ref{table:pptable} corresponding to the del Pezzo surface $X$.
\end{corollary}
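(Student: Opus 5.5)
The plan is to combine Proposition \ref{prop:maxmut} with the classification Theorem \ref{thm:coates}, and then identify the specific entry of Table \ref{table:pptable} using invariants that are preserved by mutation and that are readable from the almost toric diagram.

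First, by Theorem \ref{thm:newt} the Newton polygon $\Newt(W_L)$ equals the dual polytope $\Phi(X)^\dual$. Since $X$ is monotone, the facets of $\Phi(X)$ can be normalized to $\{\bran{x,\mu}=1\}$ with primitive normals $\mu$. So $\Phi(X)^\dual$ is the convex hull of primitive lattice vectors and contains the origin in its interior; that is, it is Fano. By Proposition \ref{prop:noconstant} the constant term of $W_L$ vanishes, and by Proposition \ref{prop:maxmut} $W_L$ is maximally mutable. Theorem \ref{thm:coates} then says $W_L$ agrees, up to mutation and $SL(2,\Z)$-equivalence, with one of the polynomials in Table \ref{table:pptable}. (The $SL(2,\Z)$ ambiguity only reflects the choice of identification \eqref{eq:Tlam-r2}, so it can be absorbed.)

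It remains to show that the entry we land on is the row for $X$ itself. I would use the degree. Mutations of almost toric diagrams preserve $X$, and by Lemma \ref{lem:degree} they preserve $\Vol(\Delta)$, which equals $\tfrac12 c_1(X)^2$. On the Laurent polynomial side, the corresponding invariant should be the mutation-invariant quantity attached to the Fano polygon: its singularity content, or equivalently the degree of the associated toric degeneration, computed from the lattice volume of the polar polygon $\Newt(W)^\dual$. I would check that this quantity equals $c_1(X)^2 = 9-k$ for $\Bl^k\P^2$, and $8$ for $\P^1\times\P^1$, on each row of the table.

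The degrees alone separate every row except degree $8$, where $\P^1\times\P^1$ and $\Bl^1\P^2$ coincide. I would separate these two by the rank of $H_2(X)$. Equivalently, I would check directly: every almost toric diagram of $\P^1\times\P^1$ is mutation-related (Vianna) to the square, whose potential is the $\P^1\times\P^1$ row, and similarly for $\Bl^1\P^2$. Theorem \ref{thm:mutthm} then transports the potential along these mutations.

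The main obstacle is the identification step: verifying that the Coates et al. equivalence classes are distinguished exactly by invariants visible on $X$. If that matching proves awkward, the fallback is to use Vianna's result that every almost toric diagram of a del Pezzo surface is connected by mutations to the specific diagram in Figure \ref{fig:dpall}. One would then compute the potential of that one diagram tropically via Theorem \ref{thm:potthm-T}, as in the degree four example, and conclude with Theorem \ref{thm:mutthm}.
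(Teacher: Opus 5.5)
Your reduction to Theorem \ref{thm:coates} via Proposition \ref{prop:maxmut} agrees in substance with the paper. So does your use of a mutation-invariant area: the area of the polar of $\Newt(W)$, matched with $\Vol(\Delta)$ through Lemma \ref{lem:degree}, fixes $c_1(X)^2$.

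The gap is the degree-eight case. The invariant you propose, the rank of $H_2(X)$, does not separate $\P^1\times\P^1$ from $\Bl^1\P^2$, since both have $H_2\cong\Z^2$. What distinguishes them is the parity of the intersection form: $\P^1\times\P^1$ has no class of square $-1$.

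More importantly, you cannot compare any topological invariant of $X$ with the mutation class of a Laurent polynomial until you have a bridge between the two. That bridge is the missing idea, and the paper supplies it as follows.
\begin{enumerate}
\item Realize the mutations taking $W_L$ to the table polynomial $W$ by mutations of the almost toric diagram (Theorem \ref{thm:mutthm}). This produces a diagram $\Delta'$ on the same $X$ whose monotone fiber $L'$ has $W_{L'}=W$.
\item Because $\Newt(W)$ is a smooth polygon, adjacent normals satisfy $|\det(v_i,v_{i+1})|=1$. Two or more nodes at a corner would force $|\det|\ge 2$, so each corner of $\Delta'$ carries at most one node.
\item Nodal trades then turn $\Delta'$ into an honest toric structure on $X$ whose moment polygon is dual to $\Newt(W)$.
\item Hence $X$ is diffeomorphic to the toric surface of that row, and the intersection-form argument picks out the correct row.
\end{enumerate}

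Your ``equivalent'' alternative is not equivalent, and it and your fallback both rest on the claim that every almost toric diagram of $X$ is connected by mutations to a standard one. That is not what Vianna proves. He constructs diagrams by mutating toric ones and classifies the triangular diagrams, but he does not show that an arbitrary almost toric structure on $X$ arises this way. If such a classification were available, it would give the whole corollary directly from Theorem \ref{thm:mutthm} and a single tropical computation, making Theorem \ref{thm:coates} superfluous. Without it, you need the realization-plus-nodal-trade argument above, or some other genuine link between the mutation class of $W_L$ and the topology of $X$.
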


\begin{proof}   By Theorem \ref{thm:coates} ( Proposition 3.9 of Coates-Kasprzyk-Pitton-Tveiten \cite{coates:max}) 
and Proposition \ref{prop:maxmut}, the disk potential $W_L$ must be one of the potentials in Table \ref{table:pptable}, up to mutation.   
It remains to check that in fact the potential $W_L$ is the potential for the del Pezzo surface $X$ containing $L$.      Thus the potential $W_L$ must be the potential $W$ for a del Pezzo surface $X'$ with an almost toric diagram $\Delta'$ with the same
volume as $\Delta$.  By Lemma \ref{lem:degree}, $X'$ has the same degree as $X$.    The only two del Pezzos with the same degree are $(\P^1)^2$ and $\Bl^1 \P^2$, which both have  $c_1^2$ equal to $8$.   We distinguish these  two cases by a cohomology argument.   Suppose that $W_L$ is equivalent by 
mutation to $W$ which is the standard potential for either 
$(\P^1)^2$ and $\Bl^1 \P^2$.   By mutating the original almost toric diagram, we obtain an almost toric diagram $\Delta'$ with Lagrangian fiber  $L'$ is equal to $W_{L'} = W.$  We claim that in fact the diagram $\Delta'$ has at most one focus-focus value at any vertex.  Indeed, if there is more than one focus-focus value at any corner then the normal vectors $v_i,v_{i+1}$ at a vertex would be related by the composition of at least two shears, and so $|\det(v_i v_{i+1})| \ge 2$ which is not the case.   After a nodal trade,  we obtain a base diagram without focus-focus values.  

After absorbing the focus-focus values into the vertices by nodal trades, one obtains a toric structure on $X$ with polytope dual to $\Newt(W)$. The two possible toric varieties $(\P^1)^2$ and $\Bl^1 \P^2$ are not diffeomorphic, since $(\P^1)^2$ has no homology classes of square $-1$.  Therefore,  $W_L$ must be mutation-equivalent to the toric potential $W_{L'}$ on $X$.   \end{proof}

\begin{remark} An inspection of the tables in Vianna shows that $\Newt(W_L)$ is the Newton polygon of the  potential in Corollary \ref{cor:ptable} for a suitable diagram. 
For most of these, the diagram giving the potential in \ref{cor:ptable} is easily read off.
For example, for the del Pezzo of degree $1$, the diagram is labelled $C2$ in 
Vianna's Figure 16 in \cite{vianna:dp}. For the del Pezzo of degree $2$, the diagram is labelled $B2$ in Figure 15 in \cite{vianna:dp}.
\end{remark} 

\section{Explicit computations for del Pezzo surfaces} 

We now discuss each of these potentials in turn, with the goal of illustrating the tropical formulas by computing various coefficients of the potentials.  For blow-ups at less than four points, the corresponding del Pezzo surfaces admit toric structures.    For convenience, we record the critical values of the potentials for del Pezzos in Table \ref{table:eigentable}.

\begin{table}
\[ 
\begin{array}{|l|l|l|}X   &  \text{Manin Root System} &  \approx \Critval(W) \\
\hline 
\P^2                 & & 3\alpha, \alpha^3 = 1 \\
\P^1 \times \P^1     & A_1 & 4, 0^{\oplus 2}, -4 \\ 
\Bl^1 \P^2           & & -0.33, 3.8, -2.23 \pm 1.94 I \\
\Bl^2 \P^2            & A_1 & (-1)^{\oplus 2}, 4.73, -2.86 \pm 0.94 I \\
\Bl^3 \P^2            & A_1 \oplus A_2  &  (-2)^{\oplus 3}, (-3)^{\oplus 2} ,6 \\
\Bl^4 \P^2             & A_4 & (-3)^{\oplus 5}, 8.09, -3.09 \\
\Bl^5 \P^2            & D_5 & (-4)^{\oplus 7}, 12 \\
\Bl^6 \P^2             & E_6 & (-6)^{\oplus 8}, 21 \\
\Bl^7 \P^2            & E_7 &  (-12)^{\oplus 9}, 52\\
\Bl^8 \P^2 & E_8 &  (-60)^{\oplus 10} , 372. 
\end{array} \]
\caption{Critical values of the potential for del Pezzos}
\label{table:eigentable}
\end{table}

\subsection{Potentials of toric del Pezzo surfaces}

 The disk potential of monotone toric manifolds was computed
in Cho-Oh \cite{chooh:fano}.  Suppose that a  monotone toric variety $X$ has convex Delzant polytope $P$.  Denote the primitive normal 
vectors  to the facets
\[ \nu_1,\ldots, \nu_k \in \Z^n.  \] 
The potential for monotone torus $L$ is the function
\[ W_L: (\C^\times)^n \to \C, \quad y \mapsto \sum_{i=1}^k y^{\nu_i}  \]
called the {\em Givental-Hori-Vafa potential}, see Givental 
\cite{giv:icm}.    (Even if $L$ is not monotone, the potential for $L$ in the sense
of \eqref{eq:mcl} is equivalent to the Givental-Hori-Vafa potential by the results of Fukaya-Oh-Ohta-Ono \cite{fooo:toric}.)

\subsubsection{The projective plane}
Let $X = \P^2$. The canonical toric structure has moment polytope $\Phi(X)$ a triangle as shown in Figure \ref{fig:p2fig}, where we have drawn the cartoon diagrams for the broken disks.  
By a {\em cartoon diagram}, we mean a collection of subsets $C_v \subset P(v)$ that are the moment images of some collection $\ol{u}'_v$ of smooth (not necessarily pseudoholomorphic) maps isotopic to $\ol{u}_v$.    The tropical graphs $\Gamma$ contributing to the potential $W_L$ each have a single edge and no vertex.  The critical values are of the form $3\alpha$ for $\alpha^3 = 1$.     Another example is computed in Example \ref{ex:chek}.

\begin{figure}[ht]\begin{center} \scalebox{.8}{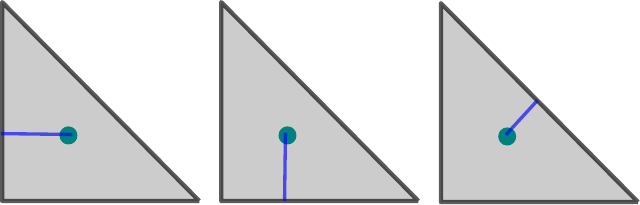}
\end{center} \caption{Cartoon diagrams of Maslov-index-two disks in $\P^2$}  \label{fig:p2fig}
\end{figure}

\subsubsection{The quadric surface}
Let $X=  \P^1 \times \P^1$.  The canonical
toric structure has polytope given by a product of intervals
\[ \Delta = \Phi(X) = [-1,1]^2 \] 
shown in Figure \ref{p1p1fig}.  The potential is the monotone torus fiber is 
\[ W_L(y_1,y_2) = y_1 + 1/y_1 + y_2 + 1/y_2 .\]
The critical points and values are easily found by hand to be
\[ \Crit(W_L) = \{ y_1 = \pm 1 ,y_2 = \pm 1 \}, \quad \Critval(W_L) = \{  \pm 4, 0  \}  .\]

\begin{figure}[ht]\begin{center} \scalebox{.5}{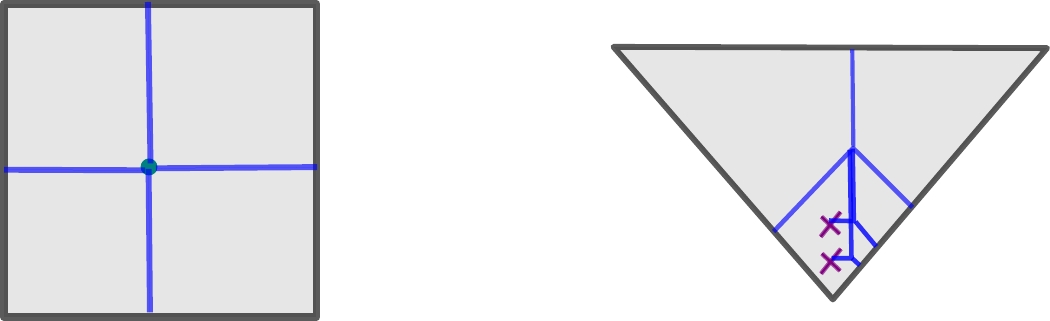}
\end{center} \caption{Two almost toric diagrams for $\P^1 \times \P^1$ and cartoon diagrams of Maslov-index-two disks}  \label{p1p1fig}
\end{figure}

A second almost toric diagram  of the quadric surface is shown in Figure \ref{p1p1fig}.
This diagram and  is related to the realization of $X$ as a compactification 
of the cotangent bundle of the two-sphere.   The moment polytope 
\[ \Delta' = \Phi' (X) = \on{hull} \{ (0,0), (1,1), (-1,1) \}  \] %
is a triangle, with 
the vertex at the bottom representing an $A_1$-singularity.  The potential for this almost toric structure is 
is 
\[ W_{L'}(y_1,y_2) = y_1 y_2 + y_2/y_1 + 1/y_2 + 2y_2 .  \]
The  critical values are easily found by hand to be
\[ \Critval(W_{L'}) = \{4, -4 \} . \]
Thus the monotone torus fiber $L'$ for the second moment polytope
is not Hamiltonian isotopic to the monotone fiber $L$ for the first polytope.
In fact this can be easily seen by readers familiar with the technology of open-closed maps:   The Lagrangian $L'$ is disjoint from the 
Lagrangian two-sphere $L''$ given by the zero-section, whose moment image connects the two 
focus-focus critical values near the bottom vertex of the diagram.  
The image of $HF(L') \cong H(S^2) \cong \C^2$ under the open closed map is two-dimensional, since the classical, leading order term in the open closed map is non-zero.  Hence $L''$ split generates the $0$-eigencategory, while $L'$ with two local systems generates the $\pm 4$-eigencategories.

\subsubsection{The del Pezzo of degree eight}
Let  $X = \Bl^1 \P^2$.  The toric structure has a polytope given by a trapezoid
with normal vectors $(1,0), (0,1), (0,-1), (1,-1).$
The potential of the monotone torus is 
\[ W_L(y_1,y_2) = y_1 + y_2 + 1/y_2 + y_2/y_1 . \]
We used Mathematica to compute the critical points and critical values in Table \ref{table:eigentable}. 

\subsubsection{The del Pezzo of degree seven}
Let $X = \Bl^2 \P^2$ be the twice blow-up of the projective plane.  The toric structure has polytope $\Phi(X)$ given by a square with a  corner cut off.   In Figure \ref{fig:b2p2_tori}, the cartoon diagrams $\Gamma$ are drawn below the approximate images of the curves in the moment polytope.    The tropical graphs associated to these curves can be reconstructed by examining the tangent vectors to the paths at the vertices. That is, to draw the tropical graph from the cartoon picture, one would simply straighten out the edges.  In order to give a complete description, one would also have to describe the dual complex of some polyhedral decomposition, which we find tedious as the answer is independent of which polyhedral decomposition one chooses.   The potential for the monotone torus may be computed as in Cho-Oh \cite{chooh:fano} as
\[ W_L(y_1,y_2) = y_1 + y_2 + 1/y_1 + 1/y_2 + 1/(y_1 y_2)  .\]
There is also an almost toric structure on $X$
 with two focus-focus singularities whose polytope  $\Delta$ is a quadrilateral,  is shown in Figure \ref{fig:b2p2_tori}.  The potential  $W_{L'}(y)$ has six terms.  Since the potentials $W_L, W_{L'}$ have five terms and six terms respectively, the tori $L,L'$ are not Hamiltonian isotopic. 

\begin{figure}[ht]\begin{center} \scalebox{.5}{\includegraphics{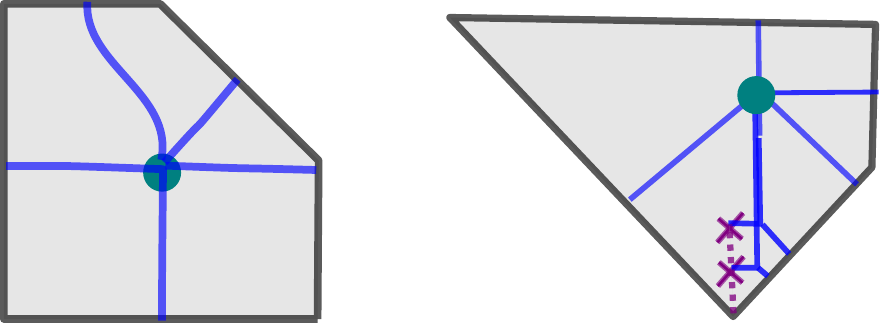}}
\end{center} \caption{Cartoon diagrams for Maslov-index-two disks in $\Bl^2 \P^2$}  \label{fig:b2p2_tori}
\end{figure}

%
%

\subsubsection{The del Pezzo of degree six}

The thrice-blow up $ X = \Bl^3 \P^2$ is toric with moment polytope $\Phi(X)$ a hexagon
as shown in Figure \ref{fig:b3p2_tori}.    The potential for the monotone torus fiber for this toric structure is 
\[ W_L(y_1,y_2) = y_1 + y_2 + y_1 y_2 + 1/y_1 + 1/y_2 + 1/(y_1y_2) .\]
We used Mathematica to compute the critical points and critical values:
\[ \Critval(W_L) = \{ - 2, -3, 6 \} .\]

\begin{figure}[ht] 
\begin{center} \scalebox{.6}{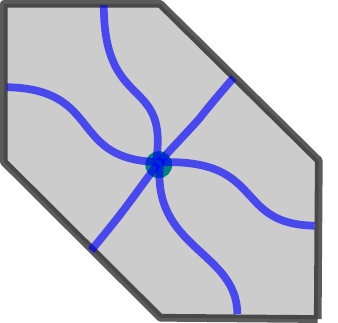}
\end{center} \caption{Cartoon diagrams for Maslov-index-two disks in $\Bl^3 \P^2$} 
\label{fig:b3p2_tori} 
\end{figure}

\subsection{Potentials of non-toric del Pezzo surfaces}

\subsubsection{The del Pezzo of degree five}
The four-times blow-up of the projective plane does not admit a toric 
monotone symplectic form.  An almost toric diagram for the monotone symplectic form with two focus-focus singularities is shown in Figure \ref{fig:b4p2_tori}.  The potential of the monotone Lagrangian torus is 
\[ W_L(y_1,y_2) = 2 y_1 + 2 y_2 + 1/y_1 + 1/y_2 + y_1 y_2 + y_1/y_2 + y_2/y_1 . \] 
Some of the cartoon diagrams contributing to the potential are shown in Figure \ref{fig:b4p2_tori}.  We used Mathematica to compute the critical points and critical values in Table \ref{table:eigentable}.

\begin{figure}[ht] \begin{center} \scalebox{.6}{\includegraphics{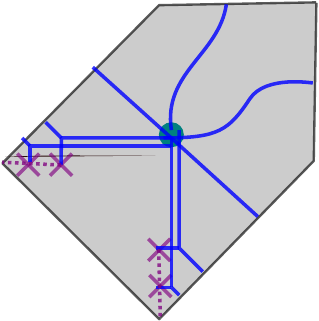}}
\end{center} 
\caption{Cartoon diagrams of disks contributing to the potentials for $\Bl^4 \P^2$} 
\label{fig:b4p2_tori} 
\end{figure}

\subsubsection{The del Pezzo of degree four}
The five-times blow up of the projective plane has an almost toric diagram shown in Figure \ref{fig:b5p2_at}, with corresponding potential
\[ W_L(y_1,y_2) = 2 y_1 + 2 y_2 + 2/y_1 + 2/y_2 + y_1 y_2 + y_2/y_1 + y_1/y_2  + 1/(y_1y_2) .\]
The cartoon diagrams for the disks are shown in Figure \ref{fig:b5p2_at}.
We used Mathematica to compute the critical points and critical values in Table \ref{table:eigentable}.  

\subsubsection{The cubic surface}

The six-times blow-up has an almost toric diagram shown in Figure \ref{fig:b6p2_tori}.  The potential is given by a count of disks in Figure \ref{fig:b6p2_tori} and is given by 
\[ W_L(y_1,y_2) =
  3 y_1 + 3 y_2 + y_1^2/y_2 + y_2^2/y_1 + 3 y_1/y_2 + 3 y_2/y_1 + 3/y_1 + 3/y_2 + 1/(y_1 y_2) .\] 
\begin{figure}[ht] \begin{center} \scalebox{.4}{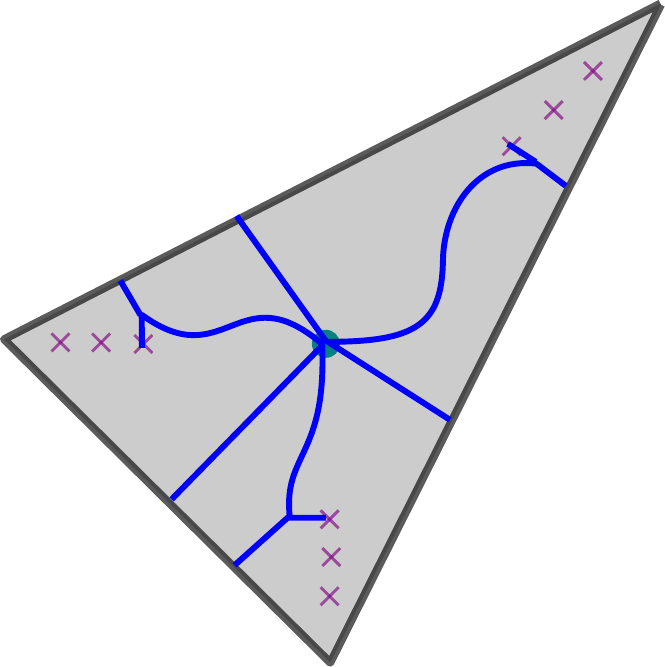}
\end{center} 
\caption{Cartoon diagrams for disks contributing to the potentials for $\Bl^6 \P^2$} 
\label{fig:b6p2_tori} 
\end{figure}

\subsubsection{The del Pezzo of degree two}
The seven-times blow-up has an almost toric base diagram
shown in Figure \ref{fig:b7p2_torus}.
Some of the cartoon diagrams for the disks contributing to the potential
are shown in Figure \ref{fig:b7p2_torus}.  The potential 
of the monotone torus fiber is 
\[ W_L(y_1,y_2) = 
(y_2 + 1/y_2)(1/y_1^2+4/y_1 + 6 + 4 y_1 + y_1^2) + (2/y_1^2 + 8/y_1 + 
    8 y_1 + 2 y_1^2) .\]
We used Mathematica to compute the critical points and critical values in Table \ref{table:eigentable}.

\begin{figure}[ht] \begin{center} \scalebox{.5}{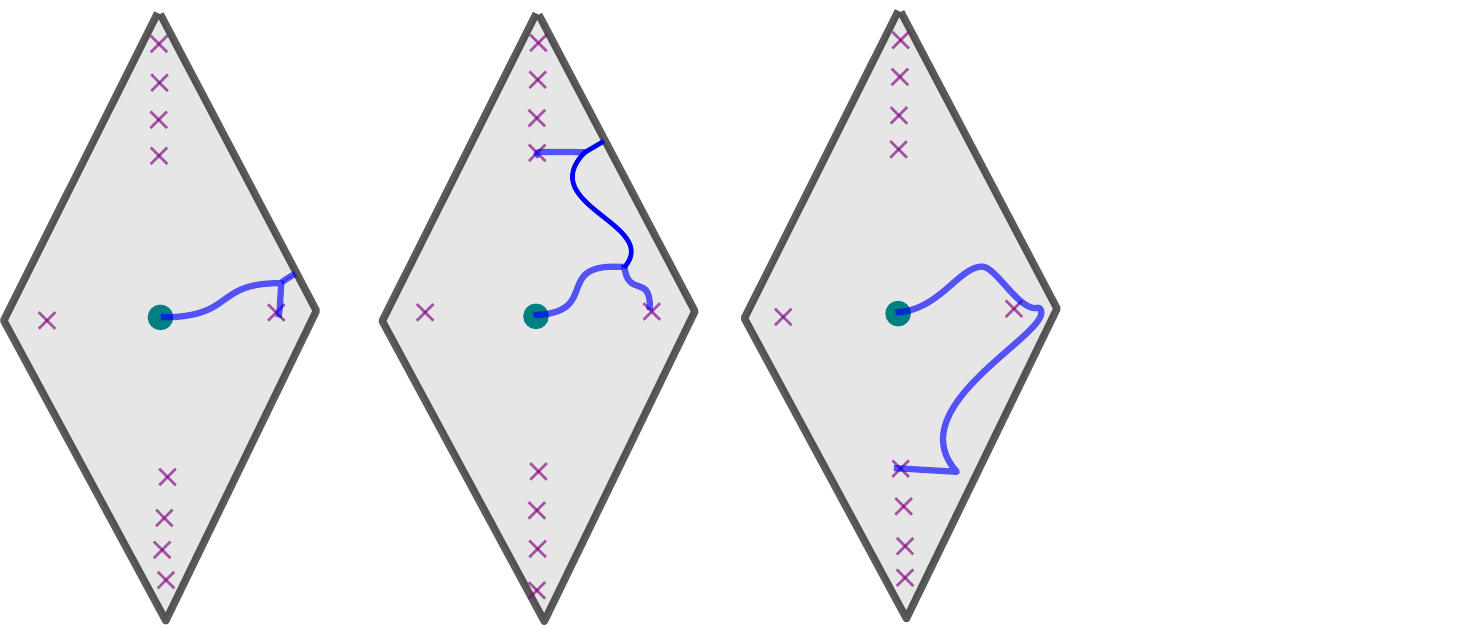}
\caption{Cartoon diagrams for disks contributing to the potential for the torus for $\Bl^7 \P^2$}
\label{fig:b7p2_torus}
\end{center} \end{figure}

\subsubsection{The del Pezzo of degree one}
The eight-times blow-up $\Bl^8 \P^2$ has an almost toric diagram
described by Vianna \cite{vianna:dp} whose polytope is the triangle
\[ \Delta = \on{hull} \{ (-1,3), (1,0), (0,-3) \}  \]
shown in Figure \ref{fig:b8p2_torus}.
Some of the disks contributing to the potential, which has $372$ terms, are shown in Figure \ref{fig:b8p2_torus}.  The potential $W_L$ in Figure \ref{fig:b8p2_torus} is that in Akhtar et al \cite[Figure 1]{akhtar}.
Note that the coefficients along each edge  of the Newton polygon are binomial, as 
justified in Theorem \ref{thm:bithm2}.  We used Mathematica to compute the critical points and critical values shown in Table \ref{table:eigentable}.

%

\begin{figure}[ht] \begin{center} 
\scalebox{1}{{\fontsize{1}{1}\selectfont  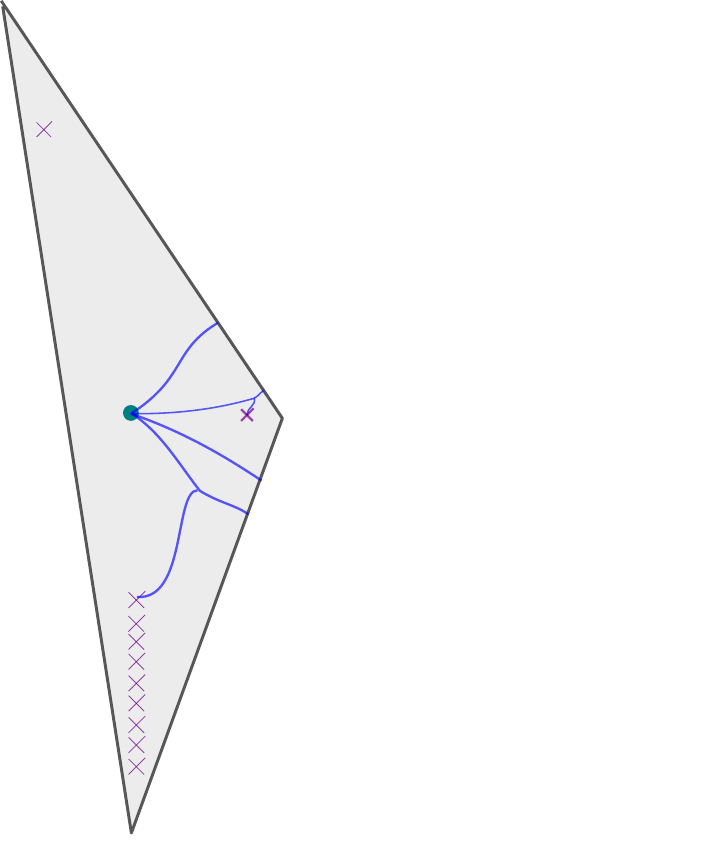}}
\caption{Cartoon diagrams for Maslov-index-two disks in $\Bl^8 \P^2$}
\label{fig:b8p2_torus}
\end{center} \end{figure}

\appendix

\section{Rigid spheres}
\label{sec:spheres}

This appendix contains a digression on counting rigid holomorphic stable maps of genus zero in almost toric four-manifolds; these are closely related
to the so-called {\em exceptional spheres} in del Pezzo surfaces considered
in the algebraic geometry literature, for example Testa \cite{testa:thesis}.

\begin{definition} A {\em rigid sphere} in an almost complex four-manifold $X$ is a holomorphic map $u:\P^1 \to X$ of Chern number $c_1(u) = 1$.
\end{definition}

\begin{lemma} Suppose $X$ is a compact symplectic manifold of
  dimension four.   Any rigid sphere  $u$ lies in a component of the moduli space $\M(X)$ of stable maps to $X$ of expected dimension zero.
\end{lemma}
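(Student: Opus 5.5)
The plan is to show two things: the stratum of $\M(X)$ containing $u$ has index zero, and $u$ is a smooth, transversally cut out point of it. Together these say that the component of $u$ has expected dimension zero, rather than merely that the class $[u]$ has formal index zero.

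\textbf{Step 1 (index).} For a genus-zero map $u:\P^1 \to X$ to a four-manifold, Riemann--Roch gives the dimension of the space of parametrized maps in class $\beta=[u]$ as $\dim_\R X + 2c_1(\beta) = 4+2c_1(\beta)$. Quotienting by $\Aut(\P^1)$, which has real dimension six, leaves expected dimension $2c_1(\beta)-2$, and this is zero when $c_1(u)=1$.

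In the framework of \cite{vw:trop} the domains are stabilized by a Donaldson hypersurface of degree $k$. Then $u$ has $d = k\,\omega(\beta)$ interior markings, each constrained to the hypersurface. Each marking adds two real dimensions and each constraint removes two, so the index is unchanged at $0$. The same count holds for the domain-dependent perturbations used to define $\M(X)$.

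\textbf{Step 2 (the component has no other strata of different index).} Any stable map in the same component as $u$ represents $\beta$. Its nonconstant sphere components $u_i$ have classes $\beta_i$ with $\sum_i \beta_i = \beta$ and positive area. If $X$ is monotone, as in the del Pezzo case of interest, each such component has $c_1(\beta_i)>0$. Since these must sum to $c_1(\beta)=1$, there is exactly one nonconstant component. Constant components would be unstable apart from markings, and with markings they only form codimension-two boundary strata. So no nodal stratum in this component has larger dimension than the top stratum, and the component is governed by the index-zero computation of Step 1.

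\textbf{Step 3 (simplicity and transversality).} This is the step I expect to be the main point. Suppose $u$ factors as $u = v\circ\phi$ with $\phi$ a branched cover of degree $m$. Then $c_1(u)=m\,c_1(v)$, and since $c_1(v)$ is an integer, $m$ divides $1$, so $u$ is simple. Simple maps are regular for generic (domain-dependent) almost complex structures, by the transversality results used in \cite{vw:trop} following Cieliebak--Mohnke \cite{cm:trans}. Hence the linearized operator at $u$ is surjective, and a neighborhood of $u$ in $\M(X)$ is a manifold of the expected dimension, namely zero. Combined with Step 2, this shows that the component of $\M(X)$ containing $u$ has expected dimension zero and consists of isolated, transversally cut out points.
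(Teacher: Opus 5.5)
Your Step 1 is the paper's argument: the expected dimension is the Riemann--Roch index $\dim_\R X + 2c_1(u) - 6 = 0$, and this alone proves the lemma (your formula, with the factor $2$, is the correct one; the paper's displayed $\dim(X)+c_1(u)-6$ drops it). Steps 2 and 3 go beyond what the statement asks, and Step 2 assumes monotonicity, which is not a hypothesis of the lemma; in dimension four you can avoid it, since for generic $J$ there are no nonconstant $J$-spheres with $c_1\le 0$, as their underlying simple spheres would have negative index.
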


\begin{proof}  The dimension of the moduli space is the index of the linearized operator.  By Riemann-Roch (see \cite[Appendix C]{ms:jh}) the dimension of $T_u \M_{\bGam}(X)$ at any rigid sphere $u$  is $\dim(X) + c_1(u) - 6
  = 0$, as claimed.
\end{proof}

Rigid spheres are closely related to the {\em exceptional spheres} in the theory of del Pezzo surfaces, which are defined to be embedded smooth genus zero curves of self-intersection minus one. 
These two definitions are not quite the same in the case of del Pezzo surfaces of degree one; see the discussion in Remark \ref{rem:exc}.   The same techniques as in the case of disks shows the following:

\begin{theorem} \label{thm:potthm4} The signed count $E(X) \in \Z$ of rigid spheres  is equal to a sum of multiplicities  $m(\Gamma) \in \Q$ over tropical graphs $\Gamma$ in $\A(X)$ with all incoming edges colinear with the lines $\Theta_b^\pm$ emanating out of focus-focus values $b \in B^{\foc}$, and outgoing edge normal to a facet of the moment polytope $\Phi(X)$.  After desingularization, each contribution $m(\Gamma)$ is given by the same product of multiplicities $m(v)$ over vertices
$v \in \Ver(\Gamma)$ as above in Definition \ref{def:mvs}. 
\end{theorem}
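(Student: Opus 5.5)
The plan is to run the disk argument again with the Lagrangian removed, replacing the boundary point constraint by a point constraint for spheres.

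\emph{Step 1: invariance.} Since $X$ is monotone, bubbling is governed by Chern number. A rigid sphere has $c_1(u)=1$, the minimal positive value, so it cannot split into two nonconstant pieces of positive Chern number. This plays the role of the Maslov-index argument in Theorem \ref{thm:cobord}. The neck-stretching family $\ul J$ therefore gives a compact oriented one-dimensional cobordism between $\M(X)_0$ and the broken moduli space $\M(\XX)_0$ for a standard decomposition $\PP$. Hence $E(X)$ is the signed count of rigid broken spheres, as in Corollary \ref{cor:w}.

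\emph{Step 2: product decomposition.} For each rigid type $\bGam$, apply Theorem \ref{thm:split}. A sphere has no boundary constraint, and the constraint orientation from Lemma \ref{lem:orient} again gives one outgoing edge at each vertex, since $c_1(u)=1$ is the total intersection number with the boundary divisor $D$ (log Calabi--Yau). So the count factors into local counts $m(v)$ over vertices, weighted by $1/\#\Aut$. The local counts are exactly the pieces already computed:
\begin{itemize}
\item cylinders and pairs of pants from Proposition \ref{prop:spheres};
\item Bryan--Pandharipande covers at focus-focus pieces from Proposition \ref{prop:nd} together with Lemma \ref{lem:nodal-sph};
\item higher-valent vertices handled by desingularization, Proposition \ref{prop:ffcross}.
\end{itemize}
Disk vertices never occur.

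\emph{Step 3: tropical shape.} Because $c_1=1$, the intersection with $D$ is a single point. The area argument of Proposition \ref{prop:col-int} then gives exactly one boundary vertex $v_\partial$, univalent, with incoming edge direction $\nu$ normal to a facet. Every other univalent vertex cannot be a disk vertex, so it must sit in a focus-focus piece. By Lemma \ref{lem:nodal-sph} such a vertex maps into a nodal fiber, so its edge is colinear with an eigenline $\Theta_b^\pm$. Lemma \ref{lem:no-interior} (outward spiraling) keeps all collisions in the annulus, so the analogue of Proposition \ref{prop:collg}\eqref{part:collg3} holds. Passing to the limit $\rho\to\infty$ as in \eqref{eq:ax} turns the graphs into $\A$-tropical graphs in $\A(X)$ of the stated form.

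\emph{Main obstacle.} The hardest step is Step 3, because there is no Lagrangian point or Blaschke disk to anchor the area bookkeeping. The Claim in Proposition \ref{prop:col-int} must be rerun with the "sources" taken to be focus-focus vertices rather than $v_0$. I would first prove that the only sources are focus-focus vertices, using the balancing condition and Lemma \ref{lem:no-interior}. Then I would compute the area via the augmentation as $\sum_{e\in S} n_\partial(\cT(e))$ times the fiber area, forcing $|S|=1$ and $n_e=1$.
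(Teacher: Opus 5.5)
Your proposal is correct and matches the paper's proof. The paper's proof is only the remark that the argument for Theorem \ref{thm:potthm} goes through unchanged once every leaf of the tropical graph sits at a focus-focus value. Your Steps 1--3 expand that remark faithfully, and rerunning the augmentation Claim of Proposition \ref{prop:col-int} on the part of the graph upstream of the outward edges is the right way to supply the missing area bookkeeping.

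One slip to fix there. The upstream part has area $(1-\eps)\sum_{e\in S} n_\partial(\cT(e))$: the augmented sphere has area $\sum_{e\in S} n_\partial(\cT(e))$ (by monotonicity and $c_1(X)=[D]^\dual$), of which the added fiber covers account for only $\eps\sum_{e\in S} n_\partial(\cT(e))$. It is comparing this upstream term with the total area $1$, not the fiber-area term, that forces $|S|=1$ and $n_\partial(\cT(e))=1$.
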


\begin{proof}  The proof is identical to that of Theorem \ref{thm:potthm} except for the fact that all incoming edges of the tropical graphs of the broken maps arising from the degeneration emanate from 
the focus-focus values $b \in B^{\foc}$.
\end{proof}

By enumeration of graphs we obtain the following:

\begin{theorem} (see Testa  \cite{testa:thesis})
\label{thm:ex}
The number $E(X)$ of rigid spheres $u: \P^1 \to X$ is given in the following table.
\[ 
\begin{array}{l|llllllll}
X & 
\Bl^1 \P^2 &
\Bl^2 \P^2 & 
\Bl^3 \P^2 & 
\Bl^4 \P^2 & 
\Bl^5 \P^2 & 
\Bl^6 \P^2 & 
\Bl^7 \P^2 & 
\Bl^8 \P^2 \\ \hline
E(X) & 1 & 3 & 6 & 10 & 16 & 27 & 56 & 252 .
\end{array} \]
\end{theorem}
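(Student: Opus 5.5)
We prove Theorem \ref{thm:ex} by running the closed-curve analogue of the disk count (Theorem \ref{thm:potthm4}) on the Vianna-type diagrams of Figure \ref{fig:dpall}. For each $\Bl^k\P^2$ we fix such a diagram and a standard decomposition $\PP=\PP_\ann\cap\PP_{\on{in}}$. We then pass to the dual affine manifold $\A(X)$ of Remark \ref{rem:ax}, in which all focus-focus values on a branch cut coincide; by the argument of Theorem \ref{thm:nowall} and Remark \ref{rem:nowall} this does not change the count. Rigid spheres have $c_1(u)=1$, and $c_1=[D]^\dual$ by the log Calabi--Yau property. Hence a rigid sphere meets the divisor at infinity exactly once. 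Tropically, this means the graph has exactly one outgoing semi-infinite edge, of direction $\muout$ normal to a facet, with lattice length one. This is the sphere analogue of Proposition \ref{prop:col-int}, proved by the same area/augmentation argument with total area $1$ by monotonicity.

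The first step is to classify the admissible tropical graphs. The only inputs are univalent vertices at singular points $b^\dual$, with edges in direction $n\mu_b$. We show these graphs live in the annulus $\A_\ann(X)$ and spiral outward, as in Lemma \ref{lem:no-interior} and Lemma \ref{lem:outward}. Balancing at trivalent vertices, together with the requirement that the output be $\muout$ with $n_\partial=1$, then bounds the input multiplicities. In practice this leaves two kinds of graph. The first is a single edge from one $b^\dual$ straight out to a facet; this is the case where $\mu_b$ itself becomes $\muout$ after the shear. The second consists of several rays from focus-focus values at adjacent corners merging at trivalent vertices. The second step is to compute the multiplicities via Definition \ref{def:mvs}: Bryan--Pandharipande weights $(-1)^{\ell-1}/\ell^2$ at the sources, Mikhalkin determinants at trivalent vertices, and $1$ at the boundary cylinder. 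To organize the sum we use bunching (Proposition \ref{prop:bunchmult}), so that all rays from one corner contribute a binomial coefficient $\binom{k}{n}$ in place of a sum over partitions.

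The third step is the case-by-case enumeration. For $k\le 3$ the diagram is toric after nodal trades, so rigid spheres are just the toric $(-1)$-curves; the outputs are single-edge graphs, giving $1,3,6$. For $k=4,\dots,8$ we count graphs corner by corner, using the $\A(X)$ pictures such as Figure \ref{fig:b8p2-affine}. For each facet we sum the binomial-weighted contributions of incoming ray combinations whose balanced sum is the facet normal. The resulting numbers are cross-checked with Testa's counts and with the $252$ cartoon diagrams of Figure \ref{fig:b8p2_exc}.

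The main obstacle is completeness of the enumeration for $\Bl^7\P^2$ and $\Bl^8\P^2$. There, high-determinant vertices (determinant $3$) and repeated collisions with the same focus-focus value occur, and one must make sure no graph with larger input multiplicities balances to a primitive output. I would attack this with an area bound. Each input ray of lattice length $\ell$ from $b$ costs area proportional to $\ell$ times the area of the vanishing sphere, while the total area is $1$. This bounds the number and length of inputs, making the search finite and small enough to check by hand or by computer.
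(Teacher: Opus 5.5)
Your overall route is the paper's: its proof of Theorem~\ref{thm:ex} is only an enumeration of the graphs of Theorem~\ref{thm:potthm4} on Vianna-type diagrams. But two steps in the reduction you set up before enumerating are wrong.

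First, ``$c_1=[D]^\dual$, hence a rigid sphere meets $D$ exactly once'' assumes that no rigid sphere lies inside $D$. That fails whenever the diagram still has elliptic vertices. In a toric diagram for $k\le 3$, \emph{every} rigid sphere is a boundary divisor, which is why you had to fall back on classical toric geometry there. In the pentagon the paper uses for $\Bl^4\P^2$ (normals $(1,1),(-1,1),(-1,0),(0,-1),(1,-1)$, three elliptic vertices), the boundary spheres with normals $(-1,0)$ and $(0,-1)$ are $(-1)$-curves. Such spheres are invisible to Theorem~\ref{thm:potthm4}: they have no inputs at focus--focus values, and the index count of Lemma~\ref{lem:orient} does not apply to them. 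With that diagram you would miss two of the ten. You must first nodal-trade every elliptic vertex, so that $D$ is an embedded torus containing no spheres, or else add the boundary $(-1)$-components by hand.

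Second, your list of graph types contradicts your own claim that the graphs live in $\A_\ann(X)$. At a corner with adjacent normals $\nu_1,\nu_2$, the shear $A$ fixes $\mu_b$ and satisfies $A\nu_1=\nu_2$. Since $A-I$ has image $\R\mu_b$, this gives $\nu_2-\nu_1\in\R\mu_b$. Both $\mu_b$ and $\muout$ are parallel along paths in the annulus, and at the corner they are not proportional, because $\nu_1,\nu_2$ are independent. Hence a lone ray from $b^\dual$ that stays in $\A_\ann(X)$ never has direction $\muout$. The phrase ``$\mu_b$ becomes $\muout$ after the shear'' describes something that cannot happen: $\mu_b$ is fixed by its own shear and $\muout$ by all of them.

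Rigid spheres are genuinely multi-input; compare $27=3\cdot 3\cdot 3$ for the cubic surface in Figure~\ref{fig:b6p2_exc}. For $k\le 3$ after nodal trades, take a facet $Q_i$ with $\nu_{i-1}+\nu_{i+1}=a_i\nu_i$. Its $(-1)$-curve is the trivalent graph with inputs $\nu_i-\nu_{i-1}$ and $\nu_i-\nu_{i+1}$ from the two corners of $Q_i$. The output is $(2-a_i)\nu_i$, which equals $\muout$ with $n_\partial=1$ exactly when $a_i=1$, and the Mikhalkin weight is $|\det|=|a_i-2|=1$. That is how $1,3,6$ arise tropically. Quoting the toric $(-1)$-curves is a valid argument, but it is not the tropical count, and ``single-edge graphs'' is simply wrong.

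For $k\ge 4$ you defer the actual enumeration, as the paper does. Your area bound does not help there: it only gives finiteness, which Gromov compactness already supplies. It is also weakest exactly in Vianna-type diagrams, where focus--focus values sit near the corners and the vanishing spheres have small area.
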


\begin{remark} \label{rem:exc}  In all but the case of the degree one del
Pezzo, every stable map of non-zero lowest area has embedded image as explained in Testa \cite{testa:thesis}.  However, 
for the degree one case there are $240$ of $-1$ curves and  $252$ stable maps of Chern number one.  See Figure \ref{fig:b6p2_exc} for the cartoon diagram and tropical graph for one of the 27 lines on the cubic surface.
\end{remark}

\begin{figure}[ht]
  \begin{center}
    \scalebox{.4}{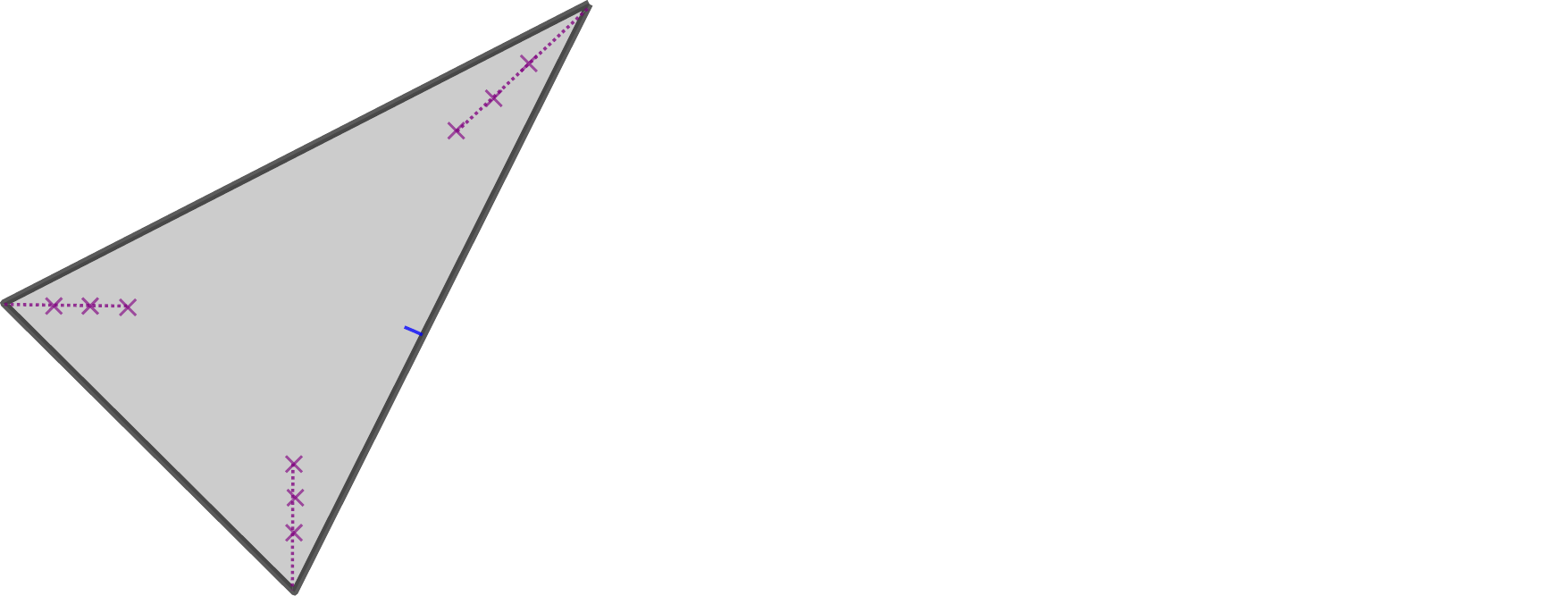}
  \end{center}
  \caption{Cartoon diagrams (left) and tropical graphs (right) for one of the 27 lines on the cubic surface}
  \label{fig:b6p2_exc}
\end{figure}

\section{Code for computing potentials and Jacobian rings}

In the course of writing the paper, we found it useful to have computer-assisted calculations of sets of critical values of potentials and subrings of the Jacobian ring. We include the code for these computations below.   We begin with the computation of the critical values for $\P^2$ using Mathematica:  In the code below, the variables have the following meaning:

\begin{center}
\begin{tabular}{l|l}
Critpoint & the set of critical points \\
Critval & the set of critical values \\
Crithess &  the set of determinants of the Hessian at each of the critical values \end{tabular} \end{center}  

\noindent For $X = \P^2$, all of the determinants in Crithess are non-zero which indicates that the Landau-Ginzburg potential $w$ is Morse in this case. 

\begin{verbatim}
w[y1_, y2_] = y1 + y2 + 1/(y1*y2);
g[y1_, y2_] = { y1* D[w[y1, y2], y1], y2* D[w[y1, y2], y2]};
h[y1_,y2_] = D[w[y1,y2],{{y1,y2}},{{y1,y2}}];
Critpoint = NSolve [ g[y1, y2] == 0, {y1, y2}] 
Critval  = w[y1, y2] /. Critpoint
Crithess = Det[h[y1,y2]] /. Critpoint
{{y1 -> -0.5 + 0.866025 I, 
  y2 -> -0.5 + 0.866025 I}, {y1 -> -0.5 - 0.866025 I, 
  y2 -> -0.5 - 0.866025 I}, {y1 -> 1., y2 -> 1.}}
{-1.5 + 2.59808 I, -1.5 - 2.59808 I, 3.}
{-1.5 + 2.59808 I, -1.5 - 2.59808 I, 3.}
\end{verbatim}

The ring generated by the potential $W$ in $\Jac(W)$ for $X = \P^2$ can be computed by hand, or (as practice for later more complicated example) verified using
the following code from Macaulay2:

\begin{verbatim}
i1 : r1 = QQ[y1,y2,Y1,Y2]/(y1*Y1 -1, y2*Y2-1, y1 - Y1*Y2, y2 - Y1*Y2)
i2 : r2 = QQ[z]
i4 : gen = map(r1,r2,{y1+y2+Y1*Y2})
i5 : i1= kernel gen
i6 : factor i1_0
o6 =  (z-3)(z^2+3z+9)
\end{verbatim}

Mathematica can be used to compute the critical points $\P^1 \times \P^1$ with its standard almost toric diagram.  For 
\begin{verbatim}
w[y1_, y2_] = y1 + 1/y1 + y2 + 1/y2
\end{verbatim}
we obtain critical values 

\begin{verbatim}
{4., 0., 0., -4.}
\end{verbatim}

For $X = \P^1 \times \P^1$ the potential for the 
almost toric diagram whose moment polytope is a triangle has potential
\begin{verbatim}
   w[y1_, y2_] = y1*y2 + y2/y1 + 1/y2 + 2*y2;
\end{verbatim}
The critical values are
\begin{verbatim}
{4, -4.}
\end{verbatim}
This gives an example of an almost toric diagram where the Fukaya category cannot be generated by monotone moment fibers, since the critical value $0$ does not occur.  

For $X = \Bl^1 \P^2$ we have using Mathematica for the toric potential:

\begin{verbatim}
w[y1_, y2_] = y1 + y2 + 1/y2 + y2/y1;
Critval = {-0.33, 3.8, -2.23 + 1.94 I, -2.23 - 1.94 I}
\end{verbatim}


For $X = \Bl^2 \P^2$ with the standard toric structure we have via Mathematica:

\begin{verbatim}
w[y1_, y2_] = y1 + y2 + 1/y1 + 1/y2 + 1/(y1*y2) ;
Critval = {-1., -1., 4.73, -2.86 + 0.94 I, -2.86 - 0.94 I}
\end{verbatim}

The ring generated by $W$  in $\Jac(W)$ for $X = \Bl^2 \P^2$ was calculated using 
Macaulay2.

\begin{verbatim}
r1 = QQ[y1,y2,Y1,Y2]/(y1*Y1 -1, y1*Y2-1, y1 - Y1 + y1*y2, y2 - Y2 + y1*y2);  
r2 = QQ[z];
gen = map(r1,r2,{y1+Y1+y1+Y2+y1*y2});
i1= kernel gen;
factor i1_0
o16 =  (z+1)(z^3 +z^2 -18z-43) 
\end{verbatim}
.  

The critical points for the toric structure on $X = \Bl^3 \P^2$ were computed via Mathematica:

\begin{verbatim}
w[y1_, y2_] = y1 + y2 + 1/y1 + 1/y2 + y1*y2 + 1/(y1*y2);
Critval = {-3., -3., -2., -2., 6., -2.}
\end{verbatim}

The ring generated by $W$ in $\Jac(W)$ for $X = \Bl^3 \P^2$ was computed using
Macaulay2.

\begin{verbatim}
 r1 = QQ[y1,y2,Y1,Y2]/(y1*Y1 -1, y*Y2-1); 
w = y1 + y2 + Y1 + Y2 + y1*y2 + Y1*Y2;
d1 = y1*diff(y1,w) - Y1*diff(Y1,w); d2= y2*diff(y2,w) - Y2*diff(Y2,w); 
r2=r1/(d1,d2); r3=QQ[z]; gen = map(r2,r3,{w}); 
i1= kernel gen; factor i1_0
o9 =  (z-6)(z+2)(z+3) 

\end{verbatim}

Mathematica gives the critical values for $X = \Bl^4  \P^2$:
\begin{verbatim}
w[y1_, y2_] = 2 y1 + 2 y2 + 1/y1 + 1/y2 + y1*y2 + y1/y2 + y2/y1 ;
Critval = {-3., -3., -3., -3., 8.09, -3.09, -3.}
\end{verbatim}
\noindent Note that the potential is not Morse, as the Hessian is degenerate for some of the 
$-3$ critical points. 

The critical values for $X = \Bl^5 \P^2$ are 

\begin{verbatim}
w[y1_, y2_] = 2 y1 + 2 y2 + 2/y1 + 2/y2 + y1*y2 + y1/y2 + y2/y1  + 1/(y1*y2);
Critval = {-4., -4., -4., -4., -4., -4., 12., -4.}
\end{verbatim}

\noindent In this case, all the $-4$ critical points are degenerate; mathematica gives a somewhat misleading answer
in this case since the critical set is not isolated.   The ring generated by $w$ in the Jacobian ring was computed using Macaulay.

\begin{verbatim}
r1 = QQ[y1,y2,Y1,Y2]/(y1*Y1 -1, y2*Y2-1); 
w = 2*y1 + 2*y2 + 2*Y1 + 2*Y2 + y1*Y2 + y2*Y1 + y1*y2 + Y1*Y2; 
d1 = y1*diff(y1,w) - Y1*diff(Y1,w); 
d2= y2*diff(y2,w) - Y2*diff(Y2,w); r2=r1/(d1,d2); 
 r3=QQ[z];  gen = map(r2,r3,{w}); i1= kernel gen; factor i1_0

o10 =  (z-12)(z+4) 

\end{verbatim}

\noindent This sub-ring is dimension two, as opposed to the three-dimension ring one expects.  Thus the Jacobian ring of the disk potential, and so not equal to the 
quantum cohomology; see Barrott \cite{barrott:explicit} for 
the potential that does have the correct Jacobian ring.

The code for computing the potential and its critical points in Mathematica for $X = \Bl^6 \P^2$ is:

\begin{verbatim}
w[y1_, y2_] =   3 y1 + 3 y2 + y1^2/y2 + y2^2/y1 + 3 y1/y2 + 
3 y2/y1 + 3/y1 + 3/y2 + 1/(y1*y2);
g[y1_, y2_] = {y1*D[w[y1, y2], y1], y2*D[w[y1, y2], y2]};
h[y1_,y2_] = D[w[y1,y2],{{y1,y2}},{{y1,y2}}];
Critpoint = NSolve [ g[y1, y2] == 0, {y1, y2}];
Critpointapprox = {Round[y1, .01], Round[y2, .01]} /. Critpoint
Critval = Round[w[y1, y2], .01] /. Critpoint
Crithess = Round[Det[h[y1, y2]], .01] /. Critpoint
{{2.12 + 0.22 I, -3.12 - 0.22 I}, {2.09 + 0.23 I, -3.09 - 
   0.23 I}, {-0.49 - 0.14 I, -0.51 + 0.14 I}, {-0.49 - 
   0.14 I, -0.51 + 0.14 I}, {-0.6 + 0.62 I, -0.4 - 0.62 I}, {-0.6 + 
   0.62 I, -0.4 - 0.62 I}, {-0.52 - 0.69 I, -0.48 + 0.69 I}, {-0.52 - 
   0.69 I, -0.48 + 0.69 I}, {1., 1.}}
{-6., -6., -6., -6., -6., -6., -6., -6., 21.}
{0., 0., 0., 0., 0., 0., 0., 0., 243.}
\end{verbatim}

There are only four critical points with critical values $w = -6$, and one
critical point with value $w = 21$.  The first Chern class obeys the relation $(c_1 + 6)^2 (c_1 - 21) =
0$, as a special case of the computations Givental \cite{gi:eq}, as
explained in Sheridan's appendix in \cite{sheridan:hypersurface}. 
In particular $c_1` - 21 $ is a generalized eigenvector of $c_1 + 6 $ rather than an actual eigenvector.  On other hand, one might compare the potential above to  the potential used in Sheridan \cite{sheridan:hypersurface}, which is of the correct dimension:

\begin{verbatim}
r1 = QQ[a,b,c,d]; w = a^3+b^3+c^3+d^3-a*b*c*d - 6 ;  
r2 = r1/(3*a^2 - b*c*d, 3*b^2 - a*c*d, 3*c^2 - a*b*d, 3*d^2 - a*b*c); r3=QQ[z];
gen = map(r2,r3,{w}); i1= kernel gen; factor i1_0
o7 =  (z-21)(z+6)^2
\end{verbatim}

The code for computing the critical points for $X = \Bl^7 \P^2$ via Mathematica is:

\begin{verbatim}
w[y1_, y2_] = (y2 + 1/y2)*(1/y1^2 + 4/y1 + 6 + 4 y1 + y1^2) + (2/y1^2 + 8/y1 + 
    8 y1 + 2 y1^2);
Critval = 
{-12., -12., -12., -12., -12., -12., -12., -12., 52., -12., -12., \
-12., -12., -12., -12., -12.}
\end{verbatim}

The ring generated by $c_1$ was computed using Macaulay2:

\begin{verbatim}
r1 = QQ[y1,Y1,y2,Y2]/(y1*Y1 -1, y2*Y2-1); 
w = ((1 + y1 + y2)^4 - 12 *y1 * y2)*Y1*Y2; 
d1 = y1*diff(y1,w) - Y1*diff(Y1,w); 
d2= y2*diff(y2,w) - Y2*diff(Y2,w);  
r2=r1/(d1,d2); r3=QQ[z]; 
gen = map(r2,r3,{w});  i1= kernel gen; factor i1_0
o9 =  (z-52)(z+12) 
\end{verbatim}

The following  Macaulay2 computation shows that the Jacobian ring of the disk potential
is not isomorphic to the quantum cohomology, since the sub-ring generated by $c_1(X)$
has rank two.

\begin{verbatim}
r1 = QQ[y1,Y1,y2,Y2]/(y1*Y1 -1, y2*Y2-1); 
w = (y2 + Y2)*(Y1^2 + 4*Y1 + 6 + 4 * y1 + y1^2) + (2*Y1^2 + 8*Y1 + 
    8 * y1 + 2 * y1^2);
d1 = y1*diff(y1,w) - Y1*diff(Y1,w); 
d2= y2*diff(y2,w) - Y2*diff(Y2,w);  r2=r1/(d1,d2); r3=QQ[z]; 
gen = map(r2,r3,{w});  i1= kernel gen; factor i1_0
o9 =  (z-52)(z+12) 
\end{verbatim}

The Hori-Vafa-style mirror, roughly adapted from \cite{horivafa}, has better properties.   It is given in Macaulay2 by the following code:

\begin{verbatim}
i66 : r1 = QQ[a,b,c,d]; w = a^4+b^4+c^4+d^4-a^2*b*c*d  ;
r2 = r1/(4*a^3 - 2*a*b*c*d, 4*b^3 - a^2*c*d, 4*c^3 - a^2*b*d, 4*d^3 - a^2*b*c); 
r3=QQ[z];gen = map(r2,r3,{w});
i1= kernel gen; factor i1_0
o72 =  (z)^2(z-64) 
\end{verbatim}

In particular, the sub-ring generated by $c_1(X) = w$ has dimension four, as expected.

The code for computing the critical points for $X = \Bl^8 \P^2$ via Mathematica is:

\begin{verbatim}
w[y1_, y2_] = (1 + y1)^9 / (y1^6 * y2) + (3 + 18 y1 + 45 y1^2 + 45 y1^4 + 
     18 y1^5 + 3 y1^6)/ y1^3 + (3 + 9 y1 + 9 y1^2 + 3 y1^3 )*y2 + y1^3*y2^2; 
Critval = 
{-60., -60., -60., -60., -60., -60., -60., -60., -60., -60., -60., \
-60., -60., -60., -60., -60., -60., -60., -60., -60., -60., -60., \
-60., -60., -60., 372.}
\end{verbatim}

The subring generated by $c_1$ was computed using Macaulay2:

\begin{verbatim}
i1 : r1 = QQ[y1,y2,Y1,Y2]/(y1*Y1 -1, y2*Y2-1)
i4 : w = (1 + y1)^9 * (Y1^6 * Y2) 
+ (3 + 18 * y1 + 45 * y1^2 + 45 * y1^4 + 18  * y1^5 + 3  * y1^6) * Y1^3 
+ (3 + 9  * y1 + 9 * y1^2 + 3 * y1^3 )*y2 + y1^3*y2^2; 
d1 = y1*diff(y1,w) - Y1*diff(X,w);  d2= y2*diff(y2,w) - Y2*diff(Y2,w); 
r2=r1/(d1,d2);  r3=QQ[z];  gen = map(r2,r3,{w});  i1= kernel gen; factor i1_0
o11 =  (z-372)(z+60) 
\end{verbatim}

The ring generated by 
$z = c_1 -60$ is given by the Macaulay2 code:

\begin{verbatim}
r1 = QQ[a,b,c,d]; w = a^6+b^6+c^6+d^6-a*b^2*c^3*d  ;
r2 = r1/(6*a^5 - b^2*c^3*d, 6*b^5 - a*2*b*c^3*d, 
6*c^5 - a*b^2*3*c^2*d, 6*d^5 - a*b^2*c^3); 
r3=QQ[z]; gen = map(r2,r3,{w}); i1= kernel gen; factor i1_0
o21 =  (z)^2(z-432) 
\end{verbatim}

\noindent Thus the Jacobian ring of the Hori-Vafa potential re-produces the correct sub-ring generated by the canonical class, while the disk potential of the monotone torus 
does not. 

\appendix

\bibliography{dp}{}
\bibliographystyle{plain}
\end{document}

%% file: b5p2_at.pdf_tex
\begingroup%
  \makeatletter%
  \providecommand\color[2][]{%
    \errmessage{(Inkscape) Color is used for the text in Inkscape, but the package 'color.sty' is not loaded}%
    \renewcommand\color[2][]{}%
  }%
  \providecommand\transparent[1]{%
    \errmessage{(Inkscape) Transparency is used (non-zero) for the text in Inkscape, but the package 'transparent.sty' is not loaded}%
    \renewcommand\transparent[1]{}%
  }%
  \providecommand\rotatebox[2]{#2}%
  \newcommand*\fsize{\dimexpr\f@size pt\relax}%
  \newcommand*\lineheight[1]{\fontsize{\fsize}{#1\fsize}\selectfont}%
  \ifx\svgwidth\undefined%
    \setlength{\unitlength}{340.44848271bp}%
    \ifx\svgscale\undefined%
      \relax%
    \else%
      \setlength{\unitlength}{\unitlength * \real{\svgscale}}%
    \fi%
  \else%
    \setlength{\unitlength}{\svgwidth}%
  \fi%
  \global\let\svgwidth\undefined%
  \global\let\svgscale\undefined%
  \makeatother%
  \begin{picture}(1,0.44703259)%
    \lineheight{1}%
    \setlength\tabcolsep{0pt}%
    \put(0,0){\includegraphics[width=\unitlength,page=1]{b5p2_at.pdf}}%
  \end{picture}%
\endgroup%

%% file: mv1.pdf_tex
\begingroup%
  \makeatletter%
  \providecommand\color[2][]{%
    \errmessage{(Inkscape) Color is used for the text in Inkscape, but the package 'color.sty' is not loaded}%
    \renewcommand\color[2][]{}%
  }%
  \providecommand\transparent[1]{%
    \errmessage{(Inkscape) Transparency is used (non-zero) for the text in Inkscape, but the package 'transparent.sty' is not loaded}%
    \renewcommand\transparent[1]{}%
  }%
  \providecommand\rotatebox[2]{#2}%
  \newcommand*\fsize{\dimexpr\f@size pt\relax}%
  \newcommand*\lineheight[1]{\fontsize{\fsize}{#1\fsize}\selectfont}%
  \ifx\svgwidth\undefined%
    \setlength{\unitlength}{138.12511826bp}%
    \ifx\svgscale\undefined%
      \relax%
    \else%
      \setlength{\unitlength}{\unitlength * \real{\svgscale}}%
    \fi%
  \else%
    \setlength{\unitlength}{\svgwidth}%
  \fi%
  \global\let\svgwidth\undefined%
  \global\let\svgscale\undefined%
  \makeatother%
  \begin{picture}(1,0.83121261)%
    \lineheight{1}%
    \setlength\tabcolsep{0pt}%
    \put(0,0){\includegraphics[width=\unitlength,page=1]{mv1.pdf}}%
    \put(0.26751777,0.30329169){\color[rgb]{0,0,0}\makebox(0,0)[lt]{\lineheight{1.25}\smash{\begin{tabular}[t]{l}$e_1$\end{tabular}}}}%
    \put(0.68301413,0.30611068){\color[rgb]{0,0,0}\makebox(0,0)[lt]{\lineheight{1.25}\smash{\begin{tabular}[t]{l}$e_2$\end{tabular}}}}%
    \put(0.58459937,0.09312353){\color[rgb]{0,0,0}\makebox(0,0)[lt]{\lineheight{1.25}\smash{\begin{tabular}[t]{l}$e_3$\end{tabular}}}}%
  \end{picture}%
\endgroup%

%% file: mv2.pdf_tex
\begingroup%
  \makeatletter%
  \providecommand\color[2][]{%
    \errmessage{(Inkscape) Color is used for the text in Inkscape, but the package 'color.sty' is not loaded}%
    \renewcommand\color[2][]{}%
  }%
  \providecommand\transparent[1]{%
    \errmessage{(Inkscape) Transparency is used (non-zero) for the text in Inkscape, but the package 'transparent.sty' is not loaded}%
    \renewcommand\transparent[1]{}%
  }%
  \providecommand\rotatebox[2]{#2}%
  \newcommand*\fsize{\dimexpr\f@size pt\relax}%
  \newcommand*\lineheight[1]{\fontsize{\fsize}{#1\fsize}\selectfont}%
  \ifx\svgwidth\undefined%
    \setlength{\unitlength}{227.92770506bp}%
    \ifx\svgscale\undefined%
      \relax%
    \else%
      \setlength{\unitlength}{\unitlength * \real{\svgscale}}%
    \fi%
  \else%
    \setlength{\unitlength}{\svgwidth}%
  \fi%
  \global\let\svgwidth\undefined%
  \global\let\svgscale\undefined%
  \makeatother%
  \begin{picture}(1,0.73383834)%
    \lineheight{1}%
    \setlength\tabcolsep{0pt}%
    \put(0,0){\includegraphics[width=\unitlength,page=1]{mv2.pdf}}%
  \end{picture}%
\endgroup%

%% file: mv3.pdf_tex
\begingroup%
  \makeatletter%
  \providecommand\color[2][]{%
    \errmessage{(Inkscape) Color is used for the text in Inkscape, but the package 'color.sty' is not loaded}%
    \renewcommand\color[2][]{}%
  }%
  \providecommand\transparent[1]{%
    \errmessage{(Inkscape) Transparency is used (non-zero) for the text in Inkscape, but the package 'transparent.sty' is not loaded}%
    \renewcommand\transparent[1]{}%
  }%
  \providecommand\rotatebox[2]{#2}%
  \newcommand*\fsize{\dimexpr\f@size pt\relax}%
  \newcommand*\lineheight[1]{\fontsize{\fsize}{#1\fsize}\selectfont}%
  \ifx\svgwidth\undefined%
    \setlength{\unitlength}{144.89644328bp}%
    \ifx\svgscale\undefined%
      \relax%
    \else%
      \setlength{\unitlength}{\unitlength * \real{\svgscale}}%
    \fi%
  \else%
    \setlength{\unitlength}{\svgwidth}%
  \fi%
  \global\let\svgwidth\undefined%
  \global\let\svgscale\undefined%
  \makeatother%
  \begin{picture}(1,1.20685066)%
    \lineheight{1}%
    \setlength\tabcolsep{0pt}%
    \put(0,0){\includegraphics[width=\unitlength,page=1]{mv3.pdf}}%
  \end{picture}%
\endgroup%

%% file: chek.pdf_tex
\begingroup%
  \makeatletter%
  \providecommand\color[2][]{%
    \errmessage{(Inkscape) Color is used for the text in Inkscape, but the package 'color.sty' is not loaded}%
    \renewcommand\color[2][]{}%
  }%
  \providecommand\transparent[1]{%
    \errmessage{(Inkscape) Transparency is used (non-zero) for the text in Inkscape, but the package 'transparent.sty' is not loaded}%
    \renewcommand\transparent[1]{}%
  }%
  \providecommand\rotatebox[2]{#2}%
  \newcommand*\fsize{\dimexpr\f@size pt\relax}%
  \newcommand*\lineheight[1]{\fontsize{\fsize}{#1\fsize}\selectfont}%
  \ifx\svgwidth\undefined%
    \setlength{\unitlength}{80.6023165bp}%
    \ifx\svgscale\undefined%
      \relax%
    \else%
      \setlength{\unitlength}{\unitlength * \real{\svgscale}}%
    \fi%
  \else%
    \setlength{\unitlength}{\svgwidth}%
  \fi%
  \global\let\svgwidth\undefined%
  \global\let\svgscale\undefined%
  \makeatother%
  \begin{picture}(1,2.95524565)%
    \lineheight{1}%
    \setlength\tabcolsep{0pt}%
    \put(0,0){\includegraphics[width=\unitlength,page=1]{chek.pdf}}%
    \put(-0.00490443,2.09946619){\color[rgb]{0,0,0}\makebox(0,0)[lt]{\lineheight{1.25}\smash{\begin{tabular}[t]{l}$(-1,0)$\end{tabular}}}}%
    \put(0.70679249,2.40419704){\color[rgb]{0,0,0}\makebox(0,0)[lt]{\lineheight{1.25}\smash{\begin{tabular}[t]{l}$(1,1)$\end{tabular}}}}%
    \put(0.75477862,1.82125537){\color[rgb]{0,0,0}\makebox(0,0)[lt]{\lineheight{1.25}\smash{\begin{tabular}[t]{l}$(3,1)$\end{tabular}}}}%
    \put(0.27486842,1.64512591){\color[rgb]{0,0,0}\makebox(0,0)[lt]{\lineheight{1.25}\smash{\begin{tabular}[t]{l}$(2,0)$\end{tabular}}}}%
    \put(0,0){\includegraphics[width=\unitlength,page=2]{chek.pdf}}%
  \end{picture}%
\endgroup%

%% file: b8p2_exc.pdf_tex
\begingroup%
  \makeatletter%
  \providecommand\color[2][]{%
    \errmessage{(Inkscape) Color is used for the text in Inkscape, but the package 'color.sty' is not loaded}%
    \renewcommand\color[2][]{}%
  }%
  \providecommand\transparent[1]{%
    \errmessage{(Inkscape) Transparency is used (non-zero) for the text in Inkscape, but the package 'transparent.sty' is not loaded}%
    \renewcommand\transparent[1]{}%
  }%
  \providecommand\rotatebox[2]{#2}%
  \newcommand*\fsize{\dimexpr\f@size pt\relax}%
  \newcommand*\lineheight[1]{\fontsize{\fsize}{#1\fsize}\selectfont}%
  \ifx\svgwidth\undefined%
    \setlength{\unitlength}{407.74041378bp}%
    \ifx\svgscale\undefined%
      \relax%
    \else%
      \setlength{\unitlength}{\unitlength * \real{\svgscale}}%
    \fi%
  \else%
    \setlength{\unitlength}{\svgwidth}%
  \fi%
  \global\let\svgwidth\undefined%
  \global\let\svgscale\undefined%
  \makeatother%
  \begin{picture}(1,0.79316564)%
    \lineheight{1}%
    \setlength\tabcolsep{0pt}%
    \put(0,0){\includegraphics[width=\unitlength,page=1]{b8p2_exc.pdf}}%
    \put(0.33402512,0.00022073){\color[rgb]{0,0,1}\transparent{0.76470602}\makebox(0,0)[lt]{\lineheight{1.25}\smash{\begin{tabular}[t]{l}84 + 84 +3 + 81 = 252\end{tabular}}}}%
    \put(0,0){\includegraphics[width=\unitlength,page=2]{b8p2_exc.pdf}}%
  \end{picture}%
\endgroup%

%% file: pert-edges.pdf_tex
\begingroup%
  \makeatletter%
  \providecommand\color[2][]{%
    \errmessage{(Inkscape) Color is used for the text in Inkscape, but the package 'color.sty' is not loaded}%
    \renewcommand\color[2][]{}%
  }%
  \providecommand\transparent[1]{%
    \errmessage{(Inkscape) Transparency is used (non-zero) for the text in Inkscape, but the package 'transparent.sty' is not loaded}%
    \renewcommand\transparent[1]{}%
  }%
  \providecommand\rotatebox[2]{#2}%
  \newcommand*\fsize{\dimexpr\f@size pt\relax}%
  \newcommand*\lineheight[1]{\fontsize{\fsize}{#1\fsize}\selectfont}%
  \ifx\svgwidth\undefined%
    \setlength{\unitlength}{328.64142951bp}%
    \ifx\svgscale\undefined%
      \relax%
    \else%
      \setlength{\unitlength}{\unitlength * \real{\svgscale}}%
    \fi%
  \else%
    \setlength{\unitlength}{\svgwidth}%
  \fi%
  \global\let\svgwidth\undefined%
  \global\let\svgscale\undefined%
  \makeatother%
  \begin{picture}(1,0.41113135)%
    \lineheight{1}%
    \setlength\tabcolsep{0pt}%
    \put(0,0){\includegraphics[width=\unitlength,page=1]{pert-edges.pdf}}%
    \put(0.93619029,0.1788473){\color[rgb]{0,0,0}\makebox(0,0)[lt]{\lineheight{1.25}\smash{\begin{tabular}[t]{l}$e_1'$\end{tabular}}}}%
    \put(0.60456364,0.16759392){\color[rgb]{0,0,0}\makebox(0,0)[lt]{\lineheight{1.25}\smash{\begin{tabular}[t]{l}$e_3'$\end{tabular}}}}%
    \put(0.60735185,0.10048664){\color[rgb]{0,0,0}\makebox(0,0)[lt]{\lineheight{1.25}\smash{\begin{tabular}[t]{l}$e_2'$\end{tabular}}}}%
    \put(0,0){\includegraphics[width=\unitlength,page=2]{pert-edges.pdf}}%
    \put(0.58434614,0.21368376){\color[rgb]{0,0,0}\makebox(0,0)[lt]{\lineheight{1.25}\smash{\begin{tabular}[t]{l}$\ell_3$\end{tabular}}}}%
    \put(0.49878748,0.14986376){\color[rgb]{0,0,0}\makebox(0,0)[lt]{\lineheight{1.25}\smash{\begin{tabular}[t]{l}$\ell_2$\end{tabular}}}}%
    \put(0.90328491,0.26840158){\color[rgb]{0,0,0}\makebox(0,0)[lt]{\lineheight{1.25}\smash{\begin{tabular}[t]{l}$\ell_1$\end{tabular}}}}%
    \put(0.86852655,0.05492561){\color[rgb]{0,0,0}\makebox(0,0)[lt]{\lineheight{1.25}\smash{\begin{tabular}[t]{l}$\Gamma_v^\pert$\end{tabular}}}}%
    \put(0,0){\includegraphics[width=\unitlength,page=3]{pert-edges.pdf}}%
    \put(0.29639274,0.25838365){\color[rgb]{0,0,0}\makebox(0,0)[lt]{\lineheight{1.25}\smash{\begin{tabular}[t]{l}$e_1$\end{tabular}}}}%
    \put(0.01034154,0.14996721){\color[rgb]{0,0,0}\makebox(0,0)[lt]{\lineheight{1.25}\smash{\begin{tabular}[t]{l}$e_2$\end{tabular}}}}%
    \put(-0.00098347,0.19386939){\color[rgb]{0,0,0}\makebox(0,0)[lt]{\lineheight{1.25}\smash{\begin{tabular}[t]{l}$e_3$\end{tabular}}}}%
    \put(0.20343963,0.34013326){\color[rgb]{0,0,0}\makebox(0,0)[lt]{\lineheight{1.25}\smash{\begin{tabular}[t]{l}$e_4$\end{tabular}}}}%
    \put(0.31720341,0.30513886){\color[rgb]{0,0,0}\makebox(0,0)[lt]{\lineheight{1.25}\smash{\begin{tabular}[t]{l}$e_5$\end{tabular}}}}%
    \put(0.20991673,0.16895505){\color[rgb]{0,0,0}\makebox(0,0)[lt]{\lineheight{1.25}\smash{\begin{tabular}[t]{l}$v$\end{tabular}}}}%
    \put(0.05897361,0.07509915){\color[rgb]{0,0,0}\makebox(0,0)[lt]{\lineheight{1.25}\smash{\begin{tabular}[t]{l}$\Gamma_v$\end{tabular}}}}%
  \end{picture}%
\endgroup%

%% file: pert-edges2.pdf_tex
\begingroup%
  \makeatletter%
  \providecommand\color[2][]{%
    \errmessage{(Inkscape) Color is used for the text in Inkscape, but the package 'color.sty' is not loaded}%
    \renewcommand\color[2][]{}%
  }%
  \providecommand\transparent[1]{%
    \errmessage{(Inkscape) Transparency is used (non-zero) for the text in Inkscape, but the package 'transparent.sty' is not loaded}%
    \renewcommand\transparent[1]{}%
  }%
  \providecommand\rotatebox[2]{#2}%
  \newcommand*\fsize{\dimexpr\f@size pt\relax}%
  \newcommand*\lineheight[1]{\fontsize{\fsize}{#1\fsize}\selectfont}%
  \ifx\svgwidth\undefined%
    \setlength{\unitlength}{345.88375782bp}%
    \ifx\svgscale\undefined%
      \relax%
    \else%
      \setlength{\unitlength}{\unitlength * \real{\svgscale}}%
    \fi%
  \else%
    \setlength{\unitlength}{\svgwidth}%
  \fi%
  \global\let\svgwidth\undefined%
  \global\let\svgscale\undefined%
  \makeatother%
  \begin{picture}(1,0.41188226)%
    \lineheight{1}%
    \setlength\tabcolsep{0pt}%
    \put(0,0){\includegraphics[width=\unitlength,page=1]{pert-edges2.pdf}}%
    \put(0.85921521,0.24980294){\color[rgb]{0,0,0}\makebox(0,0)[lt]{\lineheight{1.25}\smash{\begin{tabular}[t]{l}$e_1'$\end{tabular}}}}%
    \put(0.637156,0.14484927){\color[rgb]{0,0,0}\makebox(0,0)[lt]{\lineheight{1.25}\smash{\begin{tabular}[t]{l}$e_3'$\end{tabular}}}}%
    \put(0.62385396,0.054531){\color[rgb]{0,0,0}\makebox(0,0)[lt]{\lineheight{1.25}\smash{\begin{tabular}[t]{l}$e_2'$\end{tabular}}}}%
    \put(0,0){\includegraphics[width=\unitlength,page=2]{pert-edges2.pdf}}%
    \put(0.59585595,0.20824084){\color[rgb]{0,0,0}\makebox(0,0)[lt]{\lineheight{1.25}\smash{\begin{tabular}[t]{l}$\ell_3$\end{tabular}}}}%
    \put(0.51456242,0.14760228){\color[rgb]{0,0,0}\makebox(0,0)[lt]{\lineheight{1.25}\smash{\begin{tabular}[t]{l}$\ell_2$\end{tabular}}}}%
    \put(0,0){\includegraphics[width=\unitlength,page=3]{pert-edges2.pdf}}%
    \put(0.94972106,0.36216666){\color[rgb]{0,0,0}\makebox(0,0)[lt]{\lineheight{1.25}\smash{\begin{tabular}[t]{l}$\ell_1$\end{tabular}}}}%
    \put(0,0){\includegraphics[width=\unitlength,page=4]{pert-edges2.pdf}}%
    \put(0.38281563,0.30101313){\color[rgb]{0,0,0}\makebox(0,0)[lt]{\lineheight{1.25}\smash{\begin{tabular}[t]{l}$e_1'$\end{tabular}}}}%
    \put(0.115659,0.16451886){\color[rgb]{0,0,0}\makebox(0,0)[lt]{\lineheight{1.25}\smash{\begin{tabular}[t]{l}$e_3'$\end{tabular}}}}%
    \put(0.10833634,0.07605485){\color[rgb]{0,0,0}\makebox(0,0)[lt]{\lineheight{1.25}\smash{\begin{tabular}[t]{l}$e_2'$\end{tabular}}}}%
    \put(0,0){\includegraphics[width=\unitlength,page=5]{pert-edges2.pdf}}%
    \put(0.08035913,0.22427744){\color[rgb]{0,0,0}\makebox(0,0)[lt]{\lineheight{1.25}\smash{\begin{tabular}[t]{l}$\ell_3$\end{tabular}}}}%
    \put(-0.00093444,0.16363887){\color[rgb]{0,0,0}\makebox(0,0)[lt]{\lineheight{1.25}\smash{\begin{tabular}[t]{l}$\ell_2$\end{tabular}}}}%
    \put(0,0){\includegraphics[width=\unitlength,page=6]{pert-edges2.pdf}}%
    \put(0.43858932,0.37881719){\color[rgb]{0,0,0}\makebox(0,0)[lt]{\lineheight{1.25}\smash{\begin{tabular}[t]{l}$\ell_1$\end{tabular}}}}%
    \put(0,0){\includegraphics[width=\unitlength,page=7]{pert-edges2.pdf}}%
  \end{picture}%
\endgroup%

%% file: annulus.pdf_tex
\begingroup%
  \makeatletter%
  \providecommand\color[2][]{%
    \errmessage{(Inkscape) Color is used for the text in Inkscape, but the package 'color.sty' is not loaded}%
    \renewcommand\color[2][]{}%
  }%
  \providecommand\transparent[1]{%
    \errmessage{(Inkscape) Transparency is used (non-zero) for the text in Inkscape, but the package 'transparent.sty' is not loaded}%
    \renewcommand\transparent[1]{}%
  }%
  \providecommand\rotatebox[2]{#2}%
  \newcommand*\fsize{\dimexpr\f@size pt\relax}%
  \newcommand*\lineheight[1]{\fontsize{\fsize}{#1\fsize}\selectfont}%
  \ifx\svgwidth\undefined%
    \setlength{\unitlength}{329.62654442bp}%
    \ifx\svgscale\undefined%
      \relax%
    \else%
      \setlength{\unitlength}{\unitlength * \real{\svgscale}}%
    \fi%
  \else%
    \setlength{\unitlength}{\svgwidth}%
  \fi%
  \global\let\svgwidth\undefined%
  \global\let\svgscale\undefined%
  \makeatother%
  \begin{picture}(1,0.48113044)%
    \lineheight{1}%
    \setlength\tabcolsep{0pt}%
    \put(0,0){\includegraphics[width=\unitlength,page=1]{annulus.pdf}}%
    \put(0.01585433,0.35844466){\color[rgb]{0,0,0}\makebox(0,0)[lt]{\lineheight{1.25}\smash{\begin{tabular}[t]{l}$\Phi(X)$\end{tabular}}}}%
    \put(0,0){\includegraphics[width=\unitlength,page=2]{annulus.pdf}}%
    \put(0.73829763,0.47375238){\color[rgb]{0,0,0}\makebox(0,0)[lt]{\lineheight{1.25}\smash{\begin{tabular}[t]{l}shear\end{tabular}}}}%
    \put(0.97456508,0.23672851){\color[rgb]{0,0,0}\makebox(0,0)[lt]{\lineheight{1.25}\smash{\begin{tabular}[t]{l}shear\end{tabular}}}}%
    \put(0.71973123,0.00009708){\color[rgb]{0,0,0}\makebox(0,0)[lt]{\lineheight{1.25}\smash{\begin{tabular}[t]{l}shear\end{tabular}}}}%
    \put(0.45491127,0.25231728){\color[rgb]{0,0,0}\makebox(0,0)[lt]{\lineheight{1.25}\smash{\begin{tabular}[t]{l}shear\end{tabular}}}}%
    \put(0.70241427,0.24449674){\color[rgb]{0,0,0}\makebox(0,0)[lt]{\lineheight{1.25}\smash{\begin{tabular}[t]{l}$\A(X)$\end{tabular}}}}%
    \put(0,0){\includegraphics[width=\unitlength,page=3]{annulus.pdf}}%
  \end{picture}%
\endgroup%

%% file: ops.pdf_tex
\begingroup%
  \makeatletter%
  \providecommand\color[2][]{%
    \errmessage{(Inkscape) Color is used for the text in Inkscape, but the package 'color.sty' is not loaded}%
    \renewcommand\color[2][]{}%
  }%
  \providecommand\transparent[1]{%
    \errmessage{(Inkscape) Transparency is used (non-zero) for the text in Inkscape, but the package 'transparent.sty' is not loaded}%
    \renewcommand\transparent[1]{}%
  }%
  \providecommand\rotatebox[2]{#2}%
  \newcommand*\fsize{\dimexpr\f@size pt\relax}%
  \newcommand*\lineheight[1]{\fontsize{\fsize}{#1\fsize}\selectfont}%
  \ifx\svgwidth\undefined%
    \setlength{\unitlength}{226.85177334bp}%
    \ifx\svgscale\undefined%
      \relax%
    \else%
      \setlength{\unitlength}{\unitlength * \real{\svgscale}}%
    \fi%
  \else%
    \setlength{\unitlength}{\svgwidth}%
  \fi%
  \global\let\svgwidth\undefined%
  \global\let\svgscale\undefined%
  \makeatother%
  \begin{picture}(1,0.97438281)%
    \lineheight{1}%
    \setlength\tabcolsep{0pt}%
    \put(0,0){\includegraphics[width=\unitlength,page=1]{ops.pdf}}%
    \put(0.13852674,0.9518169){\color[rgb]{0,0,0}\makebox(0,0)[lt]{\lineheight{1.25}\smash{\begin{tabular}[t]{l}Nodal trade\end{tabular}}}}%
    \put(0.72949637,0.45176574){\color[rgb]{0,0,0}\makebox(0,0)[lt]{\lineheight{1.25}\smash{\begin{tabular}[t]{l}Transferring the cut\end{tabular}}}}%
    \put(0.1674878,0.11699651){\color[rgb]{0,0,0}\makebox(0,0)[lt]{\lineheight{1.25}\smash{\begin{tabular}[t]{l}Nodal slide\end{tabular}}}}%
    \put(0,0){\includegraphics[width=\unitlength,page=2]{ops.pdf}}%
  \end{picture}%
\endgroup%

%% file: transf.pdf_tex
\begingroup%
  \makeatletter%
  \providecommand\color[2][]{%
    \errmessage{(Inkscape) Color is used for the text in Inkscape, but the package 'color.sty' is not loaded}%
    \renewcommand\color[2][]{}%
  }%
  \providecommand\transparent[1]{%
    \errmessage{(Inkscape) Transparency is used (non-zero) for the text in Inkscape, but the package 'transparent.sty' is not loaded}%
    \renewcommand\transparent[1]{}%
  }%
  \providecommand\rotatebox[2]{#2}%
  \newcommand*\fsize{\dimexpr\f@size pt\relax}%
  \newcommand*\lineheight[1]{\fontsize{\fsize}{#1\fsize}\selectfont}%
  \ifx\svgwidth\undefined%
    \setlength{\unitlength}{343.48774452bp}%
    \ifx\svgscale\undefined%
      \relax%
    \else%
      \setlength{\unitlength}{\unitlength * \real{\svgscale}}%
    \fi%
  \else%
    \setlength{\unitlength}{\svgwidth}%
  \fi%
  \global\let\svgwidth\undefined%
  \global\let\svgscale\undefined%
  \makeatother%
  \begin{picture}(1,0.43965955)%
    \lineheight{1}%
    \setlength\tabcolsep{0pt}%
    \put(0,0){\includegraphics[width=\unitlength,page=1]{transf.pdf}}%
    \put(0.21742949,0.36251417){\color[rgb]{0,0,0}\makebox(0,0)[lt]{\lineheight{1.25}\smash{\begin{tabular}[t]{l}Transferring \\the cut\end{tabular}}}}%
    \put(0,0){\includegraphics[width=\unitlength,page=2]{transf.pdf}}%
    \put(0.92088665,0.10637635){\color[rgb]{0,0,0}\makebox(0,0)[lt]{\lineheight{1.25}\smash{\begin{tabular}[t]{l}$s_2$\end{tabular}}}}%
    \put(0,0){\includegraphics[width=\unitlength,page=3]{transf.pdf}}%
    \put(0.44340071,0.21421139){\color[rgb]{0,0,0}\makebox(0,0)[lt]{\lineheight{1.25}\smash{\begin{tabular}[t]{l}$C_2'$\end{tabular}}}}%
    \put(0.44707687,0.34348029){\color[rgb]{0,0,0}\makebox(0,0)[lt]{\lineheight{1.25}\smash{\begin{tabular}[t]{l}$C_1$\end{tabular}}}}%
    \put(0.44277544,0.06565878){\color[rgb]{0,0,0}\makebox(0,0)[lt]{\lineheight{1.25}\smash{\begin{tabular}[t]{l}$s_2 C_3$\end{tabular}}}}%
    \put(0.42619777,0.42673288){\color[rgb]{0,0,0}\makebox(0,0)[lt]{\lineheight{1.25}\smash{\begin{tabular}[t]{l}$(0,1)$\end{tabular}}}}%
    \put(0.41996001,0.0058503){\color[rgb]{0,0,0}\makebox(0,0)[lt]{\lineheight{1.25}\smash{\begin{tabular}[t]{l}$(0,-1)$\end{tabular}}}}%
    \put(0.52529698,0.28564138){\color[rgb]{0,0,0}\makebox(0,0)[lt]{\lineheight{1.25}\smash{\begin{tabular}[t]{l}$(\hh,\hh)$\end{tabular}}}}%
    \put(0,0){\includegraphics[width=\unitlength,page=4]{transf.pdf}}%
    \put(-0.00105593,0.17819922){\color[rgb]{0,0,0}\makebox(0,0)[lt]{\lineheight{1.25}\smash{\begin{tabular}[t]{l}$(0,0)$\end{tabular}}}}%
    \put(0.00197275,0.37558814){\color[rgb]{0,0,0}\makebox(0,0)[lt]{\lineheight{1.25}\smash{\begin{tabular}[t]{l}$(0,1)$\end{tabular}}}}%
    \put(0.24301638,0.1458649){\color[rgb]{0,0,0}\makebox(0,0)[lt]{\lineheight{1.25}\smash{\begin{tabular}[t]{l}$(1,0)$\end{tabular}}}}%
    \put(0.10424326,0.28711872){\color[rgb]{0,0,0}\makebox(0,0)[lt]{\lineheight{1.25}\smash{\begin{tabular}[t]{l}$C_1$\end{tabular}}}}%
    \put(0.12264216,0.20375355){\color[rgb]{0,0,0}\makebox(0,0)[lt]{\lineheight{1.25}\smash{\begin{tabular}[t]{l}$C_2$\end{tabular}}}}%
    \put(0.18641599,0.23203225){\color[rgb]{0,0,0}\makebox(0,0)[lt]{\lineheight{1.25}\smash{\begin{tabular}[t]{l}$C_3$\end{tabular}}}}%
    \put(0,0){\includegraphics[width=\unitlength,page=5]{transf.pdf}}%
  \end{picture}%
\endgroup%

%% file: triang.pdf_tex
\begingroup%
  \makeatletter%
  \providecommand\color[2][]{%
    \errmessage{(Inkscape) Color is used for the text in Inkscape, but the package 'color.sty' is not loaded}%
    \renewcommand\color[2][]{}%
  }%
  \providecommand\transparent[1]{%
    \errmessage{(Inkscape) Transparency is used (non-zero) for the text in Inkscape, but the package 'transparent.sty' is not loaded}%
    \renewcommand\transparent[1]{}%
  }%
  \providecommand\rotatebox[2]{#2}%
  \newcommand*\fsize{\dimexpr\f@size pt\relax}%
  \newcommand*\lineheight[1]{\fontsize{\fsize}{#1\fsize}\selectfont}%
  \ifx\svgwidth\undefined%
    \setlength{\unitlength}{229.12706124bp}%
    \ifx\svgscale\undefined%
      \relax%
    \else%
      \setlength{\unitlength}{\unitlength * \real{\svgscale}}%
    \fi%
  \else%
    \setlength{\unitlength}{\svgwidth}%
  \fi%
  \global\let\svgwidth\undefined%
  \global\let\svgscale\undefined%
  \makeatother%
  \begin{picture}(1,0.60088096)%
    \lineheight{1}%
    \setlength\tabcolsep{0pt}%
    \put(0,0){\includegraphics[width=\unitlength,page=1]{triang.pdf}}%
    \put(0.06755506,0.13693241){\color[rgb]{0,0,0}\makebox(0,0)[lt]{\lineheight{1.25}\smash{\begin{tabular}[t]{l}$D_5$\end{tabular}}}}%
    \put(0,0){\includegraphics[width=\unitlength,page=2]{triang.pdf}}%
    \put(0.54814635,0.41018195){\color[rgb]{0,0,0}\makebox(0,0)[lt]{\lineheight{1.25}\smash{\begin{tabular}[t]{l}$E_8$\end{tabular}}}}%
    \put(0,0){\includegraphics[width=\unitlength,page=3]{triang.pdf}}%
    \put(0.92337872,0.18668221){\color[rgb]{0,0,0}\makebox(0,0)[lt]{\lineheight{1.25}\smash{\begin{tabular}[t]{l}$E_7$\end{tabular}}}}%
    \put(0,0){\includegraphics[width=\unitlength,page=4]{triang.pdf}}%
  \end{picture}%
\endgroup%

%% file: rect.pdf_tex
\begingroup%
  \makeatletter%
  \providecommand\color[2][]{%
    \errmessage{(Inkscape) Color is used for the text in Inkscape, but the package 'color.sty' is not loaded}%
    \renewcommand\color[2][]{}%
  }%
  \providecommand\transparent[1]{%
    \errmessage{(Inkscape) Transparency is used (non-zero) for the text in Inkscape, but the package 'transparent.sty' is not loaded}%
    \renewcommand\transparent[1]{}%
  }%
  \providecommand\rotatebox[2]{#2}%
  \newcommand*\fsize{\dimexpr\f@size pt\relax}%
  \newcommand*\lineheight[1]{\fontsize{\fsize}{#1\fsize}\selectfont}%
  \ifx\svgwidth\undefined%
    \setlength{\unitlength}{441.88905484bp}%
    \ifx\svgscale\undefined%
      \relax%
    \else%
      \setlength{\unitlength}{\unitlength * \real{\svgscale}}%
    \fi%
  \else%
    \setlength{\unitlength}{\svgwidth}%
  \fi%
  \global\let\svgwidth\undefined%
  \global\let\svgscale\undefined%
  \makeatother%
  \begin{picture}(1,0.28004768)%
    \lineheight{1}%
    \setlength\tabcolsep{0pt}%
    \put(0,0){\includegraphics[width=\unitlength,page=1]{rect.pdf}}%
    \put(0.39609056,0.124937){\color[rgb]{0,0,0}\makebox(0,0)[lt]{\lineheight{1.25}\smash{\begin{tabular}[t]{l}$P_0 \in \PP_0$\end{tabular}}}}%
    \put(0.25182356,0.249841){\color[rgb]{0,0,0}\makebox(0,0)[lt]{\lineheight{1.25}\smash{\begin{tabular}[t]{l}$P_1 \in \PP_1$\end{tabular}}}}%
    \put(0,0){\includegraphics[width=\unitlength,page=2]{rect.pdf}}%
    \put(-0.00138261,0.13578069){\color[rgb]{0.50196078,0.2,0}\makebox(0,0)[lt]{\lineheight{1.25}\smash{\begin{tabular}[t]{l}$P_0^\dual$\end{tabular}}}}%
    \put(0.31971395,0.00612889){\color[rgb]{0.50196078,0.2,0}\makebox(0,0)[lt]{\lineheight{1.25}\smash{\begin{tabular}[t]{l}$P_1^\dual$\end{tabular}}}}%
    \put(0.59338615,0.15039589){\color[rgb]{0.50196078,0.2,0}\makebox(0,0)[lt]{\lineheight{1.25}\smash{\begin{tabular}[t]{l}$\rho P_0^\dual$\end{tabular}}}}%
    \put(0.66903452,0.06789037){\color[rgb]{0.50196078,0.2,0}\makebox(0,0)[lt]{\lineheight{1.25}\smash{\begin{tabular}[t]{l}$P_1^\dual$\end{tabular}}}}%
    \put(0,0){\includegraphics[width=\unitlength,page=3]{rect.pdf}}%
    \put(0.82485241,0.13491411){\color[rgb]{0.50196078,0.2,0}\makebox(0,0)[lt]{\lineheight{1.25}\smash{\begin{tabular}[t]{l}$\rho P_0^\dual$\end{tabular}}}}%
    \put(0.89266631,0.06702381){\color[rgb]{0.50196078,0.2,0}\makebox(0,0)[lt]{\lineheight{1.25}\smash{\begin{tabular}[t]{l}$P_1^\dual$\end{tabular}}}}%
    \put(0,0){\includegraphics[width=\unitlength,page=4]{rect.pdf}}%
    \put(0.97588264,0.21215735){\color[rgb]{0.50196078,0.2,0}\makebox(0,0)[lt]{\lineheight{1.25}\smash{\begin{tabular}[t]{l}$\Gamma$\end{tabular}}}}%
    \put(0.66903451,0.22064363){\color[rgb]{0.50196078,0.2,0}\makebox(0,0)[lt]{\lineheight{1.25}\smash{\begin{tabular}[t]{l}$B^\dual_\rho$\end{tabular}}}}%
  \end{picture}%
\endgroup%

%% file: ff-general.pdf_tex
\begingroup%
  \makeatletter%
  \providecommand\color[2][]{%
    \errmessage{(Inkscape) Color is used for the text in Inkscape, but the package 'color.sty' is not loaded}%
    \renewcommand\color[2][]{}%
  }%
  \providecommand\transparent[1]{%
    \errmessage{(Inkscape) Transparency is used (non-zero) for the text in Inkscape, but the package 'transparent.sty' is not loaded}%
    \renewcommand\transparent[1]{}%
  }%
  \providecommand\rotatebox[2]{#2}%
  \newcommand*\fsize{\dimexpr\f@size pt\relax}%
  \newcommand*\lineheight[1]{\fontsize{\fsize}{#1\fsize}\selectfont}%
  \ifx\svgwidth\undefined%
    \setlength{\unitlength}{381.96223094bp}%
    \ifx\svgscale\undefined%
      \relax%
    \else%
      \setlength{\unitlength}{\unitlength * \real{\svgscale}}%
    \fi%
  \else%
    \setlength{\unitlength}{\svgwidth}%
  \fi%
  \global\let\svgwidth\undefined%
  \global\let\svgscale\undefined%
  \makeatother%
  \begin{picture}(1,0.24859569)%
    \lineheight{1}%
    \setlength\tabcolsep{0pt}%
    \put(0,0){\includegraphics[width=\unitlength,page=1]{ff-general.pdf}}%
    \put(0.34007282,0.17193203){\color[rgb]{0,0,0}\makebox(0,0)[lt]{\lineheight{1.25}\smash{\begin{tabular}[t]{l}$u$\end{tabular}}}}%
    \put(0,0){\includegraphics[width=\unitlength,page=2]{ff-general.pdf}}%
    \put(0.05760892,0.17634424){\color[rgb]{0,0,0}\makebox(0,0)[lt]{\lineheight{1.25}\smash{\begin{tabular}[t]{l}$f^{-1}(0)$\end{tabular}}}}%
    \put(0.49128721,0.13420093){\color[rgb]{0,0,0}\makebox(0,0)[lt]{\lineheight{1.25}\smash{\begin{tabular}[t]{l}$f^{-1}(\infty)$\end{tabular}}}}%
    \put(0.1856073,0.23798827){\color[rgb]{0.50196078,0.50196078,0.50196078}\makebox(0,0)[lt]{\lineheight{1.25}\smash{\begin{tabular}[t]{l}$S_0$\end{tabular}}}}%
    \put(0.18501616,0.00335412){\color[rgb]{0.50196078,0.50196078,0.50196078}\makebox(0,0)[lt]{\lineheight{1.25}\smash{\begin{tabular}[t]{l}$S_\infty$\end{tabular}}}}%
    \put(0,0){\includegraphics[width=\unitlength,page=3]{ff-general.pdf}}%
    \put(-0.00086648,0.05784362){\color[rgb]{0,0,0}\makebox(0,0)[lt]{\lineheight{1.25}\smash{\begin{tabular}[t]{l}$(m_1,n_1)$\end{tabular}}}}%
    \put(0.44520956,0.18973958){\color[rgb]{0,0,0}\makebox(0,0)[lt]{\lineheight{1.25}\smash{\begin{tabular}[t]{l}$(m,n)$\end{tabular}}}}%
    \put(0.49060577,0.04370622){\color[rgb]{0,0,0}\makebox(0,0)[lt]{\lineheight{1.25}\smash{\begin{tabular}[t]{l}$(m,n-m)$\end{tabular}}}}%
    \put(0,0){\includegraphics[width=\unitlength,page=4]{ff-general.pdf}}%
  \end{picture}%
\endgroup%

%% file: ffd.pdf_tex
\begingroup%
  \makeatletter%
  \providecommand\color[2][]{%
    \errmessage{(Inkscape) Color is used for the text in Inkscape, but the package 'color.sty' is not loaded}%
    \renewcommand\color[2][]{}%
  }%
  \providecommand\transparent[1]{%
    \errmessage{(Inkscape) Transparency is used (non-zero) for the text in Inkscape, but the package 'transparent.sty' is not loaded}%
    \renewcommand\transparent[1]{}%
  }%
  \providecommand\rotatebox[2]{#2}%
  \newcommand*\fsize{\dimexpr\f@size pt\relax}%
  \newcommand*\lineheight[1]{\fontsize{\fsize}{#1\fsize}\selectfont}%
  \ifx\svgwidth\undefined%
    \setlength{\unitlength}{273.05304674bp}%
    \ifx\svgscale\undefined%
      \relax%
    \else%
      \setlength{\unitlength}{\unitlength * \real{\svgscale}}%
    \fi%
  \else%
    \setlength{\unitlength}{\svgwidth}%
  \fi%
  \global\let\svgwidth\undefined%
  \global\let\svgscale\undefined%
  \makeatother%
  \begin{picture}(1,0.34774988)%
    \lineheight{1}%
    \setlength\tabcolsep{0pt}%
    \put(0,0){\includegraphics[width=\unitlength,page=1]{ffd.pdf}}%
    \put(0.65707868,0.24050837){\color[rgb]{0,0,0}\makebox(0,0)[lt]{\lineheight{1.25}\smash{\begin{tabular}[t]{l}$u$\end{tabular}}}}%
    \put(0,0){\includegraphics[width=\unitlength,page=2]{ffd.pdf}}%
    \put(0.08058663,0.24992173){\color[rgb]{0,0,0}\makebox(0,0)[lt]{\lineheight{1.25}\smash{\begin{tabular}[t]{l}$f^{-1}(0)$\end{tabular}}}}%
    \put(0.86860615,0.18772786){\color[rgb]{0,0,0}\makebox(0,0)[lt]{\lineheight{1.25}\smash{\begin{tabular}[t]{l}$f^{-1}(\infty)$\end{tabular}}}}%
    \put(0.44100345,0.33291162){\color[rgb]{0.50196078,0.50196078,0.50196078}\makebox(0,0)[lt]{\lineheight{1.25}\smash{\begin{tabular}[t]{l}$S_0$\end{tabular}}}}%
    \put(0.44017657,0.00469194){\color[rgb]{0.50196078,0.50196078,0.50196078}\makebox(0,0)[lt]{\lineheight{1.25}\smash{\begin{tabular}[t]{l}$S_\infty$\end{tabular}}}}%
    \put(0,0){\includegraphics[width=\unitlength,page=3]{ffd.pdf}}%
    \put(-0.00121208,0.08415627){\color[rgb]{0,0,0}\makebox(0,0)[lt]{\lineheight{1.25}\smash{\begin{tabular}[t]{l}$(m_1,n_1)$\end{tabular}}}}%
    \put(0.80415,0.26541867){\color[rgb]{0,0,0}\makebox(0,0)[lt]{\lineheight{1.25}\smash{\begin{tabular}[t]{l}$(m,n)$\end{tabular}}}}%
    \put(0.86765278,0.06113876){\color[rgb]{0,0,0}\makebox(0,0)[lt]{\lineheight{1.25}\smash{\begin{tabular}[t]{l}$(m,n-mk)$\end{tabular}}}}%
    \put(0,0){\includegraphics[width=\unitlength,page=4]{ffd.pdf}}%
    \put(0.52773521,0.06804146){\color[rgb]{0.50196078,0,0.50196078}\makebox(0,0)[lt]{\lineheight{1.25}\smash{\begin{tabular}[t]{l}$k$\end{tabular}}}}%
  \end{picture}%
\endgroup%

%% file: crossff.pdf_tex
\begingroup%
  \makeatletter%
  \providecommand\color[2][]{%
    \errmessage{(Inkscape) Color is used for the text in Inkscape, but the package 'color.sty' is not loaded}%
    \renewcommand\color[2][]{}%
  }%
  \providecommand\transparent[1]{%
    \errmessage{(Inkscape) Transparency is used (non-zero) for the text in Inkscape, but the package 'transparent.sty' is not loaded}%
    \renewcommand\transparent[1]{}%
  }%
  \providecommand\rotatebox[2]{#2}%
  \newcommand*\fsize{\dimexpr\f@size pt\relax}%
  \newcommand*\lineheight[1]{\fontsize{\fsize}{#1\fsize}\selectfont}%
  \ifx\svgwidth\undefined%
    \setlength{\unitlength}{453.16181882bp}%
    \ifx\svgscale\undefined%
      \relax%
    \else%
      \setlength{\unitlength}{\unitlength * \real{\svgscale}}%
    \fi%
  \else%
    \setlength{\unitlength}{\svgwidth}%
  \fi%
  \global\let\svgwidth\undefined%
  \global\let\svgscale\undefined%
  \makeatother%
  \begin{picture}(1,0.71423601)%
    \lineheight{1}%
    \setlength\tabcolsep{0pt}%
    \put(0,0){\includegraphics[width=\unitlength,page=1]{crossff.pdf}}%
    \put(0.09366205,0.47221895){\color[rgb]{0.50196078,0.2,0}\makebox(0,0)[lt]{\lineheight{1.25}\smash{\begin{tabular}[t]{l}$p$\end{tabular}}}}%
    \put(0.1436132,0.58967767){\color[rgb]{0.50196078,0,0.50196078}\makebox(0,0)[lt]{\lineheight{1.25}\smash{\begin{tabular}[t]{l}$b$\end{tabular}}}}%
    \put(-0.00015598,0.55828854){\color[rgb]{0,0,0}\makebox(0,0)[lt]{\lineheight{1.25}\smash{\begin{tabular}[t]{l}$\PP$\end{tabular}}}}%
    \put(0.00199123,0.36526322){\color[rgb]{0.50196078,0.2,0}\makebox(0,0)[lt]{\lineheight{1.25}\smash{\begin{tabular}[t]{l}$B^\dual$\end{tabular}}}}%
    \put(0,0){\includegraphics[width=\unitlength,page=2]{crossff.pdf}}%
    \put(0.40351201,0.47268931){\color[rgb]{0.50196078,0.2,0}\makebox(0,0)[lt]{\lineheight{1.25}\smash{\begin{tabular}[t]{l}$p$\end{tabular}}}}%
    \put(0.48428617,0.58844687){\color[rgb]{0.50196078,0,0.50196078}\makebox(0,0)[lt]{\lineheight{1.25}\smash{\begin{tabular}[t]{l}$b$\end{tabular}}}}%
    \put(0,0){\includegraphics[width=\unitlength,page=3]{crossff.pdf}}%
    \put(0.45579111,0.47088187){\color[rgb]{0,0,0}\makebox(0,0)[lt]{\lineheight{1.25}\smash{\begin{tabular}[t]{l}$\PP_1$\end{tabular}}}}%
    \put(0,0){\includegraphics[width=\unitlength,page=4]{crossff.pdf}}%
    \put(0.42397008,0.22455648){\color[rgb]{0.50196078,0.2,0}\makebox(0,0)[lt]{\lineheight{1.25}\smash{\begin{tabular}[t]{l}$B^\dual \times \rho B_1^\dual$\end{tabular}}}}%
    \put(0,0){\includegraphics[width=\unitlength,page=5]{crossff.pdf}}%
    \put(0.13682656,0.08604384){\color[rgb]{0.50196078,0.2,0}\makebox(0,0)[lt]{\lineheight{1.25}\smash{\begin{tabular}[t]{l}$v$\end{tabular}}}}%
    \put(0.13774944,0.04461075){\color[rgb]{0.50196078,0.2,0}\makebox(0,0)[lt]{\lineheight{1.25}\smash{\begin{tabular}[t]{l}$e_0$\end{tabular}}}}%
    \put(0.07406253,0.16138654){\color[rgb]{0.50196078,0.2,0}\makebox(0,0)[lt]{\lineheight{1.25}\smash{\begin{tabular}[t]{l}$e_1$\end{tabular}}}}%
    \put(0.14241266,0.31120443){\color[rgb]{0.50196078,0.2,0}\makebox(0,0)[lt]{\lineheight{1.25}\smash{\begin{tabular}[t]{l}$P(v)$\end{tabular}}}}%
    \put(0,0){\includegraphics[width=\unitlength,page=6]{crossff.pdf}}%
    \put(0.47067933,0.04182683){\color[rgb]{0.50196078,0.2,0}\makebox(0,0)[lt]{\lineheight{1.25}\smash{\begin{tabular}[t]{l}$e_0$\end{tabular}}}}%
    \put(0.40699242,0.15860262){\color[rgb]{0.50196078,0.2,0}\makebox(0,0)[lt]{\lineheight{1.25}\smash{\begin{tabular}[t]{l}$e_1$\end{tabular}}}}%
    \put(0,0){\includegraphics[width=\unitlength,page=7]{crossff.pdf}}%
    \put(0.41659512,0.30687349){\color[rgb]{0.50196078,0.2,0}\makebox(0,0)[lt]{\lineheight{1.25}\smash{\begin{tabular}[t]{l}$P_p$\end{tabular}}}}%
    \put(0.52099937,0.30912515){\color[rgb]{0.50196078,0.2,0}\makebox(0,0)[lt]{\lineheight{1.25}\smash{\begin{tabular}[t]{l}$P_b$\end{tabular}}}}%
    \put(0,0){\includegraphics[width=\unitlength,page=8]{crossff.pdf}}%
    \put(0.8760716,0.31055872){\color[rgb]{0.50196078,0.2,0}\makebox(0,0)[lt]{\lineheight{1.25}\smash{\begin{tabular}[t]{l}$P_p$\end{tabular}}}}%
    \put(0.79964289,0.30508938){\color[rgb]{0.50196078,0.2,0}\makebox(0,0)[lt]{\lineheight{1.25}\smash{\begin{tabular}[t]{l}$P_b$\end{tabular}}}}%
    \put(0,0){\includegraphics[width=\unitlength,page=9]{crossff.pdf}}%
    \put(0.5213393,0.07667717){\color[rgb]{0.50196078,0.2,0}\makebox(0,0)[lt]{\lineheight{1.25}\smash{\begin{tabular}[t]{l}$v_0$\end{tabular}}}}%
    \put(0.43733004,0.07863433){\color[rgb]{0.50196078,0.2,0}\makebox(0,0)[lt]{\lineheight{1.25}\smash{\begin{tabular}[t]{l}$v_1$\end{tabular}}}}%
    \put(0.88348817,0.15634833){\color[rgb]{0.08627451,0.35294118,0.77254902}\makebox(0,0)[lt]{\lineheight{1.25}\smash{\begin{tabular}[t]{l}$v_1$\end{tabular}}}}%
    \put(0.80336726,0.17278863){\color[rgb]{0.08627451,0.35294118,0.77254902}\makebox(0,0)[lt]{\lineheight{1.25}\smash{\begin{tabular}[t]{l}$v_1$\end{tabular}}}}%
    \put(0.88214178,0.18427175){\color[rgb]{0.08627451,0.35294118,0.77254902}\makebox(0,0)[lt]{\lineheight{1.25}\smash{\begin{tabular}[t]{l}$e_1$\end{tabular}}}}%
    \put(0.77658329,0.19249744){\color[rgb]{0.08627451,0.35294118,0.77254902}\makebox(0,0)[lt]{\lineheight{1.25}\smash{\begin{tabular}[t]{l}$e_1$\end{tabular}}}}%
    \put(0,0){\includegraphics[width=\unitlength,page=10]{crossff.pdf}}%
    \put(0.82687643,0.14493492){\color[rgb]{0,0,0}\makebox(0,0)[lt]{\lineheight{1.25}\smash{\begin{tabular}[t]{l}$\sim$\end{tabular}}}}%
    \put(0,0){\includegraphics[width=\unitlength,page=11]{crossff.pdf}}%
    \put(0.87941532,0.02014263){\color[rgb]{0.08627451,0.35294118,0.77254902}\makebox(0,0)[lt]{\lineheight{1.25}\smash{\begin{tabular}[t]{l}$e_0$\end{tabular}}}}%
    \put(0.88592591,0.09188371){\color[rgb]{0.08627451,0.35294118,0.77254902}\makebox(0,0)[lt]{\lineheight{1.25}\smash{\begin{tabular}[t]{l}$v_0$\end{tabular}}}}%
    \put(0,0){\includegraphics[width=\unitlength,page=12]{crossff.pdf}}%
    \put(0.89321057,0.47299145){\color[rgb]{0.50196078,0.2,0}\makebox(0,0)[lt]{\lineheight{1.25}\smash{\begin{tabular}[t]{l}$p$\end{tabular}}}}%
    \put(0.82491334,0.56958087){\color[rgb]{0.50196078,0,0.50196078}\makebox(0,0)[lt]{\lineheight{1.25}\smash{\begin{tabular}[t]{l}$b$\end{tabular}}}}%
    \put(0,0){\includegraphics[width=\unitlength,page=13]{crossff.pdf}}%
    \put(0.83650356,0.4748072){\color[rgb]{0,0,0}\makebox(0,0)[lt]{\lineheight{1.25}\smash{\begin{tabular}[t]{l}$\PP_1$\end{tabular}}}}%
    \put(0,0){\includegraphics[width=\unitlength,page=14]{crossff.pdf}}%
    \put(0.48153312,0.35844183){\color[rgb]{0.50196078,0.2,0}\makebox(0,0)[lt]{\lineheight{1.25}\smash{\begin{tabular}[t]{l}$\rho$\end{tabular}}}}%
    \put(0,0){\includegraphics[width=\unitlength,page=15]{crossff.pdf}}%
    \put(0.84017199,0.35971792){\color[rgb]{0.50196078,0.2,0}\makebox(0,0)[lt]{\lineheight{1.25}\smash{\begin{tabular}[t]{l}$\rho$\end{tabular}}}}%
    \put(0.80285567,0.22251158){\color[rgb]{0.50196078,0.2,0}\makebox(0,0)[lt]{\lineheight{1.25}\smash{\begin{tabular}[t]{l}$B^\dual \times \rho B_1^\dual$\end{tabular}}}}%
    \put(-0.00056004,0.15092723){\color[rgb]{0.08627451,0.35294118,0.77254902}\makebox(0,0)[lt]{\lineheight{1.25}\smash{\begin{tabular}[t]{l}$\Gamma_v$\end{tabular}}}}%
    \put(0.32733824,0.15761399){\color[rgb]{0.08627451,0.35294118,0.77254902}\makebox(0,0)[lt]{\lineheight{1.25}\smash{\begin{tabular}[t]{l}$\Gamma_{v,+}^\pert$\end{tabular}}}}%
    \put(0.7121369,0.14942641){\color[rgb]{0.08627451,0.35294118,0.77254902}\makebox(0,0)[lt]{\lineheight{1.25}\smash{\begin{tabular}[t]{l}$\Gamma_{v,-}^\pert$\end{tabular}}}}%
    \put(0.07769413,0.68258683){\color[rgb]{0,0,0}\makebox(0,0)[lt]{\lineheight{1.25}\smash{\begin{tabular}[t]{l}$X_{P(v)}$\end{tabular}}}}%
    \put(0.33086513,0.56126561){\color[rgb]{0,0,0}\makebox(0,0)[lt]{\lineheight{1.25}\smash{\begin{tabular}[t]{l}$X_{P_p}$\end{tabular}}}}%
    \put(0.53224473,0.61297584){\color[rgb]{0,0,0}\makebox(0,0)[lt]{\lineheight{1.25}\smash{\begin{tabular}[t]{l}$X_{P_b}$\end{tabular}}}}%
    \put(0.69106796,0.53672127){\color[rgb]{0,0,0}\makebox(0,0)[lt]{\lineheight{1.25}\smash{\begin{tabular}[t]{l}$X_{P_b}$\end{tabular}}}}%
    \put(0.93497095,0.54362055){\color[rgb]{0,0,0}\makebox(0,0)[lt]{\lineheight{1.25}\smash{\begin{tabular}[t]{l}$X_{P_p}$\end{tabular}}}}%
  \end{picture}%
\endgroup%

%% file: split2pert.pdf_tex
\begingroup%
  \makeatletter%
  \providecommand\color[2][]{%
    \errmessage{(Inkscape) Color is used for the text in Inkscape, but the package 'color.sty' is not loaded}%
    \renewcommand\color[2][]{}%
  }%
  \providecommand\transparent[1]{%
    \errmessage{(Inkscape) Transparency is used (non-zero) for the text in Inkscape, but the package 'transparent.sty' is not loaded}%
    \renewcommand\transparent[1]{}%
  }%
  \providecommand\rotatebox[2]{#2}%
  \newcommand*\fsize{\dimexpr\f@size pt\relax}%
  \newcommand*\lineheight[1]{\fontsize{\fsize}{#1\fsize}\selectfont}%
  \ifx\svgwidth\undefined%
    \setlength{\unitlength}{476.52984023bp}%
    \ifx\svgscale\undefined%
      \relax%
    \else%
      \setlength{\unitlength}{\unitlength * \real{\svgscale}}%
    \fi%
  \else%
    \setlength{\unitlength}{\svgwidth}%
  \fi%
  \global\let\svgwidth\undefined%
  \global\let\svgscale\undefined%
  \makeatother%
  \begin{picture}(1,0.66538687)%
    \lineheight{1}%
    \setlength\tabcolsep{0pt}%
    \put(0,0){\includegraphics[width=\unitlength,page=1]{split2pert.pdf}}%
    \put(0.4018384,0.44616928){\color[rgb]{0,0,0}\makebox(0,0)[lt]{\lineheight{1.25}\smash{\begin{tabular}[t]{l}Split $e_1$\end{tabular}}}}%
    \put(0.3755614,0.54165615){\color[rgb]{0.08627451,0.35294118,0.77254902}\makebox(0,0)[lt]{\lineheight{1.25}\smash{\begin{tabular}[t]{l}$(3,2)$\end{tabular}}}}%
    \put(0.16682493,0.36764765){\color[rgb]{0.08627451,0.35294118,0.77254902}\makebox(0,0)[lt]{\lineheight{1.25}\smash{\begin{tabular}[t]{l}$(3,0)$\end{tabular}}}}%
    \put(0.26957821,0.29184283){\color[rgb]{0.08627451,0.35294118,0.77254902}\makebox(0,0)[lt]{\lineheight{1.25}\smash{\begin{tabular}[t]{l}$(0,1), (0,1)$\end{tabular}}}}%
    \put(0,0){\includegraphics[width=\unitlength,page=2]{split2pert.pdf}}%
    \put(0.26687113,0.38172848){\color[rgb]{0.08627451,0.35294118,0.77254902}\makebox(0,0)[lt]{\lineheight{1.25}\smash{\begin{tabular}[t]{l}$v$\end{tabular}}}}%
    \put(0.331542,0.33538939){\color[rgb]{0.08627451,0.35294118,0.77254902}\makebox(0,0)[lt]{\lineheight{1.25}\smash{\begin{tabular}[t]{l}$e_1, e_2$\end{tabular}}}}%
    \put(0,0){\includegraphics[width=\unitlength,page=3]{split2pert.pdf}}%
    \put(0.28505411,0.53200118){\color[rgb]{0.08627451,0.35294118,0.77254902}\makebox(0,0)[lt]{\lineheight{1.25}\smash{\begin{tabular}[t]{l}$\Gamma$\end{tabular}}}}%
    \put(0,0){\includegraphics[width=\unitlength,page=4]{split2pert.pdf}}%
    \put(0.05902833,0.53620341){\color[rgb]{0.50196078,0.2,0}\makebox(0,0)[lt]{\lineheight{1.25}\smash{\begin{tabular}[t]{l}$B^\dual$\end{tabular}}}}%
    \put(0.70161797,0.52628445){\color[rgb]{0.08627451,0.35294118,0.77254902}\makebox(0,0)[lt]{\lineheight{1.25}\smash{\begin{tabular}[t]{l}$(3,2)$\end{tabular}}}}%
    \put(0.49323796,0.38783512){\color[rgb]{0.08627451,0.35294118,0.77254902}\makebox(0,0)[lt]{\lineheight{1.25}\smash{\begin{tabular}[t]{l}$(3,0)$\end{tabular}}}}%
    \put(0,0){\includegraphics[width=\unitlength,page=5]{split2pert.pdf}}%
    \put(0.57839446,0.3692152){\color[rgb]{0.08627451,0.35294118,0.77254902}\makebox(0,0)[lt]{\lineheight{1.25}\smash{\begin{tabular}[t]{l}$e_2$\end{tabular}}}}%
    \put(0,0){\includegraphics[width=\unitlength,page=6]{split2pert.pdf}}%
    \put(0.62751732,0.36492893){\color[rgb]{0.08627451,0.35294118,0.77254902}\makebox(0,0)[lt]{\lineheight{1.25}\smash{\begin{tabular}[t]{l}$e_1$\end{tabular}}}}%
    \put(0,0){\includegraphics[width=\unitlength,page=7]{split2pert.pdf}}%
    \put(0.61786228,0.55938365){\color[rgb]{0.08627451,0.35294118,0.77254902}\makebox(0,0)[lt]{\lineheight{1.25}\smash{\begin{tabular}[t]{l}$\tGam$\end{tabular}}}}%
    \put(0.92565651,0.63726059){\color[rgb]{0.08627451,0.35294118,0.77254902}\makebox(0,0)[lt]{\lineheight{1.25}\smash{\begin{tabular}[t]{l}$(3,2)$\end{tabular}}}}%
    \put(0.74675831,0.45798509){\color[rgb]{0.08627451,0.35294118,0.77254902}\makebox(0,0)[lt]{\lineheight{1.25}\smash{\begin{tabular}[t]{l}$(3,0)$\end{tabular}}}}%
    \put(0,0){\includegraphics[width=\unitlength,page=8]{split2pert.pdf}}%
    \put(0.80893349,0.43478077){\color[rgb]{0.08627451,0.35294118,0.77254902}\makebox(0,0)[lt]{\lineheight{1.25}\smash{\begin{tabular}[t]{l}$e_2$\end{tabular}}}}%
    \put(0,0){\includegraphics[width=\unitlength,page=9]{split2pert.pdf}}%
    \put(0.86964553,0.45737679){\color[rgb]{0.08627451,0.35294118,0.77254902}\makebox(0,0)[lt]{\lineheight{1.25}\smash{\begin{tabular}[t]{l}$e_1$\end{tabular}}}}%
    \put(0.81148495,0.49706566){\color[rgb]{0.08627451,0.35294118,0.77254902}\makebox(0,0)[lt]{\lineheight{1.25}\smash{\begin{tabular}[t]{l}$(3,1)$\end{tabular}}}}%
    \put(0.91838779,0.51278881){\color[rgb]{0.08627451,0.35294118,0.77254902}\makebox(0,0)[lt]{\lineheight{1.25}\smash{\begin{tabular}[t]{l}$\Gamma_v^\pert$\end{tabular}}}}%
    \put(0.74171133,0.19627041){\color[rgb]{0.08627451,0.35294118,0.77254902}\makebox(0,0)[lt]{\lineheight{1.25}\smash{\begin{tabular}[t]{l}$(3,2)$\end{tabular}}}}%
    \put(0.53333133,0.0578212){\color[rgb]{0.08627451,0.35294118,0.77254902}\makebox(0,0)[lt]{\lineheight{1.25}\smash{\begin{tabular}[t]{l}$(3,0)$\end{tabular}}}}%
    \put(0,0){\includegraphics[width=\unitlength,page=10]{split2pert.pdf}}%
    \put(0.67321705,0.04903765){\color[rgb]{0.08627451,0.35294118,0.77254902}\makebox(0,0)[lt]{\lineheight{1.25}\smash{\begin{tabular}[t]{l}$e_1$\end{tabular}}}}%
    \put(0,0){\includegraphics[width=\unitlength,page=11]{split2pert.pdf}}%
    \put(0.62845783,0.02861498){\color[rgb]{0.08627451,0.35294118,0.77254902}\makebox(0,0)[lt]{\lineheight{1.25}\smash{\begin{tabular}[t]{l}$e_2$\end{tabular}}}}%
    \put(0,0){\includegraphics[width=\unitlength,page=12]{split2pert.pdf}}%
    \put(0.65795564,0.22936956){\color[rgb]{0.08627451,0.35294118,0.77254902}\makebox(0,0)[lt]{\lineheight{1.25}\smash{\begin{tabular}[t]{l}$\tGam$\end{tabular}}}}%
    \put(0,0){\includegraphics[width=\unitlength,page=13]{split2pert.pdf}}%
    \put(0.36175796,0.17061316){\color[rgb]{0,0,0}\makebox(0,0)[lt]{\lineheight{1.25}\smash{\begin{tabular}[t]{l}Split $e_1$, $e_2$\end{tabular}}}}%
  \end{picture}%
\endgroup%

%% file: bunch.pdf_tex
\begingroup%
  \makeatletter%
  \providecommand\color[2][]{%
    \errmessage{(Inkscape) Color is used for the text in Inkscape, but the package 'color.sty' is not loaded}%
    \renewcommand\color[2][]{}%
  }%
  \providecommand\transparent[1]{%
    \errmessage{(Inkscape) Transparency is used (non-zero) for the text in Inkscape, but the package 'transparent.sty' is not loaded}%
    \renewcommand\transparent[1]{}%
  }%
  \providecommand\rotatebox[2]{#2}%
  \newcommand*\fsize{\dimexpr\f@size pt\relax}%
  \newcommand*\lineheight[1]{\fontsize{\fsize}{#1\fsize}\selectfont}%
  \ifx\svgwidth\undefined%
    \setlength{\unitlength}{372.67535456bp}%
    \ifx\svgscale\undefined%
      \relax%
    \else%
      \setlength{\unitlength}{\unitlength * \real{\svgscale}}%
    \fi%
  \else%
    \setlength{\unitlength}{\svgwidth}%
  \fi%
  \global\let\svgwidth\undefined%
  \global\let\svgscale\undefined%
  \makeatother%
  \begin{picture}(1,0.20013844)%
    \lineheight{1}%
    \setlength\tabcolsep{0pt}%
    \put(0,0){\includegraphics[width=\unitlength,page=1]{bunch.pdf}}%
    \put(0.16599075,0.11573235){\color[rgb]{0.50196078,0,0.50196078}\makebox(0,0)[lt]{\lineheight{1.25}\smash{\begin{tabular}[t]{l}$b$\end{tabular}}}}%
    \put(0,0){\includegraphics[width=\unitlength,page=2]{bunch.pdf}}%
    \put(0.82867501,0.11891963){\color[rgb]{0.50196078,0,0.50196078}\makebox(0,0)[lt]{\lineheight{1.25}\smash{\begin{tabular}[t]{l}$b$\end{tabular}}}}%
    \put(0,0){\includegraphics[width=\unitlength,page=3]{bunch.pdf}}%
    \put(0.51987383,0.11542223){\color[rgb]{0.50196078,0,0.50196078}\makebox(0,0)[lt]{\lineheight{1.25}\smash{\begin{tabular}[t]{l}$b$\end{tabular}}}}%
    \put(0,0){\includegraphics[width=\unitlength,page=4]{bunch.pdf}}%
    \put(0.151918,0.05566797){\color[rgb]{0.08627451,0.35294118,0.77254902}\makebox(0,0)[lt]{\lineheight{1.25}\smash{\begin{tabular}[t]{l}$-n \mu_b$\end{tabular}}}}%
    \put(0.05487817,0.18552489){\color[rgb]{0.08627451,0.35294118,0.77254902}\makebox(0,0)[lt]{\lineheight{1.25}\smash{\begin{tabular}[t]{l}$f_1$\end{tabular}}}}%
    \put(0.14451896,0.00511112){\color[rgb]{0.08627451,0.35294118,0.77254902}\makebox(0,0)[lt]{\lineheight{1.25}\smash{\begin{tabular}[t]{l}$f_0$\end{tabular}}}}%
    \put(0.87897525,0.01074834){\color[rgb]{0.08627451,0.35294118,0.77254902}\makebox(0,0)[lt]{\lineheight{1.25}\smash{\begin{tabular}[t]{l}$f_0$\end{tabular}}}}%
    \put(0.91664719,0.17403793){\color[rgb]{0.08627451,0.35294118,0.77254902}\makebox(0,0)[lt]{\lineheight{1.25}\smash{\begin{tabular}[t]{l}$f_1$\end{tabular}}}}%
    \put(0.13868435,0.1023298){\color[rgb]{0.08627451,0.35294118,0.77254902}\makebox(0,0)[lt]{\lineheight{1.25}\smash{\begin{tabular}[t]{l}$e$\end{tabular}}}}%
    \put(0.84774438,0.10589231){\color[rgb]{0.08627451,0.35294118,0.77254902}\makebox(0,0)[lt]{\lineheight{1.25}\smash{\begin{tabular}[t]{l}$e$\end{tabular}}}}%
    \put(0.73739757,0.05668014){\color[rgb]{0.08627451,0.35294118,0.77254902}\makebox(0,0)[lt]{\lineheight{1.25}\smash{\begin{tabular}[t]{l}$(k-n) \mu_b$\end{tabular}}}}%
    \put(0.43630358,0.18335558){\color[rgb]{0.08627451,0.35294118,0.77254902}\makebox(0,0)[lt]{\lineheight{1.25}\smash{\begin{tabular}[t]{l}$f_1$\end{tabular}}}}%
    \put(0.52594437,0.00294181){\color[rgb]{0.08627451,0.35294118,0.77254902}\makebox(0,0)[lt]{\lineheight{1.25}\smash{\begin{tabular}[t]{l}$f_0$\end{tabular}}}}%
    \put(0.53420232,0.08435362){\color[rgb]{0.08627451,0.35294118,0.77254902}\makebox(0,0)[lt]{\lineheight{1.25}\smash{\begin{tabular}[t]{l}$v$\end{tabular}}}}%
    \put(0.02931521,0.08416887){\color[rgb]{0.08627451,0.35294118,0.77254902}\makebox(0,0)[lt]{\lineheight{1.25}\smash{\begin{tabular}[t]{l}$\Gamma_{v,+}^\pert$\end{tabular}}}}%
    \put(0.92887204,0.09139951){\color[rgb]{0.08627451,0.35294118,0.77254902}\makebox(0,0)[lt]{\lineheight{1.25}\smash{\begin{tabular}[t]{l}$\Gamma_{v,-}^\pert$\end{tabular}}}}%
  \end{picture}%
\endgroup%

%% file: 2p.pdf_tex
\begingroup%
  \makeatletter%
  \providecommand\color[2][]{%
    \errmessage{(Inkscape) Color is used for the text in Inkscape, but the package 'color.sty' is not loaded}%
    \renewcommand\color[2][]{}%
  }%
  \providecommand\transparent[1]{%
    \errmessage{(Inkscape) Transparency is used (non-zero) for the text in Inkscape, but the package 'transparent.sty' is not loaded}%
    \renewcommand\transparent[1]{}%
  }%
  \providecommand\rotatebox[2]{#2}%
  \newcommand*\fsize{\dimexpr\f@size pt\relax}%
  \newcommand*\lineheight[1]{\fontsize{\fsize}{#1\fsize}\selectfont}%
  \ifx\svgwidth\undefined%
    \setlength{\unitlength}{491.70280396bp}%
    \ifx\svgscale\undefined%
      \relax%
    \else%
      \setlength{\unitlength}{\unitlength * \real{\svgscale}}%
    \fi%
  \else%
    \setlength{\unitlength}{\svgwidth}%
  \fi%
  \global\let\svgwidth\undefined%
  \global\let\svgscale\undefined%
  \makeatother%
  \begin{picture}(1,0.23454338)%
    \lineheight{1}%
    \setlength\tabcolsep{0pt}%
    \put(0,0){\includegraphics[width=\unitlength,page=1]{2p.pdf}}%
    \put(0.24410511,0.13893562){\color[rgb]{0,0,0}\makebox(0,0)[lt]{\lineheight{1.25}\smash{\begin{tabular}[t]{l}Cut\end{tabular}}}}%
    \put(0,0){\includegraphics[width=\unitlength,page=2]{2p.pdf}}%
    \put(0.89405795,0.19155972){\color[rgb]{1,0,0}\makebox(0,0)[lt]{\lineheight{1.25}\smash{\begin{tabular}[t]{l}$\PP_{ann}$\end{tabular}}}}%
    \put(0.70177944,0.03867501){\color[rgb]{0.66666667,0.26666667,0}\makebox(0,0)[lt]{\lineheight{1.25}\smash{\begin{tabular}[t]{l}$\PP_{in}$\end{tabular}}}}%
  \end{picture}%
\endgroup%

%% file: b8p2-dual.pdf_tex
\begingroup%
  \makeatletter%
  \providecommand\color[2][]{%
    \errmessage{(Inkscape) Color is used for the text in Inkscape, but the package 'color.sty' is not loaded}%
    \renewcommand\color[2][]{}%
  }%
  \providecommand\transparent[1]{%
    \errmessage{(Inkscape) Transparency is used (non-zero) for the text in Inkscape, but the package 'transparent.sty' is not loaded}%
    \renewcommand\transparent[1]{}%
  }%
  \providecommand\rotatebox[2]{#2}%
  \newcommand*\fsize{\dimexpr\f@size pt\relax}%
  \newcommand*\lineheight[1]{\fontsize{\fsize}{#1\fsize}\selectfont}%
  \ifx\svgwidth\undefined%
    \setlength{\unitlength}{495.43235432bp}%
    \ifx\svgscale\undefined%
      \relax%
    \else%
      \setlength{\unitlength}{\unitlength * \real{\svgscale}}%
    \fi%
  \else%
    \setlength{\unitlength}{\svgwidth}%
  \fi%
  \global\let\svgwidth\undefined%
  \global\let\svgscale\undefined%
  \makeatother%
  \begin{picture}(1,0.4338668)%
    \lineheight{1}%
    \setlength\tabcolsep{0pt}%
    \put(0,0){\includegraphics[width=\unitlength,page=1]{b8p2-dual.pdf}}%
    \put(0.04152988,0.16548079){\color[rgb]{0,0,0}\makebox(0,0)[lt]{\lineheight{1.25}\smash{\begin{tabular}[t]{l}{\LARGE $\PP$}\end{tabular}}}}%
    \put(0.15624614,0.24091579){\color[rgb]{0.50196078,0.2,0}\makebox(0,0)[lt]{\lineheight{1.25}\smash{\begin{tabular}[t]{l}$P_0$\end{tabular}}}}%
    \put(0.12877345,0.19477733){\color[rgb]{0.50196078,0.2,0}\makebox(0,0)[lt]{\lineheight{1.25}\smash{\begin{tabular}[t]{l}$P_1$\end{tabular}}}}%
    \put(0.10495849,0.29197439){\color[rgb]{0.50196078,0.2,0}\makebox(0,0)[lt]{\lineheight{1.25}\smash{\begin{tabular}[t]{l}$P_2$\end{tabular}}}}%
    \put(0.12273386,0.33047575){\color[rgb]{0.50196078,0.2,0}\makebox(0,0)[lt]{\lineheight{1.25}\smash{\begin{tabular}[t]{l}$P_3$\end{tabular}}}}%
    \put(0.22005322,0.28870368){\color[rgb]{0.50196078,0.2,0}\makebox(0,0)[lt]{\lineheight{1.25}\smash{\begin{tabular}[t]{l}$P_4$\end{tabular}}}}%
    \put(0.22402381,0.21517584){\color[rgb]{0.50196078,0.2,0}\makebox(0,0)[lt]{\lineheight{1.25}\smash{\begin{tabular}[t]{l}$P_5$\end{tabular}}}}%
    \put(0.19227482,0.17656972){\color[rgb]{0.50196078,0.2,0}\makebox(0,0)[lt]{\lineheight{1.25}\smash{\begin{tabular}[t]{l}$P_6$\end{tabular}}}}%
    \put(0.19481805,0.0872718){\color[rgb]{0.78039216,0.21960784,0.21960784}\makebox(0,0)[lt]{\lineheight{1.25}\smash{\begin{tabular}[t]{l}$Q_6$\end{tabular}}}}%
    \put(0.28528957,0.21586205){\color[rgb]{0.78039216,0.21960784,0.21960784}\makebox(0,0)[lt]{\lineheight{1.25}\smash{\begin{tabular}[t]{l}$Q_5$\end{tabular}}}}%
    \put(0.27790439,0.28944262){\color[rgb]{0.78039216,0.21960784,0.21960784}\makebox(0,0)[lt]{\lineheight{1.25}\smash{\begin{tabular}[t]{l}$Q_4$\end{tabular}}}}%
    \put(0.08462324,0.37924411){\color[rgb]{0.78039216,0.21960784,0.21960784}\makebox(0,0)[lt]{\lineheight{1.25}\smash{\begin{tabular}[t]{l}$Q_3$\end{tabular}}}}%
    \put(-0.00098063,0.34260792){\color[rgb]{0.78039216,0.21960784,0.21960784}\makebox(0,0)[lt]{\lineheight{1.25}\smash{\begin{tabular}[t]{l}$Q_2$\end{tabular}}}}%
    \put(0.09752416,0.10164127){\color[rgb]{0.78039216,0.21960784,0.21960784}\makebox(0,0)[lt]{\lineheight{1.25}\smash{\begin{tabular}[t]{l}$Q_1$\end{tabular}}}}%
    \put(0,0){\includegraphics[width=\unitlength,page=2]{b8p2-dual.pdf}}%
    \put(0.74296053,0.11743808){\makebox(0,0)[lt]{\lineheight{1.25}\smash{\begin{tabular}[t]{l}$Q_5^\dual$\end{tabular}}}}%
    \put(0.51956566,0.18956583){\makebox(0,0)[lt]{\lineheight{1.25}\smash{\begin{tabular}[t]{l}$P_0^\dual$\end{tabular}}}}%
    \put(0.52196973,0.12098877){\makebox(0,0)[lt]{\lineheight{1.25}\smash{\begin{tabular}[t]{l}$Q_2^\dual$\end{tabular}}}}%
    \put(0.66527192,0.20731675){\makebox(0,0)[lt]{\lineheight{1.25}\smash{\begin{tabular}[t]{l}$P_4^\dual$\end{tabular}}}}%
    \put(0.67416734,0.17924537){\makebox(0,0)[lt]{\lineheight{1.25}\smash{\begin{tabular}[t]{l}$P_5^\dual$\end{tabular}}}}%
    \put(0.87610175,0.11216841){\color[rgb]{0,0,0}\makebox(0,0)[lt]{\lineheight{1.25}\smash{\begin{tabular}[t]{l}{\LARGE $B^\dual(\rho)$}\end{tabular}}}}%
    \put(0.70645055,0.08233649){\makebox(0,0)[lt]{\lineheight{1.25}\smash{\begin{tabular}[t]{l}$Q_6^\dual$\end{tabular}}}}%
    \put(0.53043012,0.08440577){\makebox(0,0)[lt]{\lineheight{1.25}\smash{\begin{tabular}[t]{l}$Q_1^\dual$\end{tabular}}}}%
    \put(0,0){\includegraphics[width=\unitlength,page=3]{b8p2-dual.pdf}}%
    \put(0.84460187,0.00317435){\makebox(0,0)[lt]{\lineheight{1.25}\smash{\begin{tabular}[t]{l}$(3,-1)$\end{tabular}}}}%
    \put(0,0){\includegraphics[width=\unitlength,page=4]{b8p2-dual.pdf}}%
    \put(0.80054022,0.08429515){\makebox(0,0)[lt]{\lineheight{1.25}\smash{\begin{tabular}[t]{l}$(0,1)$\end{tabular}}}}%
    \put(0,0){\includegraphics[width=\unitlength,page=5]{b8p2-dual.pdf}}%
    \put(0.73773153,0.02350266){\makebox(0,0)[lt]{\lineheight{1.25}\smash{\begin{tabular}[t]{l}$(1,0)$\end{tabular}}}}%
    \put(0,0){\includegraphics[width=\unitlength,page=6]{b8p2-dual.pdf}}%
    \put(0.48003339,0.02773879){\makebox(0,0)[lt]{\lineheight{1.25}\smash{\begin{tabular}[t]{l}$(1,0)$\end{tabular}}}}%
    \put(0,0){\includegraphics[width=\unitlength,page=7]{b8p2-dual.pdf}}%
    \put(0.4039852,0.09443822){\makebox(0,0)[lt]{\lineheight{1.25}\smash{\begin{tabular}[t]{l}$(-3,-1)$\end{tabular}}}}%
    \put(0,0){\includegraphics[width=\unitlength,page=8]{b8p2-dual.pdf}}%
    \put(0.39937981,0.00468679){\makebox(0,0)[lt]{\lineheight{1.25}\smash{\begin{tabular}[t]{l}$(-6,-1)$\end{tabular}}}}%
    \put(0,0){\includegraphics[width=\unitlength,page=9]{b8p2-dual.pdf}}%
    \put(0.64019575,0.04044254){\color[rgb]{0.4,0.4,0.4}\makebox(0,0)[lt]{\lineheight{1.25}\smash{\begin{tabular}[t]{l}$A_1$\end{tabular}}}}%
    \put(0,0){\includegraphics[width=\unitlength,page=10]{b8p2-dual.pdf}}%
    \put(0.79502376,0.19175696){\color[rgb]{0.4,0.4,0.4}\makebox(0,0)[lt]{\lineheight{1.25}\smash{\begin{tabular}[t]{l}$A_3$\end{tabular}}}}%
    \put(0,0){\includegraphics[width=\unitlength,page=11]{b8p2-dual.pdf}}%
    \put(0.70601579,0.28150315){\makebox(0,0)[lt]{\lineheight{1.25}\smash{\begin{tabular}[t]{l}$Q_4^\dual$\end{tabular}}}}%
    \put(0,0){\includegraphics[width=\unitlength,page=12]{b8p2-dual.pdf}}%
    \put(0.67863142,0.30689089){\makebox(0,0)[lt]{\lineheight{1.25}\smash{\begin{tabular}[t]{l}$Q_3^\dual$\end{tabular}}}}%
    \put(0,0){\includegraphics[width=\unitlength,page=13]{b8p2-dual.pdf}}%
    \put(0.76334764,0.35072238){\makebox(0,0)[lt]{\lineheight{1.25}\smash{\begin{tabular}[t]{l}$(0,1)$\end{tabular}}}}%
    \put(0,0){\includegraphics[width=\unitlength,page=14]{b8p2-dual.pdf}}%
    \put(0.62127511,0.37653354){\makebox(0,0)[lt]{\lineheight{1.25}\smash{\begin{tabular}[t]{l}$(-3,-1)$\end{tabular}}}}%
    \put(0,0){\includegraphics[width=\unitlength,page=15]{b8p2-dual.pdf}}%
    \put(0.76063192,0.39758574){\makebox(0,0)[lt]{\lineheight{1.25}\smash{\begin{tabular}[t]{l}$(3,2)$\end{tabular}}}}%
    \put(0,0){\includegraphics[width=\unitlength,page=16]{b8p2-dual.pdf}}%
    \put(0.54161438,0.23783809){\color[rgb]{0.4,0.4,0.4}\makebox(0,0)[lt]{\lineheight{1.25}\smash{\begin{tabular}[t]{l}$A_2$\end{tabular}}}}%
    \put(0.89095257,0.37568222){\makebox(0,0)[lt]{\lineheight{1.25}\smash{\begin{tabular}[t]{l}$\rho$\end{tabular}}}}%
    \put(0,0){\includegraphics[width=\unitlength,page=17]{b8p2-dual.pdf}}%
    \put(0.91018558,0.30942929){\makebox(0,0)[lt]{\lineheight{1.25}\smash{\begin{tabular}[t]{l}$Q_4^\dual$\end{tabular}}}}%
    \put(0,0){\includegraphics[width=\unitlength,page=18]{b8p2-dual.pdf}}%
    \put(0.49813338,0.33772311){\makebox(0,0)[lt]{\lineheight{1.25}\smash{\begin{tabular}[t]{l}$Q_3^\dual$\end{tabular}}}}%
    \put(0,0){\includegraphics[width=\unitlength,page=19]{b8p2-dual.pdf}}%
    \put(0.44274448,0.31586501){\makebox(0,0)[lt]{\lineheight{1.25}\smash{\begin{tabular}[t]{l}$\rho$\end{tabular}}}}%
  \end{picture}%
\endgroup%

%% file: b8p2-affine.pdf_tex
\begingroup%
  \makeatletter%
  \providecommand\color[2][]{%
    \errmessage{(Inkscape) Color is used for the text in Inkscape, but the package 'color.sty' is not loaded}%
    \renewcommand\color[2][]{}%
  }%
  \providecommand\transparent[1]{%
    \errmessage{(Inkscape) Transparency is used (non-zero) for the text in Inkscape, but the package 'transparent.sty' is not loaded}%
    \renewcommand\transparent[1]{}%
  }%
  \providecommand\rotatebox[2]{#2}%
  \newcommand*\fsize{\dimexpr\f@size pt\relax}%
  \newcommand*\lineheight[1]{\fontsize{\fsize}{#1\fsize}\selectfont}%
  \ifx\svgwidth\undefined%
    \setlength{\unitlength}{191.1349934bp}%
    \ifx\svgscale\undefined%
      \relax%
    \else%
      \setlength{\unitlength}{\unitlength * \real{\svgscale}}%
    \fi%
  \else%
    \setlength{\unitlength}{\svgwidth}%
  \fi%
  \global\let\svgwidth\undefined%
  \global\let\svgscale\undefined%
  \makeatother%
  \begin{picture}(1,0.95059367)%
    \lineheight{1}%
    \setlength\tabcolsep{0pt}%
    \put(0,0){\includegraphics[width=\unitlength,page=1]{b8p2-affine.pdf}}%
    \put(0.56891551,0.13835358){\color[rgb]{0.4,0.4,0.4}\makebox(0,0)[lt]{\lineheight{1.25}\smash{\begin{tabular}[t]{l}$A_1$\end{tabular}}}}%
    \put(0,0){\includegraphics[width=\unitlength,page=2]{b8p2-affine.pdf}}%
    \put(0.39856319,0.47946048){\color[rgb]{0.4,0.4,0.4}\makebox(0,0)[lt]{\lineheight{1.25}\smash{\begin{tabular}[t]{l}$A_2$\end{tabular}}}}%
    \put(0,0){\includegraphics[width=\unitlength,page=3]{b8p2-affine.pdf}}%
    \put(0.85933935,0.40745557){\color[rgb]{0.4,0.4,0.4}\makebox(0,0)[lt]{\lineheight{1.25}\smash{\begin{tabular}[t]{l}$A_3$\end{tabular}}}}%
    \put(0,0){\includegraphics[width=\unitlength,page=4]{b8p2-affine.pdf}}%
    \put(0.86995677,0.91075749){\color[rgb]{0,0,0}\makebox(0,0)[lt]{\lineheight{1.25}\smash{\begin{tabular}[t]{l}$\muout$\end{tabular}}}}%
    \put(0.86755576,0.77933815){\color[rgb]{0.50196078,0.2,0}\makebox(0,0)[lt]{\lineheight{1.25}\smash{\begin{tabular}[t]{l}$(3,2)$\end{tabular}}}}%
    \put(0.9140625,0.17898521){\color[rgb]{0.50196078,0.2,0}\makebox(0,0)[lt]{\lineheight{1.25}\smash{\begin{tabular}[t]{l}$(3,-1)$\end{tabular}}}}%
    \put(-0.00155702,0.20961699){\color[rgb]{0.50196078,0.2,0}\makebox(0,0)[lt]{\lineheight{1.25}\smash{\begin{tabular}[t]{l}$(-6,-1)$\end{tabular}}}}%
  \end{picture}%
\endgroup%

%% file: b4p2dual.pdf_tex
\begingroup%
  \makeatletter%
  \providecommand\color[2][]{%
    \errmessage{(Inkscape) Color is used for the text in Inkscape, but the package 'color.sty' is not loaded}%
    \renewcommand\color[2][]{}%
  }%
  \providecommand\transparent[1]{%
    \errmessage{(Inkscape) Transparency is used (non-zero) for the text in Inkscape, but the package 'transparent.sty' is not loaded}%
    \renewcommand\transparent[1]{}%
  }%
  \providecommand\rotatebox[2]{#2}%
  \newcommand*\fsize{\dimexpr\f@size pt\relax}%
  \newcommand*\lineheight[1]{\fontsize{\fsize}{#1\fsize}\selectfont}%
  \ifx\svgwidth\undefined%
    \setlength{\unitlength}{486.58144644bp}%
    \ifx\svgscale\undefined%
      \relax%
    \else%
      \setlength{\unitlength}{\unitlength * \real{\svgscale}}%
    \fi%
  \else%
    \setlength{\unitlength}{\svgwidth}%
  \fi%
  \global\let\svgwidth\undefined%
  \global\let\svgscale\undefined%
  \makeatother%
  \begin{picture}(1,0.49167466)%
    \lineheight{1}%
    \setlength\tabcolsep{0pt}%
    \put(0,0){\includegraphics[width=\unitlength,page=1]{b4p2dual.pdf}}%
    \put(0.47775726,0.17610487){\color[rgb]{0,0,0}\makebox(0,0)[lt]{\lineheight{1.25}\smash{\begin{tabular}[t]{l}shear\end{tabular}}}}%
    \put(0.65108989,0.01799111){\color[rgb]{0,0,0}\makebox(0,0)[lt]{\lineheight{1.25}\smash{\begin{tabular}[t]{l}shear\end{tabular}}}}%
    \put(0,0){\includegraphics[width=\unitlength,page=2]{b4p2dual.pdf}}%
    \put(0.62004522,0.15843316){\color[rgb]{0,0,0}\makebox(0,0)[lt]{\lineheight{1.25}\smash{\begin{tabular}[t]{l}$P_1^\dual$\end{tabular}}}}%
    \put(0.57786626,0.12589225){\color[rgb]{0,0,0}\makebox(0,0)[lt]{\lineheight{1.25}\smash{\begin{tabular}[t]{l}$Q_1^\dual$\end{tabular}}}}%
    \put(0.64506833,0.12559205){\color[rgb]{0,0,0}\makebox(0,0)[lt]{\lineheight{1.25}\smash{\begin{tabular}[t]{l}$P_{10}^\dual$\end{tabular}}}}%
    \put(0,0){\includegraphics[width=\unitlength,page=3]{b4p2dual.pdf}}%
    \put(0.61477241,0.08127259){\color[rgb]{0,0,0}\makebox(0,0)[lt]{\lineheight{1.25}\smash{\begin{tabular}[t]{l}$Q_{10}^\dual$\end{tabular}}}}%
    \put(0.57602796,0.23669091){\color[rgb]{0,0,0}\makebox(0,0)[lt]{\lineheight{1.25}\smash{\begin{tabular}[t]{l}$Q_2^\dual$\end{tabular}}}}%
    \put(0.5917296,0.28709998){\color[rgb]{0,0,0}\makebox(0,0)[lt]{\lineheight{1.25}\smash{\begin{tabular}[t]{l}$Q_3^\dual$\end{tabular}}}}%
    \put(0.65726513,0.34553911){\color[rgb]{0,0,0}\makebox(0,0)[lt]{\lineheight{1.25}\smash{\begin{tabular}[t]{l}$Q_4^\dual$\end{tabular}}}}%
    \put(0.62303383,0.36274913){\color[rgb]{0,0,0}\makebox(0,0)[lt]{\lineheight{1.25}\smash{\begin{tabular}[t]{l}$R_x^\dual$\end{tabular}}}}%
    \put(0.81892651,0.29067209){\color[rgb]{0,0,0}\makebox(0,0)[lt]{\lineheight{1.25}\smash{\begin{tabular}[t]{l}$R_y^\dual$\end{tabular}}}}%
    \put(0.83581504,0.12743213){\color[rgb]{0,0,0}\makebox(0,0)[lt]{\lineheight{1.25}\smash{\begin{tabular}[t]{l}$R_z^\dual$\end{tabular}}}}%
    \put(0,0){\includegraphics[width=\unitlength,page=4]{b4p2dual.pdf}}%
    \put(0.12548001,0.22924323){\color[rgb]{0.50196078,0.2,0}\makebox(0,0)[lt]{\lineheight{1.25}\smash{\begin{tabular}[t]{l}$P_2$\end{tabular}}}}%
    \put(0.11359393,0.19029406){\color[rgb]{0.50196078,0.2,0}\makebox(0,0)[lt]{\lineheight{1.25}\smash{\begin{tabular}[t]{l}$P_1$\end{tabular}}}}%
    \put(0.17086558,0.13913287){\color[rgb]{0.50196078,0.2,0}\makebox(0,0)[lt]{\lineheight{1.25}\smash{\begin{tabular}[t]{l}$P_{10}$\end{tabular}}}}%
    \put(0.21809272,0.12814109){\color[rgb]{0.50196078,0.2,0}\makebox(0,0)[lt]{\lineheight{1.25}\smash{\begin{tabular}[t]{l}$P_9$\end{tabular}}}}%
    \put(0.236076,0.31688646){\color[rgb]{0.50196078,0.2,0}\makebox(0,0)[lt]{\lineheight{1.25}\smash{\begin{tabular}[t]{l}$P_4$\end{tabular}}}}%
    \put(0.19895514,0.30406863){\color[rgb]{0.50196078,0.2,0}\makebox(0,0)[lt]{\lineheight{1.25}\smash{\begin{tabular}[t]{l}$P_3$\end{tabular}}}}%
    \put(0.01676073,0.24799307){\color[rgb]{0.50196078,0.2,0}\makebox(0,0)[lt]{\lineheight{1.25}\smash{\begin{tabular}[t]{l}$Q_2$\end{tabular}}}}%
    \put(-0.00138118,0.16085644){\color[rgb]{0.50196078,0.2,0}\makebox(0,0)[lt]{\lineheight{1.25}\smash{\begin{tabular}[t]{l}$Q_1$\end{tabular}}}}%
    \put(0.14569212,0.36920744){\color[rgb]{0.50196078,0.2,0}\makebox(0,0)[lt]{\lineheight{1.25}\smash{\begin{tabular}[t]{l}$Q_3$\end{tabular}}}}%
    \put(0.24578728,0.39170278){\color[rgb]{0.50196078,0.2,0}\makebox(0,0)[lt]{\lineheight{1.25}\smash{\begin{tabular}[t]{l}$Q_4$\end{tabular}}}}%
    \put(0.18663994,0.43434533){\color[rgb]{0.78431373,0.21568627,0.21568627}\makebox(0,0)[lt]{\lineheight{1.25}\smash{\begin{tabular}[t]{l}$R_x$\end{tabular}}}}%
    \put(0.40470806,0.40712686){\color[rgb]{0.78431373,0.21568627,0.21568627}\makebox(0,0)[lt]{\lineheight{1.25}\smash{\begin{tabular}[t]{l}$R_y$\end{tabular}}}}%
    \put(0.40571466,0.2139363){\color[rgb]{0.78431373,0.21568627,0.21568627}\makebox(0,0)[lt]{\lineheight{1.25}\smash{\begin{tabular}[t]{l}$R_z$\end{tabular}}}}%
    \put(0,0){\includegraphics[width=\unitlength,page=5]{b4p2dual.pdf}}%
    \put(0.72183186,0.45432779){\color[rgb]{0.50196078,0.2,0}\makebox(0,0)[lt]{\lineheight{1.25}\smash{\begin{tabular}[t]{l}$\mu_{out}$\end{tabular}}}}%
    \put(0.88251896,0.04256046){\color[rgb]{0.50196078,0.2,0}\makebox(0,0)[lt]{\lineheight{1.25}\smash{\begin{tabular}[t]{l}$\mu_{out}$\end{tabular}}}}%
  \end{picture}%
\endgroup%

%% file: ellip.pdf_tex
\begingroup%
  \makeatletter%
  \providecommand\color[2][]{%
    \errmessage{(Inkscape) Color is used for the text in Inkscape, but the package 'color.sty' is not loaded}%
    \renewcommand\color[2][]{}%
  }%
  \providecommand\transparent[1]{%
    \errmessage{(Inkscape) Transparency is used (non-zero) for the text in Inkscape, but the package 'transparent.sty' is not loaded}%
    \renewcommand\transparent[1]{}%
  }%
  \providecommand\rotatebox[2]{#2}%
  \newcommand*\fsize{\dimexpr\f@size pt\relax}%
  \newcommand*\lineheight[1]{\fontsize{\fsize}{#1\fsize}\selectfont}%
  \ifx\svgwidth\undefined%
    \setlength{\unitlength}{369.63926916bp}%
    \ifx\svgscale\undefined%
      \relax%
    \else%
      \setlength{\unitlength}{\unitlength * \real{\svgscale}}%
    \fi%
  \else%
    \setlength{\unitlength}{\svgwidth}%
  \fi%
  \global\let\svgwidth\undefined%
  \global\let\svgscale\undefined%
  \makeatother%
  \begin{picture}(1,0.32676287)%
    \lineheight{1}%
    \setlength\tabcolsep{0pt}%
    \put(0,0){\includegraphics[width=\unitlength,page=1]{ellip.pdf}}%
    \put(0.54232495,0.13368971){\color[rgb]{0,0,0}\makebox(0,0)[lt]{\lineheight{1.25}\smash{\begin{tabular}[t]{l}$R_x^\dual$\end{tabular}}}}%
    \put(0,0){\includegraphics[width=\unitlength,page=2]{ellip.pdf}}%
    \put(0.25236184,0.30576041){\color[rgb]{0,0,0}\makebox(0,0)[lt]{\lineheight{1.25}\smash{\begin{tabular}[t]{l}$x$\end{tabular}}}}%
    \put(0.12163319,0.16358561){\color[rgb]{0,0,0}\makebox(0,0)[lt]{\lineheight{1.25}\smash{\begin{tabular}[t]{l}$\Phi(X)$\end{tabular}}}}%
    \put(0.19882296,0.22431216){\color[rgb]{0.78431373,0.21568627,0.21568627}\makebox(0,0)[lt]{\lineheight{1.25}\smash{\begin{tabular}[t]{l}$R_x$\end{tabular}}}}%
    \put(0.0244837,0.17223713){\color[rgb]{0.50196078,0.2,0}\makebox(0,0)[lt]{\lineheight{1.25}\smash{\begin{tabular}[t]{l}$Q_2$\end{tabular}}}}%
    \put(0.14290704,0.03356402){\color[rgb]{0.50196078,0.2,0}\makebox(0,0)[lt]{\lineheight{1.25}\smash{\begin{tabular}[t]{l}$Q_1$\end{tabular}}}}%
    \put(0,0){\includegraphics[width=\unitlength,page=3]{ellip.pdf}}%
    \put(0.72351926,0.17617){\color[rgb]{0,0,0}\makebox(0,0)[lt]{\lineheight{1.25}\smash{\begin{tabular}[t]{l}$\mu_{out}$\end{tabular}}}}%
    \put(0.89563912,0.01095779){\color[rgb]{0,0,0}\makebox(0,0)[lt]{\lineheight{1.25}\smash{\begin{tabular}[t]{l}$\mu_{out}$\end{tabular}}}}%
    \put(0,0){\includegraphics[width=\unitlength,page=4]{ellip.pdf}}%
    \put(0.85015763,0.17169591){\color[rgb]{0,0,0}\makebox(0,0)[lt]{\lineheight{1.25}\smash{\begin{tabular}[t]{l}$\mu^{(2)}_{out}$\end{tabular}}}}%
    \put(0,0){\includegraphics[width=\unitlength,page=5]{ellip.pdf}}%
    \put(0.89316027,0.08924363){\color[rgb]{0,0,0}\makebox(0,0)[lt]{\lineheight{1.25}\smash{\begin{tabular}[t]{l}$\mu^{(1)}_{out}$\end{tabular}}}}%
    \put(0,0){\includegraphics[width=\unitlength,page=6]{ellip.pdf}}%
    \put(0.43349455,0.18042713){\color[rgb]{0,0,0}\makebox(0,0)[lt]{\lineheight{1.25}\smash{\begin{tabular}[t]{l}$Q_2^\dual$\end{tabular}}}}%
    \put(0.57187202,0.03385728){\color[rgb]{0,0,0}\makebox(0,0)[lt]{\lineheight{1.25}\smash{\begin{tabular}[t]{l}$Q_1^\dual$\end{tabular}}}}%
  \end{picture}%
\endgroup%

%% file: divcurve.pdf_tex
\begingroup%
  \makeatletter%
  \providecommand\color[2][]{%
    \errmessage{(Inkscape) Color is used for the text in Inkscape, but the package 'color.sty' is not loaded}%
    \renewcommand\color[2][]{}%
  }%
  \providecommand\transparent[1]{%
    \errmessage{(Inkscape) Transparency is used (non-zero) for the text in Inkscape, but the package 'transparent.sty' is not loaded}%
    \renewcommand\transparent[1]{}%
  }%
  \providecommand\rotatebox[2]{#2}%
  \newcommand*\fsize{\dimexpr\f@size pt\relax}%
  \newcommand*\lineheight[1]{\fontsize{\fsize}{#1\fsize}\selectfont}%
  \ifx\svgwidth\undefined%
    \setlength{\unitlength}{372.26005608bp}%
    \ifx\svgscale\undefined%
      \relax%
    \else%
      \setlength{\unitlength}{\unitlength * \real{\svgscale}}%
    \fi%
  \else%
    \setlength{\unitlength}{\svgwidth}%
  \fi%
  \global\let\svgwidth\undefined%
  \global\let\svgscale\undefined%
  \makeatother%
  \begin{picture}(1,0.53571076)%
    \lineheight{1}%
    \setlength\tabcolsep{0pt}%
    \put(0,0){\includegraphics[width=\unitlength,page=1]{divcurve.pdf}}%
    \put(0.91288381,0.39324887){\color[rgb]{0,0,1}\makebox(0,0)[lt]{\lineheight{1.25}\smash{\begin{tabular}[t]{l}$(3,2)$\end{tabular}}}}%
    \put(0,0){\includegraphics[width=\unitlength,page=2]{divcurve.pdf}}%
    \put(0.85141785,0.2619827){\makebox(0,0)[lt]{\lineheight{1.25}\smash{\begin{tabular}[t]{l}$v_1$\end{tabular}}}}%
    \put(0.77653325,0.17176757){\makebox(0,0)[lt]{\lineheight{1.25}\smash{\begin{tabular}[t]{l}$v_0$\end{tabular}}}}%
    \put(0.96674283,0.50547569){\makebox(0,0)[lt]{\lineheight{1.25}\smash{\begin{tabular}[t]{l}$v_2$\end{tabular}}}}%
    \put(0.22147208,0.27214858){\makebox(0,0)[lt]{\lineheight{1.25}\smash{\begin{tabular}[t]{l}$u_0$\end{tabular}}}}%
    \put(0.24415935,0.32891117){\makebox(0,0)[lt]{\lineheight{1.25}\smash{\begin{tabular}[t]{l}$u_1$\end{tabular}}}}%
    \put(0.24676808,0.368493){\makebox(0,0)[lt]{\lineheight{1.25}\smash{\begin{tabular}[t]{l}$u_2$\end{tabular}}}}%
  \end{picture}%
\endgroup%

%% file: vertcurve.pdf_tex
\begingroup%
  \makeatletter%
  \providecommand\color[2][]{%
    \errmessage{(Inkscape) Color is used for the text in Inkscape, but the package 'color.sty' is not loaded}%
    \renewcommand\color[2][]{}%
  }%
  \providecommand\transparent[1]{%
    \errmessage{(Inkscape) Transparency is used (non-zero) for the text in Inkscape, but the package 'transparent.sty' is not loaded}%
    \renewcommand\transparent[1]{}%
  }%
  \providecommand\rotatebox[2]{#2}%
  \newcommand*\fsize{\dimexpr\f@size pt\relax}%
  \newcommand*\lineheight[1]{\fontsize{\fsize}{#1\fsize}\selectfont}%
  \ifx\svgwidth\undefined%
    \setlength{\unitlength}{416.5126384bp}%
    \ifx\svgscale\undefined%
      \relax%
    \else%
      \setlength{\unitlength}{\unitlength * \real{\svgscale}}%
    \fi%
  \else%
    \setlength{\unitlength}{\svgwidth}%
  \fi%
  \global\let\svgwidth\undefined%
  \global\let\svgscale\undefined%
  \makeatother%
  \begin{picture}(1,0.64176751)%
    \lineheight{1}%
    \setlength\tabcolsep{0pt}%
    \put(0,0){\includegraphics[width=\unitlength,page=1]{vertcurve.pdf}}%
    \put(0.51244715,0.48508174){\makebox(0,0)[lt]{\lineheight{1.25}\smash{\begin{tabular}[t]{l}$(3,0)$\end{tabular}}}}%
    \put(0.48023735,0.45865557){\makebox(0,0)[lt]{\lineheight{1.25}\smash{\begin{tabular}[t]{l}L\end{tabular}}}}%
    \put(0,0){\includegraphics[width=\unitlength,page=2]{vertcurve.pdf}}%
    \put(0.6669349,0.4243126){\makebox(0,0)[lt]{\lineheight{1.25}\smash{\begin{tabular}[t]{l}$(3,-1)$\end{tabular}}}}%
    \put(0.53073929,0.45504358){\makebox(0,0)[lt]{\lineheight{1.25}\smash{\begin{tabular}[t]{l}$u_0$\end{tabular}}}}%
    \put(0.5956377,0.45738989){\makebox(0,0)[lt]{\lineheight{1.25}\smash{\begin{tabular}[t]{l}$u_1$\end{tabular}}}}%
    \put(0.62999672,0.43273394){\makebox(0,0)[lt]{\lineheight{1.25}\smash{\begin{tabular}[t]{l}$u_3$\end{tabular}}}}%
    \put(0,0){\includegraphics[width=\unitlength,page=3]{vertcurve.pdf}}%
    \put(0.18796302,0.23485891){\makebox(0,0)[lt]{\lineheight{1.25}\smash{\begin{tabular}[t]{l}$u_0$\end{tabular}}}}%
    \put(0,0){\includegraphics[width=\unitlength,page=4]{vertcurve.pdf}}%
    \put(0.27809854,0.20031025){\makebox(0,0)[lt]{\lineheight{1.25}\smash{\begin{tabular}[t]{l}$u_1$\end{tabular}}}}%
    \put(0,0){\includegraphics[width=\unitlength,page=5]{vertcurve.pdf}}%
    \put(0.3085166,0.23776515){\makebox(0,0)[lt]{\lineheight{1.25}\smash{\begin{tabular}[t]{l}$u_2$\end{tabular}}}}%
    \put(0.33460541,0.1920514){\makebox(0,0)[lt]{\lineheight{1.25}\smash{\begin{tabular}[t]{l}$u_3$\end{tabular}}}}%
    \put(0,0){\includegraphics[width=\unitlength,page=6]{vertcurve.pdf}}%
    \put(0.8201058,0.13880847){\makebox(0,0)[lt]{\lineheight{1.25}\smash{\begin{tabular}[t]{l}$v_1$\end{tabular}}}}%
    \put(0.73652843,0.19047504){\makebox(0,0)[lt]{\lineheight{1.25}\smash{\begin{tabular}[t]{l}$v_0$\end{tabular}}}}%
    \put(0.82643209,0.19255709){\makebox(0,0)[lt]{\lineheight{1.25}\smash{\begin{tabular}[t]{l}$v_2$\end{tabular}}}}%
    \put(0.68944598,0.03322339){\color[rgb]{0,0,1}\makebox(0,0)[lt]{\lineheight{1.25}\smash{\begin{tabular}[t]{l}{\tiny (3,0)}\end{tabular}}}}%
    \put(0,0){\includegraphics[width=\unitlength,page=7]{vertcurve.pdf}}%
    \put(0.94428964,0.04459066){\makebox(0,0)[lt]{\lineheight{1.25}\smash{\begin{tabular}[t]{l}$v_3$\end{tabular}}}}%
    \put(0.88718231,0.10779566){\color[rgb]{0,0,1}\makebox(0,0)[lt]{\lineheight{1.25}\smash{\begin{tabular}[t]{l}{\tiny (3,-1)}\end{tabular}}}}%
    \put(0.87371916,0.18919375){\color[rgb]{0,0,1}\makebox(0,0)[lt]{\lineheight{1.25}\smash{\begin{tabular}[t]{l}{\tiny (0,-1)}\end{tabular}}}}%
    \put(0,0){\includegraphics[width=\unitlength,page=8]{vertcurve.pdf}}%
    \put(0.62779008,0.47531118){\makebox(0,0)[lt]{\lineheight{1.25}\smash{\begin{tabular}[t]{l}$u_2$\end{tabular}}}}%
    \put(0,0){\includegraphics[width=\unitlength,page=9]{vertcurve.pdf}}%
  \end{picture}%
\endgroup%

%% file: dp1full.pdf_tex
\begingroup%
  \makeatletter%
  \providecommand\color[2][]{%
    \errmessage{(Inkscape) Color is used for the text in Inkscape, but the package 'color.sty' is not loaded}%
    \renewcommand\color[2][]{}%
  }%
  \providecommand\transparent[1]{%
    \errmessage{(Inkscape) Transparency is used (non-zero) for the text in Inkscape, but the package 'transparent.sty' is not loaded}%
    \renewcommand\transparent[1]{}%
  }%
  \providecommand\rotatebox[2]{#2}%
  \newcommand*\fsize{\dimexpr\f@size pt\relax}%
  \newcommand*\lineheight[1]{\fontsize{\fsize}{#1\fsize}\selectfont}%
  \ifx\svgwidth\undefined%
    \setlength{\unitlength}{431.08131788bp}%
    \ifx\svgscale\undefined%
      \relax%
    \else%
      \setlength{\unitlength}{\unitlength * \real{\svgscale}}%
    \fi%
  \else%
    \setlength{\unitlength}{\svgwidth}%
  \fi%
  \global\let\svgwidth\undefined%
  \global\let\svgscale\undefined%
  \makeatother%
  \begin{picture}(1,0.64929278)%
    \lineheight{1}%
    \setlength\tabcolsep{0pt}%
    \put(0,0){\includegraphics[width=\unitlength,page=1]{dp1full.pdf}}%
    \put(0.2062042,0.23027708){\makebox(0,0)[lt]{\lineheight{1.25}\smash{\begin{tabular}[t]{l}$u_0$\end{tabular}}}}%
    \put(0.26974097,0.18484059){\makebox(0,0)[lt]{\lineheight{1.25}\smash{\begin{tabular}[t]{l}$u_1$\end{tabular}}}}%
    \put(0.34154013,0.22365761){\makebox(0,0)[lt]{\lineheight{1.25}\smash{\begin{tabular}[t]{l}$u_2$\end{tabular}}}}%
    \put(0.23976635,0.2998991){\makebox(0,0)[lt]{\lineheight{1.25}\smash{\begin{tabular}[t]{l}$u_3$\end{tabular}}}}%
    \put(0.08001001,0.34283134){\makebox(0,0)[lt]{\lineheight{1.25}\smash{\begin{tabular}[t]{l}$u_4$\end{tabular}}}}%
    \put(-0.00063909,0.36857781){\makebox(0,0)[lt]{\lineheight{1.25}\smash{\begin{tabular}[t]{l}$u_6$\end{tabular}}}}%
    \put(0.0845073,0.37803677){\makebox(0,0)[lt]{\lineheight{1.25}\smash{\begin{tabular}[t]{l}$u_5$\end{tabular}}}}%
    \put(0.17566693,0.10101327){\makebox(0,0)[lt]{\lineheight{1.25}\smash{\begin{tabular}[t]{l}$u_8$\end{tabular}}}}%
    \put(0.14029611,0.19389448){\makebox(0,0)[lt]{\lineheight{1.25}\smash{\begin{tabular}[t]{l}$u_7$\end{tabular}}}}%
    \put(0,0){\includegraphics[width=\unitlength,page=2]{dp1full.pdf}}%
    \put(0.51273648,0.47183244){\makebox(0,0)[lt]{\lineheight{1.25}\smash{\begin{tabular}[t]{l}$(1,0)$\end{tabular}}}}%
    \put(0.57944935,0.51262229){\makebox(0,0)[lt]{\lineheight{1.25}\smash{\begin{tabular}[t]{l}$(1,1)$\end{tabular}}}}%
    \put(0.48263349,0.2913954){\makebox(0,0)[lt]{\lineheight{1.25}\smash{\begin{tabular}[t]{l}$(-1,0)$\end{tabular}}}}%
    \put(0.33034464,0.63471791){\makebox(0,0)[lt]{\lineheight{1.25}\smash{\begin{tabular}[t]{l}$(2,1)$\end{tabular}}}}%
    \put(0.39643951,0.61603874){\makebox(0,0)[lt]{\lineheight{1.25}\smash{\begin{tabular}[t]{l}$(3,2)$\end{tabular}}}}%
    \put(0.47586552,0.44563063){\makebox(0,0)[lt]{\lineheight{1.25}\smash{\begin{tabular}[t]{l}$L$\end{tabular}}}}%
    \put(0,0){\includegraphics[width=\unitlength,page=3]{dp1full.pdf}}%
    \put(0.95157395,0.59371016){\makebox(0,0)[lt]{\lineheight{1.25}\smash{\begin{tabular}[t]{l}$(3,2)$\end{tabular}}}}%
    \put(0,0){\includegraphics[width=\unitlength,page=4]{dp1full.pdf}}%
    \put(0.62142196,0.20062875){\color[rgb]{0,0,1}\transparent{0.99645102}\rotatebox{63.541492}{\makebox(0,0)[lt]{\lineheight{1.25}\smash{\begin{tabular}[t]{l}$\simeq$\end{tabular}}}}}%
    \put(0,0){\includegraphics[width=\unitlength,page=5]{dp1full.pdf}}%
    \put(0.82702589,0.19765634){\color[rgb]{0,0,1}\transparent{0.99645102}\makebox(0,0)[lt]{\lineheight{1.25}\smash{\begin{tabular}[t]{l}$\simeq$\end{tabular}}}}%
    \put(0.78237829,0.15718744){\makebox(0,0)[lt]{\lineheight{1.25}\smash{\begin{tabular}[t]{l}$v_1$\end{tabular}}}}%
    \put(0.72787132,0.18505956){\makebox(0,0)[lt]{\lineheight{1.25}\smash{\begin{tabular}[t]{l}$v_0$\end{tabular}}}}%
    \put(0.82365988,0.15335653){\makebox(0,0)[lt]{\lineheight{1.25}\smash{\begin{tabular}[t]{l}$v_2$\end{tabular}}}}%
    \put(0.87732894,0.43439205){\makebox(0,0)[lt]{\lineheight{1.25}\smash{\begin{tabular}[t]{l}$v_3$\end{tabular}}}}%
    \put(0.85445796,0.49687444){\makebox(0,0)[lt]{\lineheight{1.25}\smash{\begin{tabular}[t]{l}$v_4$\end{tabular}}}}%
    \put(0.718511,0.29517462){\makebox(0,0)[lt]{\lineheight{1.25}\smash{\begin{tabular}[t]{l}$v_6$\end{tabular}}}}%
    \put(0.58626913,0.13637695){\makebox(0,0)[lt]{\lineheight{1.25}\smash{\begin{tabular}[t]{l}$v_6$\end{tabular}}}}%
    \put(0.64627136,0.10100139){\makebox(0,0)[lt]{\lineheight{1.25}\smash{\begin{tabular}[t]{l}$v_7$\end{tabular}}}}%
    \put(0.89905276,0.56615863){\makebox(0,0)[lt]{\lineheight{1.25}\smash{\begin{tabular}[t]{l}$v_5$\end{tabular}}}}%
    \put(0.71966841,0.09795493){\makebox(0,0)[lt]{\lineheight{1.25}\smash{\begin{tabular}[t]{l}$v_8$\end{tabular}}}}%
    \put(0.6702187,0.12946539){\color[rgb]{0,0,1}\makebox(0,0)[lt]{\lineheight{1.25}\smash{\begin{tabular}[t]{l}{\tiny (-1,0)}\end{tabular}}}}%
    \put(0.76222787,0.18355137){\color[rgb]{0,0,1}\makebox(0,0)[lt]{\lineheight{1.25}\smash{\begin{tabular}[t]{l}{\tiny (1,0)}\end{tabular}}}}%
    \put(0.79558445,0.3704055){\color[rgb]{0,0,1}\rotatebox{59.456533}{\makebox(0,0)[lt]{\lineheight{1.25}\smash{\begin{tabular}[t]{l}{\tiny (2,1)}\end{tabular}}}}}%
    \put(0.86270633,0.34947105){\color[rgb]{0,0,1}\rotatebox{78.60363}{\makebox(0,0)[lt]{\lineheight{1.25}\smash{\begin{tabular}[t]{l}{\tiny (1,1)}\end{tabular}}}}}%
  \end{picture}%
\endgroup%

%% file: corner.pdf_tex
\begingroup%
  \makeatletter%
  \providecommand\color[2][]{%
    \errmessage{(Inkscape) Color is used for the text in Inkscape, but the package 'color.sty' is not loaded}%
    \renewcommand\color[2][]{}%
  }%
  \providecommand\transparent[1]{%
    \errmessage{(Inkscape) Transparency is used (non-zero) for the text in Inkscape, but the package 'transparent.sty' is not loaded}%
    \renewcommand\transparent[1]{}%
  }%
  \providecommand\rotatebox[2]{#2}%
  \newcommand*\fsize{\dimexpr\f@size pt\relax}%
  \newcommand*\lineheight[1]{\fontsize{\fsize}{#1\fsize}\selectfont}%
  \ifx\svgwidth\undefined%
    \setlength{\unitlength}{387.13999625bp}%
    \ifx\svgscale\undefined%
      \relax%
    \else%
      \setlength{\unitlength}{\unitlength * \real{\svgscale}}%
    \fi%
  \else%
    \setlength{\unitlength}{\svgwidth}%
  \fi%
  \global\let\svgwidth\undefined%
  \global\let\svgscale\undefined%
  \makeatother%
  \begin{picture}(1,0.32360934)%
    \lineheight{1}%
    \setlength\tabcolsep{0pt}%
    \put(0,0){\includegraphics[width=\unitlength,page=1]{corner.pdf}}%
    \put(0.14488597,0.11681101){\color[rgb]{0,0,1}\makebox(0,0)[lt]{\lineheight{1.25}\smash{\begin{tabular}[t]{l}$w_{out}$\end{tabular}}}}%
    \put(0.21370588,0.13211172){\color[rgb]{0,0,0}\makebox(0,0)[lt]{\lineheight{1.25}\smash{\begin{tabular}[t]{l}$R_x^\dual$\end{tabular}}}}%
    \put(0.97007719,0.30341189){\color[rgb]{0,0,0}\makebox(0,0)[lt]{\lineheight{1.25}\smash{\begin{tabular}[t]{l}$x$\end{tabular}}}}%
    \put(0,0){\includegraphics[width=\unitlength,page=2]{corner.pdf}}%
    \put(0.58389381,0.11165229){\color[rgb]{0,0,1}\makebox(0,0)[lt]{\lineheight{1.25}\smash{\begin{tabular}[t]{l}$w_{out}$\end{tabular}}}}%
    \put(0,0){\includegraphics[width=\unitlength,page=3]{corner.pdf}}%
    \put(0.57287011,0.15775413){\color[rgb]{0,0,1}\makebox(0,0)[lt]{\lineheight{1.25}\smash{\begin{tabular}[t]{l}$w_0$\end{tabular}}}}%
    \put(0.66186475,0.15231244){\color[rgb]{0,0,1}\makebox(0,0)[lt]{\lineheight{1.25}\smash{\begin{tabular}[t]{l}$w_1$\end{tabular}}}}%
    \put(0.59488543,0.2145585){\color[rgb]{0,0,1}\makebox(0,0)[lt]{\lineheight{1.25}\smash{\begin{tabular}[t]{l}$w_2$\end{tabular}}}}%
    \put(0.92651233,0.26587175){\color[rgb]{0.50196078,0.2,0}\makebox(0,0)[lt]{\lineheight{1.25}\smash{\begin{tabular}[t]{l}$u_{w_2}$\end{tabular}}}}%
    \put(0.92651233,0.19227096){\color[rgb]{0.50196078,0.2,0}\makebox(0,0)[lt]{\lineheight{1.25}\smash{\begin{tabular}[t]{l}$u_{w_1}$\end{tabular}}}}%
    \put(0.83003066,0.17512247){\color[rgb]{0,0,1}\makebox(0,0)[lt]{\lineheight{1.25}\smash{\begin{tabular}[t]{l}$u_{w_0}$, \\$u_{w_{out}}$\end{tabular}}}}%
    \put(0,0){\includegraphics[width=\unitlength,page=4]{corner.pdf}}%
    \put(0.11004223,0.00784117){\color[rgb]{0,0,1}\makebox(0,0)[lt]{\lineheight{1.25}\smash{\begin{tabular}[t]{l}$\Gamma_0$\end{tabular}}}}%
    \put(0,0){\includegraphics[width=\unitlength,page=5]{corner.pdf}}%
    \put(0.29706156,0.13462527){\color[rgb]{0,0,0}\makebox(0,0)[lt]{\lineheight{1.25}\smash{\begin{tabular}[t]{l}Augment\end{tabular}}}}%
    \put(0.1376656,0.17584561){\color[rgb]{0,0,1}\makebox(0,0)[lt]{\lineheight{1.25}\smash{\begin{tabular}[t]{l}$e_{out}$\end{tabular}}}}%
  \end{picture}%
\endgroup%

%% file: ff2ell.pdf_tex
\begingroup%
  \makeatletter%
  \providecommand\color[2][]{%
    \errmessage{(Inkscape) Color is used for the text in Inkscape, but the package 'color.sty' is not loaded}%
    \renewcommand\color[2][]{}%
  }%
  \providecommand\transparent[1]{%
    \errmessage{(Inkscape) Transparency is used (non-zero) for the text in Inkscape, but the package 'transparent.sty' is not loaded}%
    \renewcommand\transparent[1]{}%
  }%
  \providecommand\rotatebox[2]{#2}%
  \newcommand*\fsize{\dimexpr\f@size pt\relax}%
  \newcommand*\lineheight[1]{\fontsize{\fsize}{#1\fsize}\selectfont}%
  \ifx\svgwidth\undefined%
    \setlength{\unitlength}{517.67169128bp}%
    \ifx\svgscale\undefined%
      \relax%
    \else%
      \setlength{\unitlength}{\unitlength * \real{\svgscale}}%
    \fi%
  \else%
    \setlength{\unitlength}{\svgwidth}%
  \fi%
  \global\let\svgwidth\undefined%
  \global\let\svgscale\undefined%
  \makeatother%
  \begin{picture}(1,0.63830248)%
    \lineheight{1}%
    \setlength\tabcolsep{0pt}%
    \put(0,0){\includegraphics[width=\unitlength,page=1]{ff2ell.pdf}}%
    \put(0.25280496,0.07604676){\color[rgb]{0.50196078,0.2,0}\makebox(0,0)[lt]{\lineheight{1.25}\smash{\begin{tabular}[t]{l}$P_1$\end{tabular}}}}%
    \put(0.31153087,0.10884692){\color[rgb]{0.50196078,0.2,0}\makebox(0,0)[lt]{\lineheight{1.25}\smash{\begin{tabular}[t]{l}$P_2$\end{tabular}}}}%
    \put(0.13362769,0.08362765){\color[rgb]{0.50196078,0.2,0}\makebox(0,0)[lt]{\lineheight{1.25}\smash{\begin{tabular}[t]{l}$Q_1$\end{tabular}}}}%
    \put(0.21820072,0.14381954){\color[rgb]{0.50196078,0.2,0}\makebox(0,0)[lt]{\lineheight{1.25}\smash{\begin{tabular}[t]{l}$Q_2$\end{tabular}}}}%
    \put(0.49910249,0.37171688){\color[rgb]{0,0,1}\makebox(0,0)[lt]{\lineheight{1.25}\smash{\begin{tabular}[t]{l}$(-1,1)$\end{tabular}}}}%
    \put(0.0800613,0.13157841){\color[rgb]{0.78431373,0.21568627,0.21568627}\makebox(0,0)[lt]{\lineheight{1.25}\smash{\begin{tabular}[t]{l}$R_1$\end{tabular}}}}%
    \put(0.15136626,0.20173472){\color[rgb]{0.10196078,0.10196078,0.10196078}\makebox(0,0)[lt]{\lineheight{1.25}\smash{\begin{tabular}[t]{l}$-x+2y=1$\end{tabular}}}}%
    \put(-0.00095271,0.07982653){\color[rgb]{0.10196078,0.10196078,0.10196078}\makebox(0,0)[lt]{\lineheight{1.25}\smash{\begin{tabular}[t]{l}$-x+y=1$\end{tabular}}}}%
    \put(0.40770108,0.23706719){\color[rgb]{0.10196078,0.10196078,0.10196078}\makebox(0,0)[lt]{\lineheight{1.25}\smash{\begin{tabular}[t]{l}$x+2y=1$\end{tabular}}}}%
    \put(0.56401644,0.31367303){\color[rgb]{0,0,0}\makebox(0,0)[lt]{\lineheight{1.25}\smash{\begin{tabular}[t]{l}$R_1^\dual$\end{tabular}}}}%
    \put(0.59171049,0.22755592){\color[rgb]{0,0,0}\makebox(0,0)[lt]{\lineheight{1.25}\smash{\begin{tabular}[t]{l}$Q_1^\dual$\end{tabular}}}}%
    \put(0.75407924,0.06880368){\color[rgb]{0,0,0}\makebox(0,0)[lt]{\lineheight{1.25}\smash{\begin{tabular}[t]{l}$P_1^\dual$\end{tabular}}}}%
    \put(0.74435812,0.1142422){\color[rgb]{0,0,0}\makebox(0,0)[lt]{\lineheight{1.25}\smash{\begin{tabular}[t]{l}$P_2^\dual$\end{tabular}}}}%
    \put(0.59249329,0.42249191){\color[rgb]{0,0,0}\makebox(0,0)[lt]{\lineheight{1.25}\smash{\begin{tabular}[t]{l}$Q_2^\dual$\end{tabular}}}}%
    \put(0,0){\includegraphics[width=\unitlength,page=2]{ff2ell.pdf}}%
    \put(0.75668741,0.39042274){\color[rgb]{0,0,0}\makebox(0,0)[lt]{\lineheight{1.25}\smash{\begin{tabular}[t]{l}shear\end{tabular}}}}%
    \put(0.80254068,0.1100724){\color[rgb]{0,0,1}\makebox(0,0)[lt]{\lineheight{1.25}\smash{\begin{tabular}[t]{l}$(0,1)$\end{tabular}}}}%
    \put(0,0){\includegraphics[width=\unitlength,page=3]{ff2ell.pdf}}%
  \end{picture}%
\endgroup%

%% file: moving-ff.pdf_tex
\begingroup%
  \makeatletter%
  \providecommand\color[2][]{%
    \errmessage{(Inkscape) Color is used for the text in Inkscape, but the package 'color.sty' is not loaded}%
    \renewcommand\color[2][]{}%
  }%
  \providecommand\transparent[1]{%
    \errmessage{(Inkscape) Transparency is used (non-zero) for the text in Inkscape, but the package 'transparent.sty' is not loaded}%
    \renewcommand\transparent[1]{}%
  }%
  \providecommand\rotatebox[2]{#2}%
  \newcommand*\fsize{\dimexpr\f@size pt\relax}%
  \newcommand*\lineheight[1]{\fontsize{\fsize}{#1\fsize}\selectfont}%
  \ifx\svgwidth\undefined%
    \setlength{\unitlength}{492.82522356bp}%
    \ifx\svgscale\undefined%
      \relax%
    \else%
      \setlength{\unitlength}{\unitlength * \real{\svgscale}}%
    \fi%
  \else%
    \setlength{\unitlength}{\svgwidth}%
  \fi%
  \global\let\svgwidth\undefined%
  \global\let\svgscale\undefined%
  \makeatother%
  \begin{picture}(1,0.1619813)%
    \lineheight{1}%
    \setlength\tabcolsep{0pt}%
    \put(0,0){\includegraphics[width=\unitlength,page=1]{moving-ff.pdf}}%
    \put(0.02647446,0.13939723){\color[rgb]{0,0,0}\makebox(0,0)[lt]{\lineheight{1.25}\smash{\begin{tabular}[t]{l}$\Phi(X)$\end{tabular}}}}%
    \put(0,0){\includegraphics[width=\unitlength,page=2]{moving-ff.pdf}}%
    \put(0.0914263,0.12212793){\color[rgb]{0.50196078,0,0.50196078}\makebox(0,0)[lt]{\lineheight{1.25}\smash{\begin{tabular}[t]{l}$b_0$, $b_1$\end{tabular}}}}%
    \put(0,0){\includegraphics[width=\unitlength,page=3]{moving-ff.pdf}}%
    \put(0.30435767,0.00106007){\color[rgb]{0,0,0}\makebox(0,0)[lt]{\lineheight{1.25}\smash{\begin{tabular}[t]{l}$A$\end{tabular}}}}%
    \put(0.66478266,0.12981413){\color[rgb]{0.50196078,0,0.50196078}\makebox(0,0)[lt]{\lineheight{1.25}\smash{\begin{tabular}[t]{l}$b_0$\end{tabular}}}}%
    \put(0.66591405,0.10690189){\color[rgb]{0.50196078,0,0.50196078}\makebox(0,0)[lt]{\lineheight{1.25}\smash{\begin{tabular}[t]{l}$b_1$\end{tabular}}}}%
    \put(0.59779354,0.14838146){\color[rgb]{0,0,0}\makebox(0,0)[lt]{\lineheight{1.25}\smash{\begin{tabular}[t]{l}$\Phi(X)$\end{tabular}}}}%
    \put(0.23125471,0.1460318){\color[rgb]{0,0,0}\makebox(0,0)[lt]{\lineheight{1.25}\smash{\begin{tabular}[t]{l}$\A(X)$\end{tabular}}}}%
    \put(0,0){\includegraphics[width=\unitlength,page=4]{moving-ff.pdf}}%
    \put(0.85860018,0.05123514){\color[rgb]{0,0,0}\makebox(0,0)[lt]{\lineheight{1.25}\smash{\begin{tabular}[t]{l}$A'$\end{tabular}}}}%
    \put(0,0){\includegraphics[width=\unitlength,page=5]{moving-ff.pdf}}%
    \put(0.86050356,0.0074833){\color[rgb]{0,0,0}\makebox(0,0)[lt]{\lineheight{1.25}\smash{\begin{tabular}[t]{l}$A$\end{tabular}}}}%
    \put(0.80721605,0.14137445){\color[rgb]{0,0,0}\makebox(0,0)[lt]{\lineheight{1.25}\smash{\begin{tabular}[t]{l}$\A(X)$\end{tabular}}}}%
    \put(0.90065519,0.12272997){\color[rgb]{0.50196078,0,0.50196078}\makebox(0,0)[lt]{\lineheight{1.25}\smash{\begin{tabular}[t]{l}$b_0$\end{tabular}}}}%
    \put(0.81233644,0.07365227){\color[rgb]{0.50196078,0,0.50196078}\makebox(0,0)[lt]{\lineheight{1.25}\smash{\begin{tabular}[t]{l}$b_1$\end{tabular}}}}%
    \put(0.90550762,0.0736001){\color[rgb]{0.50196078,0,0.50196078}\makebox(0,0)[lt]{\lineheight{1.25}\smash{\begin{tabular}[t]{l}$b_1$\end{tabular}}}}%
    \put(0,0){\includegraphics[width=\unitlength,page=6]{moving-ff.pdf}}%
    \put(0.33168681,0.07326135){\color[rgb]{0.50196078,0,0.50196078}\makebox(0,0)[lt]{\lineheight{1.25}\smash{\begin{tabular}[t]{l}$b_0$, $b_1$\end{tabular}}}}%
  \end{picture}%
\endgroup%

%% file: types.pdf_tex
\begingroup%
  \makeatletter%
  \providecommand\color[2][]{%
    \errmessage{(Inkscape) Color is used for the text in Inkscape, but the package 'color.sty' is not loaded}%
    \renewcommand\color[2][]{}%
  }%
  \providecommand\transparent[1]{%
    \errmessage{(Inkscape) Transparency is used (non-zero) for the text in Inkscape, but the package 'transparent.sty' is not loaded}%
    \renewcommand\transparent[1]{}%
  }%
  \providecommand\rotatebox[2]{#2}%
  \newcommand*\fsize{\dimexpr\f@size pt\relax}%
  \newcommand*\lineheight[1]{\fontsize{\fsize}{#1\fsize}\selectfont}%
  \ifx\svgwidth\undefined%
    \setlength{\unitlength}{485.27895326bp}%
    \ifx\svgscale\undefined%
      \relax%
    \else%
      \setlength{\unitlength}{\unitlength * \real{\svgscale}}%
    \fi%
  \else%
    \setlength{\unitlength}{\svgwidth}%
  \fi%
  \global\let\svgwidth\undefined%
  \global\let\svgscale\undefined%
  \makeatother%
  \begin{picture}(1,0.29633853)%
    \lineheight{1}%
    \setlength\tabcolsep{0pt}%
    \put(0,0){\includegraphics[width=\unitlength,page=1]{types.pdf}}%
    \put(0.08971766,0.26483295){\color[rgb]{0,0,1}\makebox(0,0)[lt]{\lineheight{1.25}\smash{\begin{tabular}[t]{l}$\cT_{t_0-\eps}$\end{tabular}}}}%
    \put(0.36678656,0.26638793){\color[rgb]{0,0,1}\makebox(0,0)[lt]{\lineheight{1.25}\smash{\begin{tabular}[t]{l}$\cT_{t_0}$\end{tabular}}}}%
    \put(0.04113205,0.20862243){\color[rgb]{0,0,1}\makebox(0,0)[lt]{\lineheight{1.25}\smash{\begin{tabular}[t]{l}$v_-$\end{tabular}}}}%
    \put(0.14009196,0.20853317){\color[rgb]{0,0,1}\makebox(0,0)[lt]{\lineheight{1.25}\smash{\begin{tabular}[t]{l}$v_+$\end{tabular}}}}%
    \put(0.097017,0.21937632){\color[rgb]{0,0,1}\makebox(0,0)[lt]{\lineheight{1.25}\smash{\begin{tabular}[t]{l}$e$\end{tabular}}}}%
    \put(0.3943806,0.21193409){\color[rgb]{0,0,1}\makebox(0,0)[lt]{\lineheight{1.25}\smash{\begin{tabular}[t]{l}$\ell(e)=0$\end{tabular}}}}%
    \put(0,0){\includegraphics[width=\unitlength,page=2]{types.pdf}}%
    \put(0.54681507,0.21448964){\color[rgb]{0,0,1}\makebox(0,0)[lt]{\lineheight{1.25}\smash{\begin{tabular}[t]{l}$v_-$\end{tabular}}}}%
    \put(0.61963491,0.22757987){\color[rgb]{0,0,1}\makebox(0,0)[lt]{\lineheight{1.25}\smash{\begin{tabular}[t]{l}$e$\end{tabular}}}}%
    \put(0.66227455,0.21925029){\color[rgb]{0,0,1}\makebox(0,0)[lt]{\lineheight{1.25}\smash{\begin{tabular}[t]{l}$v_+$\end{tabular}}}}%
    \put(0.55617014,0.27043418){\color[rgb]{0,0,1}\makebox(0,0)[lt]{\lineheight{1.25}\smash{\begin{tabular}[t]{l}$\cT_{t_0-\eps}$\end{tabular}}}}%
    \put(0,0){\includegraphics[width=\unitlength,page=3]{types.pdf}}%
    \put(0.20335591,0.18548657){\makebox(0,0)[lt]{\lineheight{1.25}\smash{\begin{tabular}[t]{l}Type 1A\end{tabular}}}}%
    \put(0,0){\includegraphics[width=\unitlength,page=4]{types.pdf}}%
    \put(0.86034745,0.27690833){\color[rgb]{0,0,1}\makebox(0,0)[lt]{\lineheight{1.25}\smash{\begin{tabular}[t]{l}$\cT_{t_0}$\end{tabular}}}}%
    \put(0.91347655,0.22309909){\color[rgb]{0,0,1}\makebox(0,0)[lt]{\lineheight{1.25}\smash{\begin{tabular}[t]{l}$\ell(e)=0$\end{tabular}}}}%
    \put(0,0){\includegraphics[width=\unitlength,page=5]{types.pdf}}%
    \put(0.72100432,0.18560462){\makebox(0,0)[lt]{\lineheight{1.25}\smash{\begin{tabular}[t]{l}Type 1B\end{tabular}}}}%
    \put(0,0){\includegraphics[width=\unitlength,page=6]{types.pdf}}%
    \put(0.33046702,0.10198787){\color[rgb]{0,0,1}\makebox(0,0)[lt]{\lineheight{1.25}\smash{\begin{tabular}[t]{l}$\cT_{t_0-\eps}$\end{tabular}}}}%
    \put(0.310042,0.0242683){\color[rgb]{0,0,1}\makebox(0,0)[lt]{\lineheight{1.25}\smash{\begin{tabular}[t]{l}$v_-$\end{tabular}}}}%
    \put(0.38084133,0.04568806){\color[rgb]{0,0,1}\makebox(0,0)[lt]{\lineheight{1.25}\smash{\begin{tabular}[t]{l}$v_+$\end{tabular}}}}%
    \put(0.33776637,0.05653121){\color[rgb]{0,0,1}\makebox(0,0)[lt]{\lineheight{1.25}\smash{\begin{tabular}[t]{l}$e$\end{tabular}}}}%
    \put(0,0){\includegraphics[width=\unitlength,page=7]{types.pdf}}%
    \put(0.56311438,0.10607237){\color[rgb]{0,0,1}\makebox(0,0)[lt]{\lineheight{1.25}\smash{\begin{tabular}[t]{l}$\cT_{t_0}$\end{tabular}}}}%
    \put(0,0){\includegraphics[width=\unitlength,page=8]{types.pdf}}%
    \put(0.456472,0.01338639){\makebox(0,0)[lt]{\lineheight{1.25}\smash{\begin{tabular}[t]{l}Type 1C\end{tabular}}}}%
    \put(0.56199064,0.04696155){\color[rgb]{0,0.50196078,0.50196078}\makebox(0,0)[lt]{\lineheight{1.25}\smash{\begin{tabular}[t]{l}$\lam$\end{tabular}}}}%
    \put(0.28641095,0.05999193){\color[rgb]{0,0.50196078,0.50196078}\makebox(0,0)[lt]{\lineheight{1.25}\smash{\begin{tabular}[t]{l}$\lam$\end{tabular}}}}%
    \put(0,0){\includegraphics[width=\unitlength,page=9]{types.pdf}}%
    \put(0.71347977,0.07022605){\makebox(0,0)[lt]{\lineheight{1.25}\smash{\begin{tabular}[t]{l}Index $0$\end{tabular}}}}%
  \end{picture}%
\endgroup%

%% file: slide-wall.pdf_tex
\begingroup%
  \makeatletter%
  \providecommand\color[2][]{%
    \errmessage{(Inkscape) Color is used for the text in Inkscape, but the package 'color.sty' is not loaded}%
    \renewcommand\color[2][]{}%
  }%
  \providecommand\transparent[1]{%
    \errmessage{(Inkscape) Transparency is used (non-zero) for the text in Inkscape, but the package 'transparent.sty' is not loaded}%
    \renewcommand\transparent[1]{}%
  }%
  \providecommand\rotatebox[2]{#2}%
  \newcommand*\fsize{\dimexpr\f@size pt\relax}%
  \newcommand*\lineheight[1]{\fontsize{\fsize}{#1\fsize}\selectfont}%
  \ifx\svgwidth\undefined%
    \setlength{\unitlength}{476.69178148bp}%
    \ifx\svgscale\undefined%
      \relax%
    \else%
      \setlength{\unitlength}{\unitlength * \real{\svgscale}}%
    \fi%
  \else%
    \setlength{\unitlength}{\svgwidth}%
  \fi%
  \global\let\svgwidth\undefined%
  \global\let\svgscale\undefined%
  \makeatother%
  \begin{picture}(1,0.43314059)%
    \lineheight{1}%
    \setlength\tabcolsep{0pt}%
    \put(0,0){\includegraphics[width=\unitlength,page=1]{slide-wall.pdf}}%
    \put(0.18440719,0.28564929){\color[rgb]{0,0,1}\makebox(0,0)[lt]{\lineheight{1.25}\smash{\begin{tabular}[t]{l}$(-1,2)$\end{tabular}}}}%
    \put(0.05865917,0.39115808){\color[rgb]{0,0,1}\makebox(0,0)[lt]{\lineheight{1.25}\smash{\begin{tabular}[t]{l}$(-3,2)$\end{tabular}}}}%
    \put(0.13026818,0.36000499){\color[rgb]{0,0,1}\makebox(0,0)[lt]{\lineheight{1.25}\smash{\begin{tabular}[t]{l}$(-2,0)$\end{tabular}}}}%
    \put(0,0){\includegraphics[width=\unitlength,page=2]{slide-wall.pdf}}%
    \put(0.27576229,0.31767876){\color[rgb]{0,0,0}\makebox(0,0)[lt]{\lineheight{1.25}\smash{\begin{tabular}[t]{l}Slide $b$\\down\end{tabular}}}}%
    \put(0.18018596,0.34902385){\color[rgb]{0.50196078,0,0.50196078}\makebox(0,0)[lt]{\lineheight{1.25}\smash{\begin{tabular}[t]{l}$b$\end{tabular}}}}%
    \put(0,0){\includegraphics[width=\unitlength,page=3]{slide-wall.pdf}}%
    \put(0.55115734,0.28565211){\color[rgb]{0,0,1}\makebox(0,0)[lt]{\lineheight{1.25}\smash{\begin{tabular}[t]{l}$(-1,2)$\end{tabular}}}}%
    \put(0.53141974,0.28000073){\color[rgb]{0.50196078,0,0.50196078}\makebox(0,0)[lt]{\lineheight{1.25}\smash{\begin{tabular}[t]{l}$b$\end{tabular}}}}%
    \put(0,0){\includegraphics[width=\unitlength,page=4]{slide-wall.pdf}}%
    \put(0.94341287,0.27549708){\color[rgb]{0,0,1}\makebox(0,0)[lt]{\lineheight{1.25}\smash{\begin{tabular}[t]{l}$(-1,2)$\end{tabular}}}}%
    \put(0.89763758,0.27129925){\color[rgb]{0.50196078,0,0.50196078}\makebox(0,0)[lt]{\lineheight{1.25}\smash{\begin{tabular}[t]{l}$b$\end{tabular}}}}%
    \put(0,0){\includegraphics[width=\unitlength,page=5]{slide-wall.pdf}}%
    \put(0.46091112,0.34009558){\color[rgb]{0,0,1}\makebox(0,0)[lt]{\lineheight{1.25}\smash{\begin{tabular}[t]{l}$(-3,2)$\end{tabular}}}}%
    \put(0.79219181,0.30380359){\color[rgb]{0,0,1}\makebox(0,0)[lt]{\lineheight{1.25}\smash{\begin{tabular}[t]{l}$(-3,2)$\end{tabular}}}}%
    \put(0,0){\includegraphics[width=\unitlength,page=6]{slide-wall.pdf}}%
    \put(0.66274038,0.31724758){\color[rgb]{0,0,0}\makebox(0,0)[lt]{\lineheight{1.25}\smash{\begin{tabular}[t]{l}Slide $b$\\down\end{tabular}}}}%
    \put(0.55564824,0.41076974){\color[rgb]{0.50196078,0,0.50196078}\makebox(0,0)[lt]{\lineheight{1.25}\smash{\begin{tabular}[t]{l}Wall\end{tabular}}}}%
    \put(0,0){\includegraphics[width=\unitlength,page=7]{slide-wall.pdf}}%
    \put(0.16744165,0.14064276){\color[rgb]{0,0,1}\makebox(0,0)[lt]{\lineheight{1.25}\smash{\begin{tabular}[t]{l}(-1,2)\end{tabular}}}}%
    \put(0.19471994,0.02741362){\color[rgb]{0,0,1}\makebox(0,0)[lt]{\lineheight{1.25}\smash{\begin{tabular}[t]{l}(0,2)\end{tabular}}}}%
    \put(0,0){\includegraphics[width=\unitlength,page=8]{slide-wall.pdf}}%
    \put(0.26202391,0.13276303){\color[rgb]{0.50196078,0,0.50196078}\makebox(0,0)[lt]{\lineheight{1.25}\smash{\begin{tabular}[t]{l}$b$\end{tabular}}}}%
    \put(0,0){\includegraphics[width=\unitlength,page=9]{slide-wall.pdf}}%
    \put(0.63010098,0.13236824){\color[rgb]{0.50196078,0,0.50196078}\makebox(0,0)[lt]{\lineheight{1.25}\smash{\begin{tabular}[t]{l}$b$\end{tabular}}}}%
    \put(0,0){\includegraphics[width=\unitlength,page=10]{slide-wall.pdf}}%
    \put(0.68246382,0.14076416){\color[rgb]{0,0,1}\makebox(0,0)[lt]{\lineheight{1.25}\smash{\begin{tabular}[t]{l}(1,2)\end{tabular}}}}%
    \put(0,0){\includegraphics[width=\unitlength,page=11]{slide-wall.pdf}}%
    \put(0.2510219,0.08580499){\color[rgb]{0,0,1}\makebox(0,0)[lt]{\lineheight{1.25}\smash{\begin{tabular}[t]{l}(-1,0)\end{tabular}}}}%
    \put(0,0){\includegraphics[width=\unitlength,page=12]{slide-wall.pdf}}%
    \put(0.61379557,0.08946566){\color[rgb]{0,0,1}\makebox(0,0)[lt]{\lineheight{1.25}\smash{\begin{tabular}[t]{l}(1,0)\end{tabular}}}}%
    \put(0.66539423,0.03358914){\color[rgb]{0,0,1}\makebox(0,0)[lt]{\lineheight{1.25}\smash{\begin{tabular}[t]{l}(0,2)\end{tabular}}}}%
    \put(0,0){\includegraphics[width=\unitlength,page=13]{slide-wall.pdf}}%
    \put(0.41300738,0.0723556){\color[rgb]{0,0,0}\makebox(0,0)[lt]{\lineheight{1.25}\smash{\begin{tabular}[t]{l}Slide $b$\\to the left\end{tabular}}}}%
    \put(0.15095963,0.33678103){\color[rgb]{0,0,1}\makebox(0,0)[lt]{\lineheight{1.25}\smash{\begin{tabular}[t]{l}$e$\end{tabular}}}}%
    \put(0.02022287,0.3876998){\color[rgb]{0,0,1}\makebox(0,0)[lt]{\lineheight{1.25}\smash{\begin{tabular}[t]{l}$e_1$\end{tabular}}}}%
    \put(0.15849414,0.26305826){\color[rgb]{0,0,1}\makebox(0,0)[lt]{\lineheight{1.25}\smash{\begin{tabular}[t]{l}$e_0$\end{tabular}}}}%
    \put(0.93167649,0.2440437){\color[rgb]{0,0,1}\makebox(0,0)[lt]{\lineheight{1.25}\smash{\begin{tabular}[t]{l}$e_0$\end{tabular}}}}%
    \put(0.8087363,0.34358553){\color[rgb]{0,0,1}\makebox(0,0)[lt]{\lineheight{1.25}\smash{\begin{tabular}[t]{l}$e_1$\end{tabular}}}}%
    \put(0.17515662,0.18732651){\color[rgb]{0,0,1}\makebox(0,0)[lt]{\lineheight{1.25}\smash{\begin{tabular}[t]{l}$e_1$\end{tabular}}}}%
    \put(0.24607536,0.0022999){\color[rgb]{0,0,1}\makebox(0,0)[lt]{\lineheight{1.25}\smash{\begin{tabular}[t]{l}$e_0$\end{tabular}}}}%
    \put(0.66620179,0.00551242){\color[rgb]{0,0,1}\makebox(0,0)[lt]{\lineheight{1.25}\smash{\begin{tabular}[t]{l}$e_0$\end{tabular}}}}%
    \put(0.69887721,0.17545946){\color[rgb]{0,0,1}\makebox(0,0)[lt]{\lineheight{1.25}\smash{\begin{tabular}[t]{l}$e_1$\end{tabular}}}}%
    \put(0.24067589,0.12228498){\color[rgb]{0,0,1}\makebox(0,0)[lt]{\lineheight{1.25}\smash{\begin{tabular}[t]{l}$e$\end{tabular}}}}%
    \put(0.64500932,0.12218354){\color[rgb]{0,0,1}\makebox(0,0)[lt]{\lineheight{1.25}\smash{\begin{tabular}[t]{l}$e$\end{tabular}}}}%
  \end{picture}%
\endgroup%

%% file: mute1.pdf_tex
\begingroup%
  \makeatletter%
  \providecommand\color[2][]{%
    \errmessage{(Inkscape) Color is used for the text in Inkscape, but the package 'color.sty' is not loaded}%
    \renewcommand\color[2][]{}%
  }%
  \providecommand\transparent[1]{%
    \errmessage{(Inkscape) Transparency is used (non-zero) for the text in Inkscape, but the package 'transparent.sty' is not loaded}%
    \renewcommand\transparent[1]{}%
  }%
  \providecommand\rotatebox[2]{#2}%
  \newcommand*\fsize{\dimexpr\f@size pt\relax}%
  \newcommand*\lineheight[1]{\fontsize{\fsize}{#1\fsize}\selectfont}%
  \ifx\svgwidth\undefined%
    \setlength{\unitlength}{424.56241491bp}%
    \ifx\svgscale\undefined%
      \relax%
    \else%
      \setlength{\unitlength}{\unitlength * \real{\svgscale}}%
    \fi%
  \else%
    \setlength{\unitlength}{\svgwidth}%
  \fi%
  \global\let\svgwidth\undefined%
  \global\let\svgscale\undefined%
  \makeatother%
  \begin{picture}(1,0.23004679)%
    \lineheight{1}%
    \setlength\tabcolsep{0pt}%
    \put(0,0){\includegraphics[width=\unitlength,page=1]{mute1.pdf}}%
    \put(0.54999947,0.08786861){\color[rgb]{0.50196078,0,0.50196078}\makebox(0,0)[lt]{\lineheight{1.25}\smash{\begin{tabular}[t]{l}$b_0$\end{tabular}}}}%
    \put(0,0){\includegraphics[width=\unitlength,page=2]{mute1.pdf}}%
    \put(0.11388062,0.1816581){\makebox(0,0)[lt]{\lineheight{1.25}\smash{\begin{tabular}[t]{l}$t=-\eps$\end{tabular}}}}%
    \put(0.44299455,0.17525991){\makebox(0,0)[lt]{\lineheight{1.25}\smash{\begin{tabular}[t]{l}$t=0$\end{tabular}}}}%
    \put(0.78935947,0.17811289){\makebox(0,0)[lt]{\lineheight{1.25}\smash{\begin{tabular}[t]{l}$t=\eps$\end{tabular}}}}%
    \put(0.49091249,0.09968599){\color[rgb]{0,0,1}\makebox(0,0)[lt]{\lineheight{1.25}\smash{\begin{tabular}[t]{l}$\mu_b$\end{tabular}}}}%
    \put(0,0){\includegraphics[width=\unitlength,page=3]{mute1.pdf}}%
    \put(0.917216,0.08786754){\color[rgb]{0.50196078,0,0.50196078}\makebox(0,0)[lt]{\lineheight{1.25}\smash{\begin{tabular}[t]{l}$b_{\eps}$\end{tabular}}}}%
    \put(0.82054068,0.02473315){\color[rgb]{0,0.50196078,0.50196078}\makebox(0,0)[lt]{\lineheight{1.25}\smash{\begin{tabular}[t]{l}$\lam_{\eps}$\end{tabular}}}}%
    \put(0,0){\includegraphics[width=\unitlength,page=4]{mute1.pdf}}%
    \put(0.76736237,0.12518101){\color[rgb]{0.10196078,0.10196078,0.10196078}\makebox(0,0)[lt]{\lineheight{1.25}\smash{\begin{tabular}[t]{l}$\A_{\eps}$\end{tabular}}}}%
    \put(0.39541895,0.1287273){\color[rgb]{0.10196078,0.10196078,0.10196078}\makebox(0,0)[lt]{\lineheight{1.25}\smash{\begin{tabular}[t]{l}$\A_0$\end{tabular}}}}%
    \put(0.45214243,0.06018639){\color[rgb]{0,0.50196078,0.50196078}\makebox(0,0)[lt]{\lineheight{1.25}\smash{\begin{tabular}[t]{l}$\lam_0$\end{tabular}}}}%
    \put(0,0){\includegraphics[width=\unitlength,page=5]{mute1.pdf}}%
    \put(0.20949334,0.08786861){\color[rgb]{0.50196078,0,0.50196078}\makebox(0,0)[lt]{\lineheight{1.25}\smash{\begin{tabular}[t]{l}$b_{-\eps}$\end{tabular}}}}%
    \put(0,0){\includegraphics[width=\unitlength,page=6]{mute1.pdf}}%
    \put(0.09657817,0.09563859){\color[rgb]{0,0.50196078,0.50196078}\makebox(0,0)[lt]{\lineheight{1.25}\smash{\begin{tabular}[t]{l}$\lam_{-\eps}$\end{tabular}}}}%
    \put(0.02567373,0.02946117){\color[rgb]{0.10196078,0.10196078,0.10196078}\makebox(0,0)[lt]{\lineheight{1.25}\smash{\begin{tabular}[t]{l}$\A_{-\eps}$\end{tabular}}}}%
    \put(0,0){\includegraphics[width=\unitlength,page=7]{mute1.pdf}}%
    \put(0.27524503,0.08550513){\color[rgb]{0.50196078,0,0.50196078}\makebox(0,0)[lt]{\lineheight{1.25}\smash{\begin{tabular}[t]{l}$\nu$\end{tabular}}}}%
  \end{picture}%
\endgroup%

%% file: mute2.pdf_tex
\begingroup%
  \makeatletter%
  \providecommand\color[2][]{%
    \errmessage{(Inkscape) Color is used for the text in Inkscape, but the package 'color.sty' is not loaded}%
    \renewcommand\color[2][]{}%
  }%
  \providecommand\transparent[1]{%
    \errmessage{(Inkscape) Transparency is used (non-zero) for the text in Inkscape, but the package 'transparent.sty' is not loaded}%
    \renewcommand\transparent[1]{}%
  }%
  \providecommand\rotatebox[2]{#2}%
  \newcommand*\fsize{\dimexpr\f@size pt\relax}%
  \newcommand*\lineheight[1]{\fontsize{\fsize}{#1\fsize}\selectfont}%
  \ifx\svgwidth\undefined%
    \setlength{\unitlength}{294.13738058bp}%
    \ifx\svgscale\undefined%
      \relax%
    \else%
      \setlength{\unitlength}{\unitlength * \real{\svgscale}}%
    \fi%
  \else%
    \setlength{\unitlength}{\svgwidth}%
  \fi%
  \global\let\svgwidth\undefined%
  \global\let\svgscale\undefined%
  \makeatother%
  \begin{picture}(1,0.33205154)%
    \lineheight{1}%
    \setlength\tabcolsep{0pt}%
    \put(0,0){\includegraphics[width=\unitlength,page=1]{mute2.pdf}}%
    \put(0.16437703,0.26220657){\makebox(0,0)[lt]{\lineheight{1.25}\smash{\begin{tabular}[t]{l}$t=-\eps$\end{tabular}}}}%
    \put(0.7357994,0.25297131){\makebox(0,0)[lt]{\lineheight{1.25}\smash{\begin{tabular}[t]{l}$t=0$\end{tabular}}}}%
    \put(0.75382807,0.15562248){\color[rgb]{0,0.50196078,0.50196078}\makebox(0,0)[lt]{\lineheight{1.25}\smash{\begin{tabular}[t]{l}$\lam_0$\end{tabular}}}}%
    \put(0,0){\includegraphics[width=\unitlength,page=2]{mute2.pdf}}%
    \put(0.17989687,0.19974398){\color[rgb]{0,0.50196078,0.50196078}\makebox(0,0)[lt]{\lineheight{1.25}\smash{\begin{tabular}[t]{l}$\lam_{-\eps}$\end{tabular}}}}%
    \put(0,0){\includegraphics[width=\unitlength,page=3]{mute2.pdf}}%
    \put(0.17386619,0.1116957){\color[rgb]{0,0,1}\makebox(0,0)[lt]{\lineheight{1.25}\smash{\begin{tabular}[t]{l}$v_+$\end{tabular}}}}%
    \put(0.10994071,0.17320885){\color[rgb]{0,0,1}\makebox(0,0)[lt]{\lineheight{1.25}\smash{\begin{tabular}[t]{l}$v_-$\end{tabular}}}}%
    \put(0.20522582,0.15752903){\color[rgb]{0,0,1}\makebox(0,0)[lt]{\lineheight{1.25}\smash{\begin{tabular}[t]{l}$e$\end{tabular}}}}%
    \put(0.7093928,0.12134484){\color[rgb]{0,0,1}\makebox(0,0)[lt]{\lineheight{1.25}\smash{\begin{tabular}[t]{l}$v$\end{tabular}}}}%
  \end{picture}%
\endgroup%

%% file: mute3.pdf_tex
\begingroup%
  \makeatletter%
  \providecommand\color[2][]{%
    \errmessage{(Inkscape) Color is used for the text in Inkscape, but the package 'color.sty' is not loaded}%
    \renewcommand\color[2][]{}%
  }%
  \providecommand\transparent[1]{%
    \errmessage{(Inkscape) Transparency is used (non-zero) for the text in Inkscape, but the package 'transparent.sty' is not loaded}%
    \renewcommand\transparent[1]{}%
  }%
  \providecommand\rotatebox[2]{#2}%
  \newcommand*\fsize{\dimexpr\f@size pt\relax}%
  \newcommand*\lineheight[1]{\fontsize{\fsize}{#1\fsize}\selectfont}%
  \ifx\svgwidth\undefined%
    \setlength{\unitlength}{421.6964357bp}%
    \ifx\svgscale\undefined%
      \relax%
    \else%
      \setlength{\unitlength}{\unitlength * \real{\svgscale}}%
    \fi%
  \else%
    \setlength{\unitlength}{\svgwidth}%
  \fi%
  \global\let\svgwidth\undefined%
  \global\let\svgscale\undefined%
  \makeatother%
  \begin{picture}(1,0.17499185)%
    \lineheight{1}%
    \setlength\tabcolsep{0pt}%
    \put(0,0){\includegraphics[width=\unitlength,page=1]{mute3.pdf}}%
    \put(0.55373744,0.08846579){\color[rgb]{0.50196078,0,0.50196078}\makebox(0,0)[lt]{\lineheight{1.25}\smash{\begin{tabular}[t]{l}$b_0$\end{tabular}}}}%
    \put(0,0){\includegraphics[width=\unitlength,page=2]{mute3.pdf}}%
    \put(0.49424888,0.10036349){\color[rgb]{0,0,1}\makebox(0,0)[lt]{\lineheight{1.25}\smash{\begin{tabular}[t]{l}$\mu_b$\end{tabular}}}}%
    \put(0,0){\includegraphics[width=\unitlength,page=3]{mute3.pdf}}%
    \put(0.82611733,0.02490124){\color[rgb]{0,0.50196078,0.50196078}\makebox(0,0)[lt]{\lineheight{1.25}\smash{\begin{tabular}[t]{l}$\lam_{\eps}$\end{tabular}}}}%
    \put(0.77257761,0.12603178){\color[rgb]{0.10196078,0.10196078,0.10196078}\makebox(0,0)[lt]{\lineheight{1.25}\smash{\begin{tabular}[t]{l}$\A_{\eps}$\end{tabular}}}}%
    \put(0.39810634,0.12960217){\color[rgb]{0.10196078,0.10196078,0.10196078}\makebox(0,0)[lt]{\lineheight{1.25}\smash{\begin{tabular}[t]{l}$\A_0$\end{tabular}}}}%
    \put(0.45521533,0.06059544){\color[rgb]{0,0.50196078,0.50196078}\makebox(0,0)[lt]{\lineheight{1.25}\smash{\begin{tabular}[t]{l}$\lam_0$\end{tabular}}}}%
    \put(0,0){\includegraphics[width=\unitlength,page=4]{mute3.pdf}}%
    \put(0.21091712,0.08846579){\color[rgb]{0.50196078,0,0.50196078}\makebox(0,0)[lt]{\lineheight{1.25}\smash{\begin{tabular}[t]{l}$b_{-\eps}$\end{tabular}}}}%
    \put(0.09723455,0.09628857){\color[rgb]{0,0.50196078,0.50196078}\makebox(0,0)[lt]{\lineheight{1.25}\smash{\begin{tabular}[t]{l}$\lam_{-\eps}$\end{tabular}}}}%
    \put(0.02584822,0.0296614){\color[rgb]{0.10196078,0.10196078,0.10196078}\makebox(0,0)[lt]{\lineheight{1.25}\smash{\begin{tabular}[t]{l}$\A_{-\eps}$\end{tabular}}}}%
    \put(0,0){\includegraphics[width=\unitlength,page=5]{mute3.pdf}}%
    \put(0.27711568,0.08608625){\color[rgb]{0.50196078,0,0.50196078}\makebox(0,0)[lt]{\lineheight{1.25}\smash{\begin{tabular}[t]{l}$\nu$\end{tabular}}}}%
    \put(0,0){\includegraphics[width=\unitlength,page=6]{mute3.pdf}}%
    \put(0.14779972,0.15637201){\color[rgb]{0,0,1}\makebox(0,0)[lt]{\lineheight{1.25}\smash{\begin{tabular}[t]{l}$(k,l)$\end{tabular}}}}%
    \put(0.91103772,0.13079196){\color[rgb]{0,0,1}\makebox(0,0)[lt]{\lineheight{1.25}\smash{\begin{tabular}[t]{l}$(k,l)$\end{tabular}}}}%
    \put(0.75279818,0.08320112){\color[rgb]{0,0,1}\makebox(0,0)[lt]{\lineheight{1.25}\smash{\begin{tabular}[t]{l}$(k+k_1,l)$\end{tabular}}}}%
    \put(0.91936607,0.08736532){\color[rgb]{0,0,1}\makebox(0,0)[lt]{\lineheight{1.25}\smash{\begin{tabular}[t]{l}$(-k_1,0)$\end{tabular}}}}%
  \end{picture}%
\endgroup%

%% file: signs.pdf_tex
\begingroup%
  \makeatletter%
  \providecommand\color[2][]{%
    \errmessage{(Inkscape) Color is used for the text in Inkscape, but the package 'color.sty' is not loaded}%
    \renewcommand\color[2][]{}%
  }%
  \providecommand\transparent[1]{%
    \errmessage{(Inkscape) Transparency is used (non-zero) for the text in Inkscape, but the package 'transparent.sty' is not loaded}%
    \renewcommand\transparent[1]{}%
  }%
  \providecommand\rotatebox[2]{#2}%
  \newcommand*\fsize{\dimexpr\f@size pt\relax}%
  \newcommand*\lineheight[1]{\fontsize{\fsize}{#1\fsize}\selectfont}%
  \ifx\svgwidth\undefined%
    \setlength{\unitlength}{254.95734168bp}%
    \ifx\svgscale\undefined%
      \relax%
    \else%
      \setlength{\unitlength}{\unitlength * \real{\svgscale}}%
    \fi%
  \else%
    \setlength{\unitlength}{\svgwidth}%
  \fi%
  \global\let\svgwidth\undefined%
  \global\let\svgscale\undefined%
  \makeatother%
  \begin{picture}(1,0.58221076)%
    \lineheight{1}%
    \setlength\tabcolsep{0pt}%
    \put(0,0){\includegraphics[width=\unitlength,page=1]{signs.pdf}}%
    \put(0.02208883,0.56911908){\color[rgb]{0,0,0}\makebox(0,0)[lt]{\lineheight{1.25}\smash{\begin{tabular}[t]{l}$a$\end{tabular}}}}%
    \put(0.42679778,0.57064575){\color[rgb]{0,0,0}\makebox(0,0)[lt]{\lineheight{1.25}\smash{\begin{tabular}[t]{l}$a$\end{tabular}}}}%
    \put(0.86248995,0.00295063){\color[rgb]{0,0,0}\makebox(0,0)[lt]{\lineheight{1.25}\smash{\begin{tabular}[t]{l}$b$\end{tabular}}}}%
    \put(0.86558757,0.57059164){\color[rgb]{0,0,0}\makebox(0,0)[lt]{\lineheight{1.25}\smash{\begin{tabular}[t]{l}$a$\end{tabular}}}}%
    \put(0.70763179,0.28859291){\color[rgb]{0,0,0}\makebox(0,0)[lt]{\lineheight{1.25}\smash{\begin{tabular}[t]{l}$b$\end{tabular}}}}%
    \put(0.29822063,0.27962815){\color[rgb]{0,0,0}\makebox(0,0)[lt]{\lineheight{1.25}\smash{\begin{tabular}[t]{l}$b$\end{tabular}}}}%
    \put(0,0){\includegraphics[width=\unitlength,page=2]{signs.pdf}}%
    \put(0.65022265,0.4581347){\color[rgb]{0.50196078,0,0.50196078}\makebox(0,0)[lt]{\lineheight{1.25}\smash{\begin{tabular}[t]{l}$\nu$\end{tabular}}}}%
  \end{picture}%
\endgroup%

%% file: Tsing.pdf_tex
\begingroup%
  \makeatletter%
  \providecommand\color[2][]{%
    \errmessage{(Inkscape) Color is used for the text in Inkscape, but the package 'color.sty' is not loaded}%
    \renewcommand\color[2][]{}%
  }%
  \providecommand\transparent[1]{%
    \errmessage{(Inkscape) Transparency is used (non-zero) for the text in Inkscape, but the package 'transparent.sty' is not loaded}%
    \renewcommand\transparent[1]{}%
  }%
  \providecommand\rotatebox[2]{#2}%
  \newcommand*\fsize{\dimexpr\f@size pt\relax}%
  \newcommand*\lineheight[1]{\fontsize{\fsize}{#1\fsize}\selectfont}%
  \ifx\svgwidth\undefined%
    \setlength{\unitlength}{161.79304762bp}%
    \ifx\svgscale\undefined%
      \relax%
    \else%
      \setlength{\unitlength}{\unitlength * \real{\svgscale}}%
    \fi%
  \else%
    \setlength{\unitlength}{\svgwidth}%
  \fi%
  \global\let\svgwidth\undefined%
  \global\let\svgscale\undefined%
  \makeatother%
  \begin{picture}(1,0.4238764)%
    \lineheight{1}%
    \setlength\tabcolsep{0pt}%
    \put(0,0){\includegraphics[width=\unitlength,page=1]{Tsing.pdf}}%
    \put(-0.00223303,0.38737897){\color[rgb]{0,0,0}\makebox(0,0)[lt]{\lineheight{1.25}\smash{\begin{tabular}[t]{l}$(0,1)$\end{tabular}}}}%
    \put(0.74539855,0.38485798){\color[rgb]{0,0,0}\makebox(0,0)[lt]{\lineheight{1.25}\smash{\begin{tabular}[t]{l}$(dp^2,dpq-1)$\end{tabular}}}}%
    \put(0.34870246,0.28108259){\color[rgb]{0.50196078,0,0.50196078}\makebox(0,0)[lt]{\lineheight{1.25}\smash{\begin{tabular}[t]{l}$(p,q)$\end{tabular}}}}%
  \end{picture}%
\endgroup%

%% file: cutTsing.pdf_tex
\begingroup%
  \makeatletter%
  \providecommand\color[2][]{%
    \errmessage{(Inkscape) Color is used for the text in Inkscape, but the package 'color.sty' is not loaded}%
    \renewcommand\color[2][]{}%
  }%
  \providecommand\transparent[1]{%
    \errmessage{(Inkscape) Transparency is used (non-zero) for the text in Inkscape, but the package 'transparent.sty' is not loaded}%
    \renewcommand\transparent[1]{}%
  }%
  \providecommand\rotatebox[2]{#2}%
  \newcommand*\fsize{\dimexpr\f@size pt\relax}%
  \newcommand*\lineheight[1]{\fontsize{\fsize}{#1\fsize}\selectfont}%
  \ifx\svgwidth\undefined%
    \setlength{\unitlength}{443.91533458bp}%
    \ifx\svgscale\undefined%
      \relax%
    \else%
      \setlength{\unitlength}{\unitlength * \real{\svgscale}}%
    \fi%
  \else%
    \setlength{\unitlength}{\svgwidth}%
  \fi%
  \global\let\svgwidth\undefined%
  \global\let\svgscale\undefined%
  \makeatother%
  \begin{picture}(1,0.28938372)%
    \lineheight{1}%
    \setlength\tabcolsep{0pt}%
    \put(0,0){\includegraphics[width=\unitlength,page=1]{cutTsing.pdf}}%
    \put(0.05141974,0.16325535){\color[rgb]{0.50196078,0,0.50196078}\makebox(0,0)[lt]{\lineheight{1.25}\smash{\begin{tabular}[t]{l}$b$\end{tabular}}}}%
    \put(0.39961182,0.16438566){\color[rgb]{0.10196078,0.10196078,0.10196078}\makebox(0,0)[lt]{\lineheight{1.25}\smash{\begin{tabular}[t]{l}$P_b^\dual$\end{tabular}}}}%
    \put(0.33052722,0.16625244){\color[rgb]{0.10196078,0.10196078,0.10196078}\makebox(0,0)[lt]{\lineheight{1.25}\smash{\begin{tabular}[t]{l}$Q^\dual$\end{tabular}}}}%
    \put(0.44658553,0.09492512){\color[rgb]{0.10196078,0.10196078,0.10196078}\makebox(0,0)[lt]{\lineheight{1.25}\smash{\begin{tabular}[t]{l}$Q^\dual$\end{tabular}}}}%
    \put(0,0){\includegraphics[width=\unitlength,page=2]{cutTsing.pdf}}%
    \put(0.29915332,0.12406922){\color[rgb]{0.10196078,0.10196078,0.10196078}\makebox(0,0)[lt]{\lineheight{1.25}\smash{\begin{tabular}[t]{l}shear\end{tabular}}}}%
    \put(0.45520032,0.24657062){\color[rgb]{0.10196078,0.10196078,0.10196078}\makebox(0,0)[lt]{\lineheight{1.25}\smash{\begin{tabular}[t]{l}$\lam$\end{tabular}}}}%
    \put(0,0){\includegraphics[width=\unitlength,page=3]{cutTsing.pdf}}%
    \put(0.29922993,0.2686099){\color[rgb]{0.10196078,0.10196078,0.10196078}\makebox(0,0)[lt]{\lineheight{1.25}\smash{\begin{tabular}[t]{l}$\rho$\end{tabular}}}}%
    \put(0,0){\includegraphics[width=\unitlength,page=4]{cutTsing.pdf}}%
    \put(0.59647789,0.05612849){\color[rgb]{0.10196078,0.10196078,0.10196078}\makebox(0,0)[lt]{\lineheight{1.25}\smash{\begin{tabular}[t]{l}$\rho$\end{tabular}}}}%
    \put(0,0){\includegraphics[width=\unitlength,page=5]{cutTsing.pdf}}%
    \put(0.84653695,0.24766552){\color[rgb]{0.10196078,0.10196078,0.10196078}\makebox(0,0)[lt]{\lineheight{1.25}\smash{\begin{tabular}[t]{l}$\lam$\end{tabular}}}}%
    \put(0.0991809,0.22735686){\color[rgb]{0.10196078,0.10196078,0.10196078}\makebox(0,0)[lt]{\lineheight{1.25}\smash{\begin{tabular}[t]{l}$\Phi(L)$\end{tabular}}}}%
    \put(0,0){\includegraphics[width=\unitlength,page=6]{cutTsing.pdf}}%
    \put(0.79094845,0.16548056){\color[rgb]{0.10196078,0.10196078,0.10196078}\makebox(0,0)[lt]{\lineheight{1.25}\smash{\begin{tabular}[t]{l}$P_b^\dual$\end{tabular}}}}%
    \put(0,0){\includegraphics[width=\unitlength,page=7]{cutTsing.pdf}}%
    \put(0.72704392,0.21186763){\color[rgb]{0.10196078,0.10196078,0.10196078}\makebox(0,0)[lt]{\lineheight{1.25}\smash{\begin{tabular}[t]{l}$\mu_{\on{out}}$\end{tabular}}}}%
    \put(0.88048769,0.10477661){\color[rgb]{0.10196078,0.10196078,0.10196078}\makebox(0,0)[lt]{\lineheight{1.25}\smash{\begin{tabular}[t]{l}$\mu_{\on{out}}$\end{tabular}}}}%
    \put(0.39526132,0.01800332){\color[rgb]{0.10196078,0.10196078,0.10196078}\makebox(0,0)[lt]{\lineheight{1.25}\smash{\begin{tabular}[t]{l}$B^\dual_\rho$\end{tabular}}}}%
    \put(0.78446512,0.03158951){\color[rgb]{0.10196078,0.10196078,0.10196078}\makebox(0,0)[lt]{\lineheight{1.25}\smash{\begin{tabular}[t]{l}$\A(X)$\end{tabular}}}}%
    \put(0,0){\includegraphics[width=\unitlength,page=8]{cutTsing.pdf}}%
    \put(0.69042597,0.1259793){\color[rgb]{0.10196078,0.10196078,0.10196078}\makebox(0,0)[lt]{\lineheight{1.25}\smash{\begin{tabular}[t]{l}shear\end{tabular}}}}%
  \end{picture}%
\endgroup%

%% file: curveT.pdf_tex
\begingroup%
  \makeatletter%
  \providecommand\color[2][]{%
    \errmessage{(Inkscape) Color is used for the text in Inkscape, but the package 'color.sty' is not loaded}%
    \renewcommand\color[2][]{}%
  }%
  \providecommand\transparent[1]{%
    \errmessage{(Inkscape) Transparency is used (non-zero) for the text in Inkscape, but the package 'transparent.sty' is not loaded}%
    \renewcommand\transparent[1]{}%
  }%
  \providecommand\rotatebox[2]{#2}%
  \newcommand*\fsize{\dimexpr\f@size pt\relax}%
  \newcommand*\lineheight[1]{\fontsize{\fsize}{#1\fsize}\selectfont}%
  \ifx\svgwidth\undefined%
    \setlength{\unitlength}{376.38976649bp}%
    \ifx\svgscale\undefined%
      \relax%
    \else%
      \setlength{\unitlength}{\unitlength * \real{\svgscale}}%
    \fi%
  \else%
    \setlength{\unitlength}{\svgwidth}%
  \fi%
  \global\let\svgwidth\undefined%
  \global\let\svgscale\undefined%
  \makeatother%
  \begin{picture}(1,0.3145114)%
    \lineheight{1}%
    \setlength\tabcolsep{0pt}%
    \put(0,0){\includegraphics[width=\unitlength,page=1]{curveT.pdf}}%
    \put(0.29646265,0.13614319){\color[rgb]{0.10196078,0.10196078,0.10196078}\makebox(0,0)[lt]{\lineheight{1.25}\smash{\begin{tabular}[t]{l}$\lam$\end{tabular}}}}%
    \put(0,0){\includegraphics[width=\unitlength,page=2]{curveT.pdf}}%
    \put(0.17087275,0.19389446){\color[rgb]{0.10196078,0.10196078,0.10196078}\makebox(0,0)[lt]{\lineheight{1.25}\smash{\begin{tabular}[t]{l}$P_b^\dual$\end{tabular}}}}%
    \put(0,0){\includegraphics[width=\unitlength,page=3]{curveT.pdf}}%
    \put(0.23940337,0.20987611){\color[rgb]{0.50196078,0,0.50196078}\makebox(0,0)[lt]{\lineheight{1.25}\smash{\begin{tabular}[t]{l}$\mu_b$\end{tabular}}}}%
    \put(0,0){\includegraphics[width=\unitlength,page=4]{curveT.pdf}}%
    \put(0.81900514,0.29001069){\color[rgb]{0.10196078,0.10196078,0.10196078}\makebox(0,0)[lt]{\lineheight{1.25}\smash{\begin{tabular}[t]{l}$\lam$\end{tabular}}}}%
    \put(0,0){\includegraphics[width=\unitlength,page=5]{curveT.pdf}}%
    \put(0.75344388,0.19308147){\color[rgb]{0.10196078,0.10196078,0.10196078}\makebox(0,0)[lt]{\lineheight{1.25}\smash{\begin{tabular}[t]{l}$P_b^\dual$\end{tabular}}}}%
    \put(0,0){\includegraphics[width=\unitlength,page=6]{curveT.pdf}}%
    \put(0.50960922,0.18230588){\color[rgb]{0.50196078,0,0.50196078}\makebox(0,0)[lt]{\lineheight{1.25}\smash{\begin{tabular}[t]{l}$\nu$\end{tabular}}}}%
    \put(0,0){\includegraphics[width=\unitlength,page=7]{curveT.pdf}}%
    \put(0.42228003,0.13051171){\color[rgb]{0.10196078,0.10196078,0.10196078}\makebox(0,0)[lt]{\lineheight{1.25}\smash{\begin{tabular}[t]{l}Cross wall in \\direction $\nu=(-p,-q)$\end{tabular}}}}%
    \put(0.03219993,0.13447149){\color[rgb]{0.10196078,0.10196078,0.10196078}\makebox(0,0)[lt]{\lineheight{1.25}\smash{\begin{tabular}[t]{l}$(\A_{-\eps}, \lam_{-\eps})$\end{tabular}}}}%
    \put(0.86947698,0.2093335){\color[rgb]{0.10196078,0.10196078,0.10196078}\makebox(0,0)[lt]{\lineheight{1.25}\smash{\begin{tabular}[t]{l}$(\A_\eps, \lam_\eps)=\A(X)$\end{tabular}}}}%
  \end{picture}%
\endgroup%

%% file: p2new.pdf_tex
\begingroup%
  \makeatletter%
  \providecommand\color[2][]{%
    \errmessage{(Inkscape) Color is used for the text in Inkscape, but the package 'color.sty' is not loaded}%
    \renewcommand\color[2][]{}%
  }%
  \providecommand\transparent[1]{%
    \errmessage{(Inkscape) Transparency is used (non-zero) for the text in Inkscape, but the package 'transparent.sty' is not loaded}%
    \renewcommand\transparent[1]{}%
  }%
  \providecommand\rotatebox[2]{#2}%
  \newcommand*\fsize{\dimexpr\f@size pt\relax}%
  \newcommand*\lineheight[1]{\fontsize{\fsize}{#1\fsize}\selectfont}%
  \ifx\svgwidth\undefined%
    \setlength{\unitlength}{306.57094574bp}%
    \ifx\svgscale\undefined%
      \relax%
    \else%
      \setlength{\unitlength}{\unitlength * \real{\svgscale}}%
    \fi%
  \else%
    \setlength{\unitlength}{\svgwidth}%
  \fi%
  \global\let\svgwidth\undefined%
  \global\let\svgscale\undefined%
  \makeatother%
  \begin{picture}(1,0.31663711)%
    \lineheight{1}%
    \setlength\tabcolsep{0pt}%
    \put(0,0){\includegraphics[width=\unitlength,page=1]{p2new.pdf}}%
    \put(0.19799503,0.20689505){\color[rgb]{0,0,1}\makebox(0,0)[lt]{\lineheight{1.25}\smash{\begin{tabular}[t]{l}1\end{tabular}}}}%
    \put(0.54049316,0.20963504){\color[rgb]{0,0,1}\makebox(0,0)[lt]{\lineheight{1.25}\smash{\begin{tabular}[t]{l}1\end{tabular}}}}%
    \put(0.88162139,0.20963504){\color[rgb]{0,0,1}\makebox(0,0)[lt]{\lineheight{1.25}\smash{\begin{tabular}[t]{l}1\end{tabular}}}}%
  \end{picture}%
\endgroup%

%% file: p1p1_torus.pdf_tex
\begingroup%
  \makeatletter%
  \providecommand\color[2][]{%
    \errmessage{(Inkscape) Color is used for the text in Inkscape, but the package 'color.sty' is not loaded}%
    \renewcommand\color[2][]{}%
  }%
  \providecommand\transparent[1]{%
    \errmessage{(Inkscape) Transparency is used (non-zero) for the text in Inkscape, but the package 'transparent.sty' is not loaded}%
    \renewcommand\transparent[1]{}%
  }%
  \providecommand\rotatebox[2]{#2}%
  \newcommand*\fsize{\dimexpr\f@size pt\relax}%
  \newcommand*\lineheight[1]{\fontsize{\fsize}{#1\fsize}\selectfont}%
  \ifx\svgwidth\undefined%
    \setlength{\unitlength}{503.79208374bp}%
    \ifx\svgscale\undefined%
      \relax%
    \else%
      \setlength{\unitlength}{\unitlength * \real{\svgscale}}%
    \fi%
  \else%
    \setlength{\unitlength}{\svgwidth}%
  \fi%
  \global\let\svgwidth\undefined%
  \global\let\svgscale\undefined%
  \makeatother%
  \begin{picture}(1,0.30521516)%
    \lineheight{1}%
    \setlength\tabcolsep{0pt}%
    \put(0,0){\includegraphics[width=\unitlength,page=1]{p1p1_torus.pdf}}%
    \put(0.78950446,0.18778393){\makebox(0,0)[lt]{\lineheight{1.25}\smash{\begin{tabular}[t]{l}w = 2,-2\end{tabular}}}}%
    \put(0.14482776,0.19590664){\makebox(0,0)[lt]{\lineheight{1.25}\smash{\begin{tabular}[t]{l}w = 2,0,0,-2\end{tabular}}}}%
    \put(0,0){\includegraphics[width=\unitlength,page=2]{p1p1_torus.pdf}}%
  \end{picture}%
\endgroup%

%% file: b3p2_tori.pdf_tex
\begingroup%
  \makeatletter%
  \providecommand\color[2][]{%
    \errmessage{(Inkscape) Color is used for the text in Inkscape, but the package 'color.sty' is not loaded}%
    \renewcommand\color[2][]{}%
  }%
  \providecommand\transparent[1]{%
    \errmessage{(Inkscape) Transparency is used (non-zero) for the text in Inkscape, but the package 'transparent.sty' is not loaded}%
    \renewcommand\transparent[1]{}%
  }%
  \providecommand\rotatebox[2]{#2}%
  \newcommand*\fsize{\dimexpr\f@size pt\relax}%
  \newcommand*\lineheight[1]{\fontsize{\fsize}{#1\fsize}\selectfont}%
  \ifx\svgwidth\undefined%
    \setlength{\unitlength}{161.48814011bp}%
    \ifx\svgscale\undefined%
      \relax%
    \else%
      \setlength{\unitlength}{\unitlength * \real{\svgscale}}%
    \fi%
  \else%
    \setlength{\unitlength}{\svgwidth}%
  \fi%
  \global\let\svgwidth\undefined%
  \global\let\svgscale\undefined%
  \makeatother%
  \begin{picture}(1,0.95601284)%
    \lineheight{1}%
    \setlength\tabcolsep{0pt}%
    \put(0,0){\includegraphics[width=\unitlength,page=1]{b3p2_tori.pdf}}%
    \put(0.56789079,0.87781381){\makebox(0,0)[lt]{\lineheight{1.25}\smash{\begin{tabular}[t]{l}w = 6,-2,-3\end{tabular}}}}%
  \end{picture}%
\endgroup%

%% file: b6p2_tori.pdf_tex
\begingroup%
  \makeatletter%
  \providecommand\color[2][]{%
    \errmessage{(Inkscape) Color is used for the text in Inkscape, but the package 'color.sty' is not loaded}%
    \renewcommand\color[2][]{}%
  }%
  \providecommand\transparent[1]{%
    \errmessage{(Inkscape) Transparency is used (non-zero) for the text in Inkscape, but the package 'transparent.sty' is not loaded}%
    \renewcommand\transparent[1]{}%
  }%
  \providecommand\rotatebox[2]{#2}%
  \newcommand*\fsize{\dimexpr\f@size pt\relax}%
  \newcommand*\lineheight[1]{\fontsize{\fsize}{#1\fsize}\selectfont}%
  \ifx\svgwidth\undefined%
    \setlength{\unitlength}{318.07230377bp}%
    \ifx\svgscale\undefined%
      \relax%
    \else%
      \setlength{\unitlength}{\unitlength * \real{\svgscale}}%
    \fi%
  \else%
    \setlength{\unitlength}{\svgwidth}%
  \fi%
  \global\let\svgwidth\undefined%
  \global\let\svgscale\undefined%
  \makeatother%
  \begin{picture}(1,1.00336711)%
    \lineheight{1}%
    \setlength\tabcolsep{0pt}%
    \put(0,0){\includegraphics[width=\unitlength,page=1]{b6p2_tori.pdf}}%
    \put(0.1191901,0.81942003){\makebox(0,0)[lt]{\lineheight{1.25}\smash{\begin{tabular}[t]{l}w=-6,21\end{tabular}}}}%
    \put(0,0){\includegraphics[width=\unitlength,page=2]{b6p2_tori.pdf}}%
  \end{picture}%
\endgroup%

%% file: b7p2_torus.pdf_tex
\begingroup%
  \makeatletter%
  \providecommand\color[2][]{%
    \errmessage{(Inkscape) Color is used for the text in Inkscape, but the package 'color.sty' is not loaded}%
    \renewcommand\color[2][]{}%
  }%
  \providecommand\transparent[1]{%
    \errmessage{(Inkscape) Transparency is used (non-zero) for the text in Inkscape, but the package 'transparent.sty' is not loaded}%
    \renewcommand\transparent[1]{}%
  }%
  \providecommand\rotatebox[2]{#2}%
  \newcommand*\fsize{\dimexpr\f@size pt\relax}%
  \newcommand*\lineheight[1]{\fontsize{\fsize}{#1\fsize}\selectfont}%
  \ifx\svgwidth\undefined%
    \setlength{\unitlength}{698.18847656bp}%
    \ifx\svgscale\undefined%
      \relax%
    \else%
      \setlength{\unitlength}{\unitlength * \real{\svgscale}}%
    \fi%
  \else%
    \setlength{\unitlength}{\svgwidth}%
  \fi%
  \global\let\svgwidth\undefined%
  \global\let\svgscale\undefined%
  \makeatother%
  \begin{picture}(1,0.42854022)%
    \lineheight{1}%
    \setlength\tabcolsep{0pt}%
    \put(0,0){\includegraphics[width=\unitlength,page=1]{b7p2_torus.pdf}}%
    \put(0.122527,0.23043661){\color[rgb]{0,0,1}\makebox(0,0)[lt]{\lineheight{1.25}\smash{\begin{tabular}[t]{l}(2,0)\end{tabular}}}}%
    \put(0.79575185,0.31011903){\makebox(0,0)[lt]{\lineheight{1.25}\smash{\begin{tabular}[t]{l}1 4 6 4 1 \\2    8 0 8 2\\ 1 4 6 4 1 \end{tabular}}}}%
    \put(0.80274706,0.14386966){\color[rgb]{0,0,1}\makebox(0,0)[lt]{\lineheight{1.25}\smash{\begin{tabular}[t]{l}w = -12,52\end{tabular}}}}%
    \put(0,0){\includegraphics[width=\unitlength,page=2]{b7p2_torus.pdf}}%
    \put(0.64194702,0.2492587){\color[rgb]{0,0,1}\transparent{0.84773701}\makebox(0,0)[lt]{\lineheight{1.25}\smash{\begin{tabular}[t]{l}slope (1,0)\end{tabular}}}}%
    \put(0.70371066,0.15975692){\color[rgb]{0,0,1}\transparent{0.84773701}\makebox(0,0)[lt]{\lineheight{1.25}\smash{\begin{tabular}[t]{l}slope (1,-1)\end{tabular}}}}%
    \put(0.67708204,0.09318533){\color[rgb]{0,0,1}\transparent{0.84773701}\makebox(0,0)[lt]{\lineheight{1.25}\smash{\begin{tabular}[t]{l}slope (2,-1)\end{tabular}}}}%
    \put(0,0){\includegraphics[width=\unitlength,page=3]{b7p2_torus.pdf}}%
    \put(0.79589085,0.35627528){\color[rgb]{0.50196078,0,0.50196078}\makebox(0,0)[lt]{\lineheight{1.25}\smash{\begin{tabular}[t]{l}Coefficients of $W_L$:\end{tabular}}}}%
    \put(0.80272667,0.17658949){\color[rgb]{0.50196078,0,0.50196078}\makebox(0,0)[lt]{\lineheight{1.25}\smash{\begin{tabular}[t]{l}Critical values:\end{tabular}}}}%
  \end{picture}%
\endgroup%

%% file: b8p2_torus.pdf_tex
\begingroup%
  \makeatletter%
  \providecommand\color[2][]{%
    \errmessage{(Inkscape) Color is used for the text in Inkscape, but the package 'color.sty' is not loaded}%
    \renewcommand\color[2][]{}%
  }%
  \providecommand\transparent[1]{%
    \errmessage{(Inkscape) Transparency is used (non-zero) for the text in Inkscape, but the package 'transparent.sty' is not loaded}%
    \renewcommand\transparent[1]{}%
  }%
  \providecommand\rotatebox[2]{#2}%
  \newcommand*\fsize{\dimexpr\f@size pt\relax}%
  \newcommand*\lineheight[1]{\fontsize{\fsize}{#1\fsize}\selectfont}%
  \ifx\svgwidth\undefined%
    \setlength{\unitlength}{340.81295013bp}%
    \ifx\svgscale\undefined%
      \relax%
    \else%
      \setlength{\unitlength}{\unitlength * \real{\svgscale}}%
    \fi%
  \else%
    \setlength{\unitlength}{\svgwidth}%
  \fi%
  \global\let\svgwidth\undefined%
  \global\let\svgscale\undefined%
  \makeatother%
  \begin{picture}(1,1.18721198)%
    \lineheight{1}%
    \setlength\tabcolsep{0pt}%
    \put(0,0){\includegraphics[width=\unitlength,page=1]{b8p2_torus.pdf}}%
    \put(0.25995223,0.59773169){\color[rgb]{0,0,1}\makebox(0,0)[lt]{\lineheight{1.25}\smash{\begin{tabular}[t]{l}(3,0)\end{tabular}}}}%
    \put(0.3541352,0.62014755){\color[rgb]{0,0,1}\makebox(0,0)[lt]{\lineheight{1.25}\smash{\begin{tabular}[t]{l}(0,2)\end{tabular}}}}%
    \put(0.22362582,0.67863378){\color[rgb]{0,0,1}\makebox(0,0)[lt]{\lineheight{1.25}\smash{\begin{tabular}[t]{l}(3,2)\end{tabular}}}}%
    \put(0.26812025,0.54702993){\color[rgb]{0,0,1}\makebox(0,0)[lt]{\lineheight{1.25}\smash{\begin{tabular}[t]{l}(3,-1)\end{tabular}}}}%
    \put(0.19871788,0.55005127){\color[rgb]{0,0,1}\makebox(0,0)[lt]{\lineheight{1.25}\smash{\begin{tabular}[t]{l}(2,-1)\end{tabular}}}}%
    \put(0,0){\includegraphics[width=\unitlength,page=2]{b8p2_torus.pdf}}%
    \put(0.78590062,0.58012672){\color[rgb]{0,0,1}\makebox(0,0)[lt]{\lineheight{1.25}\smash{\begin{tabular}[t]{l}(3,0)\end{tabular}}}}%
    \put(0.83549786,0.59635999){\color[rgb]{0,0,1}\makebox(0,0)[lt]{\lineheight{1.25}\smash{\begin{tabular}[t]{l}(0,1)\end{tabular}}}}%
    \put(0.87701295,0.6046193){\color[rgb]{0,0,1}\makebox(0,0)[lt]{\lineheight{1.25}\smash{\begin{tabular}[t]{l}(0,1)\end{tabular}}}}%
    \put(0.31706998,0.73881485){\color[rgb]{0,0,1}\transparent{0.84773701}\makebox(0,0)[lt]{\lineheight{1.25}\smash{\begin{tabular}[t]{l}1\end{tabular}}}}%
    \put(0.39099815,0.64024386){\color[rgb]{0,0,1}\transparent{0.84773701}\makebox(0,0)[lt]{\lineheight{1.25}\smash{\begin{tabular}[t]{l}9/2\end{tabular}}}}%
    \put(0.39017668,0.5055302){\color[rgb]{0,0,1}\transparent{0.84773701}\makebox(0,0)[lt]{\lineheight{1.25}\smash{\begin{tabular}[t]{l}1\end{tabular}}}}%
    \put(0.36717681,0.44885193){\color[rgb]{0,0,1}\transparent{0.84773701}\makebox(0,0)[lt]{\lineheight{1.25}\smash{\begin{tabular}[t]{l}9\end{tabular}}}}%
    \put(0.90438854,0.63942246){\color[rgb]{0,0,1}\transparent{0.84773701}\makebox(0,0)[lt]{\lineheight{1.25}\smash{\begin{tabular}[t]{l}-3/2\end{tabular}}}}%
    \put(0,0){\includegraphics[width=\unitlength,page=3]{b8p2_torus.pdf}}%
    \put(0.67943633,1.09611437){\color[rgb]{0,0,1}\makebox(0,0)[lt]{\lineheight{1.25}\smash{\begin{tabular}[t]{l}Critical values\\w = -60, 372\end{tabular}}}}%
  \end{picture}%
\endgroup%

%% file: b6p2_exc.pdf_tex
\begingroup%
  \makeatletter%
  \providecommand\color[2][]{%
    \errmessage{(Inkscape) Color is used for the text in Inkscape, but the package 'color.sty' is not loaded}%
    \renewcommand\color[2][]{}%
  }%
  \providecommand\transparent[1]{%
    \errmessage{(Inkscape) Transparency is used (non-zero) for the text in Inkscape, but the package 'transparent.sty' is not loaded}%
    \renewcommand\transparent[1]{}%
  }%
  \providecommand\rotatebox[2]{#2}%
  \newcommand*\fsize{\dimexpr\f@size pt\relax}%
  \newcommand*\lineheight[1]{\fontsize{\fsize}{#1\fsize}\selectfont}%
  \ifx\svgwidth\undefined%
    \setlength{\unitlength}{841.15631104bp}%
    \ifx\svgscale\undefined%
      \relax%
    \else%
      \setlength{\unitlength}{\unitlength * \real{\svgscale}}%
    \fi%
  \else%
    \setlength{\unitlength}{\svgwidth}%
  \fi%
  \global\let\svgwidth\undefined%
  \global\let\svgscale\undefined%
  \makeatother%
  \begin{picture}(1,0.37941018)%
    \lineheight{1}%
    \setlength\tabcolsep{0pt}%
    \put(0,0){\includegraphics[width=\unitlength,page=1]{b6p2_exc.pdf}}%
    \put(0.268451,0.127456){\color[rgb]{0,0,1}\transparent{0.76363599}\makebox(0,0)[lt]{\lineheight{1.25}\smash{\begin{tabular}[t]{l}E(X) = (3)(3)(3) = 27\end{tabular}}}}%
    \put(0,0){\includegraphics[width=\unitlength,page=2]{b6p2_exc.pdf}}%
  \end{picture}%
\endgroup%

%% file: dp.bib
@article {cps:trop,
    AUTHOR = {Carl, Michael and Pumperla, Max and Siebert, Bernd},
     TITLE = {A tropical view on {L}andau-{G}inzburg models},
   JOURNAL = {Acta Math. Sin. (Engl. Ser.)},
  FJOURNAL = {Acta Mathematica Sinica (English Series)},
    VOLUME = {40},
      YEAR = {2024},
    NUMBER = {1},
     PAGES = {329--382},
      ISSN = {1439-8516},
   MRCLASS = {14J33 (14J32 14T20 32Q25 53D37)},
  MRNUMBER = {4685231},
       DOI = {10.1007/s10114-024-2295-y},
       URL = {https://doi-org.proxy.libraries.rutgers.edu/10.1007/s10114-024-2295-y},
}

@article {ns,
    AUTHOR = {Nishinou, Takeo and Siebert, Bernd},
     TITLE = {Toric degenerations of toric varieties and tropical curves},
   JOURNAL = {Duke Math. J.},
  FJOURNAL = {Duke Mathematical Journal},
    VOLUME = {135},
      YEAR = {2006},
    NUMBER = {1},
     PAGES = {1--51},
      ISSN = {0012-7094},
   MRCLASS = {14N10 (14M25 14N35)},
  MRNUMBER = {2259922},
MRREVIEWER = {Diego Matessi},
       DOI = {10.1215/S0012-7094-06-13511-1}
}

@article {pt:wall,
    AUTHOR = {Pascaleff, James and Tonkonog, Dmitry},
     TITLE = {The wall-crossing formula and {L}agrangian mutations},
   JOURNAL = {Adv. Math.},
  FJOURNAL = {Advances in Mathematics},
    VOLUME = {361},
      YEAR = {2020},
     PAGES = {106850, 67},
      ISSN = {0001-8708},
   MRCLASS = {53D12 (14J33 53D40)},
  MRNUMBER = {4043009},
MRREVIEWER = {Jun Zhang},
       DOI = {10.1016/j.aim.2019.106850},
       URL = {https://doi-org.proxy.libraries.rutgers.edu/10.1016/j.aim.2019.106850},
}

@article {flips,
    AUTHOR = {Charest, Fran\c{c}ois and Woodward, Chris T.},
     TITLE = {Floer cohomology and flips},
   JOURNAL = {Mem. Amer. Math. Soc.},
  FJOURNAL = {Memoirs of the American Mathematical Society},
    VOLUME = {279},
      YEAR = {2022},
    NUMBER = {1372},
     PAGES = {v+166},
      ISSN = {0065-9266},
      ISBN = {978-1-4704-5310-7; 978-1-4704-7226-9},
   MRCLASS = {53D40},
  MRNUMBER = {4464438},
       DOI = {10.1090/memo/1372},
       URL = {https://doi-org.proxy.libraries.rutgers.edu/10.1090/memo/1372},
}

@book {ms:jh,
    AUTHOR = {McDuff, Dusa and Salamon, Dietmar},
     TITLE = {{$J$}-holomorphic curves and symplectic topology},
    SERIES = {American Mathematical Society Colloquium Publications},
    VOLUME = {52},
   EDITION = {Second},
 PUBLISHER = {American Mathematical Society, Providence, RI},
      YEAR = {2012},
     PAGES = {xiv+726},
      ISBN = {978-0-8218-8746-2},
   MRCLASS = {53D45 (32Q65 53D35)},
  MRNUMBER = {2954391},
MRREVIEWER = {Mark Alan Branson},
}

@article {gi:eq,
    AUTHOR = {Givental, Alexander B.},
     TITLE = {Equivariant {G}romov-{W}itten invariants},
   JOURNAL = {Internat. Math. Res. Notices},
  FJOURNAL = {International Mathematics Research Notices},
      YEAR = {1996},
    NUMBER = {13},
     PAGES = {613--663},
      ISSN = {1073-7928},
   MRCLASS = {14D07 (14D05 14J32 14N10 32G20)},
  MRNUMBER = {1408320},
MRREVIEWER = {Claire Voisin},
       DOI = {10.1155/S1073792896000414},
       URL = {https://doi-org.proxy.libraries.rutgers.edu/10.1155/S1073792896000414},
}

@incollection {mcduff:isotopy,
    AUTHOR = {McDuff, Dusa},
     TITLE = {From symplectic deformation to isotopy},
 BOOKTITLE = {Topics in symplectic {$4$}-manifolds ({I}rvine, {CA}, 1996)},
    SERIES = {First Int. Press Lect. Ser., I},
     PAGES = {85--99},
 PUBLISHER = {Int. Press, Cambridge, MA},
      YEAR = {1998},
   MRCLASS = {57R15 (57R52 57R57)},
  MRNUMBER = {1635697},
MRREVIEWER = {Paolo Lisca},
}

@article {chooh:fano,
    AUTHOR = {Cho, Cheol-Hyun and Oh, Yong-Geun},
     TITLE = {Floer cohomology and disc instantons of {L}agrangian torus
              fibers in {F}ano toric manifolds},
   JOURNAL = {Asian J. Math.},
  FJOURNAL = {Asian Journal of Mathematics},
    VOLUME = {10},
      YEAR = {2006},
    NUMBER = {4},
     PAGES = {773--814},
      ISSN = {1093-6106},
   MRCLASS = {53D40 (14J45 14J81 14M25 53D12 81T40)},
  MRNUMBER = {2282365},
MRREVIEWER = {Chien-Hao Liu},
       DOI = {10.4310/AJM.2006.v10.n4.a10},
       URL = {https://doi-org.proxy.libraries.rutgers.edu/10.4310/AJM.2006.v10.n4.a10},
}

@article {cm:trans,
    AUTHOR = {Cieliebak, Kai and Mohnke, Klaus},
     TITLE = {Symplectic hypersurfaces and transversality in
              {G}romov-{W}itten theory},
   JOURNAL = {J. Symplectic Geom.},
  FJOURNAL = {The Journal of Symplectic Geometry},
    VOLUME = {5},
      YEAR = {2007},
    NUMBER = {3},
     PAGES = {281--356},
      ISSN = {1527-5256},
   MRCLASS = {53D45},
  MRNUMBER = {2399678},
MRREVIEWER = {Michael J. Usher},
       URL = {http://projecteuclid.org.proxy.libraries.rutgers.edu/euclid.jsg/1210083200},
}

@article {lutz:dp,
    AUTHOR = {Lutz, Wendelin},
     TITLE = {Mirrors to del {P}ezzo surfaces and the classification of
              {$T$}-polygons},
   JOURNAL = {SIGMA Symmetry Integrability Geom. Methods Appl.},
  FJOURNAL = {SIGMA. Symmetry, Integrability and Geometry. Methods and
              Applications},
    VOLUME = {20},
      YEAR = {2024},
     PAGES = {Paper No. 095, 20},
   MRCLASS = {14J33 (14E07)},
  MRNUMBER = {4843384},
       DOI = {10.3842/SIGMA.2024.095},
       URL = {https://doi.org/10.3842/SIGMA.2024.095},
}

@book {gui:mom,
    AUTHOR = {Guillemin, Victor},
     TITLE = {Moment maps and combinatorial invariants of {H}amiltonian
              {$T^n$}-spaces},
    SERIES = {Progress in Mathematics},
    VOLUME = {122},
 PUBLISHER = {Birkh\"{a}user Boston, Inc., Boston, MA},
      YEAR = {1994},
     PAGES = {viii+150},
      ISBN = {0-8176-3770-2},
   MRCLASS = {58F06 (14C40 35L10 58F05)},
  MRNUMBER = {1301331},
MRREVIEWER = {Alejandro Uribe},
       DOI = {10.1007/978-1-4612-0269-1},
       URL = {https://doi-org.proxy.libraries.rutgers.edu/10.1007/978-1-4612-0269-1},
}

@article {we:remove,
    AUTHOR = {Weinstein, Alan},
     TITLE = {Removing intersections of {L}agrangian immersions},
   JOURNAL = {Illinois J. Math.},
  FJOURNAL = {Illinois Journal of Mathematics},
    VOLUME = {27},
      YEAR = {1983},
    NUMBER = {3},
     PAGES = {484--500},
      ISSN = {0019-2082},
   MRCLASS = {58F05 (57R42)},
  MRNUMBER = {698310},
MRREVIEWER = {I. Vaisman},
}

@article{vw:split,
      title={Splitting the diagonal for broken maps}, 
      author={Sushmita Venugopalan and Chris Woodward},
       Journal={arXiv: 2504.15583},
      year={2025}
}

@misc{horivafa,
      title={Mirror Symmetry}, 
      author={Kentaro Hori and Cumrun Vafa},
      year={2000},
      eprint={hep-th/0002222},
      archivePrefix={arXiv},
      primaryClass={hep-th},
      url={https://arxiv.org/abs/hep-th/0002222}, 
}

@article{lau:syz,
      title={SYZ mirror symmetry for del Pezzo surfaces and affine structures}, 
      author={Siu-Cheong Lau and Tsung-Ju Lee and Yu-Shen Lin},
            Journal={arXiv: 2206.01681},
      year={2024}
}

@book {evans:lec,
    AUTHOR = {Evans, Jonny},
     TITLE = {Lectures on {L}agrangian torus fibrations},
    SERIES = {London Mathematical Society Student Texts},
    VOLUME = {105},
 PUBLISHER = {Cambridge University Press, Cambridge},
      YEAR = {2023},
     PAGES = {xiii+225},
      ISBN = {978-1-009-37262-6; 978-1-009-37263-3},
   MRCLASS = {53D12 (53D20 57S12)},
  MRNUMBER = {4605050},
       DOI = {10.1017/9781009372671},
       URL = {https://doi.org/10.1017/9781009372671},
}

@article {pasc,
    AUTHOR = {Pascaleff, James and Tonkonog, Dmitry},
     TITLE = {The wall-crossing formula and {L}agrangian mutations},
   JOURNAL = {Adv. Math.},
  FJOURNAL = {Advances in Mathematics},
    VOLUME = {361},
      YEAR = {2020},
     PAGES = {106850, 67},
      ISSN = {0001-8708,1090-2082},
   MRCLASS = {53D12 (14J33 53D40)},
  MRNUMBER = {4043009},
MRREVIEWER = {Jun\ Zhang},
       DOI = {10.1016/j.aim.2019.106850},
       URL = {https://doi.org/10.1016/j.aim.2019.106850},
}

@article {rittersmith,
    AUTHOR = {Ritter, Alexander F. and Smith, Ivan},
     TITLE = {The monotone wrapped {F}ukaya category and the open-closed
              string map},
   JOURNAL = {Selecta Math. (N.S.)},
  FJOURNAL = {Selecta Mathematica. New Series},
    VOLUME = {23},
      YEAR = {2017},
    NUMBER = {1},
     PAGES = {533--642},
      ISSN = {1022-1824},
   MRCLASS = {53D37 (14J33 14N35 53D40 53D45)},
  MRNUMBER = {3595902},
MRREVIEWER = {Lino Amorim},
       DOI = {10.1007/s00029-016-0255-9},
       URL = {https://doi.org/10.1007/s00029-016-0255-9},
}

@article{vw:trop,
  author = {Venugopalan, Sushmita and Woodward, Chris},
  title = {Tropical {F}ukaya Algebras},
  Journal={arXiv: 2004.14314},
  year={2020}
}

@misc{vw:tl,
  author = {Woodward, Chris},
  title = {Disks bounding tropical Lagrangians in almost toric manifolds},
note = {In preparation}
}

@inproceedings {giv:icm,
    AUTHOR = {Givental, Alexander B.},
     TITLE = {Homological geometry and mirror symmetry},
 BOOKTITLE = {Proceedings of the {I}nternational {C}ongress of
              {M}athematicians, {V}ol. 1, 2 ({Z}\"{u}rich, 1994)},
     PAGES = {472--480},
 PUBLISHER = {Birkh\"{a}user, Basel},
      YEAR = {1995},
   MRCLASS = {58D10 (14J40 14N10 32G20)},
  MRNUMBER = {1403947},
MRREVIEWER = {Bruce Hunt},
}

@article {mikhalkin,
    AUTHOR = {Mikhalkin, Grigory},
     TITLE = {Enumerative tropical algebraic geometry in {$\Bbb R^2$}},
   JOURNAL = {J. Amer. Math. Soc.},
  FJOURNAL = {Journal of the American Mathematical Society},
    VOLUME = {18},
      YEAR = {2005},
    NUMBER = {2},
     PAGES = {313--377},
      ISSN = {0894-0347},
   MRCLASS = {14N10 (05A99 14N35 52B70)},
  MRNUMBER = {2137980},
MRREVIEWER = {Charles D. Cadman},
       DOI = {10.1090/S0894-0347-05-00477-7},
       URL = {https://doi.org/10.1090/S0894-0347-05-00477-7},
}

@article {akhtar,
    AUTHOR = {Akhtar, Mohammad and Coates, Tom and Corti, Alessio and
              Heuberger, Liana and Kasprzyk, Alexander and Oneto, Alessandro
              and Petracci, Andrea and Prince, Thomas and Tveiten, Ketil},
     TITLE = {Mirror symmetry and the classification of orbifold del {P}ezzo
              surfaces},
   JOURNAL = {Proc. Amer. Math. Soc.},
  FJOURNAL = {Proceedings of the American Mathematical Society},
    VOLUME = {144},
      YEAR = {2016},
    NUMBER = {2},
     PAGES = {513--527},
      ISSN = {0002-9939},
   MRCLASS = {14J26 (52B20)},
  MRNUMBER = {3430830},
MRREVIEWER = {Makiko Mase},
       DOI = {10.1090/proc/12876},
       URL = {https://doi-org.proxy.libraries.rutgers.edu/10.1090/proc/12876},
}

@book {manin:cubic,
    AUTHOR = {Manin, Yu. I.},
     TITLE = {Cubic forms},
    SERIES = {North-Holland Mathematical Library},
    VOLUME = {4},
   EDITION = {Second},
      NOTE = {Algebra, geometry, arithmetic,
              Translated from the Russian by M. Hazewinkel},
 PUBLISHER = {North-Holland Publishing Co., Amsterdam},
      YEAR = {1986},
     PAGES = {x+326},
      ISBN = {0-444-87823-8},
   MRCLASS = {11Gxx (14Gxx 14J20)},
  MRNUMBER = {833513},
}

@article {vwx,
    AUTHOR = {Venugopalan, Sushmita and Woodward, Chris T. and Xu, Guangbo},
     TITLE = {Fukaya categories of blowups},
   JOURNAL = {J. Inst. Math. Jussieu},
  FJOURNAL = {Journal of the Institute of Mathematics of Jussieu. JIMJ.
              Journal de l'Institut de Math\'{e}matiques de Jussieu},
    VOLUME = {25},
      YEAR = {2026},
    NUMBER = {1},
     PAGES = {85--214},
      ISSN = {1474-7480,1475-3030},
   MRCLASS = {53D37 (53D40)},
  MRNUMBER = {5018881},
       DOI = {10.1017/S1474748025101138},
       URL = {https://doi-org.proxy.libraries.rutgers.edu/10.1017/S1474748025101138},
}

@article {serg:adj,
    AUTHOR = {Serganova, Vera V. and Skorobogatov, Alexei N.},
     TITLE = {Adjoint representation of {${\rm E}_8$} and del {P}ezzo
              surfaces of degree 1},
   JOURNAL = {Ann. Inst. Fourier (Grenoble)},
  FJOURNAL = {Universit\'{e} de Grenoble. Annales de l'Institut Fourier},
    VOLUME = {61},
      YEAR = {2011},
    NUMBER = {6},
     PAGES = {2337--2360 (2012)},
      ISSN = {0373-0956,1777-5310},
   MRCLASS = {14J26 (14M17 22E46)},
  MRNUMBER = {2976314},
MRREVIEWER = {Ulrich\ Derenthal},
       DOI = {10.5802/aif.2676},
       URL = {https://doi-org.proxy.libraries.rutgers.edu/10.5802/aif.2676},
}

@incollection {ohtaono,
    AUTHOR = {Ohta, Hiroshi and Ono, Kaoru},
     TITLE = {Symplectic {$4$}-manifolds with {$b^+_2=1$}},
 BOOKTITLE = {Geometry and physics ({A}arhus, 1995)},
    SERIES = {Lecture Notes in Pure and Appl. Math.},
    VOLUME = {184},
     PAGES = {237--244},
 PUBLISHER = {Dekker, New York},
      YEAR = {1997},
      ISBN = {0-8247-9791-4},
   MRCLASS = {57R15 (57R57)},
  MRNUMBER = {1423170},
MRREVIEWER = {Paolo\ Lisca},
}

@article {le:stacks,
    AUTHOR = {Lerman, Eugene},
     TITLE = {Orbifolds as stacks?},
   JOURNAL = {Enseign. Math. (2)},
  FJOURNAL = {L'Enseignement Math\'{e}matique. Revue Internationale. 2e
              S\'{e}rie},
    VOLUME = {56},
      YEAR = {2010},
    NUMBER = {3-4},
     PAGES = {315--363},
      ISSN = {0013-8584},
   MRCLASS = {18D05 (22A22)},
  MRNUMBER = {2778793},
MRREVIEWER = {Chenchang\ Zhu},
       DOI = {10.4171/LEM/56-3-4},
       URL = {https://doi-org.proxy.libraries.rutgers.edu/10.4171/LEM/56-3-4},
}

@article {gu:ka,
    AUTHOR = {Guillemin, Victor},
     TITLE = {Kaehler structures on toric varieties},
   JOURNAL = {J. Differential Geom.},
  FJOURNAL = {Journal of Differential Geometry},
    VOLUME = {40},
      YEAR = {1994},
    NUMBER = {2},
     PAGES = {285--309},
      ISSN = {0022-040X,1945-743X},
   MRCLASS = {32J27 (14M25 32C17 32M05 58F05)},
  MRNUMBER = {1293656},
MRREVIEWER = {Ivailo\ M.\ Mladenov},
       URL = {http://projecteuclid.org.proxy.libraries.rutgers.edu/euclid.jdg/1214455538},
}

@article {cm:dt,
    AUTHOR = {Cheung, Man-Wai and Mandel, Travis},
     TITLE = {Donaldson-{T}homas invariants from tropical disks},
   JOURNAL = {Selecta Math. (N.S.)},
  FJOURNAL = {Selecta Mathematica. New Series},
    VOLUME = {26},
      YEAR = {2020},
    NUMBER = {4},
     PAGES = {Paper No. 57, 46},
      ISSN = {1022-1824,1420-9020},
   MRCLASS = {14N35 (13F60 14N10 14T20)},
  MRNUMBER = {4131036},
MRREVIEWER = {Joaquim\ Ro\'{e}},
       DOI = {10.1007/s00029-020-00580-8},
       URL = {https://doi.org/10.1007/s00029-020-00580-8},
}

@article{vianna:dp,
      title={Infinitely many monotone {L}agrangian tori in del {P}ezzo surfaces}, 
      author={Renato Vianna},
      Journal={arXiv: 1602.03356},
      year={2016}
}

@article {vianna:inf,
    AUTHOR = {Vianna, Renato Ferreira de Velloso},
     TITLE = {Infinitely many exotic monotone {L}agrangian tori in
              {$\Bbb{CP}^2$}},
   JOURNAL = {J. Topol.},
  FJOURNAL = {Journal of Topology},
    VOLUME = {9},
      YEAR = {2016},
    NUMBER = {2},
     PAGES = {535--551},
      ISSN = {1753-8416},
   MRCLASS = {53D12 (53D37 53D42)},
  MRNUMBER = {3509972},
MRREVIEWER = {Stefan Nemirovski},
       DOI = {10.1112/jtopol/jtw002},
       URL = {https://doi-org.proxy.libraries.rutgers.edu/10.1112/jtopol/jtw002},
}

@article{lin:trop,
      title={On the Tropical Discs Counting on Elliptic K3 Surfaces with General Singular Fibres}, 
      author={Yu-Shen Lin},
      Journal={arXiv: 1710.10625},
      year={2019}
}

@article {lin:open,
    AUTHOR = {Lin, Yu-Shen},
     TITLE = {Open {G}romov-{W}itten invariants on elliptic {K}3 surfaces
              and wall-crossing},
   JOURNAL = {Comm. Math. Phys.},
  FJOURNAL = {Communications in Mathematical Physics},
    VOLUME = {349},
      YEAR = {2017},
    NUMBER = {1},
     PAGES = {109--164},
      ISSN = {0010-3616},
   MRCLASS = {14N35 (14J27 14J28 32Q65 53D45)},
  MRNUMBER = {3592747},
MRREVIEWER = {Feng Qu},
       DOI = {10.1007/s00220-016-2754-0},
       URL = {https://doi-org.proxy.libraries.rutgers.edu/10.1007/s00220-016-2754-0},
}

@article {leungsym,
    AUTHOR = {Leung, Naichung Conan and Symington, Margaret},
     TITLE = {Almost toric symplectic four-manifolds},
   JOURNAL = {J. Symplectic Geom.},
  FJOURNAL = {The Journal of Symplectic Geometry},
    VOLUME = {8},
      YEAR = {2010},
    NUMBER = {2},
     PAGES = {143--187},
      ISSN = {1527-5256,1540-2347},
   MRCLASS = {53D35 (53D12 57R17)},
  MRNUMBER = {2670163},
MRREVIEWER = {Vicente\ Mu\~{n}oz},
       URL = {http://projecteuclid.org/euclid.jsg/1279199213},
}

@article {cw:traj,
    AUTHOR = {Charest, Fran\c{c}ois and Woodward, Chris},
     TITLE = {Floer trajectories and stabilizing divisors},
   JOURNAL = {J. Fixed Point Theory Appl.},
  FJOURNAL = {Journal of Fixed Point Theory and Applications},
    VOLUME = {19},
      YEAR = {2017},
    NUMBER = {2},
     PAGES = {1165--1236},
      ISSN = {1661-7738},
   MRCLASS = {53D40},
  MRNUMBER = {3659006},
MRREVIEWER = {Jelena Kati\'{c}},
       DOI = {10.1007/s11784-016-0379-8},
       URL = {https://doi-org.proxy.libraries.rutgers.edu/10.1007/s11784-016-0379-8},
}

@article {graef:trop,
    AUTHOR = {Graefnitz, Tim},
     TITLE = {Tropical correspondence for smooth del {P}ezzo log
              {C}alabi-{Y}au pairs},
   JOURNAL = {J. Algebraic Geom.},
  FJOURNAL = {Journal of Algebraic Geometry},
    VOLUME = {31},
      YEAR = {2022},
    NUMBER = {4},
     PAGES = {687--749},
      ISSN = {1056-3911,1534-7486},
   MRCLASS = {14N35 (14T20)},
  MRNUMBER = {4484551},
MRREVIEWER = {Benjamin\ Gammage},
}

@article{testa:thesis,
      title={The irreducibility of the spaces of rational curves on del {P}ezzo surfaces}, 
      author={Damiano Testa},
      Journal={arXiv: math/0609355},
      year={2006}
}

@article {fooo:toric,
    AUTHOR = {Fukaya, Kenji and Oh, Yong-Geun and Ohta, Hiroshi and Ono,
              Kaoru},
     TITLE = {Lagrangian {F}loer theory and mirror symmetry on compact toric
              manifolds},
   JOURNAL = {Ast\'{e}risque},
  FJOURNAL = {Ast\'{e}risque},
    NUMBER = {376},
      YEAR = {2016},
     PAGES = {vi+340},
      ISSN = {0303-1179},
      ISBN = {978-2-85629-825-1},
   MRCLASS = {53D40 (14M25 53D37 53D45)},
  MRNUMBER = {3460884},
MRREVIEWER = {Christopher T. Woodward},
}

@article {coates:max,
    AUTHOR = {Coates, Tom and Kasprzyk, Alexander M. and Pitton, Giuseppe
              and Tveiten, Ketil},
     TITLE = {Maximally mutable {L}aurent polynomials},
   JOURNAL = {Proc. A.},
  FJOURNAL = {Proceedings A},
    VOLUME = {477},
      YEAR = {2021},
    NUMBER = {2254},
     PAGES = {Paper No. 20210584, 21},
      ISSN = {1364-5021},
   MRCLASS = {14J33 (14J45)},
  MRNUMBER = {4340449},
MRREVIEWER = {Hoil Kim},
}

@article{li2023toric,
      title={Almost toric presentations of symplectic log Calabi-Yau pairs}, 
      author={Tian-Jun Li and Jie Min and Shengzhen Ning},
      Journal={arXiv: 2303.09964},
      year={2023}
}

@book {gross:trop,
    AUTHOR = {Gross, Mark},
     TITLE = {Tropical geometry and mirror symmetry},
    SERIES = {CBMS Regional Conference Series in Mathematics},
    VOLUME = {114},
 PUBLISHER = {Published for the Conference Board of the Mathematical
              Sciences, Washington, DC; by the American Mathematical
              Society, Providence, RI},
      YEAR = {2011},
     PAGES = {xvi+317},
      ISBN = {978-0-8218-5232-3},
   MRCLASS = {14T05 (14J32 14J33 52B20)},
  MRNUMBER = {2722115},
MRREVIEWER = {Hsian-Hua Tseng},
       DOI = {10.1090/cbms/114},
       URL = {https://doi-org.proxy.libraries.rutgers.edu/10.1090/cbms/114},
}

@article {gr:corresp,
    AUTHOR = {Gr\"{a}fnitz, Tim},
     TITLE = {Theta functions, broken lines and 2-marked log
              {G}romov-{W}itten invariants},
   JOURNAL = {Manuscripta Math.},
  FJOURNAL = {Manuscripta Mathematica},
    VOLUME = {176},
      YEAR = {2025},
    NUMBER = {4},
     PAGES = {Paper No. 41, 34},
      ISSN = {0025-2611},
   MRCLASS = {14N35 (05E14 14A21 14J33 14K25 14N10 14T20)},
  MRNUMBER = {4921699},
       DOI = {10.1007/s00229-025-01640-z},
       URL = {https://doi-org.proxy.libraries.rutgers.edu/10.1007/s00229-025-01640-z},
}

@article {grz:proper,
    AUTHOR = {Gr\"{a}fnitz, Tim and Ruddat, Helge and Zaslow, Eric},
     TITLE = {The proper {L}andau-{G}inzburg potential is the open mirror
              map},
   JOURNAL = {Adv. Math.},
  FJOURNAL = {Advances in Mathematics},
    VOLUME = {447},
      YEAR = {2024},
     PAGES = {Paper No. 109639, 69},
      ISSN = {0001-8708},
   MRCLASS = {14J33 (13F60 14J45 14N10 14T20 53D37)},
  MRNUMBER = {4739248},
MRREVIEWER = {Hoil Kim},
       DOI = {10.1016/j.aim.2024.109639},
       URL = {https://doi-org.proxy.libraries.rutgers.edu/10.1016/j.aim.2024.109639},
}

@incollection {sym,
    AUTHOR = {Symington, Margaret},
     TITLE = {Four dimensions from two in symplectic topology},
 BOOKTITLE = {Topology and geometry of manifolds ({A}thens, {GA}, 2001)},
    SERIES = {Proc. Sympos. Pure Math.},
    VOLUME = {71},
     PAGES = {153--208},
 PUBLISHER = {Amer. Math. Soc., Providence, RI},
      YEAR = {2003},
   MRCLASS = {53D35 (53D20 55R55 57R17)},
  MRNUMBER = {2024634},
MRREVIEWER = {Vicente Mu\~{n}oz},
       DOI = {10.1090/pspum/071/2024634},
       URL = {https://doi-org.proxy.libraries.rutgers.edu/10.1090/pspum/071/2024634},
}

@article{parker:blowups,
    AUTHOR = {Parker, Brett},
     TITLE = {Tropical enumeration of curves in blowups of {$\Bbb{C} P^2$}},
   JOURNAL = {J. Differential Geom.},
  FJOURNAL = {Journal of Differential Geometry},
    VOLUME = {129},
      YEAR = {2025},
    NUMBER = {1},
     PAGES = {165--223},
      ISSN = {0022-040X},
   MRCLASS = {14N35 (14T20 53D45)},
  MRNUMBER = {4856594},
       DOI = {10.4310/jdg/1736262182},
       URL = {https://doi-org.proxy.libraries.rutgers.edu/10.4310/jdg/1736262182},
}

@article {oh:lag,
    AUTHOR = {Oh, Yong-Geun},
     TITLE = {Floer cohomology of {L}agrangian intersections and
              pseudo-holomorphic disks. {I}},
   JOURNAL = {Comm. Pure Appl. Math.},
  FJOURNAL = {Communications on Pure and Applied Mathematics},
    VOLUME = {46},
      YEAR = {1993},
    NUMBER = {7},
     PAGES = {949--993},
      ISSN = {0010-3640,1097-0312},
   MRCLASS = {58E05 (57R99 58F05)},
  MRNUMBER = {1223659},
MRREVIEWER = {Wilhelm\ Klingenberg},
       DOI = {10.1002/cpa.3160460702},
       URL = {https://doi.org/10.1002/cpa.3160460702},
}

@article{bardwellevans,
    AUTHOR = {Bardwell-Evans, Sam and Cheung, Man-Wai Mandy and Hong, Hansol
              and Lin, Yu-Shen},
     TITLE = {Scattering diagrams from holomorphic discs in log
              {C}alabi-{Y}au surfaces},
   JOURNAL = {J. Differential Geom.},
  FJOURNAL = {Journal of Differential Geometry},
    VOLUME = {130},
      YEAR = {2025},
    NUMBER = {1},
     PAGES = {--},
      ISSN = {0022-040X},
   MRCLASS = {14J33 (13F60 14T20)},
  MRNUMBER = {4904497},
       DOI = {10.4310/jdg/1747064192},
       URL = {https://doi-org.proxy.libraries.rutgers.edu/10.4310/jdg/1747064192}}

@article {barrott:explicit,
    AUTHOR = {Barrott, Lawrence Jack},
     TITLE = {Explicit equations for mirror families to log {C}alabi-{Y}au
              surfaces},
   JOURNAL = {Bull. Korean Math. Soc.},
  FJOURNAL = {Bulletin of the Korean Mathematical Society},
    VOLUME = {57},
      YEAR = {2020},
    NUMBER = {1},
     PAGES = {139--165},
      ISSN = {1015-8634,2234-3016},
   MRCLASS = {14J33 (14J26)},
  MRNUMBER = {4060189},
MRREVIEWER = {Shengtian\ Zhou},
       DOI = {10.4134/BKMS.b190128},
       URL = {https://doi.org/10.4134/BKMS.b190128},
}

@article {duist:global,
    AUTHOR = {Duistermaat, J. J.},
     TITLE = {On global action-angle coordinates},
   JOURNAL = {Comm. Pure Appl. Math.},
  FJOURNAL = {Communications on Pure and Applied Mathematics},
    VOLUME = {33},
      YEAR = {1980},
    NUMBER = {6},
     PAGES = {687--706},
      ISSN = {0010-3640},
   MRCLASS = {58F05 (58G15 70G10)},
  MRNUMBER = {596430},
MRREVIEWER = {Helmut R\"{u}ssmann},
       DOI = {10.1002/cpa.3160330602},
       URL = {https://doi-org.proxy.libraries.rutgers.edu/10.1002/cpa.3160330602},
}

@article {sheridan:hypersurface,
    AUTHOR = {Sheridan, Nick},
     TITLE = {On the {F}ukaya category of a {F}ano hypersurface in
              projective space},
   JOURNAL = {Publ. Math. Inst. Hautes \'{E}tudes Sci.},
  FJOURNAL = {Publications Math\'{e}matiques. Institut de Hautes \'{E}tudes
              Scientifiques},
    VOLUME = {124},
      YEAR = {2016},
     PAGES = {165--317},
      ISSN = {0073-8301},
   MRCLASS = {53D37 (14F05 14J33 14N35)},
  MRNUMBER = {3578916},
MRREVIEWER = {Christian Lehn},
       DOI = {10.1007/s10240-016-0082-8},
       URL = {https://doi-org.proxy.libraries.rutgers.edu/10.1007/s10240-016-0082-8},
}

@article {bryan:lgw,
    AUTHOR = {Bryan, Jim and Pandharipande, Rahul},
     TITLE = {The local {G}romov-{W}itten theory of curves},
      NOTE = {With an appendix by Bryan, C. Faber, A. Okounkov and
              Pandharipande},
   JOURNAL = {J. Amer. Math. Soc.},
  FJOURNAL = {Journal of the American Mathematical Society},
    VOLUME = {21},
      YEAR = {2008},
    NUMBER = {1},
     PAGES = {101--136},
      ISSN = {0894-0347},
   MRCLASS = {14N35 (57R56)},
  MRNUMBER = {2350052},
MRREVIEWER = {Sergiy Koshkin},
       DOI = {10.1090/S0894-0347-06-00545-5},
       URL = {https://doi-org.proxy.libraries.rutgers.edu/10.1090/S0894-0347-06-00545-5},
}
